\newif\ifcomment
 \setlist[itemize]{topsep=0pt,partopsep=0pt,itemsep=0pt,parsep=0pt}
 \setlist[itemize,1]{label=-}
 \setlist[itemize,2]{label=---}
 \setlist[itemize,3]{label=*}
 \setlist[enumerate]{topsep=0pt,partopsep=0pt,itemsep=0pt,parsep=0pt}
 \setlist[enumerate,1]{label=\roman*)}
 \setlist[enumerate,2]{label=\alph*)}
 \setlist[enumerate,3]{label=\arabic*)}
\renewcommand{\tilde}{\widetilde}
\renewcommand{\emptyset}{\varnothing}
{\begin{adjustwidth}{2em}{}\hspace{-2em}\textbf{Case #1.} }%
{\end{adjustwidth}}
\title{Colouring Non-Even Digraphs\thanks{This research has been supported by DFG-GRK 2434 and the ERC consolidator grant DISTRUCT-648527.}
\thanks{An extended abstract of this paper was accepted at EUROCOMB 2019.}}
\date{}
\DeclareRobustCommand{\authorthing}{
\begin{center}
\begin{tabular}{p{.3\textwidth}p{.3\textwidth}p{.3\textwidth}}
Marcelo Garlet Millani & Raphael Steiner & Sebastian Wiederrecht\\
\emph{m.garletmillani@tu-berlin.de} & \emph{steiner@math.tu-berlin.de} & \emph{sebastian.wiederrecht@tu-berlin.de}\\
\noalign{\smallskip}
\multicolumn{3}{c}{Technische Universität Berlin\quad}
\end{tabular}
\end{center}}
\author{\authorthing}
\begin{document}
\maketitle

\begin{abstract}
	A colouring of a digraph as defined by Neumann-Lara \cite{ErdNeuLara} in 1982 is a vertex-colouring such that no monochromatic directed cycles exist. The minimal number of colours required for such a colouring of a loopless digraph is defined to be its \emph{dichromatic number}. This quantity has been widely studied in the last decades and can be considered as a natural directed analogue of the chromatic number of a graph.
	A digraph $D$ is called \emph{even} if for every $0$-$1$-weighting of the edges it contains a directed cycle of even total weight.
	We show that every non-even digraph has dichromatic number at most $2$ and an optimal colouring can be found in polynomial time.
	We strengthen a previously known NP-hardness result \cite{bojannp} by showing that deciding whether a directed graph is $2$-colourable remains NP-hard even if it contains a feedback vertex set of bounded size.
	
	\noindent \textbf{Keywords.} dichromatic number, butterfly minor, Pfaffian, graph colouring, perfect matching
\end{abstract}

\section{Introduction}

Graphs in this paper are considered simple, that is, without loops and multiple edges, while digraphs have no loops or parallel edges, but are allowed to have antiparallel pairs of edges (digons).
An undirected edge with \emph{endpoints} $u$ and $v$ will be denoted by $uv$, or $vu$ symmetrically, while a directed edge with \emph{tail} $u$ and \emph{head} $v$ will be denoted as $\Brace{u,v}$.
A digraph $D$ is called strongly connected if for every pair of vertices $u,v\in\Fkt{V}{D}$ there is a directed path from $u$ to $v$ and from $v$ to $u$.
The \emph{girth} of $D$ is the minimum length of a directed cycle in $D$.
We call a set $X\subseteq\Fkt{V}{D}$ \emph{acyclic}, if $\InducedSubgraph{D}{X}$ is acyclic.

A \emph{colouring} of a digraph $D$ with $k$ colours is a function $c\colon\Fkt{V}{D}\rightarrow\Set{0,\dots,k-1}$.
A colouring is called \emph{proper} if $\Fkt{c^{-1}}{i}$ is acyclic for every $i\in\Set{0,\dots,k-1}$.
The \emph{dichromatic number} $\Dichromatic{D}$ is the smallest integer $k$ such that $D$ has a proper colouring with $k$ colours.

One of the arguably most influential problems in graph theory was the Four-Colour-Conjecture, answered positively by Appel and Haken in 1976.
As a directed version of this famous theorem, the Two-Colour-Conjecture posed by Erd\H{o}s and Neumann-Lara and independently by Skrekovski (see \cite{bokal2004circular,ErdNeuLara}) still stands open.
A digraph $D$ is called \emph{oriented} if its underlying undirected graph is simple.

\begin{conjecture}\label{con:twocolours}
	Every oriented planar digraph $D$ is $2$-colourable.
\end{conjecture}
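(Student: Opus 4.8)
The plan is to attack \Cref{con:twocolours} by contradiction, via a minimal counterexample together with a discharging argument, using the main theorem of this paper to dispose of all non-even instances. Suppose the conjecture fails and let $D$ be an oriented planar digraph with $\Dichromatic{D}\ge 3$ on the fewest vertices; then $D$ is \emph{$3$-dicritical}, that is, $\Dichromatic{D}=3$ but $\Dichromatic{D-v}\le 2$ for every vertex $v$. Standard criticality arguments show that $D$ is strongly connected — the dichromatic number of a digraph equals the maximum of the dichromatic numbers of its strongly connected components, so a smallest counterexample is itself strong — and that $\delta^+(D),\delta^-(D)\ge 2$, hence (as $D$ has no digons) every vertex has underlying degree at least $4$; one also obtains that the underlying graph is $2$-connected. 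Finally, since our main theorem says that every non-even digraph is $2$-colourable, we may assume in addition that $D$ is \emph{even}. The conjecture is thereby reduced to the statement that no even, strongly connected, oriented, planar digraph with minimum in- and out-degree at least $2$ has dichromatic number $3$.

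Next, fix a plane embedding of $D$ and give each vertex $v$ the charge $d(v)-4$ and each face $f$ the charge $|f|-4$; by Euler's formula the total charge equals $-8$, and since $\delta(D)\ge 4$ all vertex charges are nonnegative, so the entire deficit sits on triangular faces. I would then design discharging rules sending charge from vertices and long faces to the triangles, arranged so that whenever the rules \emph{fail} to make every final charge nonnegative, the local structure around the offending triangle must contain one of a fixed list of \emph{reducible configurations}: small subdigraphs $H$ with the property that every proper $2$-colouring of $D$ with $H$ removed (or suitably contracted) extends to a proper $2$-colouring of all of $D$, contradicting $3$-dicriticality. Showing that such a list is simultaneously \emph{unavoidable} and \emph{reducible} is the heart of the matter; here the evenness of $D$ — its link to the even-cycle problem, to Pfaffian orientations of associated bipartite graphs, and to the presence of prescribed butterfly minors — should be used precisely to rule out the few embeddings that the purely degree-based discharging would otherwise leave standing.

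The hard part will be the reducibility step, and it is genuinely more delicate than its counterpart in the proof of the Four-Colour-Theorem. A proper colouring of a digraph is a \emph{global} condition: a monochromatic obstruction is an arbitrarily long directed cycle, not a bounded local pattern, so there is no directed analogue of Kempe-chain recolouring that stays near the configuration, and extending a $2$-colouring across $H$ can create long monochromatic cycles far away. I see two avenues to gain control. First, one can track how the directed cycles of $D$ meet the configuration, exploiting the strong connectivity and the degree bounds — in effect pushing the known $2$-colourability results for planar digraphs of digirth at least $4$ down to digirth $3$, which is exactly the regime of oriented planar graphs. Second — and I expect the bulk of the work to lie here — one can use evenness to re-route a hypothetical long monochromatic cycle by a symmetric-difference argument on an associated matching, converting the global recolouring obstruction into a parity/flow problem on the plane that the Pfaffian structure of even planar digraphs can then resolve. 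Combining either of these with a bounded-size catalogue of reducible directed configurations is, I believe, the route to a proof.
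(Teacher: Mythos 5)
What you have written is a research programme, not a proof, and the statement you are trying to establish is precisely the still-open Two-Colour-Conjecture of Erd\H{o}s, Neumann-Lara and Skrekovski: this paper does not prove it and does not claim to. The only part of your plan that is actually carried out is the (correct) reduction to the even case via \cref{thm:mainthm}, together with the routine observations that a minimal counterexample is $3$-dicritical, strongly connected, has $\delta^+,\delta^-\ge 2$ and hence underlying minimum degree $4$, and that the total charge under $d(v)-4$ and $|f|-4$ is $-8$. Everything after that --- the discharging rules, the catalogue of reducible configurations, the unavoidability proof, and above all a single worked reducibility argument --- is absent. Since those items constitute the entire mathematical content of the proposed proof, there is no proof here.

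The gap is not merely one of missing detail; you have correctly identified the obstruction that defeats the whole strategy and then not overcome it. Reducibility in the Four-Colour-Theorem works because an improper extension of a colouring is witnessed by a \emph{local} object (an edge inside the configuration) and can be repaired by Kempe chains confined to two colour classes. For the dichromatic number, an improper extension is witnessed by a monochromatic \emph{directed cycle of unbounded length} passing through the configuration, and no local recolouring device is known that controls it; this is exactly why Mohar and Li's result stops at girth $4$ and why \cref{con:twocolours} remains open. Your two proposed remedies --- ``pushing the digirth-$4$ result down to digirth $3$'' and ``a symmetric-difference/Pfaffian argument on an associated matching'' --- are not arguments but names for the difficulty: the second, in particular, cannot be run as stated, because the correspondence between directed cycles and $M$-alternating cycles used in this paper lives in the non-even/Pfaffian world, which is precisely the case you have already disposed of; for the remaining \emph{even} planar oriented digraphs no such Pfaffian structure is available. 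Until you exhibit a concrete finite set of configurations, prove each one reducible against arbitrarily long monochromatic cycles, and verify unavoidability, the conjecture remains unproven by this route.
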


Although this conjecture has an easy formulation, there seems to be a lack of methods for attacking it. The strongest partial result proved so far is due to Mohar and Li \cite{mohar4}, who showed the following:
\begin{theorem}
	Every oriented planar digraph of girth at least $4$ is $2$-colourable.
\end{theorem}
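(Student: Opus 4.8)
I would attack this by the classical minimal‑counterexample‑plus‑discharging scheme, i.e.\ a directed analogue of the standard proof of Grötzsch‑type theorems. Suppose the statement fails and let $D$ be a counterexample with $|V(D)|$ minimum, so $D$ is oriented and planar, has directed girth at least $4$, and $\Dichromatic{D}\ge 3$, while every digraph with fewer vertices meeting the hypotheses is $2$‑colourable. First I would record a few easy reductions. Since a monochromatic directed cycle lies inside a single strong component, and inside a single block, $\Dichromatic{D}$ is the maximum of $\Dichromatic{\cdot}$ over the strong components and over the blocks; by minimality $D$ is therefore strongly connected and $2$‑connected, so in a fixed plane embedding every face is bounded by a cycle. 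Next, if some vertex $v$ had out‑degree at most $1$ (resp.\ in‑degree at most $1$), a proper $2$‑colouring of $D-v$ — which exists by minimality — would extend to $v$ by giving $v$ the colour different from that of its unique out‑neighbour (resp.\ in‑neighbour), since every directed cycle through $v$ must use that edge; hence $D$ has minimum in‑degree and out‑degree at least $2$, and as $D$ is oriented (no digons) every vertex has degree at least $4$. Finally, because $D$ has directed girth at least $4$, every triangular face is bounded by a \emph{transitive} triangle (two of its edges agree in direction around the face, one opposes), which conveniently equips such a face with a source/middle/sink labelling.

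Now the discharging. Give each vertex $v$ the charge $d(v)-4$ and each face $f$ the charge $\ell(f)-4$, where $\ell(f)$ is the length of the cycle bounding $f$. By Euler's formula $|V(D)|-|E(D)|+|F(D)|=2$ together with $\sum_v d(v)=2|E(D)|=\sum_f \ell(f)$, the total charge is $4|E(D)|-4|V(D)|-4|F(D)|=-8$. By the reductions above every vertex has non‑negative charge and every face of length at least $4$ has non‑negative charge, so the only objects with negative charge are the transitive triangular faces, each carrying exactly $-1$. I would then design local discharging rules that push charge from vertices and long faces into incident triangular faces, and prove that after applying them no object is negative — contradicting the fact that the total charge is $-8<0$.

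For such rules to balance, the crucial point is that no triangular face can be "surrounded by poverty": near every triangular face there must be a vertex of degree at least $5$ or a face of length at least $5$ that can afford to donate. Equivalently, the configurations consisting of a transitive triangular face together with degree‑$4$ vertices on and around it — and short chains of degree‑$4$ vertices linking several triangular faces — must be \emph{reducible}. Proving reducibility is the heart of the argument: given such a configuration one deletes (or, where it is safe, contracts or identifies) a bounded vertex set $S$, invokes minimality to $2$‑colour the smaller digraph, and extends over $S$. Unlike in the undirected case — where one only needs the neighbours of a deleted vertex not to exhaust the palette — here the extension fails only when some vertex of $S$ would \emph{close a monochromatic directed cycle}: in a colour class $i$ there is a directed $i$‑monochromatic path from a colour‑$i$ out‑neighbour of that vertex back to a colour‑$i$ in‑neighbour of it. I would analyse these "linking paths" using the directed structure (each bad path enters and leaves $S$ along prescribed edges) together with planarity to control how the linking paths of different vertices of $S$ can cross, recolouring a few vertices of $S$ when needed, and choosing the configurations so that the contracted/identified digraph stays oriented, planar, and of directed girth at least $4$ (identifications are the main danger, as they can create a digon or a directed triangle; this is what restricts the admissible configurations).

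I expect this reducibility step to be the main obstacle — both in assembling a list of reducible configurations rich enough to make the discharging balance, and in the delicate, direction‑sensitive case analysis needed to extend $2$‑colourings over a deleted configuration while keeping the smaller digraph inside the hypothesis class. By contrast, once the reducible configurations are fixed, selecting the discharging rules and checking non‑negativity vertex‑by‑vertex and face‑by‑face is laborious but routine. The extra leverage I would try hardest to exploit is the rigidity of the objects involved: a degree‑$4$ vertex must have in‑degree $2$ and out‑degree $2$, and a transitive triangular face has a canonical orientation of its boundary, so donors and recipients in the discharging can be matched up consistently, which should keep the number of genuinely distinct cases manageable.
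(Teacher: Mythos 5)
First, note that this theorem is not proved in the paper you are commenting on: it is quoted there as a result of Li and Mohar \cite{mohar4}, so there is no in-paper proof to match your argument against, and your proposal has to stand on its own. What you have written is a proof \emph{plan}, not a proof. The parts you actually establish are the easy ones: a minimal counterexample may be assumed strongly connected and $2$-connected, has minimum in- and out-degree at least $2$ (hence, being oriented, minimum degree at least $4$), its triangular faces are transitive, and with charges $d(v)-4$ and $\ell(f)-4$ the only negatively charged objects are those triangular faces, the total charge being $-8$. All of this together proves only that a minimal counterexample has at least eight transitive triangular faces --- it produces no contradiction. Everything that would constitute the theorem (the list of unavoidable configurations, the proof that each is reducible, and discharging rules that balance against them) is absent, and you say so yourself.

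The gap is not a routine verification you have deferred; it is the entire difficulty of the result, and there are concrete reasons to doubt the plan closes as described. Reducibility for the dichromatic number is not local: whether a proper $2$-colouring of $D-S$ extends over a configuration $S$ is governed by monochromatic directed paths of unbounded length running between the attachment vertices of $S$ through the rest of the digraph, and deleting or recolouring vertices inside $S$ gives you no handle on those paths. Some stronger inductive statement (a precoloured boundary, a global recolouring lemma, or an entirely different tool) is needed, and none is supplied. Your identification/contraction moves moreover risk creating digons or directed triangles and thus leaving the hypothesis class, which you acknowledge but do not resolve; and the claim that every triangular face must have a wealthy donor nearby is exactly what fails in orientations of planar graphs that are close to $4$-regular triangulations, so the unavoidable configurations would have to be large and their reducibility correspondingly delicate. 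For calibration: the published proof of Li and Mohar is a long technical induction on a strengthened statement, not a short vertex--face discharging, which is further evidence that what is missing here is the theorem itself rather than bookkeeping.
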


In the undirected case, $2$-colourability is very well understood and the class of bipartite graphs can be characterised in many different ways.
For one, bipartite graphs are exactly the graphs without cycles of odd length, on the other hand the famous theorem by \Konig can also be used to characterise bipartite graphs.

\begin{theorem}[\Konig \cite{konig1931grafok}]\label{thm:konig}
	A graph $G$ is bipartite if and only if for all subgraphs $G'\subseteq G$ the size of a maximum matching of $G'$ equals the size of a minimum vertex cover.
\end{theorem}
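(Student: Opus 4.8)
The plan is to prove the two implications separately: the forward direction reduces to the classical minimax form of K\"onig's theorem, while the converse is witnessed by an odd cycle. \textbf{Forward direction.} Suppose $G$ is bipartite. Every subgraph $G'\subseteq G$ is then bipartite as well, so it is enough to prove that in an arbitrary bipartite graph $H$ with parts $A$ and $B$ a maximum matching $M$ and a minimum vertex cover $C$ satisfy $|M|=|C|$. The inequality $|M|\le|C|$ holds in every graph, since the edges of $M$ are pairwise disjoint and each of them must contain a vertex of $C$. For the reverse inequality I would run the standard alternating-path argument: let $U\subseteq A$ be the set of vertices not saturated by $M$, let $Z$ be the set of all vertices of $H$ reachable from $U$ along an $M$-alternating path that starts with a non-matching edge, and put $C:=(A\setminus Z)\cup(B\cap Z)$. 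One checks that $C$ is a vertex cover --- an uncovered edge would either extend an $M$-alternating path or yield an $M$-augmenting path, contradicting either the definition of $Z$ or the maximality of $M$ --- and that $M$ matches the vertices of $C$ injectively into $V(H)\setminus C$, so $|C|\le|M|$. Combining the two inequalities gives $|C|=|M|$.

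\textbf{Converse.} I would argue by contraposition. Assume $G$ is not bipartite; then $G$ contains a cycle $C=v_0v_1\cdots v_{2k}v_0$ of odd length $2k+1$ as a subgraph. A maximum matching of $C$ has size $k$, since a matching in a graph on $2k+1$ vertices leaves at least one vertex unsaturated, while $\{v_0v_1,v_2v_3,\dots,v_{2k-2}v_{2k-1}\}$ attains this bound. A minimum vertex cover of $C$ has size $k+1$: the cycle has $2k+1$ edges and every vertex is incident with exactly two of them, so at least $\lceil(2k+1)/2\rceil=k+1$ vertices are required, and $\{v_0\}\cup\{v_1,v_3,\dots,v_{2k-1}\}$ shows that this many suffice. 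Hence for the subgraph $G'=C$ the maximum matching size and the minimum vertex cover size differ, which proves the contrapositive.

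I expect the only genuine (though still elementary) obstacle to be the verification that the set $C$ obtained from the alternating-path construction really is a vertex cover of $H$; this is exactly the point at which the bipartiteness of $H$ is used, and it is also exactly the step that fails when $H$ contains an odd cycle, so the two directions of the proof fit together naturally around this observation.
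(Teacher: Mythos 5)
Your proof is correct. The paper does not prove this statement at all — it is quoted as a classical result with a citation to K\"onig's 1931 paper — so there is no in-text argument to compare against; your write-up is the standard one (weak duality plus the alternating-path construction of a cover of size $|M|$ for the bipartite minimax, and an odd cycle, where $\nu(C_{2k+1})=k<k+1=\tau(C_{2k+1})$, witnessing the contrapositive of the converse), and both directions check out.
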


Matchings and vertex covers can be generalised to digraphs as well.
A \emph{transversal}, or \emph{feedback vertex set}, in a digraph $D$ is a set $T$ of vertices which intersects every directed cycle in $D$, i.e.,\@ $D-T$ is acyclic.
A \emph{cycle packing} is a collection $\mathcal{C}$ of pairwise (vertex-) disjoint cycles.
The cardinality of a minimum transversal of $D$ is denoted by $\VCNum{D}$ and the cardinality of a maximum cycle packing of $D$ is denoted by $\MatNum{D}$.
We say that $D$ has the \emph{\Konig property} if $\MatNum{D'}=\VCNum{D'}$ for all subdigraphs $D'\subseteq D$.

An edge $\Brace{u,v}$ in a digraph $D$ is \emph{butterfly contractible} if it is the only outgoing edge of $u$ or the only incoming edge of $v$.
The \emph{butterfly contraction} of a butterfly contractible edge $\Brace{u,v}$ which is the only outgoing edge of $u$ is obtained from $D$ by adding the edge $\Brace{x,v}$ for every edge $\Brace{x,u}$ in $D$ (if it does not yet exist) and then deleting the vertex $u$.
Analogously, if $\Brace{u,v}$ is the only incoming edge of $v$, we obtain the butterfly contraction of $\Brace{u,v}$ by adding the edge $\Brace{u,x}$ for every edge $\Brace{v,x}$ in $D$ (if it does not yet exist) and then deleting $v$.
A digraph $D'$ is a \emph{butterfly minor} of $D$ if it can be obtained by butterfly contractions from a subdigraph of $D$.

For an undirected graph $G$, the digraph obtained from $G$ by replacing every undirected edge $xy$ with the two directed edges $\Brace{x,y}$ and $\Brace{y,x}$ is called the \emph{bidirected} graph $\Bidirected{G}$.
If $G$ is a cycle we call $\Bidirected{G}$ a \emph{bicycle}.

\begin{figure}[h!]
	\begin{center}
		\begin{tikzpicture}[scale=0.7]
		
		\pgfdeclarelayer{background}
		\pgfdeclarelayer{foreground}
		
		\pgfsetlayers{background,main,foreground}
		
		%%%%% Vertex Styles %%%%%
		\tikzstyle{v:main} = [draw, circle, scale=0.5, thick,fill=black]
		\tikzstyle{v:tree} = [draw, circle, scale=0.3, thick,fill=black]
		\tikzstyle{v:border} = [draw, circle, scale=0.75, thick,minimum size=10.5mm]
		\tikzstyle{v:mainfull} = [draw, circle, scale=1, thick,fill]
		\tikzstyle{v:ghost} = [inner sep=0pt,scale=1]
		\tikzstyle{v:marked} = [circle, scale=1.2, fill=CornflowerBlue,opacity=0.3]
		%%%%% %%%%% %%%%%
		
		%%%%% Edge Styles %%%%%
		\tikzset{>=latex} 
		\tikzstyle{e:marker} = [line width=9pt,line cap=round,opacity=0.2,color=DarkGoldenrod]
		\tikzstyle{e:colored} = [line width=1.2pt,color=BostonUniversityRed,cap=round,opacity=0.8]
		\tikzstyle{e:coloredthin} = [line width=1.1pt,opacity=0.8]
		\tikzstyle{e:coloredborder} = [line width=2pt]
		\tikzstyle{e:main} = [line width=1pt]
		\tikzstyle{e:extra} = [line width=1.3pt,color=LavenderGray]
		%%%%% %%%%% %%%%%
		
		\begin{pgfonlayer}{main}
		
		%%%%% Centered Ghost Vertices %%%%%
		\node (C) [] {};
		
		%%%%% Left Center %%%%%
		\node (C1) [v:ghost, position=180:25mm from C] {};
		%\node (U1) [v:ghost, position=90:50mm from C1] {};
		%		\node (L1) [v:ghost, position=270:27mm from C1,align=center] {};
		
		%%%%% Center %%%%%
		\node (C2) [v:ghost, position=0:0mm from C] {};
		%\node (U2) [v:ghost, position=90:50mm from C2] {};
		%		\node (L2) [v:ghost, position=270:27mm from C2,align=center] {};
		
		%%%%% Right Center %%%%%
		\node (C3) [v:ghost, position=0:25mm from C] {};
		%\node (U3) [v:ghost, position=90:50mm from C3] {};
		%		\node (L3) [v:ghost, position=270:27mm from C3,align=center] {};
		%%%%% %%%%% %%%%%

		%%%%% Vertices %%%%%
		
		%%%%% Left Center %%%%%

		%%%%% %%%%% %%%%%
		
		%%%%% Center %%%%%
		
		\node (v1) [v:main,position=90:15mm from C2] {};
		\node (v2) [v:main,position=141.4:15mm from C2] {};
		\node (v3) [v:main,position=192.8:15mm from C2] {};
		\node (v4) [v:main,position=243.2:15mm from C2] {};
		\node (v5) [v:main,position=294.6:15mm from C2] {};
		\node (v6) [v:main,position=346.1:15mm from C2] {};
		\node (v7) [v:main,position=37.5:15mm from C2] {};
		
		%%%%% %%%%% %%%%%
		
		%%%%% Right Center %%%%%
		
		%%%%% %%%%% %%%%%
		
		%%%%% %%%%% %%%%%

		%%%%% Edges %%%%%
		
		%%%%% Left Center %%%%%

		%%%%% %%%%% %%%%%
		
		%%%%% Center %%%%%
		
		\draw (v1) [e:main,->,bend right=15] to (v2);
		\draw (v2) [e:main,->,bend right=15] to (v3);
		\draw (v3) [e:main,->,bend right=15] to (v4);
		\draw (v4) [e:main,->,bend right=15] to (v5);
		\draw (v5) [e:main,->,bend right=15] to (v6);
		\draw (v6) [e:main,->,bend right=15] to (v7);
		\draw (v7) [e:main,->,bend right=15] to (v1);
		
		\draw (v1) [e:main,->] to (v6);
		\draw (v2) [e:main,->] to (v7);
		\draw (v3) [e:main,->] to (v1);
		\draw (v4) [e:main,->] to (v2);
		\draw (v5) [e:main,->] to (v3);
		\draw (v6) [e:main,->] to (v4);
		\draw (v7) [e:main,->] to (v5);
		
		%%%%% %%%%% %%%%%
		
		%%%%% Right Center %%%%%

		%%%%% %%%%% %%%%%
		
		%%%%% %%%%% %%%%%
		
		\end{pgfonlayer}
		
		%%%%% %%%%% %%%%%

		%%%%% Background %%%%%
		\begin{pgfonlayer}{background}
		
		\end{pgfonlayer}	
		%%%%% %%%%% %%%%%
		
		%%%%% Foreground %%%%%
		\begin{pgfonlayer}{foreground}

		\end{pgfonlayer}
		%%%%% %%%%% %%%%%
		\end{tikzpicture}
	\end{center}
	\caption{The digraph $F_7$.}
	\label{fig:F7}
\end{figure}
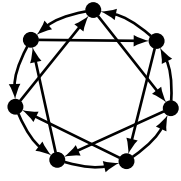

Similar to \cref{thm:konig} the digraphs with the \Konig-property can be described by forbidding odd bicycles and a single digraph called $F_7$ (illustrated in \cref{fig:F7}).
Surprisingly, this class turns out to be closed under butterfly minors.

\begin{theorem}[Guenin and Thomas \cite{guenin2011packing}]\label{thm:cyclepacking}
	A digraph $D$ has the \Konig-property if and only if it does not contain $F_7$ or an odd bicycle as a butterfly minor.
\end{theorem}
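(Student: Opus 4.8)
I would handle the ``only if'' direction first, as it is elementary. It rests on two observations. First, $F_7$ and every odd bicycle themselves lack the \Konig-property: in the bicycle over an odd cycle $C$ the digons are dicycles, so every transversal is in particular a vertex cover of $C$, hence of size at least $\lceil|C|/2\rceil$, whereas a family of pairwise disjoint digons has at most $\lfloor|C|/2\rfloor$ members; and for $F_7$ one checks directly that $\MatNum{F_7}=2$ while $\VCNum{F_7}=3$. Second, the \Konig-property is closed under butterfly minors: it is preserved by taking subdigraphs by definition, and for the butterfly contraction of an edge $\Brace{u,v}$ that is, say, the unique out-edge of $u$, one uses that every dicycle through $u$ traverses $\Brace{u,v}$ to obtain a size-preserving correspondence between dicycles, transversals and cycle packings of $D$ and of the contracted digraph. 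As the class of digraphs admitting $F_7$ or an odd bicycle as a butterfly minor is itself closed under butterfly minors, necessity follows.

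For the ``if'' direction, assume $D$ contains neither $F_7$ nor an odd bicycle as a butterfly minor. Every subdigraph inherits this, so it is enough to prove $\MatNum{D}=\VCNum{D}$, which I would attempt by induction on the number of vertices plus edges of $D$. As dicycles lie inside strong components and both parameters add over strong components, we may assume $D$ strongly connected; if some vertex has in- or out-degree at most one, we butterfly-contract an incident edge, which preserves the hypothesis and, by the correspondence above, produces a smaller instance, so we may assume every vertex has in- and out-degree at least two; and along a separation of $D$ of small order one can split $D$ into smaller digraphs --- after adding suitable arcs across the cut so that the pieces still exclude the two butterfly minors, and checking that the min--max equalities for the pieces recombine --- so it remains to treat the case in which $D$ admits no such separation.

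This core case is the heart of the matter, and here I would pass to the bipartite \emph{split graph} $B=B_D$, with colour classes $\Set{v^{+} : v\in\Fkt{V}{D}}$ and $\Set{v^{-} : v\in\Fkt{V}{D}}$, distinguished perfect matching $M_0=\Set{v^{+}v^{-} : v\in\Fkt{V}{D}}$, and an edge $u^{+}v^{-}$ for each arc $\Brace{u,v}$ of $D$. Under this translation the vertex-disjoint families of dicycles of $D$ correspond exactly to the perfect matchings of $B$ --- a dicycle $C$ going to the $M_0$-alternating cycle on $\bigcup_{v\in\Fkt{V}{C}}\Set{v^{+},v^{-}}$, and a vertex missed by the family going to its $M_0$-edge --- while transversals of $D$ correspond to those vertex sets of $B$ whose deletion leaves a uniquely matchable graph. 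One shows --- and this is the technically demanding step --- that excluding $F_7$ and odd bicycles as butterfly minors forces $B$ to be a \emph{Pfaffian} bipartite graph in whose Pfaffian decomposition, furnished by the structure theorem for Pfaffian bipartite graphs of Robertson--Seymour--Thomas and of McCuaig, the Heawood graph does not occur --- this last point being exactly where the exclusion of $F_7$ enters. Hence $B$ is built by trisums from \emph{planar} braces. Translating this decomposition back to $D$ and arguing along it, it suffices to establish $\MatNum{\cdot}=\VCNum{\cdot}$ when the split graph is planar, which can be done directly using planarity; recombining then yields $\MatNum{D}=\VCNum{D}$, and, applied to every subdigraph, the \Konig-property.

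I expect essentially all of the difficulty to lie in that last paragraph. The main obstacle is to make the dictionary between the two forbidden butterfly minors of $D$ on the one hand, and Pfaffianity of $B_D$ together with Heawood-freeness of its decomposition on the other, precise in \emph{both} directions, and to align the trisum structure of $B_D$ with the small-separation decompositions of $D$ --- choosing, across each cut, the arcs to add so that the smaller instances still exclude $F_7$ and odd bicycles, and checking that their min--max equalities recombine --- together with proving the planar base case. These translations and the base case carry the weight of the argument; the reductions to strong connectivity and to minimum in- and out-degree at least two are routine.
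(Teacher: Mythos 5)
This theorem is not proved in the paper at all: it is quoted verbatim from Guenin and Thomas \cite{guenin2011packing}, where the argument occupies a long and technically heavy article, so there is no in-paper proof to compare yours against. Judged on its own, your proposal has a genuine gap: the entire content of the ``if'' direction is concentrated in the sentence beginning ``One shows --- and this is the technically demanding step ---'', and nothing after that point is actually established. Three things are asserted there without argument, none of them routine. First, passing from ``no $F_7$ and no odd bicycle as a butterfly minor'' to ``$B_D$ is a Pfaffian brace whose trisum decomposition avoids the Heawood graph'' requires the full Robertson--Seymour--Thomas/McCuaig structure theorem together with the identification of $F_7$ as the $M$-direction of the Heawood graph (via \cref{lemma:mcguigmatminors}), and even then one must show that a trisum of planar braces cannot contain the Heawood graph as a matching minor. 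Second, a trisum is a $4$-cycle sum of the bipartite graph and does not correspond to a low-order directed separation of $D$, so the claim that the min--max equalities ``recombine'' across it is not analogous to your legitimate reductions along $0$- and $1$-separations; this recombination, together with the unspecified choice of arcs to add across each cut, is precisely where the difficulty of the theorem lives. Third, the planar base case is itself a nontrivial min--max theorem --- essentially the vertex form of the Lucchesi--Younger theorem for strongly planar digraphs, which the paper derives in \cref{sec:polychromatic} from \cref{duallucchesi} via a splitting construction --- so ``can be done directly using planarity'' is not a proof.

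There is also a smaller but real issue in the ``only if'' direction. The claimed ``size-preserving correspondence between dicycles, transversals and cycle packings'' under butterfly contraction is false as stated: if the contracted edge $\Brace{u,v}$ lies in a digon, that digon becomes a loop and vanishes from the minor, so transversals and packings of $D$ and of $D/\Brace{u,v}$ need not have equal optima. What you actually need is that $\MatNum{H'}=\VCNum{H'}$ for all subdigraphs $H'$ of $D$ forces the same equality for all subdigraphs of the contraction; this is true and is the easy direction of Guenin--Thomas, but it requires a case analysis on whether optimal transversals and packings use the contraction vertex, not a bijection. The paper itself calls the butterfly-minor closure of the class with the \Konig-property ``surprising'', which should be read as a warning that this step deserves more than one clause. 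In short, your outline points at the right circle of ideas, but as written the hard direction reduces the theorem to itself.
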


The odd bicycles also appear in another context. 
Namely, the so-called \emph{non-even digraphs} extend the class of digraphs described by \cref{thm:cyclepacking} and were helpful in the study of structural bipartite matching theory as well as in the solution of the famous \emph{even cycle problem} for digraphs.
A digraph $D$ is called \emph{even} if for every edge weighting $w\colon\Fkt{E}{D}\rightarrow\Set{0,1}$ there exists a directed cycle of even total weight in $D$.

\begin{theorem}[Seymour and Thomassen \cite{seymour1987characterization}]\label{thm:noneven}
	A directed graph is non-even if and only if it does not contain an odd bicycle as a butterfly minor.
\end{theorem}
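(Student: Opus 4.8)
The plan is to prove the two implications separately; one is short and the other carries essentially all of the weight. For the direction ``non-even $\Rightarrow$ no odd bicycle butterfly minor'' I would first show that non-evenness is closed under butterfly minors (weights read modulo $2$, and a dicycle \emph{odd} if its total weight is $1$). If $w\colon\Fkt{E}{D}\to\{0,1\}$ makes every dicycle of $D$ odd, its restriction to any subdigraph does the same; and for the butterfly contraction of an edge $\Brace{u,v}$ that is the unique out-edge of $u$, assigning each new edge $\Brace{x,v}$ the weight $w\Brace{x,u}+w\Brace{u,v}$ and keeping the other weights (with a small parity adjustment on $\Brace{u,v}$ to cover the digon case $x=v$) works, since every dicycle of the contracted digraph lifts --- by replacing the at most one new edge it uses by the length-two path through $u$ --- to a dicycle of $D$ of the same weight, so all of them stay odd; the unique-in-edge case is symmetric. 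Next I would check that an odd bicycle $\Bidirected{C}$ is even: for any weighting, either some digon of $\Bidirected{C}$ has even weight, or every digon is odd, in which case the two Hamiltonian dicycles of $\Bidirected{C}$ have weights summing to $\sum_{e\in\Fkt{E}{C}}(\text{weight of the digon at }e)\equiv\lvert\Fkt{E}{C}\rvert\equiv 1\pmod 2$, so one of them is even. Hence a digraph with an odd bicycle butterfly minor is even, so a non-even digraph has no such minor.

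For the converse I would assume $D$ has no odd bicycle butterfly minor and construct a weighting making all dicycles odd, arguing by induction on $\lvert\Fkt{V}{D}\rvert$ and then on $\lvert\Fkt{E}{D}\rvert$. Since dicycles stay inside strong components and the excluded minor passes to subdigraphs, I would handle the strong components one at a time and combine the weightings (edges joining distinct components get weight $0$), so I may assume $D$ is strongly connected and, being dicycle-free otherwise, on at least three vertices. If some vertex $u$ has out-degree $1$, its unique out-edge $\Brace{u,v}$ is butterfly contractible, the contraction still excludes the odd bicycle, and by induction it has a good weighting, which I would lift to $D$ by setting $w\Brace{u,v}:=0$, copying the weight of each new edge $\Brace{x,v}$ onto $\Brace{x,u}$, and keeping the rest (with the digon adjustment again); in-degree-$1$ vertices are symmetric. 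This leaves the core case: $D$ strongly connected with all in- and out-degrees at least $2$ and no odd bicycle butterfly minor.

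The core case is where I expect the real difficulty. Working with an extremal counterexample $D$, I would first use minimality to rule out small directed vertex cuts: across a cut of size $1$ or $2$ one splits $D$, applies induction to the two smaller pieces, and glues their weightings, the only thing to verify being that the (few) parity constraints the cut transmits are mutually consistent --- an inconsistency being precisely a subdivided odd bicycle straddling the cut. On the resulting highly connected $D$ I would build the weighting along a directed ear decomposition, adding ears one at a time while keeping every dicycle seen so far odd; a new ear from $a$ to $b$ can be added exactly when all directed $b$--$a$ paths in the current digraph have a common parity, and if two such paths have different parities then, after cleaning up their overlaps, one obtains a topological bidirected cycle in $D$ --- a cyclic sequence of branch vertices joined by pairs of oppositely directed, internally disjoint dipaths. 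The parity count from the first paragraph then says that when this cycle has an odd number of branch vertices it admits no all-odd weighting, so contracting its dipaths to single edges exhibits an odd bicycle as a butterfly minor of $D$, contradicting the hypothesis. The main obstacle --- and the bulk of the Seymour--Thomassen argument --- is to show that in a minimal counterexample this obstruction can always be forced into the odd form (handling the interleaving of the two paths, the interaction with the ear decomposition, and the gluing across small cuts), so that no counterexample can survive.

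Finally I would remark on context: this statement is a forerunner of \cref{thm:cyclepacking}, the non-even digraphs forming a butterfly-minor-closed class strictly larger than the digraphs with the \Konig-property (which additionally exclude $F_7$), and it is also equivalent --- via the standard digraph associated with an orientation of a bipartite graph carrying a perfect matching --- to the recognition of Pfaffian bipartite graphs, although the argument sketched above is self-contained.
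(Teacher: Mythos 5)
The paper does not prove this statement at all --- it is imported from Seymour and Thomassen as a black box with a citation --- so there is no internal argument to compare against and your attempt must stand on its own. The forward direction as you present it is complete and correct: non-evenness is preserved under taking subdigraphs and under butterfly contraction (your push-forward of the weighting works because every directed cycle of the contracted digraph lifts to a directed cycle of $D$ of the same parity, whether it enters the contraction vertex through an old edge or a new one), and your parity count showing that an odd bicycle is even --- if all $n$ digons are odd then the two Hamiltonian dicycles partition the edge set and their weights sum to $n\equiv 1 \pmod 2$, so one is even --- is exactly right.

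The converse contains a genuine gap, and you have located it yourself: the entire content of the theorem is the claim that a parity obstruction can always be converted into an odd bicycle butterfly minor, and you defer this to ``the bulk of the Seymour--Thomassen argument'' without carrying it out. Your sketch in fact relies on this missing step twice: once in the ear-decomposition argument, and once, implicitly, in the assertion that an inconsistency of the parity constraints across a directed $2$-cut yields a subdivided odd bicycle straddling the cut. Moreover, the step is not merely long but genuinely delicate. Two $b$--$a$ paths of different parities are directed the \emph{same} way, so producing from their union the pairs of oppositely directed, internally disjoint dipaths of a topological bidirected cycle requires a careful analysis of how the two paths interleave (it is only where they visit common vertices in opposite orders that antiparallel segments appear); the resulting bidirected cycle may have an \emph{even} number of branch vertices, in which case your parity count gives no contradiction and further surgery is needed to force oddness; and one must also check that contracting the connecting dipaths of the structure one finds can actually be realised by butterfly contractions, which is not automatic for arbitrary subdivided bidirected cycles sitting inside $D$. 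These are precisely the points where the published proof does its real work, so as it stands the hard direction is a plan rather than a proof. The reductions surrounding it (strong components, degree-one vertices, and the easy closure properties) are fine.
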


Non-even digraphs and their recognition problem naturally correspond to a famous problem from structural matching theory.
An undirected graph $G$ is called \emph{matching covered} if $G$ is connected and for every edge $e \in \Fkt{E}{G}$ there is some $M \in \Perf{G}$ with $e \in M$, where $\Perf{G}$ denotes the set of all perfect matchings of $G$.
A set $S \subseteq \V{G}$ of vertices is called \emph{conformal} if $G-S$ has a perfect matching.
A subgraph $H\subseteq G$ is \emph{conformal}  if $\V{H}$ is a conformal set and $H$ has a perfect matching.
A cycle $C$ in $G$ is called \emph{$M$-alternating} if it alternately uses edges from $M$ and $E(G) \setminus M$. Clearly, the conformal cycles of $G$ are exactly the cycles occurring as an alternating cycle in at least one perfect matching.

Counting the number of perfect matchings in a given graph (also known as the \emph{dimer problem}) is an important and well-known task which is known to be $\#P$-hard on general graphs \cite{valiant1979complexity}.
However, there is a rather rich class of graphs for which the number of perfect matchings can be expressed as the permanent of a well-known matrix and can thus be computed in polynomial time \cite{kasteleyn1967graph,little1975characterization,thomas2006survey}, known as the \emph{Pfaffian} graphs:

A graph $G$ is called Pfaffian if there exists an orientation $\Oriented{G}$ such that every conformal cycle of $G$ contains an odd number of directed edges going in one and an odd number of directed edges going in the other direction in $\Oriented{G}$.
Such an orientation is also called \emph{Pfaffian}. 
It is well-known that any planar graph is Pfaffian (see \cite{kasteleyn1967graph}).
Since edges that are not contained in a perfect matching do not contribute to a pfaffian orientation in any way, one usually just considers matching covered graphs in this context.
Similar to non-even digraphs, bipartite matching covered Pfaffian graphs can be described by forbidden minors.
To state the complete theorem, we need a connection between directed graphs and bipartite graphs with perfect matchings, as well as the definition for minors in the context of matching covered graphs.

Let $G$ be a matching covered graph and let $v_0$ be a vertex of $G$ of degree two incident to the edges $e_1=v_0v_1$ and $e_2=v_0v_2$.
Let $H$ be obtained from $G$ by contracting both $e_1$ and $e_2$ and deleting all resulting parallel edges.
We say that $H$ is obtained from $G$ by \emph{bicontraction} or \emph{bicontracting the vertex $v_0$}.
We say that $H$ is a \emph{matching minor} of $G$ if $H$ can be obtained from a conformal subgraph of $G$ by repeatedly bicontracting vertices of degree two.
Similar to how topological minors specialise graph minors, there is the following specialisation of matching minors:
A \emph{bisubdivision} of an edge is a subdivision, i.\@{}e.\@ replacing the edge by a path joining its endpoints, with an even number (possibly $2$) of vertices.
We call $H_2$ a \emph{bisubdivision of $H_1$} if $H_1$ is a matching covered graph and $H_2$ can be obtained by bisubdividing the edges of $H_1$.
If a matching covered graph $G$ contains a conformal bisubdivision of a matching covered graph $H$, then $H$ is a matching minor of $G$, but the converse is not true. If $G$ contains no conformal bisudivision of $H$, it is called \emph{$H$-free}.

\begin{definition}
	\label{def:Mdirection}
	Let $G=\Brace{A\cup B, E}$ be a bipartite graph and let $M\in\Perf{G}$ be a perfect matching of $G$. 
	The \emph{$M$-direction} $\DirM{G}{M}$ of $G$ is defined as follows.
	Let $M = \Set{a_1b_1,\dots,a_{\Abs{M}}b_{\Abs{M}}}$
	with $a_i\in A, b_i\in B$ for $1\le i\le \Abs{M}$.
	Then,
	\begin{enumerate}
		\item $\Fkt{V}{\DirM{G}{M}}\coloneqq\Set{v_1,\dots,v_{\Abs{M}}}$ and
		
		\item $\Fkt{E}{\DirM{G}{M}}\coloneqq\CondSet{\Brace{v_i,v_j}}{a_ib_j\in\Fkt{E}{G}, i \neq j}$.	
	\end{enumerate}
\end{definition}

Note furthermore that the above operation is reversible and that every digraph $D$ is the $M$-direction of its bipartite splitting-graph equipped with the canonical perfect matching. 

The $M$-directions of a bipartite matching covered graph $G$ inherit some of the properties of $G$. Most importantly, the directed cycles in an $M$-direction are in bijection with the $M$-alternating cycles of $G$.
Another relation is about connectivity.
A graph $G$ is called \emph{$k$-extendable} if it is connected, has at least $2k+2$ vertices and every matching of size $k$ is contained in a perfect matching of $G$.
The following statement is folklore.

\begin{theorem}\label{thm:exttoconn}
	Let $G$ be a bipartite matching covered graph and $M$ a perfect matching of $G$.
	Then $G$ is $k$-extendable if and only if $\DirM{G}{M}$ is strongly $k$-(vertex-)connected.
\end{theorem}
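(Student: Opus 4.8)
The plan is to recast strong $k$-connectivity of $D\coloneqq\DirM{G}{M}$ as a statement about perfect matchings of $G$ via Menger's theorem, and then to identify that statement with $k$-extendability by means of the classical surplus characterisation of bipartite $k$-extendable graphs. Fix the labelling $M=\{a_1b_1,\dots,a_nb_n\}$ of \cref{def:Mdirection}, so that $\Abs{V(D)}=n=\Abs{V(G)}/2$; hence ``$\Abs{V(D)}>k$'', ``$n\ge k+1$'' and ``$\Abs{V(G)}\ge 2k+2$'' are the same condition, and since $G$ is connected by hypothesis the order and connectivity clauses in the two definitions are automatically matched up and may be disregarded. After that reduction it suffices to show: for all distinct $i,j$ and all $S\subseteq\{1,\dots,n\}\setminus\{i,j\}$ with $\Abs{S}\le k-1$ the digraph $D-\{v_s:s\in S\}$ has a directed path from $v_i$ to $v_j$, if and only if $G$ is $k$-extendable.

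The bridge is a correspondence lemma: for distinct indices $i,j$ and any $S\subseteq\{1,\dots,n\}\setminus\{i,j\}$, the digraph $D-\{v_s:s\in S\}$ has a directed path from $v_i$ to $v_j$ if and only if $G-b_i-a_j-\{a_s,b_s:s\in S\}$ has a perfect matching. Indeed, using $\Brace{v_p,v_q}\in E(D)\iff a_pb_q\in E(G)$ (for $p\neq q$) together with the matching edges $a_mb_m$, a directed path $v_i=v_{i_0}\to\dots\to v_{i_\ell}=v_j$ corresponds bijectively to the $M$-alternating path of $G$ on the vertex sequence $b_{i_0},a_{i_0},b_{i_1},a_{i_1},\dots,b_{i_\ell},a_{i_\ell}$, which starts and ends with an edge of $M$; its symmetric difference with $M$ removes exactly the edges $a_{i_t}b_{i_t}$ and is a perfect matching of $G-b_i-a_j$ retaining every $a_sb_s$ with $s\in S$, hence restricts to a perfect matching of $G-b_i-a_j-\{a_s,b_s:s\in S\}$. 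Conversely, a perfect matching $N$ of the latter graph extends by $\{a_sb_s:s\in S\}$ to a perfect matching $N'$ of $G-b_i-a_j$, and since $b_i$ and $a_j$ are the only degree-one vertices of $M\triangle N'$, this symmetric difference has a unique nontrivial path component, which runs from $b_i$ to $a_j$, is $M$-alternating, begins and ends with edges of $M$, and avoids $\{a_s,b_s:s\in S\}$; reading this path back along the bijection yields the required directed path. Feeding the lemma into Menger's theorem, strong $k$-connectivity of $D$ becomes the property $(\star)$: $G-b_i-a_j-\{a_s,b_s:s\in S\}$ has a perfect matching for all distinct $i,j$ and all $S\subseteq\{1,\dots,n\}\setminus\{i,j\}$ with $\Abs{S}\le k-1$.

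It remains to prove that $(\star)$ is equivalent to $k$-extendability, for which I would use the well-known surplus characterisation: $G$ is $k$-extendable if and only if $\Abs{N_G(X)}\ge\Abs{X}+k$ for every nonempty $X\subseteq A$ with $N_G(X)\neq B$. If $G$ is $k$-extendable, $(\star)$ follows by checking Hall's condition for the balanced bipartite graph $G-b_i-a_j-\{a_s,b_s:s\in S\}$: a nonempty subset $X$ of its $A$-side satisfies $X\subsetneq A$, so either $\Abs{N_G(X)}\ge\Abs{X}+k$, and after deleting at most $\Abs{S}+1\le k$ vertices of $B$ at least $\Abs{X}$ of its neighbours survive, or $N_G(X)=B$, and then all $n-\Abs{S}-1\ge\Abs{X}$ surviving $B$-vertices are neighbours. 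Conversely, suppose some matching $L$ of $G$ of size $k$ does not extend, so $G-V(L)$ has no perfect matching; Hall's theorem provides a nonempty $X\subseteq A\setminus V(L)$ with $\Abs{N_G(X)\setminus V(L)}<\Abs{X}$, and since $\Abs{V(L)\cap B}=k$ this forces $\Abs{N_G(X)}\le\Abs{X}+k-1$, while $N_G(X)\neq B$ (otherwise $n-k\ge\Abs{X}$ neighbours would survive). Now put $P\coloneqq\{m:a_m\in X\}$ and $Q\coloneqq\{m:b_m\in N_G(X)\}$; the decisive point is that $P\subseteq Q$, because $a_mb_m\in M\subseteq E(G)$ forces $b_m\in N_G(X)$ as soon as $a_m\in X$. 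Hence $\Abs{Q\setminus P}=\Abs{N_G(X)}-\Abs{X}\le k-1$, so taking $S\coloneqq Q\setminus P$, some $i\in P$ and some $j\notin Q$ (which exists as $\Abs{Q}\le n-1$), one checks that $X$ keeps only $\Abs{X}-1$ neighbours in $G-b_i-a_j-\{a_s,b_s:s\in S\}$, so that graph has no perfect matching, contradicting $(\star)$.

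The main obstacle should be the correspondence lemma: pinning down the bijection between directed paths of $D$ and $M$-alternating paths of $G$ and carrying out the symmetric-difference bookkeeping cleanly (including degenerate short paths) is where the real care lies. The index computation in the last step, in particular the inclusion $P\subseteq Q$ that keeps $S$ small enough, is the other delicate point; the Menger reduction and the appeal to the surplus characterisation of $k$-extendable bipartite graphs are routine.
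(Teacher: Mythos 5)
The paper states \cref{thm:exttoconn} as folklore and supplies no proof, so there is nothing of its own to compare your argument against; judged on its own terms, your proof is correct. The correspondence lemma is the heart of the matter and you have it right: directed $v_i$--$v_j$ paths in $D-\{v_s : s\in S\}$ do biject with $M$-alternating $b_i$--$a_j$ paths that begin and end with $M$-edges and avoid $\{a_s,b_s : s\in S\}$, and the symmetric-difference bookkeeping (exactly two degree-one vertices, hence a unique nontrivial path component) is carried out correctly, so strong $k$-connectivity of $D$ is indeed equivalent to your condition $(\star)$; the order clauses $|V(D)|>k$ and $|V(G)|\ge 2k+2$ match as you say. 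The converse direction is also sound: from a non-extendable $k$-matching, Hall's theorem gives the deficient set $X$, the inclusion $P\subseteq Q$ guarantees $|S|=|N_G(X)|-|X|\le k-1$, and the count $|N_G(X)|-1-|S|=|X|-1$ produces the required violation of $(\star)$. The only imported ingredient is the implication that $k$-extendability forces $|N_G(X)|\ge |X|+k$ for every nonempty $X\subseteq A$ with $N_G(X)\neq B$; this is one direction of Plummer's surplus characterisation of $k$-extendable bipartite graphs (your formulation is equivalent to the usual one quantifying over $|X|\le n-k$), and it is genuinely standard, but since it is the one step you do not prove and it carries real content, an explicit reference (e.g.\ Plummer's 1986 paper on matching extension in bipartite graphs, or Lov\'asz and Plummer, \emph{Matching Theory}) would make the write-up airtight.
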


\begin{lemma}[McCuaig \cite{mccuaig2000even}]\label{lemma:mcguigmatminors}
	Let $G$ and $H$ be bipartite matching covered graphs.
	Then $H$ is a matching minor of $G$ if and only if there exist  perfect matchings $M\in\Perf{G}$ and $M'\in\Perf{H}$ such that $\DirM{H}{M'}$ is a butterfly minor of $\DirM{G}{M}$.
\end{lemma}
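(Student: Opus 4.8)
The plan is to prove both implications through a precise dictionary between operations on a bipartite matching covered graph carrying a distinguished perfect matching and operations on its $M$-direction, together with the fact recorded after \cref{def:Mdirection} that the $M$-direction is reversible, so that $\DirM{G_1}{M_1}=\DirM{G_2}{M_2}$ forces $\Brace{G_1,M_1}$ and $\Brace{G_2,M_2}$ to be isomorphic. The dictionary consists of two correspondences: \textbf{(i)} conformal subgraphs of a bipartite matching covered graph $G$ correspond to subdigraphs of $M$-directions of $G$; and \textbf{(ii)} bicontracting a degree-two vertex lying in $A$ (resp.\ in $B$) corresponds to butterfly-contracting an arc that is the only outgoing edge of its tail (resp.\ the only incoming edge of its head). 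The point to watch throughout is that a bicontraction alters the perfect matching, so the matching must be carried along explicitly.

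For \textbf{(i)}: if $G_0\subseteq G$ is conformal, fix a perfect matching $N_0$ of $G_0$ and a perfect matching $N_0'$ of $G-\V{G_0}$, and set $M\coloneqq N_0\cup N_0'\in\Perf{G}$. Then $\V{\DirM{G_0}{N_0}}\subseteq\V{\DirM{G}{M}}$, and since $\Fkt{E}{G_0}\subseteq\Fkt{E}{G}$ every arc of $\DirM{G_0}{N_0}$ is an arc of $\DirM{G}{M}$, so $\DirM{G_0}{N_0}$ is a subdigraph of $\DirM{G}{M}$. Conversely, a subdigraph $D_0\subseteq\DirM{G}{M}$ with vertex set $\Set{v_i:i\in I}$ equals $\DirM{G_0}{M\cap\Fkt{E}{G_0}}$, where $G_0$ is the subgraph of $G$ on $\Set{a_i,b_i:i\in I}$ consisting of the matching edges $a_ib_i$ with $i\in I$ together with the non-matching edges $a_ib_j$ with $\Brace{v_i,v_j}\in\Fkt{E}{D_0}$; this $G_0$ is conformal, as $M\cap\Fkt{E}{G_0}$ is a perfect matching of it and $M\setminus\Fkt{E}{G_0}$ is a perfect matching of $G-\V{G_0}$.

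Part \textbf{(ii)} is the main, if routine, computation. Let $G'$ be a bipartite graph with bipartition classes $A,B$ and perfect matching $N$, and let $a_i\in A$ have degree two, with $N$-partner $b_i$ and second neighbour $b_j$, whose $N$-partner is $a_j$; then $i\neq j$, and since the out-arcs of $v_i$ in $\DirM{G'}{N}$ correspond to the non-matching neighbours of $a_i$, the arc $\Brace{v_i,v_j}$ is the only outgoing edge of $v_i$, hence butterfly-contractible. Let $G''$ be the bicontraction of $G'$ at $a_i$, in which $a_i,b_i,b_j$ are identified into a single vertex $w$ whose neighbours all lie in $A$ (so $w$ joins the class $B$), and put $N''\coloneqq\Brace{N\setminus\Set{a_ib_i,a_jb_j}}\cup\Set{a_jw}$, which is a perfect matching of $G''$. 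Writing $v_j'$ for the vertex of $\DirM{G''}{N''}$ belonging to $a_jw$, an arc-by-arc check shows that the out-arcs of $v_j'$ are exactly the former out-arcs of $v_j$, the in-arcs of $v_j'$ are exactly the former in-arcs of $v_i$ together with those of $v_j$, and every other arc is unchanged --- that is, $\DirM{G''}{N''}$ is precisely $\DirM{G'}{N}$ with $\Brace{v_i,v_j}$ butterfly-contracted. Reversing all arcs gives the symmetric statement for a degree-two vertex of $B$ and the only incoming edge of its head; and since a butterfly-contractible arc $\Brace{v_i,v_j}$ forces $a_i$ (or else $b_j$) to have degree two, \textbf{(ii)} is in fact an equivalence.

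The two implications now follow by bookkeeping. If $\DirM{H}{M'}$ is a butterfly minor of $\DirM{G}{M}$, choose a subdigraph $D_0\subseteq\DirM{G}{M}$ and butterfly contractions $D_0\to\dots\to D_n=\DirM{H}{M'}$; realise $D_0$ as $\DirM{G_0}{N_0}$ for a conformal $G_0\subseteq G$ via \textbf{(i)}, and lift each step via \textbf{(ii)} to a bicontraction $G_i\to G_{i+1}$ with $\DirM{G_{i+1}}{N_{i+1}}=D_{i+1}$ (the vertex contracted in $G_i$ has degree two precisely because the arc contracted in $D_i$ is a unique outgoing or incoming edge). Then $G_n$ is a matching minor of $G$ and $\DirM{G_n}{N_n}=\DirM{H}{M'}$, so $\Brace{G_n,N_n}\cong\Brace{H,M'}$ by reversibility and $H$ is a matching minor of $G$. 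Conversely, if $H$ arises from a conformal $G_0\subseteq G$ by bicontractions $G_0\to\dots\to G_n=H$, pick $N_0,N_0'$ and $M$ as in \textbf{(i)}, carry the perfect matchings $N_i$ through the sequence, and use \textbf{(ii)} to convert each bicontraction into a butterfly contraction $\DirM{G_i}{N_i}\to\DirM{G_{i+1}}{N_{i+1}}$; this exhibits $\DirM{H}{N_n}$ as a butterfly minor of $\DirM{G}{M}$, so $M'\coloneqq N_n$ finishes the argument. The one genuine subtlety, as flagged, is getting the matching update in \textbf{(ii)} right; the remainder is a direct translation.
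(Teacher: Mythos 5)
Your proof is correct. Note that the paper does not prove this lemma at all---it is imported from McCuaig's work as a cited black box---so there is no in-paper argument to compare against; your two-part dictionary (conformal subgraphs equipped with a compatible perfect matching correspond exactly to subdigraphs of the $M$-direction, and bicontractions of degree-two vertices in $A$ resp.\ $B$ correspond to butterfly contractions of unique out- resp.\ in-arcs, with the matching updated across the contraction) is the standard direct verification, and the bookkeeping, including the matching update $N''$, the degree-two/unique-arc equivalence in both directions, and the final appeal to reversibility to identify $(G_n,N_n)$ with $(H,M')$, all checks out. The only cosmetic imprecision is in the arc-by-arc check of part (ii): the former arcs $(v_i,v_j)$ and, if present, $(v_j,v_i)$ incident to the deleted vertex must of course be discarded (the latter would become a loop under the butterfly contraction), which is consistent with the matching side, where the copy of $a_jw$ arising from $a_jb_i$ is unified with the new matching edge and hence contributes no arc.
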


The problem of describing and recognising bipartite Pfaffian graphs has given rise to a wide range of different results.
For a good overview on the topic consult the outstanding work by McCuaig \cite{mccuaig2004polya}.
For us, an important contribution is the theorem of Little \cite{little1975characterization}, which characterises bipartite Pfaffian graphs by excluding the single graph $K_{3,3}$ as a matching minor.

\begin{theorem}\label{thm:pfaffian}
	Let $G$ be a bipartite graph with a perfect matching $M$.
	The following statements are equivalent.
	\begin{enumerate}
		\item $G$ is Pfaffian.
		
		\item $G$ does not contain $K_{3,3}$ as a matching minor.
		
		\item $\DirM{G}{M}$ is non-even.
		
		\item $\DirM{G}{M}$ does not contain an odd bicycle as a butterfly minor.
	\end{enumerate}
\end{theorem}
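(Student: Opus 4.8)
The plan is to obtain all four equivalences from the three results quoted above together with one explicit translation, and then to close up by transitivity. Two of the links are immediate: ``$\DirM{G}{M}$ is non-even'' $\iff$ ``$\DirM{G}{M}$ contains no odd bicycle as a butterfly minor'' is exactly \cref{thm:noneven} applied to the digraph $\DirM{G}{M}$, and ``$G$ is Pfaffian'' $\iff$ ``$G$ has no $K_{3,3}$ matching minor'' is Little's theorem \cite{little1975characterization}. It therefore suffices to prove the single bridging equivalence ``$G$ is Pfaffian'' $\iff$ ``$\DirM{G}{M}$ is non-even'', which I would do directly.

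For that, I would set up a dictionary between orientations of $G$ and weightings $w\colon E(\DirM{G}{M})\to\Set{0,1}$. Starting from an orientation $\vec G$, first replace it by a vertex-flip--equivalent one in which every matching edge $a_ib_i$ is directed from $b_i$ to $a_i$; reversing all edges at a single vertex leaves the family of conformal cycles unchanged and alters the number of forward edges on each such cycle by an even number, hence preserves Pfaffianness, and since each $a_i$ lies in exactly one matching edge this standardisation can be performed simultaneously for all $i$. Now put $w\Brace{v_i,v_j}=1$ exactly when $a_ib_j$ is oriented from $b_j$ to $a_i$. Using the bijection between directed cycles of $\DirM{G}{M}$ and $M$-alternating cycles of $G$ (the directed cycle $v_{i_1}\to v_{i_2}\to\dots\to v_{i_\ell}\to v_{i_1}$ corresponding to the cycle $a_{i_1}b_{i_2}a_{i_2}b_{i_3}\cdots a_{i_\ell}b_{i_1}a_{i_1}$), a short parity count --- such a cycle can be traversed so that its $\ell$ matching edges all point forward, after which the forward non-matching edges are precisely the ones receiving weight $0$ --- shows that an $M$-alternating cycle is oddly oriented if and only if its directed counterpart has odd total $w$-weight. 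Hence $\vec G$ (standardised) is a Pfaffian orientation iff $w$ witnesses that $\DirM{G}{M}$ is non-even, and every witnessing $w$ arises in this way; this gives the bridging equivalence and, as a bonus, shows that non-evenness of $\DirM{G}{M}$ does not depend on $M$.

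The step I expect to be the main obstacle is the fact hidden in the parity count: that it is enough to test the cycles that are $M$-alternating for the one fixed matching $M$, although ``Pfaffian'' quantifies over all conformal cycles of $G$. I would handle this by the classical sign-of-a-matching argument: for a candidate orientation, the sign assigned to a perfect matching $N$ in the Pfaffian expansion differs from that assigned to $M$ by a factor that records, for each cycle of the symmetric difference $M\triangle N$, whether it is oddly oriented; and as $N$ runs over $\Perf{G}$ these cycles run over exactly the $M$-alternating cycles. This is the only point at which matching-theoretic facts about $G$ (as opposed to pure digraph arguments) enter.

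It is also worth deriving ``$G$ has no $K_{3,3}$ matching minor'' $\iff$ ``$\DirM{G}{M}$ contains no odd bicycle as a butterfly minor'' via McCuaig's \cref{lemma:mcguigmatminors}, which I would include as a complementary argument. One computes that $\DirM{K_{3,3}}{M'}\cong\Bidirected{K_3}$ for every perfect matching $M'$ of $K_{3,3}$, and this is an odd bicycle; combined with \cref{lemma:mcguigmatminors} and transitivity of matching minors, a $K_{3,3}$ matching minor of $G$ then forces an odd bicycle butterfly minor in $\DirM{G}{M_1}$ for some perfect matching $M_1$, hence --- by the $M$-independence above --- in $\DirM{G}{M}$. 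Conversely, if $\DirM{G}{M}$ has an odd bicycle $B$ as a butterfly minor, then the bipartite splitting graph $H$ of $B$ satisfies $\DirM{H}{N}=B$ for its canonical matching $N$, so by \cref{lemma:mcguigmatminors} $H$ is a matching minor of $G$; it then remains only to verify the combinatorial fact that the splitting graph of every odd bicycle contains $K_{3,3}$ as a matching minor (the base case being that the splitting graph of $\Bidirected{K_3}$ is $K_{3,3}$ itself), which one checks directly, for instance by induction on the length of the bicycle.
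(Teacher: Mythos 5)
The paper itself gives no proof of this theorem: it is quoted from the literature, with the equivalence of (i) and (ii) attributed to Little and the remaining links being standard (cf.\ McCuaig's survey), so there is no in-paper argument to compare yours against. Your proof is the standard folklore derivation and is correct. Reducing everything to the single bridge ``(i)$\Leftrightarrow$(iii)'' after quoting \cref{thm:noneven} for (iii)$\Leftrightarrow$(iv) and Little for (i)$\Leftrightarrow$(ii) is the right strategy; the vertex-flip standardisation of the orientation, the parity count identifying oddly oriented $M$-alternating cycles with odd-weight directed cycles of $\DirM{G}{M}$, and the passage from ``all $M$-alternating cycles for one fixed $M$'' to ``all conformal cycles'' via equality of the signs of all perfect matchings are each correct classical steps, and you correctly identified the last of these as the only place where genuine matching theory enters.

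One caveat concerns only your redundant ``complementary'' argument for (ii)$\Leftrightarrow$(iv). The claim that the splitting graph of every odd bicycle contains $K_{3,3}$ as a matching minor is true, but the proposed ``induction on the length of the bicycle'' is not routine: the odd bicycles form an antichain under butterfly minors (this is precisely why \cref{thm:noneven} must exclude all of them rather than just $\Bidirected{K_3}$), so $\Bidirected{C_{2k+3}}$ has no $\Bidirected{C_{2k+1}}$ butterfly minor, and correspondingly the splitting graph of $\Bidirected{C_{2k+3}}$ does not reduce to that of $\Bidirected{C_{2k+1}}$ through its canonical matching; any such reduction must switch to a different perfect matching of the splitting graph, which is exactly the content one is trying to establish. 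Since your main chain already closes all four equivalences without this argument, the gap is harmless, but as written that step should either be dropped or replaced by an appeal to (i)$\Leftrightarrow$(ii) and (i)$\Leftrightarrow$(iii), which is how the literature handles it.
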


Please note the huge discrepancy between the single forbidden minor $K_{3,3}$ in the matching setting opposed to the infinite antichain that needs to be excluded for digraphs.
We will later encounter a similar phenomenon in the proof of our main theorem.

\begin{figure}[h!]
	\begin{center}
		\begin{tikzpicture}[scale=0.7]
		
		\pgfdeclarelayer{background}
		\pgfdeclarelayer{foreground}
		
		\pgfsetlayers{background,main,foreground}
		
		%%%%% Vertex Styles %%%%%
		\tikzstyle{v:main} = [draw, circle, scale=0.5, thick,fill=black]
		\tikzstyle{v:tree} = [draw, circle, scale=0.3, thick,fill=black]
		\tikzstyle{v:border} = [draw, circle, scale=0.75, thick,minimum size=10.5mm]
		\tikzstyle{v:mainfull} = [draw, circle, scale=1, thick,fill]
		\tikzstyle{v:ghost} = [inner sep=0pt,scale=1]
		\tikzstyle{v:marked} = [circle, scale=1.2, fill=CornflowerBlue,opacity=0.3]
		%%%%% %%%%% %%%%%
		
		%%%%% Edge Styles %%%%%
		\tikzset{>=latex} 
		\tikzstyle{e:marker} = [line width=9pt,line cap=round,opacity=0.2,color=DarkGoldenrod]
		\tikzstyle{e:colored} = [line width=1.2pt,color=BostonUniversityRed,cap=round,opacity=0.8]
		\tikzstyle{e:coloredthin} = [line width=1.1pt,opacity=0.8]
		\tikzstyle{e:coloredborder} = [line width=2pt]
		\tikzstyle{e:main} = [line width=1pt]
		\tikzstyle{e:extra} = [line width=1.3pt,color=LavenderGray]
		%%%%% %%%%% %%%%%
		
		\begin{pgfonlayer}{main}
		
		%%%%% Centered Ghost Vertices %%%%%
		\node (C) [] {};
		
		%%%%% Left Center %%%%%
		\node (C1) [v:ghost, position=180:32mm from C] {};
		%\node (U1) [v:ghost, position=90:50mm from C1] {};
		\node (L1) [v:ghost, position=270:32mm from C1,align=center] {$R$};
		
		%%%%% Center %%%%%
		\node (C2) [v:ghost, position=0:0mm from C] {};
		%\node (U2) [v:ghost, position=90:50mm from C2] {};
		%		\node (L2) [v:ghost, position=270:27mm from C2,align=center] {};
		
		%%%%% Right Center %%%%%
		\node (C3) [v:ghost, position=0:32mm from C] {};
		%\node (U3) [v:ghost, position=90:50mm from C3] {};
		\node (L3) [v:ghost, position=270:32mm from C3,align=center] {$\Bidirected{C_5}$};
		%%%%% %%%%% %%%%%

		%%%%% Vertices %%%%%
		
		%%%%% Left Center %%%%%
		
		\node (r1) [v:main,position=90:25mm from C1] {};
		\node (r4) [v:main,position=180:5mm from C1] {};
		\node (r5) [v:main,position=0:5mm from C1] {};
		\node (r2) [v:main,position=135:15mm from C1] {};
		\node (r3) [v:main,position=45:15mm from C1] {};
		\node (r6) [v:main,position=225:15mm from C1] {};
		\node (r7) [v:main,position=315:15mm from C1] {};
		\node (r8) [v:main,position=270:25mm from C1] {};
		
		%%%%% %%%%% %%%%%
		
		%%%%% Center %%%%%

		%%%%% %%%%% %%%%%
		
		%%%%% Right Center %%%%%
		
		\node (c1) [v:main,position=90:15mm from C3] {};
		\node (c2) [v:main,position=162:15mm from C3] {};
		\node (c3) [v:main,position=234:15mm from C3] {};
		\node (c4) [v:main,position=306:15mm from C3] {};
		\node (c5) [v:main,position=18:15mm from C3] {};
		
		%%%%% %%%%% %%%%%
		
		%%%%% %%%%% %%%%%

		%%%%% Edges %%%%%
		
		%%%%% Left Center %%%%%
		
		\draw (r1) [e:main,->,bend right=15] to (r2);
		\draw (r1) [e:main,->,bend right=65] to (r6);
		
		\draw (r2) [e:main,->,bend right=15] to (r3);
		\draw (r2) [e:main,->] to (r4);
		\draw (r2) [e:main,->,bend right=10] to (r6);
		\draw (r2) [e:main,->,bend right=5] to (r8);
		
		\draw (r3) [e:main,->,bend right=15] to (r1);
		\draw (r3) [e:main,->,bend right=15] to (r2);
		
		\draw (r4) [e:main,->,bend right=15] to (r5);
		\draw (r4) [e:main,->] to (r6);
		
		\draw (r5) [e:main,->,bend right=15] to (r4);
		\draw (r5) [e:main,->] to (r3);
		
		\draw (r6) [e:main,->,bend right=15] to (r7);
		\draw (r6) [e:main,->,bend right=15] to (r8);
		
		\draw (r7) [e:main,->,bend right=65] to (r1);
		\draw (r7) [e:main,->,bend right=10] to (r3);
		\draw (r7) [e:main,->] to (r5);
		\draw (r7) [e:main,->,bend right=15] to (r6);
		
		\draw (r8) [e:main,->,bend right=5] to (r3);
		\draw (r8) [e:main,->,bend right=15] to (r7);
		
		%%%%% %%%%% %%%%%
		
		%%%%% Center %%%%%
		
		%%%%% %%%%% %%%%%
		
		%%%%% Right Center %%%%%
		
		\draw (c1) [e:main,->,bend right=20] to (c2);
		\draw (c1) [e:main,->,bend right=20] to (c5);
		
		\draw (c2) [e:main,->,bend right=20] to (c3);
		\draw (c2) [e:main,->,bend right=20] to (c1);
		
		\draw (c3) [e:main,->,bend right=20] to (c4);
		\draw (c3) [e:main,->,bend right=20] to (c2);
		
		\draw (c4) [e:main,->,bend right=20] to (c5);
		\draw (c4) [e:main,->,bend right=20] to (c3);
		
		\draw (c5) [e:main,->,bend right=20] to (c1);
		\draw (c5) [e:main,->,bend right=20] to (c4);
		
		%%%%% %%%%% %%%%%
		
		%%%%% %%%%% %%%%%
		
		\end{pgfonlayer}
		
		%%%%% %%%%% %%%%%

		%%%%% Background %%%%%
		\begin{pgfonlayer}{background}
		
		\end{pgfonlayer}	
		%%%%% %%%%% %%%%%
		
		%%%%% Foreground %%%%%
		\begin{pgfonlayer}{foreground}

		\end{pgfonlayer}
		%%%%% %%%%% %%%%%
		\end{tikzpicture}
	\end{center}
	\caption{The non-planar non-even digraph $R$ and the planar even digraph $\Bidirected{C_5}$.}
	\label{fig:planarity}
\end{figure}
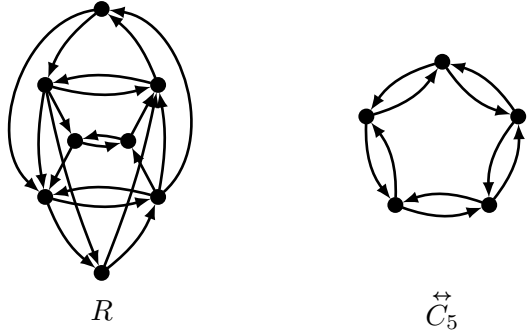

Since every matching minor of a graph is also an ordinary minor, from \cref{thm:pfaffian} it becomes clear that every planar, bipartite and matching covered graph is Pfaffian, which was known before.
However, there are also non-planar Pfaffian graphs with non-planar $M$-directions which still are non-even (for an example,  consider the graph $R$ in \cref{fig:planarity}).
On the other hand, every non-Pfaffian bipartite graph must be non-planar, but the operation of contracting a perfect matching to obtain the $M$-direction does not preserve non-planarity.
In particular, all odd bicycles are indeed planar.

Therefore, an answer to the question whether all non-even digraphs are $2$-colourable is no answer to the Two-Colour-Conjecture.
However, the class of non-even digraphs and the class of planar oriented graphs have a non-trivial intersection (see section \ref{sec:polychromatic}). For these digraphs, our main result as stated below, which however is much more general, yields a proof of \cref{con:twocolours}.

\begin{theorem}
\label{thm:mainthm}
	Let $D$ be a non-even digraph. Then $\Dichromatic{D}\leq 2$.
\end{theorem}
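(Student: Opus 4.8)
Before reading the authors' proof, here is how I would approach proving that every non-even digraph has dichromatic number at most $2$.

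\medskip

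\textbf{Strategy: induction on the structure via the forbidden-minor characterisation.} The natural starting point is \cref{thm:noneven}: a digraph is non-even if and only if it has no odd bicycle as a butterfly minor. The plan is to prove, by induction on $\Abs{\V{D}}$, that every such $D$ is $2$-colourable. The base case is trivial. For the inductive step, one wants to find either a vertex whose removal keeps us non-even and lets us extend a $2$-colouring, or a butterfly-contractible edge whose contraction stays non-even. The key point is that the class of non-even digraphs is closed under butterfly minors (this is immediate from the excluded-minor form), so both subdigraphs and butterfly contractions of a non-even digraph remain non-even — induction is available on both operations. I would first observe that it suffices to handle \emph{strongly connected} non-even digraphs, since a $2$-colouring can be assembled blockwise along the strong-component DAG.

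\medskip

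\textbf{Key steps.} (i) Reduce to the strongly connected case. (ii) In a strongly connected non-even digraph, locate a structural simplification. A promising route goes through the matching-theoretic translation: by \cref{def:Mdirection} and \cref{lemma:mcguigmatminors}, $D$ is the $M$-direction of a bipartite matching covered graph $G$, and $D$ being non-even corresponds (via \cref{thm:pfaffian}) to $G$ being Pfaffian, i.e.\ $K_{3,3}$-free as a matching minor. Pfaffian bipartite graphs have a known structural decomposition (the Little/McCuaig/Robertson–Seymour–Thomas brace decomposition), and a proper colouring of $D$ should be buildable along that decomposition — a monochromatic directed cycle in $D$ is a monochromatic $M$-alternating cycle in $G$, so one needs a $2$-partition of $V(G)$'s matching edges avoiding monochromatic alternating cycles. (iii) Alternatively, stay in the digraph world and argue directly: find a vertex $v$ such that $D-v$ is $2$-colourable (induction) and the colouring extends. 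Extension fails only if both colour classes, together with $v$, close a directed cycle through $v$; one would use non-evenness to rule out the simultaneous existence of a ``mod-$2$ obstruction'' in both classes. (iv) Conclude $\Dichromatic{D}\le 2$, and note the algorithmic version follows since each reduction step is polynomial-time checkable.

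\medskip

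\textbf{Main obstacle.} The hard part will be step (ii)/(iii): producing the right structural handle. Unlike the undirected case, the forbidden minors form an infinite antichain, so there is no single small obstruction to exploit directly — the excerpt itself flags this ``huge discrepancy.'' I expect the real work is to show that a minimal counterexample must contain a butterfly-contractible edge (or a low-degree vertex in the bipartite model) whose contraction both preserves non-evenness \emph{and} allows a $2$-colouring to be lifted back; controlling how contraction creates new directed cycles through the merged vertex, and ensuring no new monochromatic cycle appears when un-contracting, is the delicate bookkeeping. A clean way to organise this is via the bipartite side: use that every brace of a Pfaffian bipartite graph is either planar or one specific graph, translate planarity back through the $M$-direction, and handle the planar $M$-directions (which are close to planar oriented graphs of girth $\ge 3$) together with the finitely many sporadic braces by hand. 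Getting the interface between the brace decomposition and the colouring to commute is where I anticipate the bulk of the difficulty.
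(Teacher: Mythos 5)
Your outline matches the paper's architecture at the top level --- minimal counterexample, reduction via directed separations of order $0$ and $1$, then a local contraction whose $2$-colouring lifts back --- but the two places you yourself flag as ``the real work'' are exactly where the proposal has genuine gaps, and neither is routine. First, the structural handle: you never say \emph{why} a minimal counterexample contains a usable low-out-degree vertex or butterfly-contractible edge. The paper gets this from a sparsity theorem of Robertson, Seymour and Thomas: every strongly $2$-connected non-even digraph satisfies $\Abs{\Fkt{E}{D}}\leq 3\Abs{\Fkt{V}{D}}-4$ (\cref{thm:nonevenedges}), which forces a vertex of out-degree exactly $2$ (\cref{cor:degreetwo}). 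For this you must reduce to strong \emph{$2$-connectivity}, not merely strong connectivity as in your step (i); so besides the strong-component decomposition you need the $1$-sum (cut-vertex) decomposition, together with proofs that $1$-sums preserve non-evenness (the paper routes this through tight cuts and \cref{thm:pfaffiantcc}) and that $2$-colourings of the summands can be glued at the shared vertex.

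Second, the lift. Even with a vertex $v$ of out-degree $2$ and out-neighbours $v_1,v_2$, the plan ``delete some edges at $v$, butterfly-contract $(v,v_1)$, colour, un-contract'' breaks when $(v,v_1)$ lies in a digon: the $2$-cycle on $v,v_1$ collapses under contraction and leaves no trace in the contracted digraph, so nothing prevents the lifted colouring from assigning $v$ and $v_1$ the same colour and making that digon monochromatic. The paper therefore splits into two cases. If some out-edge of $v$ is digon-free, it contracts that one; the absence of the digon guarantees every monochromatic cycle through $v$ has length at least $3$ and survives into the contraction. If \emph{both} out-edges lie in digons, it performs a non-standard identification of $v,v_1,v_2$ into a single vertex (\cref{lemma:3vertexcontraction}); this is not a butterfly-minor operation, so closure under butterfly minors does not show it preserves non-evenness --- the paper proves that by passing to the bipartite matching model and realising the operation as a matching-minor reduction of a Pfaffian graph. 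Your alternative route (ii) via the Pfaffian brace decomposition is in principle viable but genuinely different from, and harder than, what the paper does: you would still have to $2$-colour the $M$-directions of the planar braces directly and handle the trisum gluings, neither of which your sketch addresses.
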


Given a matching covered graph $G$ and a perfect matching $M\in \Perf{G}$, an \emph{$M$-colouring} of $G$ with $k$ colours is a function $c\colon M\rightarrow\Set{0,\dots,k-1}$.
An $M$-colouring is called \emph{proper} if there is no $M$-alternating cycle whose matching edges are all of the same colour, i.e., $\Fkt{c^{-1}}{i}$ is the unique perfect matching of the subgraph of $G$ induced by the endpoints of the edges in $\Fkt{c^{-1}}{i}$ for all $i$.
The \emph{M-chromatic number} $\Mchromatic{G}{M}$ of $G$ is the smallest integer $k$ such that $G$ has a proper $M$-colouring with $k$ colours.

By the correspondence of $M$-alternating cycles in $G$ and directed cycles in $\mathcal{D}(G,M)$, we have $\chi(G,M)=\vec{\chi}(\mathcal{D}(G,M))$ for any bipartite graph $G$ with a perfect matching $M$.

From \cref{thm:mainthm,thm:pfaffian} we immediately derive the following corollary.

\begin{corollary}\label{cor:matchinghadwiger}
	Let $G$ be a bipartite graph with a perfect matching $M$.
	If $\Mchromatic{G}{M}\geq 3$, then $G$ contains $K_{3,3}$ as a matching minor.
\end{corollary}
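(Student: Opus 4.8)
The plan is to chain \cref{thm:mainthm} with \cref{thm:pfaffian} through the identity $\Mchromatic{G}{M}=\Dichromatic{\DirM{G}{M}}$ recorded just above the corollary. Concretely, I would start from the hypothesis $\Mchromatic{G}{M}\ge 3$. Since $\DirM{G}{M}$ is a legitimate loopless digraph — the condition $i\neq j$ in \cref{def:Mdirection} rules out loops, while antiparallel pairs are allowed for digraphs — its dichromatic number is well defined, and by the bijection between $M$-alternating cycles of $G$ and directed cycles of $\DirM{G}{M}$ we obtain $\Dichromatic{\DirM{G}{M}}=\Mchromatic{G}{M}\ge 3$.

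Next I would apply \cref{thm:mainthm} in its contrapositive form: a digraph whose dichromatic number is at least $3$ cannot be non-even, so $\DirM{G}{M}$ is not non-even. Finally I would invoke \cref{thm:pfaffian} for the bipartite graph $G$ together with its perfect matching $M$; the second and third items of that theorem (``$G$ does not contain $K_{3,3}$ as a matching minor'' and ``$\DirM{G}{M}$ is non-even'') are equivalent, so the failure of the third item forces the failure of the second, i.e.\ $G$ contains $K_{3,3}$ as a matching minor. This completes the argument.

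There is essentially no obstacle here beyond bookkeeping. The only points that need a moment's care are that $\DirM{G}{M}$ really is a loopless digraph so that \cref{thm:mainthm} applies to it, and that \cref{thm:pfaffian} is being used for exactly the perfect matching $M$ named in the statement, since its four equivalent conditions are stated relative to a fixed $M$ (in particular relative to the fixed $M$-direction $\DirM{G}{M}$). Both observations are immediate from the definitions, so the corollary indeed follows from \cref{thm:mainthm} and \cref{thm:pfaffian} with no additional work.
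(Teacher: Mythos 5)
Your argument is exactly the paper's: the paper derives the corollary immediately from the identity $\Mchromatic{G}{M}=\Dichromatic{\DirM{G}{M}}$ stated just above it, the contrapositive of \cref{thm:mainthm}, and the equivalence of items 2 and 3 in \cref{thm:pfaffian}. Your proposal is correct and takes essentially the same route, just spelled out in more detail.
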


This corollary can also be stated in the language of digraphs, where we obtain some odd bicycle as a butterfly minor instead.

Hadwiger \cite{hadwiger1943klassifikation} conjectured for the undirected chromatic number that, if $\Fkt{\chi}{G}\geq k$, $G$ would contain $K_k$ as a minor.
The case $k=5$ has been shown by Wagner \cite{wagner1937eigenschaft} to be equivalent to the Four-Colour-Theorem and, in this sense, our Main Theorem might be regarded as a directed and matching theoretic analogue of this case.

In the context of $M$-colourings of graphs one can identify certain subsets of perfect matchings, namely the \emph{forcing sets}.
Given a perfect matching $M$ of a graph, a subset $S \subseteq M$ of edges is called \emph{forcing} if $M$ is the unique perfect matching containing $S$.
The \emph{forcing number} $f(G,M)$ of a perfect matching $M$ denotes the size of a smallest forcing set for $M$.
This notion arises from resonance theory in chemistry and has attracted wide interest in the last three decades. We refer to \cite{Che-11} for a comprehensive survey on this topic. 

For any partial matching $S \subseteq M$ of a perfect matching $M$ in a graph $G$, it is clear that $S$ is forcing if and only if there is no $M$-alternating cycle with vertices in $V(G) \setminus V(S)$.
Consequently, an $M$-colouring with $k$ colours corresponds to a partition $M=S_1 \cup \dots \cup S_k$ such that for any $i$, $M \setminus S_i$ is forcing. We may thus reformulate \cref{cor:matchinghadwiger} as follows:

\begin{corollary}\label{cor:forcingdecomposition}
	Every perfect matching $M$ of a Pfaffian bipartite graph $G$ can be partitioned into two disjoint forcing sets. 
\end{corollary}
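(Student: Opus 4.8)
The plan is to read the statement off \cref{thm:mainthm}, Little's characterisation in \cref{thm:pfaffian}, and the correspondence between $M$-colourings and forcing decompositions recorded just above the statement; no further combinatorial work is required, since all the difficulty is concentrated in \cref{thm:mainthm}.

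Let $G$ be a Pfaffian bipartite graph and $M\in\Perf{G}$. First I would pass to the $M$-direction $D\coloneqq\DirM{G}{M}$, which is loopless because \cref{def:Mdirection} only creates edges $\Brace{v_i,v_j}$ with $i\neq j$. By the equivalence of items (i) and (iii) of \cref{thm:pfaffian}, $D$ is non-even, so \cref{thm:mainthm} yields $\Dichromatic{D}\leq 2$. Since the directed cycles of $D$ are in bijection with the $M$-alternating cycles of $G$, we have $\Mchromatic{G}{M}=\Dichromatic{D}\leq 2$; that is, there is a proper $M$-colouring $c\colon M\to\Set{0,1}$.

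Finally I would set $S_0\coloneqq\Fkt{c^{-1}}{0}$ and $S_1\coloneqq\Fkt{c^{-1}}{1}$, so that $M$ is partitioned into the disjoint sets $S_0$ and $S_1$. By the observation preceding the statement, properness of $c$ is equivalent to $M\setminus S_i$ being forcing for each $i\in\Set{0,1}$; as $M\setminus S_0=S_1$ and $M\setminus S_1=S_0$, both $S_0$ and $S_1$ are forcing sets, which is precisely the claimed partition. Equivalently, one could read the statement off \cref{cor:matchinghadwiger} via the contrapositive together with \cref{thm:pfaffian}.

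The only things that need checking are the routine facts that $\DirM{G}{M}$ is loopless and that proper $M$-colourings of $G$ correspond to proper colourings of $\DirM{G}{M}$, both of which are immediate from \cref{def:Mdirection}; hence there is no genuine obstacle beyond invoking \cref{thm:mainthm}.
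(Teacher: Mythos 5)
Your argument is correct and follows essentially the same route as the paper: the paper obtains $\Mchromatic{G}{M}\leq 2$ from \cref{thm:pfaffian} and \cref{thm:mainthm} (stated as the contrapositive in \cref{cor:matchinghadwiger}) and then reformulates via the observation that a proper $k$-$M$-colouring is a partition $M=S_1\cup\dots\cup S_k$ with each $M\setminus S_i$ forcing. Your explicit verification that for $k=2$ both colour classes are themselves forcing is exactly the intended (and correct) reading.
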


This directly yields the following corollary.

\begin{corollary} \label{cor:forcingnumber}
	For any Pfaffian bipartite graph $G$ and every perfect matching $M$ of $G$, we have $f(G,M) \leq \frac{|M|}{2}=\frac{|V(G)|}{4}$.
\end{corollary}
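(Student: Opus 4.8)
The plan is to read this off directly from \cref{cor:forcingdecomposition}. By that corollary, the perfect matching $M$ admits a partition $M = S_1 \cup S_2$ into two disjoint forcing sets (this is precisely the reformulation of a proper $2$-$M$-colouring: the two colour classes $S_1 = M\setminus S_2$ and $S_2 = M\setminus S_1$ are each forcing). Since $|S_1| + |S_2| = |M|$, at least one of them, say $S_1$, satisfies $|S_1| \leq \tfrac{|M|}{2}$. As $S_1$ is a forcing set, it witnesses $f(G,M) \leq |S_1| \leq \tfrac{|M|}{2}$.

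For the displayed equality, I would simply recall that $M$ is a perfect matching of $G$, so every vertex of $G$ is covered by exactly one edge of $M$; hence $2|M| = |V(G)|$ and therefore $\tfrac{|M|}{2} = \tfrac{|V(G)|}{4}$.

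There is essentially no genuine obstacle here once \cref{cor:forcingdecomposition} (and, behind it, \cref{thm:mainthm}) is available; the statement is a packaging of the two-colourability result into the language of forcing numbers. The only subtlety worth flagging explicitly is that one should \emph{not} try to argue via "a subset of a forcing set is forcing" — this is false in general (e.g.\ $\emptyset$ is contained in every perfect matching) — rather, one uses directly that the \emph{smaller} of the two parts $S_1, S_2$ in the decomposition is itself a forcing set.
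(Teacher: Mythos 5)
Your proof is correct and is exactly the argument the paper intends: the paper derives \cref{cor:forcingnumber} directly from \cref{cor:forcingdecomposition} by taking the smaller of the two forcing sets in the partition $M=S_1\cup S_2$, together with $2|M|=|V(G)|$. Your side remark that both parts are themselves forcing (since $S_i = M\setminus S_{3-i}$) and that one should not appeal to subsets of forcing sets is a correct and worthwhile clarification.
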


This generalises Theorem 2.9 in \cite{Che-11} from bipartite graphs without $K_{3,3}$ as an ordinary minor to bipartite graphs without $K_{3,3}$ as a matching minor, which is a weaker condition.

In \Cref{sec:pfaff} we consider a generalisation of the above results to non-bipartite matching covered graphs.
As these graphs bare a much more complicated structure than their bipartite cousins, we are not able to extend our colouring results in their full strength to the non-bipartite world.
Even in the planar case there are graphs with perfect matchings that are not $2$-colourable.
A smallest example of such a graph is found in the \emph{triangular prism}, which is the complement of $C_6$.
However, we are able to bring down the planar case to exactly this graph in the sense of conformal bisubdivisions and matching minors.

\begin{theorem}\label{thm:planarprismfree2colours}
	Let $G$ be a planar and matching covered graph, and $M$ a perfect matching of $G$.
	If $\Mchromatic{G}{M}\geq 3$, then $G$ contains a conformal bisubdivision of $\Complement{C_6}$, and thus has $\Complement{C_6}$ as a matching minor.
\end{theorem}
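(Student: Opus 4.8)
The plan is to prove the contrapositive: assuming that $G$ is planar and matching covered and contains \emph{no} conformal bisubdivision of $\Complement{C_6}$, we show that $\Mchromatic{G}{M}\le 2$; equivalently (by the reformulation preceding \cref{cor:forcingdecomposition}) that $M$ can be partitioned into two forcing sets, i.e.\ into two colour classes neither of which contains the matching edges of an $M$-alternating cycle. First I would dispose of the bipartite case with no minimality needed at all: if $G$ is bipartite, then being planar it is Pfaffian, so by \cref{thm:pfaffian} the digraph $\DirM{G}{M}$ is non-even, whence \cref{thm:mainthm} gives $\vec{\chi}(\DirM{G}{M})\le 2$ and therefore $\Mchromatic{G}{M}=\vec{\chi}(\DirM{G}{M})\le 2$. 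So from now on we may assume $G$ is non-bipartite, and we fix a counterexample minimising first $\Abs{\V{G}}$ and then $\Abs{\Fkt{E}{G}}$.

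Next I would reduce to the case that $G$ has no nontrivial tight cut. Suppose $\partial(X)$ is such a cut (so $\abs{X},\abs{\V{G}\setminus X}\ge 3$ and every perfect matching meets $\partial(X)$ in exactly one edge), with $G_1,G_2$ the two $\partial(X)$-contractions. These are planar, matching covered, strictly smaller, and — since they are matching minors of $G$, and a conformal bisubdivision of $\Complement{C_6}$ in $G_i$ can be expanded back through the contracted side to one in $G$ (this transfer is a short separate lemma) — they contain no conformal bisubdivision of $\Complement{C_6}$; hence each $G_i$ admits a proper $2$-$M_i$-colouring (by minimality if non-bipartite, by the first paragraph if bipartite), where $M_i$ is the matching induced by $M$. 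Choosing these colourings so that the two copies of the unique $M$-edge $e^\ast\in\partial(X)$ get the same colour, one checks that they glue to a proper $2$-$M$-colouring of $G$: every $M$-alternating cycle $C$ either avoids $\partial(X)$ — and then lies in $G_1$ or $G_2$ — or contains $e^\ast$ together with exactly one further edge of $\partial(X)$, because three or more crossings would force the perfect matching $M\,\triangle\,E(C)$ to meet $\partial(X)$ more than once. This contradicts the choice of $G$, so $G$ has no nontrivial tight cut and is therefore a (non-bipartite) brick, in particular $3$-connected and bicritical; moreover $G\notin\{K_4,\Complement{C_6}\}$, since $K_4$ is readily seen to be $2$-$M$-colourable and $\Complement{C_6}$ contains itself as a conformal bisubdivision.

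The remaining, and hardest, case is a planar brick $G\neq K_4,\Complement{C_6}$ with $\Mchromatic{G}{M}\ge 3$. Here I would encode the obstruction as a hypergraph $\mathcal H$ on vertex set $M$ whose hyperedges are the matching-edge sets of inclusionwise-minimal $M$-alternating cycles; the assumption says $\mathcal H$ is not $2$-colourable. Exploiting that $G$ is a $3$-connected planar brick — hence has an essentially unique embedding and very rich connectivity — I would argue that this failure already localises to three matching edges $e_1,e_2,e_3$ admitting, for each pair $\{e_i,e_j\}$, an $M$-alternating cycle $C_{ij}$ with $E(C_{ij})\cap M=\{e_i,e_j\}$, and that the $C_{ij}$ can be taken pairwise internally disjoint (rerouting along faces, or invoking bicriticality, whenever two of them meet, and then passing to a minimal such triple). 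Their union is a bisubdivision of $\Complement{C_6}$, and a final conformality step — extending a perfect matching of the complement using that $G$ is matching covered and bicritical — makes it conformal, contradicting the choice of $G$. The \textbf{main obstacle} I expect is precisely this last case: showing that in every planar brick the failure of $2$-$M$-colourability is witnessed by such a disjoint triple of short alternating cycles and that the resulting prism-bisubdivision is conformal; pushing the rerouting and conformality arguments through will likely require the structural description of planar bricks (notably their generation from $K_4$ and $\Complement{C_6}$).
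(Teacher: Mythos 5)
Your reduction to bricks via tight cut contractions is sound and matches the paper's \cref{lemma:tightcutsplitting}, and your treatment of the bipartite case via \cref{thm:pfaffian} and \cref{thm:mainthm} is exactly how the paper disposes of the braces. The gap is the part you yourself flag as the ``main obstacle'': the non-bipartite brick case is the entire content of the theorem, and your sketch for it does not go through as stated. First, non-$2$-colourability of the hypergraph $\mathcal{H}$ of minimal $M$-alternating cycles does not in general localise to three matching edges $e_1,e_2,e_3$ with pairwise alternating cycles $C_{ij}$ satisfying $E(C_{ij})\cap M=\{e_i,e_j\}$: a non-$2$-colourable hypergraph need not contain a triangle of $2$-element hyperedges, so this step requires a genuine argument that is missing. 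Second, even granting such a triple, making the three cycles pairwise internally disjoint and the resulting prism bisubdivision conformal is nontrivial; ``rerouting along faces'' and ``invoking bicriticality'' are not proofs, and it is precisely here that counterexamples or lengthy case analyses could hide.

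The paper closes exactly this gap by a different route: it invokes the classification of Kothari and Murty (\cref{thm:planarprismfree}), by which the only planar $\Complement{C_6}$-free bricks are the odd wheels, the staircases of order $4k$, and the tricorn, and then exhibits an explicit proper $2$-colouring of every perfect matching of each of these three families (hub edge versus rim edges for wheels, a single fixed edge $2$-colouring for staircases, and a two-type case analysis of the perfect matchings of the tricorn). If you wanted to avoid that classification, you would in effect have to prove directly that every planar brick with $\Mchromatic{G}{M}\geq 3$ contains a conformal bisubdivision of $\Complement{C_6}$, which amounts to re-deriving a substantial part of the structural theory of planar bricks. I would recommend citing \cref{thm:planarprismfree} and finishing with the explicit colourings instead.
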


On non-bipartite graphs, to the best of our knowledge, very little is known on the forcing number (see \cite{Che-11}).
However, \cref{thm:planarprismfree2colours} implies that we can partition every perfect matching of a planar, $\Complement{C_6}$-free matching covered graph into two forcing sets and thus we obtain the following corollary.

\begin{corollary} \label{cor:nonbipforcingnumber}
	For any planar matching covered graph $G$ without a $\Complement{C_6}$ matching minor and every perfect matching $M$ of $G$, we have $f(G,M) \leq \frac{|M|}{2}=\frac{|V(G)|}{4}$.
\end{corollary}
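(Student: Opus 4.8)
The plan is to read this off directly from \cref{thm:planarprismfree2colours} together with the correspondence between proper $M$-colourings and partitions of $M$ into forcing sets that was recorded just before \cref{cor:forcingdecomposition}. First, since $G$ is planar, matching covered, and contains no $\Complement{C_6}$ matching minor (in particular no conformal bisubdivision of $\Complement{C_6}$), \cref{thm:planarprismfree2colours} yields $\Mchromatic{G}{M}\leq 2$, so $G$ admits a proper $M$-colouring $c\colon M\to\{0,1\}$.

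Next I would translate this colouring into a forcing decomposition. Put $S_i\coloneqq c^{-1}(i)$ for $i\in\{0,1\}$, so that $M=S_0\cup S_1$ is a partition of $M$. Recall that a partial matching $T\subseteq M$ is forcing exactly when $G$ has no $M$-alternating cycle all of whose vertices lie in $V(G)\setminus V(T)$. Applying this with $T\coloneqq M\setminus S_0=S_1$: an $M$-alternating cycle with all vertices in $V(S_0)$ is precisely one whose matching edges all lie in $S_0$ — that is, a cycle monochromatic of colour $0$ — since the $M$-edges covering $V(S_0)$ are exactly $S_0$. As $c$ is proper no such cycle exists, whence $S_1$ is forcing; by symmetry $S_0$ is forcing as well. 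Thus every perfect matching of such a graph $G$ is partitioned into two disjoint forcing sets — the planar non-bipartite analogue of \cref{cor:forcingdecomposition}.

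Finally, from $|S_0|+|S_1|=|M|$ one of $S_0,S_1$ has size at most $|M|/2$, and $f(G,M)$ is bounded above by the size of that set; since $G$ has a perfect matching, $|M|=|V(G)|/2$, and hence $f(G,M)\leq |M|/2=|V(G)|/4$, as claimed. There is no genuine obstacle once \cref{thm:planarprismfree2colours} is in hand; the only point that warrants a word of care is the identification, in the non-bipartite setting, of monochromatic $M$-alternating cycles with $M$-alternating cycles avoiding $V(M\setminus S_i)$ — this is exactly the forcing characterisation quoted above, and it relies on $M$ being a perfect matching so that its restriction to $V(S_i)$ really is $S_i$.
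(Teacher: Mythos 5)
Your argument is correct and is exactly the route the paper takes: the paper derives this corollary by noting that \cref{thm:planarprismfree2colours} gives $\Mchromatic{G}{M}\leq 2$, that a proper $M$-colouring with two colours is the same as a partition of $M$ into two forcing sets (via the equivalence ``$S$ is forcing iff no $M$-alternating cycle avoids $V(S)$'' recorded before \cref{cor:forcingdecomposition}), and that the smaller part has size at most $|M|/2=|V(G)|/4$. Your extra care about identifying monochromatic alternating cycles with cycles avoiding $V(M\setminus S_i)$ is exactly the point the paper treats as immediate.
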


\section{$2$-Colourings of Non-Even Digraphs} \label{sec:noneven}

This section is dedicated to the proof of \cref{thm:mainthm}.
The key idea of our proof is to consider a minimal (with respect to the number of vertices) non-$2$-colourable non-even digraph.
We introduce a number of local reductions of digraphs transporting $2$-colourability while ensuring that the reduced digraph is still non-even and prove that for any non-even digraph with at least $3$ vertices one of our reductions is applicable.

Each of our reductions can be applied in polynomial time and thus this technique implies a polynomial time algorithm for $2$-colouring a non-even digraph.

We start with two splitting operations, reducing the $2$-colouring problem to the strongly $2$-connected non-even digraphs.

\begin{definition} \label{01sums}
	Let $D$, $D_1$ and $D_2$ be digraphs.
	Then $D$ is called a \emph{$0$-sum} of $D_1$ and $D_2$ if there is a partition of $\Fkt{V}{D}$ into non-empty sets $X$ and $Y$ such that no edge of $D$ has its head in $X$ and its tail in $Y$, and $D_1=\InducedSubgraph{D}{X}$, $D_2=\InducedSubgraph{D}{Y}$.
	
	We call a strongly connected digraph $D$ the \emph{$1$-sum} of $D_1$ and $D_2$ \emph{at a vertex $v\in\Fkt{V}{D}$} if there is a partition of $\Fkt{V}{D}\setminus\Set{v}$ into non-empty sets $X$ and $Y$ such that no edge in $D$ has its head in $X$ and its tail in $Y$, and such that $D_1$ arises from $D$ by identifying $Y\cup\Set{v}$ into a single vertex and $D_2$ arises by identifying $X\cup\Set{v}$ into a single vertex. In both cases, we unify possible multiple occurences of parallel edges into single edges. 
\end{definition}

In the context of perfect matchings in bipartite graphs, the described reduction of $D$ to $D_1$ and $D_2$ corresponds to a so-called \emph{tight cut contraction}. 
Let $G$ be an undirected graph and $X\subseteq\Fkt{V}{G}$. The \emph{cut} around $X$, denoted by $\Cut{}{X}$, is the set of all edges in $G$ with exactly one endpoint in $X$.
If $G$ is matching covered and $\Abs{\Cut{}{X}\cap M}=1$ for every perfect matching $M\in\Perf{G}$, we call $\Cut{}{X}$ a \emph{tight cut}.
If $\Cut{}{X}$ is a tight cut and $\Abs{X}\geq 2$, it is \emph{non-trivial}.
Identifying the \emph{shore} $X$ of a non-trivial tight cut $\Cut{}{X}$ into a single vertex is called a \emph{tight cut contraction} and the resulting graph $G'$ can easily be seen to be matching covered again.
Among many other things, tight cut contractions can be used to produce reductions of Pfaffian graphs as shown by Vazirani and Yannakakis.

\begin{theorem}[\cite{vazirani1989pfaffian}, Theorem 4.2]\label{thm:pfaffiantcc}
	Let $G$ be a matching covered graph, $X\subseteq\Fkt{V}{G}$ such that $\Cut{}{X}$ is a non-trivial tight cut and $G_1$, $G_2$ the two graphs obtained by the tight cut contractions of $X$ and $\Complement{X}$ in $G$ respectively.
	Then $G$ is Pfaffian if and only if $G_1$ and $G_2$ are Pfaffian.
\end{theorem}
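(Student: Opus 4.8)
The plan is to argue with Pfaffian orientations, via a convenient reformulation. Fix once and for all a reference orientation $\sigma_0$ of $G$ in which every cut edge is directed from $X$ to $\overline{X}$, and let $\sigma_0^1,\sigma_0^2$ be its contractions to $G_1,G_2$. For a cycle $C$ and one of its two traversals, let $\omega_0(C)\in\{0,1\}$ be the parity of $1$ plus the number of edges of $C$ whose $\sigma_0$-direction agrees with the traversal; since conformal cycles are even, $\omega_0(C)$ is independent of the traversal, and $\omega_0^1,\omega_0^2$ are defined analogously for $G_1,G_2$. The orientation of $G$ obtained from $\sigma_0$ by reversing the edges of a set $S\subseteq E(G)$ is Pfaffian precisely when $|S\cap E(C)|\equiv\omega_0(C)\pmod 2$ for every conformal cycle $C$; hence $G$ is Pfaffian iff such a \emph{correcting set} exists, and the whole statement becomes a comparison of correcting sets of $G$ with those of $G_1$ and $G_2$.

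The first ingredient I would establish is that a conformal cycle of a matching covered graph meets a tight cut in $0$ or $2$ edges: if $C$ is $M$-alternating then $M\triangle E(C)$ is again a perfect matching, so by tightness both $M\cap\partial(X)$ and $(M\triangle E(C))\cap\partial(X)=(M\cap\partial(X))\triangle(E(C)\cap\partial(X))$ are singletons, which forces $|E(C)\cap\partial(X)|\in\{0,2\}$ (and in the second case that set contains the $M$-edge of the cut). From this I would set up the dictionary between conformal cycles of $G$ and of $G_1,G_2$: a conformal cycle of $G$ disjoint from $\partial(X)$ lies in $G[X]$ or in $G[\overline{X}]$ and is a conformal cycle of $G_1$ resp.\ $G_2$ avoiding the contracted vertex, and conversely; a conformal cycle of $G$ meeting $\partial(X)$ in two edges $e,f$ contracts, on the two shores, to conformal cycles $C_1$ of $G_1$ and $C_2$ of $G_2$ through their contracted vertices, each using $e,f$. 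The only non-formal part is the reverse of the last step: given a conformal cycle of $G_1$ through the contracted vertex — an $X$-path $P$ together with cut edges $e,f$ — one lifts it by choosing perfect matchings $M_e,M_f$ of $G$ containing $e$ resp.\ $f$ (which, being tight, use no other cut edge), letting $Q$ be the path between the two relevant $\overline{X}$-vertices in $M_e|_{\overline{X}}\triangle M_f|_{\overline{X}}$, and checking (using that $M_e|_{\overline{X}}$ still perfectly matches $\overline{X}\setminus V(Q)$) that $P\cup\{e,f\}\cup Q$ is a conformal cycle of $G$; the intra-shore case is similar and easier.

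Because $\sigma_0$ orients the cut from $X$ to $\overline{X}$ and $\sigma_0^1,\sigma_0^2$ are its contractions, a short direct computation gives $\omega_0(C)\equiv\omega_0^1(C_1)+\omega_0^2(C_2)\pmod 2$ for every conformal cycle $C$ of $G$ meeting the cut, while for $C$ inside a shore the parity is simply inherited. The ``if'' direction is then bookkeeping: from correcting sets $S^1$ of $G_1$ and $S^2$ of $G_2$ I build a correcting set $S$ of $G$ by letting $S$ agree with $S^1$ on $E(G[X])$, with $S^2$ on $E(G[\overline{X}])$, and be $S^1_{\mathrm{cut}}\triangle S^2_{\mathrm{cut}}$ on $\partial(X)$ (with $S^i_{\mathrm{cut}}$ the cut-part of $S^i$). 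Using the dictionary, the parity decomposition, and the identity $|(A\triangle B)\cap T|\equiv|A\cap T|+|B\cap T|\pmod 2$, one checks that every conformal cycle of $G$ — inside $G[X]$, inside $G[\overline{X}]$, or crossing — is corrected by $S$.

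The main obstacle is the ``only if'' direction: from a correcting set $S$ of $G$, produce correcting sets of $G_1$ and $G_2$. Restricting $S$ to the two shores takes care of the conformal cycles inside $G[X]$ and $G[\overline{X}]$, and one wants cut-parts $S^1_{\mathrm{cut}},S^2_{\mathrm{cut}}$ with $S^1_{\mathrm{cut}}\triangle S^2_{\mathrm{cut}}$ equal to the cut-part of $S$; the conformal cycles of $G_1$ (resp.\ $G_2$) through the contracted vertex then impose one $\mathbb{F}_2$-equation each on $S^1_{\mathrm{cut}}$ (resp.\ $S^2_{\mathrm{cut}}$), constraining the membership of its two cut edges, and the crux is that this linear system is consistent. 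The key point to verify is that two conformal cycles of $G_1$ through the contracted vertex that use the same pair of cut edges impose the same equation — equivalently, that the symmetric difference of their $X$-paths is evenly oriented in the given Pfaffian orientation of $G$. I expect this, proved once more by splitting such a configuration along a perfect matching of $G$ meeting the cut in one of those two edges, to be the only step beyond routine parity bookkeeping, together with the lifting lemma of the second paragraph.
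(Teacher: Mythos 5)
The paper does not prove this statement---it is quoted from Vazirani and Yannakakis---so your proposal has to stand on its own. Most of it does: the reformulation via correcting sets relative to a reference orientation directing the cut from $X$ to $\overline{X}$, the fact that a conformal cycle meets a tight cut in $0$ or $2$ edges (one of them the matching cut edge), the dictionary between conformal cycles of $G$ and of $G_1,G_2$ including the lifting through $M_e|_{\overline{X}}\triangle M_f|_{\overline{X}}$, the parity identity $\omega_0(C)\equiv\omega_0^1(C_1)+\omega_0^2(C_2)$, and the entire ``if'' direction are correct and check out.

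The gap is in the ``only if'' direction, precisely at the point you call the crux. Your constraints have the form $x_e+x_f=b_{ef}$, one for each pair of cut edges spanned by a conformal cycle of $G_1$ through the contraction vertex $\bar{x}$. Well-definedness of $b_{ef}$ (your condition on two cycles using the same pair) turns the system into an edge-labelling of a \emph{constraint graph} $H$ on the cut edges, and such a system is solvable if and only if the labels sum to $0$ around \emph{every} cycle of $H$, not merely around pairs of parallel edges. Nothing in your argument prevents $H$ from containing longer cycles---for instance when $G_1$ is $K_4$, its three conformal $4$-cycles through $\bar{x}$ use the three distinct pairs of edges incident with $\bar{x}$, so $H$ contains a triangle and you additionally need $b_{ef}+b_{fg}+b_{ge}\equiv 0$. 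This does not follow from your condition (a) by linear algebra, and your proposal offers no argument for it; unwinding it along your lines amounts to showing that certain even subgraphs of $G[X]$, which need not decompose into conformal cycles, are evenly oriented, which is not routine parity bookkeeping. The statement is of course true (a Pfaffian orientation of $G$ restricted to $G[X]$ does extend to one of $G_1$), but the standard way to get it is the classical lemma that an orientation is Pfaffian as soon as all $M$-alternating cycles are oddly oriented for a \emph{single} fixed perfect matching $M$ (since $M\triangle M'$ decomposes into $M$-alternating cycles for every $M'$). Fixing $M$ with $M\cap\partial(X)=\{g_0\}$, every $M_1$-alternating cycle of $G_1$ through $\bar{x}$ must use $g_0$, so the constraint graph becomes a star centred at $g_0$, your condition (a) for the pairs $\{g_0,f\}$ is then genuinely all that is needed, and the rest of your argument goes through. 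Without that reduction, your proof of the forward implication is incomplete.
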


To combine the theory of tight cuts and digraphs we need to be able to translate between the two more smoothly.
Given a bipartite graph $G=\Brace{A\cup B,E}$ and a set $X\subseteq\Fkt{V}{G}$ such that $\Abs{X\cap A}<\Abs{X\cap B}$, we call $A$ the \emph{minority} and $B$ the \emph{majority} of $X$,
and analogously if the roles of $A$ and $B$ are reversed.
Consider the following characterisation of tight cuts in bipartite graphs.

\begin{lemma}[\cite{ThinEdges}, Proposition 5]\label{lemma:tightcutmajority}
	Let $G=\Brace{A\cup B,E}$ be a bipartite matching covered graph and $X\subseteq\Fkt{V}{G}$ of odd size.
	Then $\Cut{}{X}$ is tight if and only if $\big\lvert{\Abs{X\cap A}-\Abs{X\cap B}}\big\rvert=1$ and no vertex of the minority of $X$ has a neighbour in $\Complement{X}$.
\end{lemma}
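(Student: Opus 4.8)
The plan is to prove both directions by combining the standard counting identity for cuts and perfect matchings with the matching-covered hypothesis. Throughout, write $a = \Abs{X\cap A}$, $b = \Abs{X\cap B}$, and note that since $G$ has a perfect matching and $\Abs{X}$ is odd, we have $\Abs{A} = \Abs{B}$ and $a+b$ is odd, so $a \neq b$; without loss of generality assume $a < b$, i.e.\ $A$ is the minority on $X$. For any perfect matching $M\in\Perf{G}$, the edges of $M$ with at least one endpoint in $X$ either lie inside $X$ (matching a vertex of $X\cap A$ to a vertex of $X\cap B$) or cross the cut. Counting the $b-a$ vertices of $X\cap B$ that are not matched inside $X$, we get $\Abs{\Cut{}{X}\cap M} \geq b-a$ for every $M$, with equality precisely when no edge of $M$ joins $X\cap A$ to $\Complement{X}\cap B$; symmetrically, no edge of $M$ can join $X\cap B$ to $\Complement{X}\cap A$ would only be forced when $a=b$, which does not occur here, so in general the crossing edges incident to the minority side are the ``wasteful'' ones.

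For the forward direction, suppose $\Cut{}{X}$ is tight, so $\Abs{\Cut{}{X}\cap M} = 1$ for all $M$. From the inequality above, $1 \geq b-a \geq 1$, hence $b-a = 1$, which is the claimed $\big\lvert a-b\big\rvert = 1$. Moreover equality in the counting bound forces, for every perfect matching $M$, that the unique crossing edge of $M$ is incident to $X\cap B$ (the majority) and not to $X\cap A$ (the minority); equivalently every vertex of $X\cap A$ is matched by $M$ inside $X$. Now suppose for contradiction some vertex $u$ in the minority $X\cap A$ has a neighbour $w\in\Complement{X}\cap B$. Since $G$ is matching covered, the edge $uw$ lies in some perfect matching $M$; but then $uw\in\Cut{}{X}\cap M$ is a crossing edge incident to the minority, contradicting what we just derived. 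Hence no minority vertex of $X$ has a neighbour outside $X$.

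For the converse, suppose $\big\lvert a-b\big\rvert = 1$ (so $b = a+1$ under our convention) and no vertex of $X\cap A$ has a neighbour in $\Complement{X}$. Let $M\in\Perf{G}$ be arbitrary. Every vertex of $X\cap A$ is matched to a vertex in $X$, and since $G$ is bipartite these partners lie in $X\cap B$; that accounts for $a$ vertices of $X\cap B$. The remaining $b - a = 1$ vertex of $X\cap B$ must be matched to a vertex outside $X$ (it cannot be matched inside $X$, as all of $X\cap A$ is already used up), contributing exactly one edge to $\Cut{}{X}\cap M$. No further edges of $M$ cross $\Cut{}{X}$: an edge from $X$ to $\Complement{X}$ incident to $X\cap A$ is excluded by hypothesis, and every vertex of $X\cap A$ is matched inside, while we have just seen all but one vertex of $X\cap B$ is matched inside. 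Therefore $\Abs{\Cut{}{X}\cap M} = 1$ for every $M$, i.e.\ $\Cut{}{X}$ is tight.

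The only subtle point — and the one to state carefully rather than the ``hard part'' — is the bookkeeping that a bipartite perfect matching restricted to $X$ pairs minority vertices only with majority vertices, so that the deficiency $b-a$ is exactly the number of crossing edges needed, and no more; once that is pinned down, both implications follow from the matching-covered hypothesis applied to a single well-chosen edge (forward) and to an arbitrary matching (backward).
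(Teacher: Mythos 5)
The paper does not actually prove this lemma itself---it is imported verbatim from the cited reference (Proposition~5 of \cite{ThinEdges})---so there is no in-paper argument to compare against. Your proof is correct and is the standard counting argument one would expect: for any perfect matching $M$, if $k$ denotes the number of edges of $M$ joining the minority side $X\cap A$ to $\Complement{X}$, then $\Abs{\Cut{}{X}\cap M}=(b-a)+2k$, and both implications fall out of this identity together with the matching-covered hypothesis (used in the forward direction to realise a hypothetical minority-to-outside edge inside some perfect matching, and in the backward direction applied to an arbitrary matching). All edge cases are handled: $\Abs{X}$ odd gives $a\neq b$, so the minority is well defined and $b-a\geq 1$. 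The only blemish is the garbled aside in your first paragraph about the symmetric case ``$a=b$'', which is vacuous here and should simply be deleted; nothing in the argument depends on it.
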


In a digraph $D$ we call $\Brace{X,Y}$ a \emph{directed separation} if $X\cup Y=\Fkt{V}{D}$ and there is no edge with tail in $Y\setminus X$ and head in $X\setminus Y$.
The \emph{order} of the separation is $\Abs{X\cap Y}$.
The following is folklore, but we provide a proof for completeness.

\begin{lemma}\label{lemma:tight1sums}
	Let $G=\Brace{A\cup B,E}$ be a bipartite matching covered graph, $M$ a perfect matching in $G$ and let $X\subseteq\Fkt{V}{G}$.
	Moreover let $M_Y\coloneqq\Brace{\Fkt{E}{\InducedSubgraph{G}{Y}}\cup\Cut{}{X}}\cap M$ for $Y\in\Set{X,\Complement{X}}$ and let $v_e$ for $e\in M$ denote the vertex of the $M$-direction of $G$ corresponding to the edge $e$.
	Then $\Cut{}{X}$ is tight if and only if $\Brace{\CondSet{v_e}{e\in M_X},\CondSet{v_e}{e\in M_{\Complement{X}}}}$ or $\Brace{\CondSet{v_e}{e\in M_{\Complement{X}}},\CondSet{v_e}{e\in M_X}}$ is a directed separation of order $1$ in $\DirM{G}{M}$.
\end{lemma}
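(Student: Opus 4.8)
The plan is to rephrase both sides of the equivalence in terms of the perfect matching $M$ and the bipartition $(A,B)$, and then to read it off from the characterisation of tight cuts in \cref{lemma:tightcutmajority}. For $e\in M$ write $a_e\in A$ and $b_e\in B$ for the two endpoints of $e$, so that $v_e$ is the vertex of $\DirM{G}{M}$ corresponding to $e$ and the arcs of $\DirM{G}{M}$ incident with $v_e$ are exactly the $\Brace{v_e,v_f}$ with $a_eb_f\in\Fkt{E}{G}$ and the $\Brace{v_f,v_e}$ with $a_fb_e\in\Fkt{E}{G}$. Partition $M$ into the set $M^\circ$ of matching edges with both endpoints in $X$, the set $N^\circ$ of matching edges with both endpoints in $\Complement{X}$, and the set $M^\partial$ of matching edges lying in $\Cut{}{X}$. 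Then $M_X=M^\circ\cup M^\partial$ and $M_{\Complement{X}}=N^\circ\cup M^\partial$, so $\CondSet{v_e}{e\in M_X}\cap\CondSet{v_e}{e\in M_{\Complement{X}}}=\CondSet{v_e}{e\in M^\partial}$ and both orderings in the statement have order exactly $\Abs{M^\partial}$; thus ``of order $1$'' is equivalent to $\Abs{M^\partial}=1$.

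Next I would translate the separation condition. An arc of $\DirM{G}{M}$ from $\CondSet{v_g}{g\in N^\circ}$ to $\CondSet{v_f}{f\in M^\circ}$ exists precisely when $a_gb_f\in\Fkt{E}{G}$ for some $g\in N^\circ$, $f\in M^\circ$, and every such edge lies in $\Cut{}{X}$, since $a_g\in\Complement{X}$ and $b_f\in X$; symmetrically, arcs in the opposite direction between these two private parts correspond to edges $a_fb_g$ with $f\in M^\circ$, $g\in N^\circ$. Hence $\Brace{\CondSet{v_e}{e\in M_X},\CondSet{v_e}{e\in M_{\Complement{X}}}}$ is a directed separation if and only if no edge of $\Cut{}{X}$ joins the $A$-endpoint of a matching edge inside $\Complement{X}$ to the $B$-endpoint of a matching edge inside $X$, and the reversed ordering is a directed separation if and only if the same holds with the roles of $X$ and $\Complement{X}$ interchanged. (Arcs incident with $v_{e_0}$ for $e_0\in M^\partial$ play no role, as $v_{e_0}$ lies on both sides of the separation.)

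Both directions now follow from \cref{lemma:tightcutmajority}. If $\Cut{}{X}$ is tight it meets $M$ in a single edge $e_0$, so $\Abs{M^\partial}=1$, the order condition holds, and $\Abs{X}=2\Abs{M^\circ}+1$ is odd; \cref{lemma:tightcutmajority} then says that the minority of $X$ — which, as exactly one endpoint of $e_0$ lies in $X$, is the side not containing that endpoint — has no vertex with a neighbour in $\Complement{X}$. If this minority is $A$, then $a_{e_0}\in\Complement{X}$, $X\cap A=\CondSet{a_f}{f\in M^\circ}$ and $\Complement{X}\cap B=\CondSet{b_g}{g\in N^\circ}$, so the minority condition rules out exactly the edges $a_fb_g$ with $f\in M^\circ$, $g\in N^\circ$, which by the second paragraph says that $\Brace{\CondSet{v_e}{e\in M_{\Complement{X}}},\CondSet{v_e}{e\in M_X}}$ is a directed separation of order $1$; if the minority is $B$ one gets the other ordering. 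For the converse one reverses this: a directed separation of order $1$ gives $\Abs{M^\partial}=1$, hence $\Abs{X}$ odd and $\big\lvert\Abs{X\cap A}-\Abs{X\cap B}\big\rvert=1$, and then, reading the separating ordering through the translation of the second paragraph (matched against $e_0$ as below), one obtains that no minority vertex of $X$ has a neighbour in $\Complement{X}$, so $\Cut{}{X}$ is tight by \cref{lemma:tightcutmajority}.

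The part I expect to need the most care is the bookkeeping around the unique crossing matching edge $e_0$, and in particular the pairing of the two orderings with the sides of $X$: the endpoints of $e_0$ lie on opposite sides, and which side contains $a_{e_0}$ determines both which of $A,B$ is the minority of $X$ and which of the two orderings is the one whose being a directed separation of order $1$ is equivalent to tightness of $\Cut{}{X}$ (if $a_{e_0}\in X$ it is $\Brace{\CondSet{v_e}{e\in M_X},\CondSet{v_e}{e\in M_{\Complement{X}}}}$, and the reversed ordering otherwise). Keeping these choices synchronised, rather than treating the two orderings as interchangeable, is the crux; everything else is a routine unwinding of the definitions of $\Cut{}{X}$, the $M$-direction, and directed separations.
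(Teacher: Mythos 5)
Your forward direction is correct and is essentially the paper's own argument: both deduce from \cref{lemma:tightcutmajority} that the minority of $X$ has no neighbour in $\Complement{X}$ and translate this into the absence of arcs between the two private parts of the appropriate ordering. For the converse you take a genuinely different route: the paper argues directly from the definition of a tight cut, claiming that every $M$-alternating cycle meets $\Cut{}{X}$ in exactly two edges and then transferring the count to an arbitrary perfect matching $M'$ by flipping the cycles of $M\,\Delta\,M'$, whereas you go back through \cref{lemma:tightcutmajority}.

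However, your converse has a real gap, and it sits exactly at the point you flag as the crux. The hypothesis of the lemma is only the disjunction that \emph{one} of the two orderings is a directed separation of order $1$; your sketch tacitly assumes that the separating ordering is the one synchronised with the side of $X$ containing $a_{e_0}$, and says nothing about the case where only the mismatched ordering separates. In that case nothing can be salvaged: take $G=C_6$ with vertices $a_1,b_1,a_2,b_2,a_3,b_3$ in cyclic order, $M=\Set{a_1b_1,a_2b_2,a_3b_3}$ and $X=\Set{a_1,b_1,b_2}$. Then $M_X=\Set{a_1b_1,a_2b_2}$ and $M_{\Complement{X}}=\Set{a_2b_2,a_3b_3}$, the digraph $\DirM{G}{M}$ is the directed triangle $v_1\rightarrow v_3\rightarrow v_2\rightarrow v_1$, and $\Brace{\Set{v_1,v_2},\Set{v_2,v_3}}$ is a directed separation of order $1$ (there is no arc from $v_3$ to $v_1$); yet $\Cut{}{X}$ is not tight, since the second perfect matching of $C_6$ meets it in three edges. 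So what your argument actually establishes is the synchronised biconditional of your final paragraph — tightness is equivalent to the \emph{specific} ordering determined by the location of $a_{e_0}$ being a separation of order $1$ — which yields the ``only if'' of the stated lemma but not the ``if'' as literally written. You should not expect to close the gap by consulting the paper: its proof of the converse stumbles at the same spot, because the claim that every $M$-alternating cycle contains exactly two edges of $\Cut{}{X}$ ignores the non-matching edges incident with the endpoints of $f$ (whose two ends lie on opposite sides of the cut), and it fails for the $6$-cycle above, which meets $\Cut{}{X}$ in four edges. The correct statement must tie the choice of ordering to which endpoint of the unique crossing matching edge lies in $X$, exactly as you describe.
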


\begin{proof}
	First suppose $\Cut{}{X}$ is tight.
	By \cref{lemma:tightcutmajority} no vertex of the minority of $X$ has a neighbour in $\Complement{X}$.
	By symmetry, we may assume that $B\cap X$ is the minority of $X$.
	The $M$-direction of $G$ must be strongly connected, however there cannot exist an edge in $\DirM{G}{M}$ with head $v_e$ and tail $v_{e'}$ where $e\subseteq X$ and $e'\subseteq\Complement{X}$ since such an edge would link a vertex of $X\cap B$ to a vertex of $\Complement{X}\cap A$.
	Hence every directed path from $v_e'$ to $v_e$ must contain the vertex $v_f$ where $f$ is the unique edge of $M$ in $\Cut{}{X}$.
	Thus $\Brace{\CondSet{v_e}{e\in M_X},\CondSet{v_e}{e\in M_{\Complement{X}}}}$ is a directed separation and $v_f$ is the unique vertex in the intersection of the two sets.
	
	For the other direction let $\Brace{\CondSet{v_e}{e\in M_X},\CondSet{v_e}{e\in M_{\Complement{X}}}}$ be a directed separation of order $1$ in $\DirM{G}{M}$.
	The other case follows analogously.
	Let $f$ be the unique matching edge corresponding to the cut vertex. 
	Then every directed cycle in $\DirM{G}{M}$ must contain $v_f$ and has exactly one edge with endpoints in $\CondSet{v_e}{e\in M_X}\setminus\Set{v_f}$ and $\CondSet{v_e}{e\in M_{\Complement{X}}}\setminus\Set{v_f}$. This means that every $M$-alternating cycle in $G$ contains exactly two edges of $\Cut{}{X}$, namely $f$ and one non-matching edge.
	We know that $|\Cut{}{X} \cap M|=|\{f\}|=1$, and so to prove that $\Cut{}{X}$ is tight, we must show that any other perfect matching $M'$ of $G$ has the same number of edges on $\Cut{}{X}$ as $M$. For this, observe that the symmetric difference $M \Delta M'$ decomposes into a vertex-disjoint union of cycles $C_1,\ldots , C_t$ which are simultaneously $M$- and $M'$-alternating. Consequently, exchanging matching with non-matching edges for each $C_i$ one after the other (``flipping'') transforms $M$ into $M'$. Clearly, this operation can change the number of matching edges on $\Cut{}{X}$ only if a cycle containing vertices of both $X$ and $\Complement{X}$ is flipped, but according to the above, each such cycle must contain $f$, and so at most one $C_j$ can intersect $\Cut{}{X}$, and $E(C_j) \cap \Cut{}{X}=\{f,f'\}$ for a non-matching edge $f'$. Flipping $C_j$ now makes $f'$ into a matching and $f$ into a non-matching edge. In any case, after having performed the sequence of flips, we thus obtain that $M' \cap \Cut{}{X}$ consists of a single edge, and, hence, $\Cut{}{X}$ must be tight.
\end{proof}

From \cref{thm:pfaffiantcc,lemma:tight1sums} we obtain the following corollary.

\begin{corollary}\label{lemma:01sumnoneven}
	Let $D$ be a digraph and $i\in\Set{0,1}$ such that $D$ is the $i$-sum of the digraphs $D_1$ and $D_2$.
	Then $D$ is non-even if and only if $D_1$ and $D_2$ are non-even.
\end{corollary}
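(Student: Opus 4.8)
The plan is to lift the statement to the matching-theoretic world via Theorems \ref{thm:pfaffian} and \ref{thm:pfaffiantcc}, using \cref{lemma:tight1sums} as the translation device. Given a digraph $D$ that is the $i$-sum of $D_1$ and $D_2$ for $i\in\Set{0,1}$, I would first dispose of the $0$-sum case directly, since it is genuinely easy: if $(X,Y)$ is the defining partition with no edge from $Y$ to $X$, then every directed cycle of $D$ lies entirely inside $X$ or entirely inside $Y$, hence is a cycle of $D_1$ or of $D_2$. Thus for any $0$-$1$-weighting $w$ of $E(D)$, an even cycle of $D$ is an even cycle of one of the $D_j$; conversely a weighting of $E(D_j)$ extends (arbitrarily) to $E(D)$. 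This shows directly that $D$ is non-even iff both $D_1$ and $D_2$ are non-even, without invoking the Pfaffian machinery. (One should also note that butterfly minors are preserved in the obvious way, but the weighting argument is cleaner.)

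For the $1$-sum case I would pass to bipartite splitting graphs. Let $G$ be a bipartite matching covered graph with perfect matching $M$ such that $\DirM{G}{M}=D$ (the splitting graph with its canonical matching, as remarked after \cref{def:Mdirection}); since $D$ is strongly connected, $G$ is matching covered. The $1$-sum structure at $v$ with parts $X,Y$ gives, via the correspondence $v_e\mapsto e$, a directed separation of order $1$ in $D$ whose two sides are $\CondSet{v_e}{e\in M_X}$ and $\CondSet{v_e}{e\in M_{\Complement{X}}}$ for an appropriate subset $X'\subseteq V(G)$ (here I must spell out which matching edges of $G$ correspond to vertices of each side of the digraph separation, and check that the cut vertex $v$ corresponds to the unique matching edge $f$ on the cut). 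By \cref{lemma:tight1sums}, $\Cut{}{X'}$ is then a tight cut of $G$, and it is non-trivial because both $X$ and $Y$ are non-empty (so each shore has at least two vertices once one accounts for the matched endpoint of $f$). Let $G_1,G_2$ be the two tight cut contractions. The key identification is that these are exactly the splitting graphs of $D_1$ and $D_2$: contracting the shore on one side of the tight cut corresponds precisely to identifying the vertex set of one side of the directed separation (together with $v$) into a single vertex and unifying parallel edges, which is the definition of the $1$-sum constituents; this is again an instance of the reversibility of the $M$-direction operation. Hence, reading through \cref{lemma:mcguigmatminors} or directly, $\DirM{G_j}{M_j}=D_j$ for the induced matchings $M_j$.

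Now \cref{thm:pfaffiantcc} gives that $G$ is Pfaffian iff $G_1$ and $G_2$ are Pfaffian, and the equivalence of items (1) and (3) in \cref{thm:pfaffian} translates this into: $D$ is non-even iff $D_1$ and $D_2$ are non-even. I expect the main obstacle to be the bookkeeping in the middle step — verifying that the tight cut shore $X'$ in $G$ really yields the claimed directed separation and, conversely, that the tight cut contractions $G_1,G_2$ are the splitting graphs of the digraphs $D_1,D_2$ obtained by the $1$-sum identification (including the "unify parallel edges" clause matching the "delete resulting parallel edges" in tight cut contraction). Everything else is an immediate application of the cited theorems. A minor subtlety worth a sentence: one must ensure that the non-trivial tight cut is honestly non-trivial, i.e.\ neither shore is a single vertex, which follows from $X,Y\neq\emptyset$ in \cref{01sums}.
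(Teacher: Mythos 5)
Your proposal is correct and follows essentially the same route as the paper: the $0$-sum case is handled by the direct observation that every directed cycle lives entirely in one summand (the paper phrases the witnessing weighting as an edge set met an odd number of times by every cycle, which is the same thing), and the $1$-sum case is obtained by combining \cref{lemma:tight1sums} with \cref{thm:pfaffiantcc} and the equivalence of non-evenness with Pfaffianity from \cref{thm:pfaffian}. The paper simply states the $i=1$ case as a direct consequence of those two results; the bookkeeping you flag (identifying the tight cut shore with the directed separation and the tight cut contractions with the splitting graphs of $D_1,D_2$, and checking non-triviality) is exactly what is left implicit there, and your treatment of it is sound.
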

\begin{proof}
	For $i=0$, this can be seen directly from the definition of an even digraph: $D$ is non-even if and only if there is a subset $A \subseteq E(D)$ of edges intersected an odd number of times by each directed cycle. However, the set of directed cycles in $D$ consists of the directed cycles in $D[X]=D_1$ and $D[Y]=D_2$ for a partition $(X,Y)$ as in \cref{01sums}, because no directed cycle can pass trough $X$ and $Y$ at the same time. Thus, the above is the same as saying that there are edge sets $A_i \subseteq E(D_i)$, $i=1,2$, intersecting each directed cycle in $D_i$ an odd number of times, which is the same as saying that $D_1,D_2$ are non-even.
	
	For $i=1$, this is a direct consequence of \cref{lemma:tight1sums,thm:pfaffiantcc}.
\end{proof}

So $0$- and $1$-sums preserve non-eveneness. Next, we need to make sure we can obtain a $2$-colouring of $D$ from $2$-colourings of its sumands $D_1$ and $D_2$.

\begin{lemma}\label{lemma:splitting}
	Let $D$ be a non-even digraph and $D_1$, $D_2$ digraphs such that $D$ is the $i$-sum of $D_1$ and $D_2$ for $i\in\Set{0,1}$.
	If $D_1$ and $D_2$ are $2$-colourable, so is $D$.
\end{lemma}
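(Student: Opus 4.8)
The plan is to combine a $2$-colouring $c_1$ of $D_1$ and a $2$-colouring $c_2$ of $D_2$ into a proper $2$-colouring of $D$, treating the two cases separately. The guiding principle in both cases is that every directed cycle of $D$ lives entirely inside (a copy of) $D_1$ or entirely inside $D_2$, because of the one-sided-cut condition in \cref{01sums}; so once we know each copy is coloured properly, any monochromatic cycle would have to be a monochromatic cycle of $D_1$ or of $D_2$, a contradiction. For the $0$-sum this is immediate: take the partition $(X,Y)$ with no edge from $Y$ to $X$, so $D_1 = \InducedSubgraph{D}{X}$ and $D_2 = \InducedSubgraph{D}{Y}$, and define $c$ on $D$ by $c|_X = c_1$ and $c|_Y = c_2$. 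Any directed cycle of $D$ is a directed cycle of $\InducedSubgraph{D}{X}$ or of $\InducedSubgraph{D}{Y}$ (it cannot use an edge from $X$ to $Y$ and also return, since there are no edges from $Y$ to $X$), hence is not monochromatic. So $c$ is proper.

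For the $1$-sum at a vertex $v$, let $(X,Y)$ be the partition of $\Fkt{V}{D}\setminus\Set{v}$ with no edge having head in $X$ and tail in $Y$; recall $D_1$ is $D$ with $Y\cup\{v\}$ identified to a single vertex $y^*$, and $D_2$ is $D$ with $X\cup\{v\}$ identified to a single vertex $x^*$. The natural idea is to keep the colours $c_1$ on $X$ and $c_2$ on $Y$, and then choose the colour of $v$ consistently. After possibly permuting the two colours of $c_2$ (a symmetry that preserves properness), we may assume $c_1(y^*) = c_2(x^*)$; call this common value $\alpha$, and set $c(v) := \alpha$, $c|_X := c_1|_X$, $c|_Y := c_2|_Y$. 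It remains to check that $c$ is proper. Suppose $C$ is a monochromatic directed cycle in $D$. If $C$ avoids $v$, then since the only edges between $X$ and $Y$ go from $X$ to $Y$, $C$ lies entirely in $\InducedSubgraph{D}{X}$ or in $\InducedSubgraph{D}{Y}$; in the first case $C$ is a monochromatic cycle of $D_1$ (its vertices and colours are unchanged), in the second a monochromatic cycle of $D_2$ — contradiction. If $C$ passes through $v$, trace $C$ starting at $v$: because there is no edge from $Y$ to $X$, once $C$ leaves $v$ into $X$ it can only return to $\{v\}\cup Y$ via $v$, and similarly on the $Y$ side; so $C$ is contained in $\InducedSubgraph{D}{X\cup\{v\}}$ or in $\InducedSubgraph{D}{Y\cup\{v\}}$. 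In the former case, contracting $Y\cup\{v\}$ to $y^*$ fixes every vertex of $C$ (they all lie in $X\cup\{v\}$, and $v$ maps to $y^*$), so $C$ becomes a closed walk in $D_1$ whose vertices all have colour $\alpha = c_1(y^*)$; since $C$ is a cycle and $v$ appears once, this closed walk is in fact a directed cycle of $D_1$, monochromatic — contradiction. The latter case is symmetric with $D_2$. Hence no monochromatic cycle exists and $c$ is proper.

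The only genuinely delicate point is the bookkeeping in the $1$-sum case: making sure that the colour classes really do transport, i.e.\ that a directed cycle through $v$ on the $X$-side maps to a bona fide directed cycle (not merely a closed walk that could repeat $y^*$) of $D_1$, and conversely that a monochromatic cycle of $D_1$ through $y^*$ does not already obstruct the recolouring. This is handled by the observation that $D$ is strongly connected (it is a $1$-sum, hence by definition strongly connected) together with the separation structure: every directed path crossing between $X$ and $Y$ must pass through $v$, so a cycle meeting both sides meets $v$ exactly once, and the contraction is injective on such a cycle away from the contracted blob. I would also note explicitly that permuting the colours of $c_2$ is legitimate because "being acyclic" is a property of colour classes, not of colour names, so the permuted colouring of $D_2$ is still proper; this is what lets us align $c_1(y^*)$ with $c_2(x^*)$. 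Everything else is a routine case check.
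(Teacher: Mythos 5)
Your overall strategy is the same as the paper's (paste the two colourings together, permute the colours of $c_2$ so that $c_1(y^*)=c_2(x^*)$, give $v$ that common colour, and rule out monochromatic cycles), and your $0$-sum case and your colouring construction for the $1$-sum are both fine. However, the verification in the $1$-sum case has a genuine gap. You claim that a directed cycle $C$ through $v$ must be contained in $\InducedSubgraph{D}{X\cup\Set{v}}$ or in $\InducedSubgraph{D}{Y\cup\Set{v}}$, on the grounds that ``every directed path crossing between $X$ and $Y$ must pass through $v$''. That premise is false: the definition of the $1$-sum only forbids edges with tail in $Y$ and head in $X$, so edges from $X$ to $Y$ may exist, and a directed cycle of the form $v\to(\text{path in }X)\to(\text{path in }Y)\to v$, using one $X$-to-$Y$ edge, is perfectly possible. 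A concrete example is the directed triangle $v\to x\to y\to v$ with $X=\Set{x}$, $Y=\Set{y}$: this is a $1$-sum of two digons at $v$, and the triangle itself meets both sides yet is contained in neither $X\cup\Set{v}$ nor $Y\cup\Set{v}$. Your case analysis therefore silently skips exactly the cycles that genuinely cross the separation.

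The omission is repairable, and the paper's proof shows how: for a monochromatic cycle $C$ meeting both $X$ and $Y$, one argues (correctly, using that the only way back from $Y$ to $X$ is through $v$) that $C$ contains $v$ exactly once and that $C-v$ decomposes into two directed paths $P_1\subseteq X$ and $P_2\subseteq Y$. Then $P_1+y^*$ is a directed cycle of $D_1$ and $P_2+x^*$ is a directed cycle of $D_2$ (the whole opposite side of $C$, including $v$, collapses onto the identification vertex), and since $c(v)=c_1(y^*)=c_2(x^*)$ both of these cycles are monochromatic, contradicting the properness of $c_1$ and $c_2$. You should replace your containment claim with this decomposition argument; as written, your proof does not cover the crossing cycles and is incomplete.
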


\begin{proof}
	Assume first that $D$ is the $0$-sum of $D_1=\InducedSubgraph{D}{X}$, $D_2=\InducedSubgraph{D}{Y}$ for a partition $X,Y$ of $\Fkt{V}{D}$.
	Then the directed cycles in $D$ are exactly the directed cycles in $D_1$ together with the directed cycles in $D_2$, and thus any proper $2$-colouring of $D_1$ joined with a proper $2$-colouring of $D_2$ yields a proper $2$-colouring of $D$. 
	
	Now assume $D$ is the $1$-sum of $D_1$ and $D_2$ at $v$, let $v_1$ be the vertex of $D_1$ obtained from identifying $Y\cup\Set{v}$, and let $v_2$ be the vertex in $D_2$ identifying $X\cup\Set{v}$.
	For $i\in\Set{1,2}$ let $c_i\colon\Fkt{V}{D_i}\rightarrow\Set{0,1}$ be a proper $2$-colouring of $D_i$.
	By possibly exchanging $0$ and $1$ in $c_2$, we may assume that $\Fkt{c_1}{v_1}=\Fkt{c_2}{v_2}$.
	We define a colouring $c$ for $D$ as follows.
	\begin{align*}
	\Fkt{c}{u}\coloneqq\ThreeCases{\Fkt{c_1}{u}}{u\in X}{\Fkt{c_1}{v_1}=\Fkt{c_2}{v_2}}{u=v}{\Fkt{c_2}{u}}{u\in Y}
	\end{align*}
	To see that this defines a proper $2$-colouring of $D$, assume towards a contradiction that $C$ is a monochromatic directed cycle in $D$.
	If $C$ stays within $X \cup \Set{v}$ or $Y \cup\Set{v}$, then it also appears as a directed cycle in $D_1$, or $D_2$ respectively, contradicting the feasibility of the $2$-colourings $c_1$ and $c_2$.
	Otherwise, $C$ traverses vertices of both $X$ and $Y$ and thus, as there are no edges starting in $X$ and ending in $Y$, $C$ also contains $v$.
	Moreover, $C-v$ can be decomposed into exactly two directed paths $P_1$ and $P_2$, one contained in $X$ and the other in $Y$.
	Hence $C$ corresponds to the directed cycles $C_i = P_i+v_i$ in $D_i$ for each $i\in\Set{1,2}$ and both $C_i$ must be monochromatic under their respective colourings $c_i$.
	This again violates the feasibility of the $c_i$.
	Consequently, $c$ defines a colouring of $D$ as desired.
\end{proof}

Robertson et.\@ al.\@ \cite{robertson1999permanents} defined in total five different sum operations which they used to prove a generation theorem for non-even digraphs.
From this the following result follows.

\begin{theorem}[\cite{thomas}, Corollary 5.4]\label{thm:nonevenedges}
	Let $D$ be a strongly $2$-connected and non-even digraph on at least two vertices. Then $\Abs{\Fkt{E}{D}}\leq3\Abs{\Fkt{V}{D}}-4$.
\end{theorem}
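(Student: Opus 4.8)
The plan is to pass from digraphs to the matching-theoretic world, where a classification of Pfaffian braces is available, and then finish with an Euler-type edge count. So let $D$ be strongly $2$-connected and non-even, put $n:=|V(D)|$ and $m:=|E(D)|$, and assume first $n\ge 3$ (the case $n=2$ is degenerate: the only candidate is $\Bidirected{K_2}$, for which $m=2=3n-4$). Let $G$ be the bipartite splitting graph of $D$ equipped with its canonical perfect matching $M$, so that $\DirM{G}{M}=D$; counting the $n$ matching edges and the $m$ non-matching edges gives $|V(G)|=2n$ and $|E(G)|=n+m$. Since $D$ is strongly connected, every arc lies on a directed cycle, so $G$ is connected and matching covered; by \cref{thm:pfaffian} it is Pfaffian (as $D$ is non-even), and by \cref{thm:exttoconn} it is $2$-extendable. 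Hence $G$ is a Pfaffian brace on $2n\ge 6$ vertices.

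The next step is to invoke the classification of Pfaffian braces due to Robertson, Seymour and Thomas (and, independently, McCuaig): every Pfaffian brace is either planar or isomorphic to the Heawood graph. If $G$ is planar, it is a simple bipartite plane graph, so every face of an embedding is bounded by at least four edges; combining $2|E(G)|\ge 4|F|$ with Euler's formula $2n-|E(G)|+|F|=2$ yields $|E(G)|\le 2|V(G)|-4=4n-4$, whence $m=|E(G)|-n\le 3n-4$. If instead $G$ is the Heawood graph, then $2n=14$ and $|E(G)|=21$, so $m=21-7=14\le 17=3n-4$. Either way the bound follows.

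Everything outside the classification theorem is bookkeeping, so I expect that theorem to be the real obstacle: it is the hard direction of the structure theory of Pfaffian bipartite graphs, and there is no way around invoking it (or an equivalent). If one prefers to stay closer to the tools developed in this paper, the alternative is an induction using the five sum operations of \cite{robertson1999permanents} together with their generation theorem for non-even digraphs: one checks that the base digraphs satisfy $|E|\le 3|V|-4$ (the planar ones by the Euler count above, the sporadic one by inspection) and that each of the five sums preserves the inequality when it builds $D$ from $D_1$ and $D_2$. This is immediate for $0$- and $1$-sums — for a $1$-sum, $|V(D)|=|V(D_1)|+|V(D_2)|-1$ and $|E(D)|\le(3|V(D_1)|-4)+(3|V(D_2)|-4)=3|V(D)|-5$ — while for the higher-order sums it requires tracking exactly which edges are deleted and created at the separator, using the precise form of those operations. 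That last point, together with the fact that strong $2$-connectivity of $D$ is exactly what guarantees that $G$ is an honest brace so the classification applies, is where I would expect the care to be needed.
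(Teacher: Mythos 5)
The paper does not prove this statement itself --- it imports it verbatim as Corollary~5.4 of \cite{thomas} --- so the only thing to assess is whether your free-standing argument is sound. Your reduction to the matching world is fine: the splitting graph $G$ of $D$ is a simple bipartite matching covered graph with $|V(G)|=2n$ and $|E(G)|=n+m$, it is Pfaffian by \cref{thm:pfaffian}, and strong $2$-connectivity of $D$ makes it $2$-extendable by \cref{thm:exttoconn}, hence a brace. The Euler count for planar braces and the arithmetic $m=|E(G)|-n\le (2\cdot 2n-4)-n=3n-4$ are also correct.

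The gap is the classification you invoke: it is \emph{not} true that every Pfaffian brace is planar or the Heawood graph. The Robertson--Seymour--Thomas/McCuaig theorem says that the Pfaffian braces are exactly the Heawood graph together with the braces obtainable from \emph{planar} braces by repeated application of the \emph{trisum} (three braces glued along a common $4$-cycle, with some of its edges possibly deleted), and trisums produce genuinely non-planar braces. The paper's own \cref{fig:planarity} exhibits such a counterexample: $R$ is a non-planar, non-even, strongly $2$-connected digraph, so its splitting graph is a non-planar Pfaffian brace on $16$ vertices that is not the Heawood graph, and your dichotomy simply does not apply to it. The omission is repairable --- if each summand $G_i$ satisfies $|E(G_i)|\le 2|V(G_i)|-4$, then since the summands pairwise share exactly the four vertices and at most the four edges of the central cycle, $|E(G)|\le\sum_i|E(G_i)|-8\le 2\sum_i|V(G_i)|-20=2(|V(G)|+8)-20=2|V(G)|-4$, so an induction over the generation sequence restores the bound --- but as written your main argument rests on a false structure theorem, and your fallback sketch via the five sum operations of \cite{robertson1999permanents} defers exactly this verification. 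You should either carry out the trisum case explicitly or, as the paper does, cite \cite{thomas} for the edge bound directly.
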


\begin{corollary} \label{cor:degreetwo}
	Any strongly $2$-connected, non-even digraph $D$ on at least three vertices contains at least two vertices of out-degree $2$.
\end{corollary}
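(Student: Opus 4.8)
The plan is to read off the result from the edge count in \Cref{thm:nonevenedges} together with the elementary fact that strong $2$-connectivity forces a minimum out-degree of $2$, and then simply count edges.

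First I would record the degree observation. In a strongly $2$-connected digraph $D$ with $n := |V(D)| \ge 3$, every vertex $v$ satisfies $d^+(v) \ge 2$: if $d^+(v) = 0$ then $D$ itself is not strongly connected, and if $d^+(v) = 1$ with unique out-neighbour $u$, then $D - u$ still has $n - 1 \ge 2$ vertices and $v$ is a source in it, so $D - u$ fails to be strongly connected, contradicting strong $2$-connectivity. (The only corner case to keep an eye on is that $D - u$ could be a single vertex, but since $n \ge 3$ this never happens.)

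Next I would argue by contradiction: suppose at most one vertex of $D$ has out-degree exactly $2$. Then, by the previous paragraph, every other vertex has out-degree at least $3$, and therefore
\[
|E(D)| \;=\; \sum_{v \in V(D)} d^+(v) \;\ge\; 3(n - 1) + 2 \;=\; 3n - 1,
\]
which contradicts the bound $|E(D)| \le 3n - 4$ from \Cref{thm:nonevenedges}. Hence $D$ contains at least two vertices of out-degree $2$. There is essentially no obstacle here beyond being careful about the minimum-out-degree step in the small-order case handled above; the rest is just the counting displayed.
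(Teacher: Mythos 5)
Your proof is correct and follows essentially the same route as the paper: both derive the claim by summing out-degrees and comparing against the $|E(D)|\le 3|V(D)|-4$ bound of \cref{thm:nonevenedges}. The only cosmetic difference is that you first establish minimum out-degree $2$ from strong $2$-connectivity and use it in the count, whereas the paper bounds the one exceptional vertex's out-degree below by $0$ and invokes strong $2$-connectivity only at the end; both yield the required contradiction.
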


\begin{proof}
	Let $n:=|V(D)|$. By \cref{thm:nonevenedges} we have $|E(D)| < 3(n-1)$. If at most one vertex in $D$ had out-degree less than $3$ we would have $|E(D)|=\sum_{v \in V(D)}{\deg^{\text{out}}(v)} \ge 0+3(n-1)$, a contradiction, and so there are at least two vertices of out-degree at most, and thus, because $D$ is strongly 2-connected, exactly two. 
\end{proof}

Besides edge deletions, butterfly contractions and $0$- and $1$-sums, we will use another special operation in order to reduce our digraphs.
A bidirected $K_2$ is called a \emph{digon}.
If we encounter an out-degree $2$ vertex $v$ in a digraph $D$ such that $v$ is contained in at most one digon, we will need to delete some edges incident with $v$ in order to create a butterfly contractible edge.
However, if $v$ is contained in two different digons, we will directly contract the three digon vertices, namely $v$ and the two vertices with which $v$ forms a digon each, into a single vertex.
While this is not a standard butterfly contraction, it is natural in the context of our proof and it preserves the property of being non-even, which we show later by using matching theory.

Note that bicontractions in matching covered graphs are a special case of tight cut contractions.
To see this, consider $X$ as the set of size $3$ containing a degree $2$ vertex $v$ together with its two neighbours.
Then $\Cut{}{X}$ is tight since every perfect matching must match $v$ to one of its neighbours and thus exactly one matching edge can and must leave $X$.
Thus one can derive the following corollary from \cref{thm:pfaffiantcc} or, alternatively, \cref{thm:pfaffian}.

\begin{corollary}\label{cor:pfaffianmatminors}
	Let $G$ be a Pfaffian matching covered graph.
	Then every matching minor of $G$ is Pfaffian.
\end{corollary}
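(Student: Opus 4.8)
The plan is to exploit the definition of a matching minor: $H$ arises from a conformal subgraph $G'$ of $G$ by a (possibly empty) sequence of bicontractions of degree-two vertices. I would therefore establish two closure properties of the class of Pfaffian graphs and chain them. First, that every conformal subgraph of a Pfaffian graph is Pfaffian; second, that bicontracting a single degree-two vertex of a Pfaffian matching covered graph again yields a Pfaffian graph. Applying the first property to $G'$ and then the second once per bicontraction along $G' \to \dots \to H$ gives that $H$ is Pfaffian.

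For the first step I would simply restrict a Pfaffian orientation $\vec{G}$ of $G$ to the conformal subgraph $H'$. The only thing to verify is that every conformal cycle $C$ of $H'$ is also a conformal cycle of $G$; then $C$ already has an odd number of edges in each direction in $\vec{G}$, hence in the restriction. But $H' - V(C)$ has a perfect matching $M_1$ by conformality of $C$ in $H'$, and $G - V(H')$ has a perfect matching $M_0$ since $V(H')$ is a conformal set of $G$, and because $V(C) \subseteq V(H')$ the union $M_0 \cup M_1$ is a perfect matching of $G - V(C)$. So $C$ is conformal in $G$, as needed.

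For the second step I would use the observation made just before the statement: if $v$ has degree two with neighbours $u_1, u_2$ and $X := \{v, u_1, u_2\}$, then the cut around $X$ is a non-trivial tight cut (every perfect matching matches $v$ to exactly one of $u_1, u_2$, hence meets this cut in exactly one edge, and $|X| = 3 \ge 2$). The tight cut contraction of the shore $X$ is exactly the bicontraction of $v$ — contract both edges at $v$ and delete the resulting parallel edges — so by \cref{thm:pfaffiantcc} this contraction, being one of the two graphs $G_1, G_2$ appearing there, is Pfaffian. Since a tight cut contraction of a matching covered graph is again matching covered, the argument can be iterated along the whole bicontraction sequence.

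I expect the one delicate point to be that a conformal subgraph of a matching covered graph need not itself be matching covered, whereas \cref{thm:pfaffiantcc} and the very notion of a tight cut are phrased for matching covered graphs. The cleanest fix I see is to observe that the orientation of an edge lying in no perfect matching is irrelevant to being Pfaffian, so one may replace $G'$ by the spanning subgraph on the edges that do lie in some perfect matching: it has the same perfect matchings as $G'$, is Pfaffian if and only if $G'$ is, and is a disjoint union of matching covered graphs, and the bicontractions can be carried out there; checking that a degree-two vertex of $G'$ is handled correctly in this ``matched core'' and that no conformal cycle runs through an unmatched edge is routine. In the bipartite case this is unnecessary: by \cref{thm:pfaffian} being Pfaffian is equivalent to having no $K_{3,3}$ matching minor, and the matching-minor relation is transitive, so a matching minor of a $K_{3,3}$-matching-minor-free graph is again $K_{3,3}$-matching-minor-free and hence Pfaffian. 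Everything else is short; the matched-core bookkeeping in the general case is the only genuinely fiddly part.
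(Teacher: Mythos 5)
Your proposal is correct and follows essentially the same route as the paper: the corollary is derived from \cref{thm:pfaffiantcc} via the observation (stated immediately before the corollary) that a bicontraction is the tight cut contraction of the shore $\{v,u_1,u_2\}$, with the bipartite alternative via \cref{thm:pfaffian} that you also mention. Your explicit handling of the conformal-subgraph step and of the fact that a conformal subgraph need not itself be matching covered is more careful than the paper's one-line derivation, but it is the same argument.
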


\begin{lemma} \label{lemma:3vertexcontraction}
	Let $D$ be a non-even digraph with a vertex $v\in\Fkt{V}{D}$ with $N^\text{out}(v)=\Set{v_1,v_2}$ such that $v$ induces a digon together with $v_i$ for both $i\in\Set{1,2}$.
	Then the digraph $\Star{D},$ obtained by first deleting all edges of the form $\Brace{u,v}$ with $u\notin\Set{v_1,v_2}$ as well as all edges between the vertices $v,v_1,v_2$, and then identifying $v_1$, $v$ and $v_2$ into a single vertex (and identifying occurring parallel edges into single edges afterwards), is non-even as well.
\end{lemma}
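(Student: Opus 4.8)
The plan is to move to the bipartite matching-theoretic side via \cref{thm:pfaffian} and realise the passage from $D$ to $\Star{D}$ as a composition of operations that preserve being Pfaffian. Let $G$ be the bipartite splitting graph of $D$ with its canonical perfect matching $M$, so that $\DirM{G}{M}=D$; write $a_x b_x\in M$ for the matching edge associated with a vertex $x$ of $D$ and $a_x b_y$ for the non-matching edge associated with an arc $\Brace{x,y}$ of $D$. By \cref{thm:pfaffian}, $D$ is non-even if and only if $G$ is Pfaffian, and likewise $\Star{D}$ is non-even if and only if the splitting graph $G^{\star}$ of $\Star{D}$ is Pfaffian; hence it suffices to show that $G^{\star}$ is Pfaffian.

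First I would translate the construction of $\Star{D}$ into the language of $G$. The hypotheses $N^\text{out}(v)=\Set{v_1,v_2}$ and ``$v$ forms a digon with $v_i$'' say precisely that $a_v$ has exactly the three neighbours $b_v,b_{v_1},b_{v_2}$, and that $b_v$ is adjacent to both $a_{v_1}$ and $a_{v_2}$. Deleting the arcs $\Brace{u,v}$ with $u\notin\Set{v_1,v_2}$ and all arcs among $v,v_1,v_2$ corresponds to deleting from $G$ the non-matching edges $a_u b_v$ for $u\notin\Set{v,v_1,v_2}$, the edges $a_v b_{v_1},a_v b_{v_2},a_{v_1}b_v,a_{v_2}b_v$, and, if present, $a_{v_1}b_{v_2}$ and $a_{v_2}b_{v_1}$. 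After these deletions $a_v,b_v$ form an isolated edge, the vertex $b_v$ has all its remaining neighbours inside $\Set{a_v,a_{v_1},a_{v_2}}$, and the vertex $a_v$ has all its remaining neighbours inside $\Set{b_v,b_{v_1},b_{v_2}}$; and the identification of $v_1,v,v_2$ into $w$ corresponds to identifying $a_{v_1},a_{v_2}$ into $a_w$, identifying $b_{v_1},b_{v_2}$ into $b_w$, and deleting the (now isolated) vertices $a_v,b_v$.

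Next I would verify that each of these steps preserves the Pfaffian property. Deleting non-matching edges is harmless: restricting a Pfaffian orientation of $G$ to any subgraph that still has a perfect matching is again Pfaffian, since every conformal cycle of the subgraph is a conformal cycle of $G$; the same applies to discarding the isolated edge $a_v b_v$. The crucial point -- and this is exactly where the digon hypothesis is used -- is that, after the deletions, the identification of $v_1,v,v_2$ is realised by contracting a tight cut of the modified splitting graph. Because the neighbourhoods of $a_v$ and of $b_v$ have been pruned into $\Set{b_v,b_{v_1},b_{v_2}}$ and $\Set{a_v,a_{v_1},a_{v_2}}$ respectively (which uses that $\Brace{v_1,v}$ and $\Brace{v_2,v}$ are arcs of $D$ and that $N^\text{out}(v)\subseteq\Set{v_1,v_2}$), the vertices $a_v$ and $b_v$ can serve as the ``core'' of a shore whose minority side consists of vertices all of whose neighbours stay inside the shore, so that the tightness criterion of \cref{lemma:tightcutmajority} applies; contracting this tight cut (possibly in two successive steps, re-applying \cref{lemma:tightcutmajority} each time) produces exactly $G^{\star}$, and by \cref{thm:pfaffiantcc} each such contraction preserves being Pfaffian. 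Composing everything yields that $G^{\star}$ is Pfaffian, hence $\Star{D}$ is non-even.

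The main obstacle is precisely this middle step: pinning down the correct tight cut (or pair of tight cuts) and matching the outcome against the bipartite description of $\Star{D}$. A naive three-element shore around $v_1$ or around $v_2$ does not work, since $v_1$ and $v_2$ may have arbitrarily many neighbours outside $\Set{v,v_1,v_2}$; one is forced to route the contraction through the vertices $a_v,b_v$, whose neighbourhoods are controlled by the two digons. Some additional care is needed for the degenerate subcases where $\Brace{v_1,v_2}$ and/or $\Brace{v_2,v_1}$ are arcs of $D$, and to confirm that discarding the isolated component $\Set{a_v,b_v}$ genuinely corresponds to the vertex $v$ of $D$ disappearing in $\Star{D}$.
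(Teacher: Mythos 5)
Your overall strategy---passing to the bipartite splitting graph, invoking \cref{thm:pfaffian}, and realising the identification of $v_1,v,v_2$ by contractions routed through the two vertices $a_v,b_v$---is the same as the paper's. However, there is a genuine gap at exactly the step you flag as the main obstacle, and as written the plan breaks down there. If you first delete, as you propose, the four edges $a_vb_{v_1},a_vb_{v_2},a_{v_1}b_v,a_{v_2}b_v$ (together with the external edges $a_ub_v$), then $a_vb_v$ becomes an isolated component: $a_v$ and $b_v$ each have degree one, their unique neighbour being the other. (Your later claim that the neighbourhoods have been ``pruned into'' $\{b_v,b_{v_1},b_{v_2}\}$ and $\{a_v,a_{v_1},a_{v_2}\}$ contradicts your own earlier description and only holds if the digon edges are \emph{not} deleted.) At that point no tight cut performs the identification: for the candidate shore $\{b_{v_1},a_v,b_{v_2}\}$ the minority vertex $a_v$ has its neighbour $b_v$ outside the shore, so \cref{lemma:tightcutmajority} fails, and more basically you have deleted precisely the edges needed to glue $a_{v_1}$ to $a_{v_2}$ and $b_{v_1}$ to $b_{v_2}$ through $a_v$ and $b_v$.

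The missing idea is a matching switch. The paper keeps the digon edges, observes that $a_v,b_v,a_{v_1},b_{v_1}$ span a conformal $4$-cycle because $v$ and $v_1$ form a digon, and replaces $M$ by $M'=(M\setminus\{a_vb_v,a_{v_1}b_{v_1}\})\cup\{a_vb_{v_1},a_{v_1}b_v\}$. Only then is the edge $a_vb_v$ deleted: the graph retains the perfect matching $M'$, and since $N^{\text{out}}(v)=\{v_1,v_2\}$ and the external in-edges of $b_v$ have been removed, both $a_v$ and $b_v$ now have degree exactly $2$ and are bicontractible. Bicontracting them merges $b_{v_1},a_v,b_{v_2}$ and $a_{v_1},b_v,a_{v_2}$ and yields exactly the splitting graph of $\Star{D}$ on the relevant component, so \cref{cor:pfaffianmatminors} applies. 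Without this reordering and the switch from $M$ to $M'$, your argument does not go through. A secondary point: \cref{thm:pfaffiantcc} and \cref{cor:pfaffianmatminors} are stated for matching covered graphs, whereas after your deletions the splitting graph need not be matching covered; the paper first restricts to the strong component containing $v,v_1,v_2$ (via \cref{lemma:01sumnoneven}) precisely to guarantee this, and your proposal omits that reduction.
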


\begin{proof}
	Let $D$ be the digraph together with the vertices $v$, $v_1$, and $v_2$ as in the assertion.
	By \cref{thm:noneven}, when deleting all incoming edges of $v$ with tails other than $v_1$ or $v_2$ we obtain a subdigraph $D'$ which is non-even as well.
	Moreover, by \cref{lemma:01sumnoneven}, $D'$ is non-even if and only if every strongly connected component of $D'$ is non-even.
	Since $v$, $v_1$ and $v_2$ are contained in two digons sharing a vertex, they all must appear in the same strong component of $D'$, say, $D_0'$.
	It suffices to show that the contraction of the three vertices into one in $D_0'$ preserves non-eveness.
	
	With $D_0'$ being strongly connected, there exists a bipartite matching covered graph $G$ together with a perfect matching $M\in\Perf{G}$ such that $D_0'=\DirM{G}{M}$.
	We identify the vertices $v, v_1$ and $v_2$ of $D_0'$ as the edges $e_v$, $e_{v_1}$ and $e_{v_2}$, respectively, in $M$.
	Additionally let $A$ and $B$ be the two colour classes of $G$.
	Then $a_x$ is the vertex of $e_x$ in $A$ and $b_x$ the vertex in $B$ for all $x\in\Set{v,v_1,v_2}$.
	Since $v$ and $v_1$ form a digon in $D_0'$, the edges $a_vb_{v_1}$ and $a_{v_1}b_v$ exist in $G$ and, thus, together with $e_v$ and $e_{v_1}$ they form a conformal cycle of length $4$.
	Therefore we can obtain a new perfect matching from $M$ as follows.
	\begin{align*}
	M'\coloneqq \Brace{M\setminus\Set{e_v,e_{v_1}}}\cup\Set{a_vb_{v_1},a_{v_1}b_v}
	\end{align*}
	Now consider $G-e_v$ and note that it still has $M'$ as a perfect matching and that it is a matching minor of $G$ (see \cref{fig:3vertexcontraction} for an illustration).
	By our assumptions, $v$ has exactly two out- and two in-neighbours in $D_0'$ and therefore the two vertices $a_v$ and $b_v$ must be of degree $2$ in $G-e_v$.
	Hence we can bicontract these two vertices and identify $b_{v_1}$, $a_v$, and $b_{v_2}$ into $b_{v_1vv_2}$ and the other three vertices into $a_{v_1vv_2}$ respectively. Let us call the resulting graph $\Star{G}$ and denote the edge $a_{v_1vv_2}b_{v_1vv_2}$ by $e_{v_1vv_2}$.
	One can easily check that $\Star{G}$ still is matching covered and since it is a matching minor of $G$ it must be Pfaffian by \cref{cor:pfaffianmatminors}.
	Moreover, the strongly connected digraph $\Star{D}_0\coloneqq\DirM{\Star{G}}{\Star{M}}$ must be non-even.
	Since $\Star{M}\setminus\Set{e_{v_1vv_2}}=M'\setminus\Set{a_vb_{v_1},a_{v_1}b_v,e_{v_2}}=M\setminus\Set{e_{v_1},e_v,e_{v_2}}$ and the two edges $e_{v_1}$ and $e_{v_1vv_2}$ can be identified (again see \cref{fig:3vertexcontraction}) $\Star{D}_0$ is isomorphic to the digraph obtained from $D_0'$ identifying the three vertices $v$, $v_1$, and $v_2$ into one, and so the latter has to be non-even as well. From this we deduce that all strong components of $\Star{D}$ are non-even, proving the assertion.
\end{proof}

\begin{figure}[h!]
	\begin{center}
		\begin{tikzpicture}[scale=0.7]
		
		\pgfdeclarelayer{background}
		\pgfdeclarelayer{foreground}
		
		\pgfsetlayers{background,main,foreground}
		
		%%%%% Vertex Styles %%%%%
		%~ \tikzstyle{v:main} = [draw, circle, scale=0.5, thick,fill=black]
		%~ \tikzstyle{v:tree} = [draw, circle, scale=0.25, thick,fill=black]
		%~ \tikzstyle{v:border} = [draw, circle, scale=0.75, thick,minimum size=10.5mm]
		%~ \tikzstyle{v:mainfull} = [draw, circle, scale=1, thick,fill]
		%~ \tikzstyle{v:ghost} = [inner sep=0pt,scale=1]
		%~ \tikzstyle{v:marked} = [circle, scale=1.2, fill=CornflowerBlue,opacity=0.3]
		%%%%% %%%%% %%%%%
		
		%%%%% Edge Styles %%%%%
		%~ \tikzset{>=latex} 
		%~ \tikzstyle{e:marker} = [line width=9pt,line cap=round,opacity=0.2,color=DarkGoldenrod]
		%~ \tikzstyle{e:colored} = [line width=1.5pt,color=myGreen,cap=round,opacity=1]
		%~ \tikzstyle{e:coloredthin} = [line width=0.45pt,opacity=1,color=DarkGoldenrod]
		%~ \tikzstyle{e:coloredborder} = [line width=2.1pt]
		%~ \tikzstyle{e:main} = [line width=0.8pt]
		%~ \tikzstyle{e:extra} = [line width=1.3pt,color=LavenderGray]
		%~ \tikzstyle{e:shiftedright} = [decoration={sl, raise=0.7pt},  decorate]
		%~ \tikzstyle{e:shiftedleft}  = [decoration={sl, raise=-0.7pt}, decorate]
		%%%%% %%%%% %%%%%

		%~ \pgfdeclaredecoration{sl}{initial}{
			%~ \state{initial}[width=\pgfdecoratedpathlength-1sp]{
			 %~ \pgfmoveto{\pgfpointorigin}
			%~ }
			%~ \state{final}{
			 %~ \pgflineto{\pgfpointorigin}
			%~ }
		%~ }
		
		\begin{pgfonlayer}{main}
		
		%%%%% Centered Ghost Vertices %%%%%
		\node (C) [] {};
		
		%%%%% Center 1 %%%%%
		\node (C1) [v:ghost, position=180:75mm from C] {};
		%\node (U1) [v:ghost, position=90:50mm from C1] {};
		\node (L1) [v:ghost, position=270:20mm from C1,align=center,font=\small] {$D_0'=\DirM{G}{\textcolor{myGreen}{M}}$};
		
		%%%%% Center 2 %%%%%
		\node (C2) [v:ghost, position=180:25mm from C] {};
		%\node (U2) [v:ghost, position=90:50mm from C2] {};
		\node (L2) [v:ghost, position=270:20mm from C2,align=center,font=\small] {$G$ and $\textcolor{myGreen}{M},\textcolor{BostonUniversityRed}{M'}\in\Perf{G}$};
		
		%%%%% Center 3 %%%%%
		\node (C3) [v:ghost, position=0:25mm from C] {};
		%\node (U3) [v:ghost, position=90:50mm from C3] {};
		\node (L3) [v:ghost, position=270:19mm from C3,align=center,font=\small] {$\Star{G}$ and $\Star{M}$};
		%%%%% %%%%% %%%%%
		
		%%%%% Center 3 %%%%%
		\node (C4) [v:ghost, position=0:63mm from C] {};
		%\node (U4) [v:ghost, position=90:50mm from C4] {};
		\node (L4) [v:ghost, position=270:20.5mm from C4,align=center,font=\small] {$\Star{D}_0=\DirM{\Star{G}}{\Star{M}}$};
		%%%%% %%%%% %%%%%

		%%%%% Vertices %%%%%
		
		%%%%% Center 1 %%%%%
		
		\node (v-1) [v:main,position=0:0mm from C1,fill=myGreen] {};
		\node (v1-1) [v:main,position=150:9mm from C1,fill=myGreen] {};
		\node (v2-1) [v:main,position=30:9mm from C1,fill=myGreen] {};
		
		\node(nv1) [v:ghost,position=240:7mm from v-1] {};
		\node(nv2) [v:ghost,position=270:7mm from v-1] {};
		\node(nv3) [v:ghost,position=300:7mm from v-1] {};
		
		\node(iv1-1) [v:ghost,position=120:7mm from v1-1] {};
		\node(ov1-1) [v:ghost,position=240:7mm from v1-1] {};
		
		\node(iv2-1) [v:ghost,position=60:7mm from v2-1] {};
		\node(ov2-1) [v:ghost,position=300:7mm from v2-1] {};
		
		\node(Lv-1) [v:ghost,position=90:4mm from v-1,font=\small] {$v$};
		\node(Lv1-1) [v:ghost,position=180:5mm from v1-1,font=\small] {$v_1$};
		\node(Lv2-1) [v:ghost,position=0:5mm from v2-1,font=\small] {$v_2$};
		
		%%%%% %%%%% %%%%%
		
		%%%%% Center 2 %%%%%
		
		\node (va-2) [v:main,position=90:4.5mm from C2,fill=white] {};
		\node (vb-2) [v:main,position=270:4.5mm from C2] {};
		
		\node (v1b-2) [v:main,position=180:11mm from va-2] {};
		\node (v1a-2) [v:main,position=180:11mm from vb-2,fill=white] {};
		
		\node (v2b-2) [v:main,position=0:11mm from va-2] {};
		\node (v2a-2) [v:main,position=0:11mm from vb-2,fill=white] {};
		
		\node(iv1-2) [v:ghost,position=120:7mm from v1b-2] {};
		\node(ov1-2) [v:ghost,position=240:7mm from v1a-2] {};
		
		\node(iv2-2) [v:ghost,position=60:7mm from v2b-2] {};
		\node(ov2-2) [v:ghost,position=300:7mm from v2a-2] {};
		
		\node(Lv-2) [v:ghost,position=0:3.5mm from C2,font=\small] {$e_v$};
		\node(Lv1-2) [v:ghost,position=180:15mm from C2,font=\small] {$e_{v_1}$};
		\node(Lv2-2) [v:ghost,position=0:16mm from C2,font=\small] {$e_{v_2}$};
		
		%%%%% %%%%% %%%%%
		
		%%%%% Center 3 %%%%%
		
		\node (vb-3) [v:main,position=90:4.5mm from C3] {};
		\node (va-3) [v:main,position=270:4.5mm from C3,fill=white] {};
		
		\node(iv1-3) [v:ghost,position=120:7mm from vb-3] {};
		\node(ov1-3) [v:ghost,position=240:7mm from va-3] {};
		
		\node(iv2-3) [v:ghost,position=60:7mm from vb-3] {};
		\node(ov2-3) [v:ghost,position=300:7mm from va-3] {};
		
		\node(Lv-3) [v:ghost,position=0:8.5mm from C3,font=\small] {$e_{v_1vv_2}$};
		
		%%%%% %%%%% %%%%%
		
		%%%%% Center 4 %%%%%
		
		\node(v-4) [v:main,fill=myGreen,position=0:0mm from C4] {};
		\node(vtiny-4) [v:tree,fill=BostonUniversityRed,position=0:0mm from C4] {};
		
		\node(iv1-4) [v:ghost,position=120:7mm from v-4] {};
		\node(ov1-4) [v:ghost,position=240:7mm from v-4] {};
		
		\node(iv2-4) [v:ghost,position=60:7mm from v-4] {};
		\node(ov2-4) [v:ghost,position=300:7mm from v-4] {};
		
		\node(Lv-4) [v:ghost,position=0:9mm from C4,font=\small] {$u_{v_1vv_2}$};
		
		%%%%% %%%%% %%%%%
		
		%%%%% %%%%% %%%%%

		%%%%% Edges %%%%%
		
		%%%%% Center 1 %%%%%
		
		\draw (v-1) [e:main,->,bend right=25] to (v1-1);
		\draw (v-1) [e:main,->,bend left=25] to (v2-1);
		
		\draw (nv1) [e:main,color=gray,->,bend left=15] to (v-1);
		\draw (nv2) [e:main,color=gray,->] to (v-1);
		\draw (nv3) [e:main,color=gray,->,bend right=15] to (v-1);
		
		\draw (iv1-1) [e:main,->,bend left=15] to (v1-1);
		\draw (v1-1) [e:main,->,bend left=15] to (ov1-1);
		\draw (v1-1) [e:main,->,bend right=25] to (v-1);
		
		\draw (iv2-1) [e:main,->,bend right=15] to (v2-1);
		\draw (v2-1) [e:main,->,bend right=15] to (ov2-1);
		\draw (v2-1) [e:main,->,bend left=25] to (v-1);
		
		%%%%% %%%%% %%%%%
		
		%%%%% Center 2 %%%%%
		
		\draw (v2b-2) [e:main] to (va-2);
		\draw (v2a-2) [e:main] to (vb-2);
		
		\draw (v1b-2) [e:main] to (iv1-2);
		\draw (v1a-2) [e:main] to (ov1-2);
		
		\draw (v2b-2) [e:main] to (iv2-2);
		\draw (v2a-2) [e:main] to (ov2-2);
		
		%%%%% %%%%% %%%%%
		
		%%%%% Center 3 %%%%%
		
		\draw (vb-3) [e:main] to (iv1-3);
		\draw (va-3) [e:main] to (ov1-3);
		
		\draw (vb-3) [e:main] to (iv2-3);
		\draw (va-3) [e:main] to (ov2-3);
		
		%%%%% %%%%% %%%%%
		
		%%%%% Center 4 %%%%%
		
		\draw (iv1-4) [e:main,->,bend left=15] to (v-4);
		\draw (v-4) [e:main,->,bend left=15] to (ov1-4);
		
		\draw (iv2-4) [e:main,->,bend right=15] to (v-4);
		\draw (v-4) [e:main,->,bend right=15] to (ov2-4);
		
		%%%%% %%%%% %%%%%
		
		%%%%% %%%%% %%%%%
		
		\end{pgfonlayer}
		
		%%%%% %%%%% %%%%%

		%%%%% Background %%%%%
		\begin{pgfonlayer}{background}
		
		%%%%% Center 1 %%%%%
		
		%%%%% %%%%% %%%%%
		
		%%%%% Center 2 %%%%%
		
		%\draw (va-2) [e:coloredborder,opacity=0.3] to (vb-2);
		%\draw (va-2) [e:colored,opacity=0.3] to (vb-2);
		\draw (va-2) [e:matching1,opacity=0.3] to (vb-2);
		
		%\draw (v1a-2) [e:coloredborder] to (v1b-2);
		%\draw (v1a-2) [e:colored] to (v1b-2);
		\draw (v1a-2) [e:matching1, cap=round] to (v1b-2);
		
		%\draw (v2a-2) [line width=5pt,cap=round] to (v2b-2);
		\draw (v2a-2) [e:matching1, e:shiftedright] to (v2b-2);
		%\draw (v2a-2) [e:coloredborder, dashed] to (v2b-2);
		\draw (v2a-2) [e:matching2border, e:shiftedleft] to (v2b-2);
		%%
		% TODO: I'm not sure if a white border around the arc is better or not
		%I like it
		%%
		
		%\draw (v1b-2) [e:coloredborder, dashed] to (va-2);
		\draw (v1b-2) [e:matching2] to (va-2);
		
		%\draw (v1a-2) [e:coloredborder] to (vb-2);
		\draw (v1a-2) [e:matching2] to (vb-2);
		
		%%%%% %%%%% %%%%%
		
		%%%%% Center 3 %%%%%
		
		\draw (va-3) [e:matching1, e:shiftedright] to (vb-3);
		\draw (va-3) [e:matching2border, e:shiftedleft] to (vb-3);		
		%\draw (va-3) [e:coloredborder] to (vb-3);
		%\draw (va-3) [e:colored,color=BostonUniversityRed, color=black, line width=0.35pt, e:shiftedright, dashed, double=BostonUniversityRed, double distance=0.8pt] to (vb-3);
		
		%%%%% %%%%% %%%%%
		
		%%%%% Center 4 %%%%%
		
		%%%%% %%%%% %%%%%
		
		\end{pgfonlayer}	
		%%%%% %%%%% %%%%%
		
		%%%%% Foreground %%%%%
		\begin{pgfonlayer}{foreground}
		
		%%%%% Center 1 %%%%%
		
		%%%%% %%%%% %%%%%
		
		%%%%% Center 2 %%%%%
		
		%%%%% %%%%% %%%%%
		
		%%%%% Center 3 %%%%%
		
		%%%%% %%%%% %%%%%
		
		%%%%% Center 4 %%%%%
		
		%%%%% %%%%% %%%%%
		
		\end{pgfonlayer}
		%%%%% %%%%% %%%%%
		\end{tikzpicture}
	\end{center}
	\caption{The four steps of the contraction of $v$, $v_1$, and $v_2$ in \cref{lemma:3vertexcontraction}. The matching $M'$ is given by dashed edges while the edges of $M$ are thicker.}
	\label{fig:3vertexcontraction}
\end{figure}
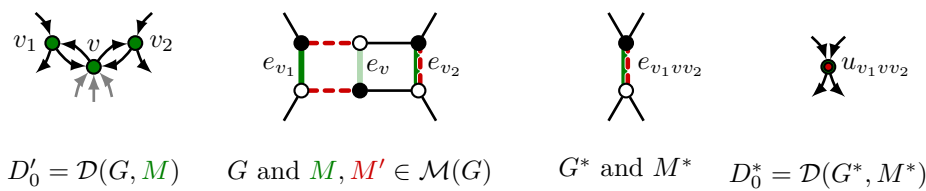

We are now ready to prove our main theorem, concluding this section.

\begin{proof}[Proof (of \cref{thm:mainthm})]
	Assume towards a contradiction that there is a non-even digraph $D$ that is not $2$-colourable.
	Furthermore, let us assume $D$ to be minimal (with respect to $\Abs{\Fkt{V}{D}}$) with this property.
	Clearly $\Abs{\Fkt{V}{D}}\geq 3$.
	
	First observe that, due to \cref{lemma:splitting}, $D$ is neither a $0$-sum nor a $1$-sum of some other non-even digraphs $D_1$ and $D_2$.
	Hence, $D$ does not have a directed cut or a cut vertex, and must therefore be strongly $2$-connected.
	By \cref{cor:degreetwo} there exists a vertex $v\in\Fkt{V}{D}$ with $\Outdeg{}{v}=2$.
	Let $e_1=\Brace{v,v_1}$ and $e_2=\Brace{v,v_2}$ be the two outgoing edges of $v$.
	We now distinguish two cases:
	
	\emph{Case 1}: Both edges $e_1$ and $e_2$ are contained in digons.
	
	If $e_1$ and $e_2$ are contained in digons, we can construct a non-even digraph $\Star{D}$ from $D$ by applying the operation from \cref{lemma:3vertexcontraction} on $v$ and its two out-neighbours.
	First, we delete all incoming edges of $v$ except $\Brace{v_1,v}$ and $\Brace{v_2,v}$ from the graph and then contract $v_1$, $v$, and $v_2$ into a single vertex.
	Since $\Abs{\Fkt{V}{\Star{D}}}=\Abs{\Fkt{V}{D}}-2$ and $\Star{D}$ is non-even, by the minimality of $D$, $\Star{D}$ admits a proper $2$-colouring $\Star{c}\colon\Fkt{V}{\Star{D}}\rightarrow\Set{0,1}$.
	Denote by $u_{v_1vv_2}$ the vertex of $\Star{D}$ into which $v_1$, $v$ and $v_2$ were identified.
	We now define a $2$-colouring for the vertices $x\in\Fkt{V}{D}$ as follows.
	\begin{align*}
	\Fkt{c}{x}\coloneqq\ThreeCases{\Fkt{\Star{c}}{u_{v_1vv_2}}}{x\in\Set{v_1,v_2}}{1-\Fkt{\Star{c}}{u_{v_1vv_2}}}{x=v}{\Fkt{\Star{c}}{x}}{\text{otherwise}}
	\end{align*}
	By assumption, $D$ is not $2$-colourable and thus there must be a directed cycle $C$ whose vertices receive the same colour from $c$.
	Moreover, $C$ must avoid $v$, since any directed cycle in $D$ containing $v$ must either contain $v_1$ or $v_2$ and thus, by the definition of $c$, cannot be monochromatic.
	Consequently, $C$ must be contained in $D-v$.
	By identifying possible occurrences of $v_1$ or $v_2$ with $u_{v_1vv_2}$, the existence of a closed directed monochromatic walk $\Star{C}$ in $\Star{D}$ follows.
	Note that $v_1$ and $v_2$ do not form a digon, as otherwise $v$, $v_1$ and $v_2$ would be an odd bicycle in $D$, contradicting the assumption that $D$ is non-even.
	Hence, the walk $\Star{C}$ must contain a directed cycle which, in turn, must also be monochromatic with respect to $\Star{c}$.
	However, the existence of such a cycle contradicts the choice of $\Star{c}$.
	
	\emph{Case 2}: At least one of the edges $e_1$ or $e_2$ is not contained in a digon.
	
	Without loss of generality assume $e_1$ to not be part of a digon in $D$.
	We now delete all edges with endpoints $v$ and $v_2$, thereby obtaining a non-even digraph in which $v$ has a single out-going edge, which is $e_1$.
	With this, $e_1$ is now butterfly contractible.
	Let $D'$ be the digraph obtained by contracting $e_1$ and let $w$ be the contraction vertex.
	Butterfly contractions are very special cases of $1$-sums, where one of the two digraphs $D_1$ and $D_2$ is a digraph on two vertices and the other one is $D'$.
	Therefore, \cref{lemma:01sumnoneven} yields that $D'$ is again non-even, alternatively, this follows from \cref{thm:noneven}.
	Moreover, as $\Abs{\Fkt{V}{D'}}=\Abs{\Fkt{V}{D}}-1$, $D'$ must admit a proper $2$-colouring $c'\colon\Fkt{V}{D'} \rightarrow \{0,1\}$ by the minimality of $D$.
	Similar to the first case we use $c'$ to define a $2$-colouring $c$ for the vertices $x\in\Fkt{V}{D}$.
	\begin{align*}
	\Fkt{c}{x}\coloneqq\ThreeCases{\Fkt{c'}{w}}{x=v_1}{1-\Fkt{c'}{v_2}}{x=v}{\Fkt{c'}{x}}{\text{otherwise}}
	\end{align*}
	Again, we assumed $D$ to not be $2$-colourable and thus there must be a monochromatic (with respect to $c$) directed cycle $C$ in $D$.
	If $C$ contains $v$, it cannot contain $v_2$ as $\Fkt{c}{v_2} \neq \Fkt{c}{v}$.
	Therefore, it must contain the edge $e_1$.
	Since $e_1$ is not contained in a digon we have $\Abs{\Fkt{V}{C}}\geq 3$ and thus there exists a cycle $C'$ in $D'$ with $\Fkt{V}{C'}\setminus{w}=\Fkt{V}{C}\setminus\Set{v,v_1}$.
	By definition of $c$, $C'$ must be monochromatic with respect to $c'$ which yields the desired contradiction in this case.
	Otherwise, $C$ does not contain $v$.
	Then, possibly after replacing $v_1$ with $w$, $C$ again corresponds to a directed cycle in $D'$ which, again, has to be monochromatic with respect to $c'$, contradicting our choice of $c'$.
\end{proof}

The proof of \cref{thm:mainthm} yields a polynomial time algorithm to find a proper $2$-colouring of a non-even digraph.
One first reduces a digraph $D$ into its strong components, then finds the cut vertices and decomposes $D$ into strongly $2$-connected digraphs.
Then, one either finds a out-degree $2$ vertex contained in two digons, which can be dealt
with by Case $1$ of the proof, or Case $2$ of the proof can be applied.
Afterwards, these reduction steps are reiterated until every such graph is reduced to a digraph on one or two vertices, which is trivially $2$-colourable.
Then, by reversing the reductions step by step, we can extend these $2$-colourings until all of $D$ is coloured.
Additionally, the work of Robertson et.\@ al.\@ and McCuaig \cite{robertson1999permanents,mccuaig2004polya} imply polynomial time algorithms to recognise non-even digraphs.
Hence, given a digraph $D$ we can decide whether it is non-even and then find a proper $2$-colouring in polynomial time.

\section{Computational Hardness}

Similar to the undirected case, the problem of deciding whether a given digraph $D$ has dichromatic number at most $k$ is \NP-complete for all $k \ge 2$ \cite{bojannp}, \cite{hochstattler2018complexity}.
For the chromatic number however, for example by using Courcelle's Theorem, one can approach colouring on undirected graphs by parametrising with treewidth \cite{courcelle1990monadic}.
While many problems become tractable for fixed parameters in the undirected case (see \cite{downey2012parameterized} for an introduction to the topic) directed width measures in general do not seem as capable \cite{ganian2010there}.
In this section we explore the computational complexity of deciding the colourability of digraphs regarding fixed parameters.

We show that the positive results for treewidth and colouring of graphs do not carry over to the world of digraphs.
More precisely and somewhat surprisingly, we show that deciding whether a digraph is $2$-colourable is \NP-hard even if $\VCNum{D} \leq 6$, where $D$ is the input digraph.
With directed treewidth being bounded in a function of $\VCNum{D}$ this implies the hardness for bounded width.
This strengthens the previous hardness reduction due to \cite{bokal2004circular}.
Formally, we consider the following decision problem.

\ProblemDef{\DigraphColouringProblem[$k$]}
{A digraph $D$.}
{Does there exist a proper $k$-colouring for $D$?}

Our hardness results bounds not only $\VCNum{D}$, but also the \emph{out-degeneracy} of $D$.

\begin{definition}
	Let $D$ be a digraph.
	The \emph{out-degeneracy} of $D$ (written $\Degeneracy{D}$) is the minimum $x$ such that a linear ordering $\preceq$ of $V(D)$ exists with the property that $\Abs{\Set{u \in \OutN{}{v} | u \preceq v}} \leq x$ for each $v \in V(D)$.
\end{definition}

The hardness result presented below is relatively tight with respect to $\VCNum{D}$ and $\Degeneracy{D}$:
If $\VCNum{D} \leq k-1$, we can find a feedback vertex set $S$ in time $f(k)n^{\BigO{1}}$ \cite{chen2008fixed}, assign each vertex of $S$ a different colour in $[k-1]$ and the remaining vertices the remaining colour $k$.
Further, one can easily find a proper $(\Degeneracy{D}+1)$-colouring of a digraph by greedily assigning each vertex a colour which does not appear in its smaller outneighbours.
Hence, if $\Degeneracy{D} \leq k-1$ or $\VCNum{D} \leq k-1$, finding a proper $k$-colouring for $D$ can be done in $f(k)n^{\BigO{1}}$ time.
In contrast, our hardness result excludes the existence of an $n^{f(k)}$-time algorithm if we only assume $\VCNum{D} \leq k+4$ and $\Degeneracy{D} \leq k+1$ instead, leaving only the cases $k \leq \VCNum{D} \leq k+3$ and $\Degeneracy{D} = k$ open.

In what follows, for a natural number $n$ we denote the set $\Set{1,\dots,n}$ by $[n]$.

\begin{lemma}
	\label{lemma:np-hard 2 fvs}
	\DigraphColouringProblem[2] is \NP-hard even if $\VCNum{D} \leq 6$ and $\Degeneracy{D} \leq 3$, where $D$ is the input digraph.
\end{lemma}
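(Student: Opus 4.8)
The plan is to reduce from a classical \NP-hard problem whose instances naturally encode a ``global'' constraint that can be funneled through a bounded-size feedback vertex set. The most natural candidate is a SAT-like problem or, better, a graph colouring / hypergraph colouring problem. I would reduce from \textsc{NAE-3-SAT} (or equivalently from the $2$-colourability of a $3$-uniform hypergraph), since the digraph $2$-colourability condition (no monochromatic directed cycle) is itself a ``not-all-equal''-style condition and the known reductions in \cite{bojannp} already exploit this flavour; the new ingredient is to keep $\VCNum{D}$ and $\Degeneracy{D}$ bounded by constants. The first step is to introduce a constant-size ``gadget core'' $S$ of six (or fewer) vertices whose colouring is forced, up to symmetry, to encode a fixed reference bit-pattern: for instance two vertices $t,f$ joined by a digon so that $c(t)\neq c(f)$, together with a few auxiliary vertices forming short digon-paths so that the whole core lies on directed cycles and its colouring is essentially unique. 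Every gadget in the rest of the construction will attach only to the vertices of $S$, so that $D-S$ is acyclic and hence $\VCNum{D}\le |S|\le 6$.

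Next I would build, for each variable $x_i$ of the NAE-instance, a gadget $V_i$ consisting of just one or two fresh vertices, all of whose edges go to or from $S$, arranged so that in any proper $2$-colouring the gadget's colour relative to $S$ records the truth value of $x_i$; the acyclicity of $D-S$ forces these gadgets not to interact with each other except through $S$. For each clause $(\ell_1,\ell_2,\ell_3)$ I would add a clause gadget on a constant number of fresh vertices, again attached only to $S$ and to (copies of) the literal-vertices, such that a proper $2$-colouring of the gadget exists if and only if the three literal-colours are not all equal; the standard trick is to route a potential monochromatic directed cycle through the gadget precisely when the NAE-constraint is violated. Because each fresh vertex has only a bounded number of out-neighbours into the constant-size core $S$ together with at most a constant number of out-neighbours inside its own gadget, ordering the vertices as ``core last, then gadgets'' gives $\Degeneracy{D}\le 3$ (one needs to be careful to check that each vertex sees at most three earlier out-neighbours; if the naïve gadget gives a larger bound, one subdivides edges or redistributes the constraint among two vertices to bring it down to $3$). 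Finally I would verify the two directions of correctness: a NAE-satisfying assignment yields a proper $2$-colouring by colouring each $V_i$ according to its truth value and extending over the clause gadgets, and conversely any proper $2$-colouring, restricted and normalised against the forced colouring of $S$, reads off a NAE-satisfying assignment.

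The main obstacle I anticipate is the simultaneous control of \emph{both} parameters: it is easy to design a clause gadget that makes the reduction work and keeps $\VCNum{D}$ small (just hang everything off a constant core), and it is easy to keep out-degeneracy small in isolation, but combining them forces the gadgets to be extremely lean — essentially every clause gadget must be implementable with $O(1)$ vertices each having out-degree $\le 3$ counting edges into the shared core. Getting the NAE-constraint enforced under that severe budget, while making sure no \emph{unintended} monochromatic directed cycle can appear (in particular through the core $S$ or between two different clause gadgets sharing core vertices), is the delicate part; I would handle it by making all inter-gadget edges point ``towards'' the core in the degeneracy order, so that the only directed cycles in $D$ are the short ones living inside individual gadgets together with $S$, and then it suffices to analyse each gadget locally. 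A secondary obstacle is ruling out parity/orientation artefacts — one must double-check that the digons used to force $c(t)\neq c(f)$ do not themselves create spurious monochromatic cycles, which is automatic since a digon is a directed $2$-cycle and hence is monochromatic iff its two endpoints share a colour, exactly the constraint we want.
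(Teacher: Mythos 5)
Your high-level architecture---a constant-size core of digon pairs serving as the feedback vertex set, with variable and clause gadgets whose constraints are enforced by directed cycles routed through that core---is exactly the architecture of the paper's proof, which uses the six-vertex core $\{t_1,f_1,t_2,f_2,t_3,f_3\}$ but reduces from \SAT\ rather than \textsc{NAE-3-SAT}. As written, however, your plan has gaps that are not mere matters of elaboration. First, the clause gadget is the heart of the reduction and you never construct it. A monochromatic directed cycle expresses ``all of these vertices received the \emph{same} colour'', which most naturally encodes an OR-clause by a single cycle through all literal-occurrence vertices of the clause together with an $f$-type core vertex (monochromatic exactly when every literal is coloured ``false''); this is what the paper does. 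Encoding a not-all-equal constraint instead needs \emph{two} such cycles through the same three occurrence vertices, one anchored at a $t$-type and one at an $f$-type core vertex, and you would then have to re-verify acyclicity of $D-S$ and the out-degeneracy bound for that denser gadget; asserting that ``a proper $2$-colouring of the gadget exists iff the literal colours are not all equal'' is not a construction. Second, and more seriously, you have no mechanism forcing the different occurrences of a literal in different clauses to receive consistent colours. You write ``(copies of) the literal-vertices'', but if each clause gadget uses its own copies, the backward direction collapses: a proper $2$-colouring may colour the copies of $x_j$ in two clauses differently, and no truth assignment can be read off. The paper devotes its literal and variable gadgets (properties (i) and (ii)) precisely to synchronising the occurrence vertices $l_{j,i}$ with a single vertex $l_j$ per literal, via triangles through $t_1,f_1$ and $t_2,f_2$; some such synchronisation layer is indispensable and absent from your plan. (If instead the clause gadgets attach to a single shared vertex per literal, that vertex's degree grows with the number of occurrences and the degeneracy analysis must be redone.)

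Two further cautions. Your fallback for the degeneracy bound---``one subdivides edges''---is not sound for dichromatic-number reductions: subdividing a directed edge inserts a new vertex into every cycle through that edge, which weakens the monochromaticity constraint the cycle was meant to enforce and can destroy the correctness of a gadget. And the claim that ``the only directed cycles in $D$ are the short ones living inside individual gadgets together with $S$'' is exactly where unintended cycles hide: a directed cycle may leave one gadget, pass through a core vertex, enter a different gadget, and return through another core vertex. The paper handles this with an explicit sink-peeling argument on each $D[c^{-1}(d)]$; your plan defers it entirely.
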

\begin{proof}
	We provide a reduction from \SAT\ to \DigraphColouringProblem[2].
	Let $C_1, C_2, \dots, C_m$ denote the clauses and $X_1, X_2, \dots, X_n$ the variables in the \SAT\ instance.
	We construct a digraph $D$ which is 2-colourable if and only if there is a satisfying assignment for the \SAT\ instance.
	For each clause $C_i$ we add the vertex $c_i$ to $D$, and for each literal $L_j \in C_i$ we add the vertex $l_{j,i}$.
	That is, we add the vertex $x_{j,i}$ if $X_j \in C_i$ and the vertex $\overline{x}_{j,i}$ if $\overline{X}_j \in C_i$.
	To simplify our notation, we assume that a literal $L_j$ is associated with the variable $X_j$, that is $L_j = X_j$ or $L_j = \overline{X}_j$, and that $l_j$ corresponds to the lower-case variant of $L_j$, that is $l_j = x_j$ if $L_j = X_j$ and $l_j = \overline{x}_j$ if $L_j = \overline{X}_j$.
	We want the colour of a vertex $x_{j,i}$ to correspond to an assignment of the variable $X_i$.
	To this end, we add a set $S = \{t_1, t_2, t_3, f_1, f_2, f_3\}$ of vertices which will correspond to a feedback vertex set in $D$.
	Furthermore, for each literal $L_j$ we add a vertex $l_j$.
	We now add cycles to $D$ in such a way that any proper colouring $c\colon \Fkt{V}{D} \rightarrow \Set{0,1}$ must have the following properties.
	\begin{enumerate}[label=(\roman*)]
		\item\label{item:lji = ljh} $\Fkt{c}{l_{j,i}} = \Fkt{c}{l_{j,h}}$ for all $j \in [n]$ and $i,h \in [m]$, and
		\item\label{item:nxji != xji} $\Fkt{c}{\overline{x}_{j,h}} \neq \Fkt{c}{x_{j,i}}$ for all $j \in [n]$ and $i,h \in [m]$.
	\end{enumerate}
	Clearly, these properties allow us to obtain a variable assignment from any proper $2$-colouring of $D$.
	
	To ensure \ref{item:lji = ljh}, we construct a \emph{literal gadget} (illustrated in \cref{subfig:literal gadget}).
	First, we add the cycle $t_1, f_1$.
	Then, for each literal $L_j$ and each clause $C_i$ with $L_j \in C_i$ we add the cycles $l_j, l_{j,i}, t_1$ and $l_j, l_{j,i}, f_1$.
	If there are $i,h \in [n]$ such that $l_{j,i}$ and $l_{j,h}$ have different colours, one of them, say, $l_{j,i}$, must have the same colour as $l_j$.
	Since $t_1, f_1$ forms a cycle, they must have different colours in any solution of the $2$-colouring problem.
	Hence, the cycle $l_j, l_{j,i}, t_1$ or the cycle $l_j, l_{j,i}, f_1$ is monochromatic if $l_{i,j}$ and $l_{i,h}$ have different colours.
	This proves \ref{item:lji = ljh}.
	
	For \ref{item:nxji != xji}, we construct a \emph{variable gadget} (illustrated in \cref{subfig:variable gadget}).
	First, we add the cycle $t_2, f_2$.
	Then, we add the cycles $x_{j}, \overline{x}_{j}, t_2$ and $x_{j}, \overline{x}_{j}, f_2$ for each $j \in [n]$ where both $X_j$ and $\overline{X}_j$ appear in the formula.
	If $x_{j}$ and $\overline{x}_{j}$ receive the same colour, then one of the added cycles is monochromatic as $t_2$ and $f_2$ must receive different colours.
	Because of the literal gadgets, we know that $l_j$ and $l_{j,i}$ have different colours for all $j \in [n]$ and $i \in [m]$.
	As $x_j$ and $\overline{x}_j$ have different colours, it follows from \ref{item:lji = ljh} that $x_{j,i}$ and $\overline{x}_{j,h}$ have different colours for all $j \in [n]$ and all $h,i \in [m]$.
	This implies \ref{item:nxji != xji}.
	\begin{figure}
		\centering
		\begin{subfigure}[t]{0.3\textwidth}
			\begin{center}
				\begin{tikzpicture}[scale=0.7]
				
				\pgfdeclarelayer{background}
				\pgfdeclarelayer{foreground}
				
				\pgfsetlayers{background,main,foreground}
				
				%%%%% Vertex Styles %%%%%
%				\tikzstyle{v:main} = [draw, circle, scale=0.5, thick,fill=black]
%				\tikzstyle{v:tree} = [draw, circle, scale=0.3, thick,fill=black]
%				\tikzstyle{v:border} = [draw, circle, scale=0.75, thick,minimum size=10.5mm]
%				\tikzstyle{v:mainfull} = [draw, circle, scale=1, thick,fill]
%				\tikzstyle{v:ghost} = [inner sep=0pt,scale=1]
%				\tikzstyle{v:marked} = [circle, scale=1.2, fill=CornflowerBlue,opacity=0.3]
%				%%%%% %%%%% %%%%%
%				
%				%%%%% Edge Styles %%%%%
%				\tikzset{>=latex} 
%				\tikzstyle{e:marker} = [line width=9pt,line cap=round,opacity=0.2,color=DarkGoldenrod]
%				\tikzstyle{e:colored} = [line width=1.2pt,color=BostonUniversityRed,cap=round,opacity=0.8]
%				\tikzstyle{e:coloredthin} = [line width=1.1pt,opacity=0.8]
%				\tikzstyle{e:coloredborder} = [line width=2pt]
%				\tikzstyle{e:main} = [line width=1pt]
%				\tikzstyle{e:extra} = [line width=1.3pt,color=LavenderGray]
%				%%%%% %%%%% %%%%%
				
				\begin{pgfonlayer}{main}
				
				%%%%% Centered Ghost Vertices %%%%%
				\node (C) [] {};
				
				%%%%% Left Center %%%%%
				\node (C1) [v:ghost, position=180:25mm from C] {};
				%\node (U1) [v:ghost, position=90:50mm from C1] {};
				%		\node (L1) [v:ghost, position=270:27mm from C1,align=center] {};
				
				%%%%% Center %%%%%
				\node (C2) [v:ghost, position=0:0mm from C] {};
				%\node (U2) [v:ghost, position=90:50mm from C2] {};
				%		\node (L2) [v:ghost, position=270:27mm from C2,align=center] {};
				
				%%%%% Right Center %%%%%
				\node (C3) [v:ghost, position=0:25mm from C] {};
				%\node (U3) [v:ghost, position=90:50mm from C3] {};
				%		\node (L3) [v:ghost, position=270:27mm from C3,align=center] {};
				%%%%% %%%%% %%%%%

				%%%%% Vertices %%%%%
				
				%%%%% Left Center %%%%%

				%%%%% %%%%% %%%%%
				
				%%%%% Center %%%%%
				
				\node (up) [v:ghost,position=90:9mm from C2] {};
				\node (x1) [v:main,position=270:9mm from C2] {};
				\node (x11) [v:main,position=180:14mm from x1] {};
				\node (x12) [v:main,position=0:14mm from x1] {};
				\node (t1) [v:main,position=180:14mm from up,fill=myGreen] {};
				\node (f1) [v:main,position=0:14mm from up,fill=BostonUniversityRed] {};
				
				\node (Lx1) [v:ghost,position=270:4.5mm from x1,font=\small] {$x_1$};
				\node (Lx11) [v:ghost,position=225:5.5mm from x11,font=\small] {$x_{1,1}$};
				\node (Lx12) [v:ghost,position=315:5.5mm from x12,font=\small] {$x_{1,2}$};
				\node (Lt1) [v:ghost,position=135:5.5mm from t1,font=\small] {$t_1$};
				\node (Lf1) [v:ghost,position=45:5.5mm from f1,font=\small] {$f_1$};
				
				%%%%% %%%%% %%%%%
				
				%%%%% Right Center %%%%%
				
				%%%%% %%%%% %%%%%
				
				%%%%% %%%%% %%%%%

				%%%%% Edges %%%%%
				
				%%%%% Left Center %%%%%

				%%%%% %%%%% %%%%%
				
				%%%%% Center %%%%%
				
				\draw (t1) [e:main,->,bend left=10] to (f1);
				\draw (f1) [e:main,->,bend left=10] to (t1);
				\draw (t1) [e:main,->] to (x1);
				\draw (f1) [e:main,->] to (x1);
				\draw (x1) [e:main,->] to (x11);
				\draw (x1) [e:main,->] to (x12);
				\draw (x11) [e:main,->] to (t1);
				\draw (x11) [e:main,->] to (f1);
				\draw (x12) [e:main,->] to (t1);
				\draw (x12) [e:main,->] to (f1);

				%%%%% %%%%% %%%%%
				
				%%%%% Right Center %%%%%

				%%%%% %%%%% %%%%%
				
				%%%%% %%%%% %%%%%
				
				\end{pgfonlayer}
				
				%%%%% %%%%% %%%%%

				%%%%% Background %%%%%
				\begin{pgfonlayer}{background}
				
				\end{pgfonlayer}	
				%%%%% %%%%% %%%%%
				
				%%%%% Foreground %%%%%
				\begin{pgfonlayer}{foreground}

				\end{pgfonlayer}
				%%%%% %%%%% %%%%%				
				\end{tikzpicture}
			\end{center}
			\subcaption{Literal gadget.}
			\label{subfig:literal gadget}
		\end{subfigure}
		~
		\begin{subfigure}[t]{0.3\textwidth}
			\begin{center}
				\begin{tikzpicture}[scale=0.7]
				
				\pgfdeclarelayer{background}
				\pgfdeclarelayer{foreground}
				
				\pgfsetlayers{background,main,foreground}
				
				%%%%% Vertex Styles %%%%%
%				\tikzstyle{v:main} = [draw, circle, scale=0.5, thick,fill=black]
%				\tikzstyle{v:tree} = [draw, circle, scale=0.3, thick,fill=black]
%				\tikzstyle{v:border} = [draw, circle, scale=0.75, thick,minimum size=10.5mm]
%				\tikzstyle{v:mainfull} = [draw, circle, scale=1, thick,fill]
%				\tikzstyle{v:ghost} = [inner sep=0pt,scale=1]
%				\tikzstyle{v:marked} = [circle, scale=1.2, fill=CornflowerBlue,opacity=0.3]
%				%%%%% %%%%% %%%%%
%				
%				%%%%% Edge Styles %%%%%
%				\tikzset{>=latex} 
%				\tikzstyle{e:marker} = [line width=9pt,line cap=round,opacity=0.2,color=DarkGoldenrod]
%				\tikzstyle{e:colored} = [line width=1.2pt,color=BostonUniversityRed,cap=round,opacity=0.8]
%				\tikzstyle{e:coloredthin} = [line width=1.1pt,opacity=0.8]
%				\tikzstyle{e:coloredborder} = [line width=2pt]
%				\tikzstyle{e:main} = [line width=1pt]
%				\tikzstyle{e:extra} = [line width=1.3pt,color=LavenderGray]
				%%%%% %%%%% %%%%%
				
				\begin{pgfonlayer}{main}
				
				%%%%% Centered Ghost Vertices %%%%%
				\node (C) [] {};
				
				%%%%% Left Center %%%%%
				\node (C1) [v:ghost, position=180:25mm from C] {};
				%\node (U1) [v:ghost, position=90:50mm from C1] {};
				%		\node (L1) [v:ghost, position=270:27mm from C1,align=center] {};
				
				%%%%% Center %%%%%
				\node (C2) [v:ghost, position=0:0mm from C] {};
				%\node (U2) [v:ghost, position=90:50mm from C2] {};
				%		\node (L2) [v:ghost, position=270:27mm from C2,align=center] {};
				
				%%%%% Right Center %%%%%
				\node (C3) [v:ghost, position=0:25mm from C] {};
				%\node (U3) [v:ghost, position=90:50mm from C3] {};
				%		\node (L3) [v:ghost, position=270:27mm from C3,align=center] {};
				%%%%% %%%%% %%%%%

				%%%%% Vertices %%%%%
				
				%%%%% Left Center %%%%%

				%%%%% %%%%% %%%%%
				
				%%%%% Center %%%%%
				
				\node (up) [v:ghost,position=90:9mm from C2] {};
				\node (down) [v:ghost,position=270:9mm from C2] {};
				\node (x1) [v:main,position=180:9mm from down] {};
				\node (x1B) [v:main,position=0:9mm from down] {};
				\node (t2) [v:main,position=180:9mm from up,fill=myGreen] {};
				\node (f2) [v:main,position=0:9mm from up,fill=BostonUniversityRed] {};
				
				\node (Lx1) [v:ghost,position=225:5.5mm from x1,font=\small] {$x_1$};
				\node (Lx1B) [v:ghost,position=315:5.5mm from x1B,font=\small] {$\overline{x}_1$};
				\node (Lt2) [v:ghost,position=135:5.5mm from t2,font=\small] {$t_2$};
				\node (Lf2) [v:ghost,position=45:5.5mm from f2,font=\small] {$f_2$};
				
				%%%%% %%%%% %%%%%
				
				%%%%% Right Center %%%%%
				
				%%%%% %%%%% %%%%%
				
				%%%%% %%%%% %%%%%

				%%%%% Edges %%%%%
				
				%%%%% Left Center %%%%%

				%%%%% %%%%% %%%%%
				
				%%%%% Center %%%%%
				
				\draw (t2) [e:main,->,bend left=10] to (f2);
				\draw (f2) [e:main,->,bend left=10] to (t2);
				\draw (t2) [e:main,->] to (x1);
				\draw (f2) [e:main,->] to (x1);
				\draw (x1) [e:main,->] to (x1B);
				\draw (x1B) [e:main,->] to (t2);
				\draw (x1B) [e:main,->] to (f2);

				%%%%% %%%%% %%%%%
				
				%%%%% Right Center %%%%%

				%%%%% %%%%% %%%%%
				
				%%%%% %%%%% %%%%%
				
				\end{pgfonlayer}
				
				%%%%% %%%%% %%%%%

				%%%%% Background %%%%%
				\begin{pgfonlayer}{background}
				
				\end{pgfonlayer}	
				%%%%% %%%%% %%%%%
				
				%%%%% Foreground %%%%%
				\begin{pgfonlayer}{foreground}

				\end{pgfonlayer}
				%%%%% %%%%% %%%%%
				\end{tikzpicture}
			\end{center}
			\subcaption{Variable gadget.}
			\label{subfig:variable gadget}
		\end{subfigure}
		~
		\begin{subfigure}[t]{0.3\textwidth}
			\begin{center}
			\begin{tikzpicture}[xscale=-0.7,yscale=0.7]
				
				\pgfdeclarelayer{background}
				\pgfdeclarelayer{foreground}
				
				\pgfsetlayers{background,main,foreground}
				
				%%%%% Vertex Styles %%%%%
%				\tikzstyle{v:main} = [draw, circle, scale=0.5, thick,fill=black]
%				\tikzstyle{v:tree} = [draw, circle, scale=0.3, thick,fill=black]
%				\tikzstyle{v:border} = [draw, circle, scale=0.75, thick,minimum size=10.5mm]
%				\tikzstyle{v:mainfull} = [draw, circle, scale=1, thick,fill]
%				\tikzstyle{v:ghost} = [inner sep=0pt,scale=1]
%				\tikzstyle{v:marked} = [circle, scale=1.2, fill=CornflowerBlue,opacity=0.3]
%				%%%%% %%%%% %%%%%
%				
%				%%%%% Edge Styles %%%%%
%				\tikzset{>=latex} 
%				\tikzstyle{e:marker} = [line width=9pt,line cap=round,opacity=0.2,color=DarkGoldenrod]
%				\tikzstyle{e:colored} = [line width=1.2pt,color=BostonUniversityRed,cap=round,opacity=0.8]
%				\tikzstyle{e:coloredthin} = [line width=1.1pt,opacity=0.8]
%				\tikzstyle{e:coloredborder} = [line width=2pt]
%				\tikzstyle{e:main} = [line width=1pt]
%				\tikzstyle{e:extra} = [line width=1.3pt,color=LavenderGray]
				%%%%% %%%%% %%%%%
				
				\begin{pgfonlayer}{main}
				
				%%%%% Centered Ghost Vertices %%%%%
				\node (C) [] {};
				
				%%%%% Left Center %%%%%
				\node (C1) [v:ghost, position=180:25mm from C] {};
				%\node (U1) [v:ghost, position=90:50mm from C1] {};
				%		\node (L1) [v:ghost, position=270:27mm from C1,align=center] {};
				
				%%%%% Center %%%%%
				\node (C2) [v:ghost, position=0:0mm from C] {};
				%\node (U2) [v:ghost, position=90:50mm from C2] {};
				%		\node (L2) [v:ghost, position=270:27mm from C2,align=center] {};
				
				%%%%% Right Center %%%%%
				\node (C3) [v:ghost, position=0:25mm from C] {};
				%\node (U3) [v:ghost, position=90:50mm from C3] {};
				%		\node (L3) [v:ghost, position=270:27mm from C3,align=center] {};
				%%%%% %%%%% %%%%%

				%%%%% Vertices %%%%%
				
				%%%%% Left Center %%%%%

				%%%%% %%%%% %%%%%
				
				%%%%% Center %%%%%
				
				\node (up) [v:ghost,position=90:9mm from C2] {};
				\node (x12) [v:main,position=270:9mm from C2] {};
				\node (x22B) [v:main,position=180:14mm from x12] {};
				\node (c2) [v:main,position=0:14mm from x12] {};
				\node (f3) [v:main,position=180:14mm from up,fill=BostonUniversityRed] {};
				\node (t3) [v:main,position=0:14mm from up,fill=myGreen] {};
				
				\node (Lx12) [v:ghost,position=270:4.5mm from x12,font=\small] {$x_{1,2}$};
				\node (Lx22B) [v:ghost,position=225:5.5mm from x22B,font=\small] {$\overline{x}_{2,2}$};
				\node (Lc2) [v:ghost,position=315:5.5mm from c2,font=\small] {$c_2$};
				\node (Lf3) [v:ghost,position=135:5.5mm from f3,font=\small] {$f_3$};
				\node (Lt3) [v:ghost,position=45:5.5mm from t3,font=\small] {$t_3$};
				
				%%%%% %%%%% %%%%%
				
				%%%%% Right Center %%%%%
				
				%%%%% %%%%% %%%%%
				
				%%%%% %%%%% %%%%%

				%%%%% Edges %%%%%
				
				%%%%% Left Center %%%%%

				%%%%% %%%%% %%%%%
				
				%%%%% Center %%%%%
				
				\draw (t3) [e:main,->,bend left=10] to (f3);
				\draw (f3) [e:main,->,bend left=10] to (t3);
				\draw (t3) [e:main,->,bend left=10] to (c2);
				\draw (c2) [e:main,->,bend left=10] to (t3);
				\draw (f3) [e:main,->] to (c2);
				\draw (c2) [e:main,->] to (x12);
				\draw (x12) [e:main,->] to (x22B);
				\draw (x22B) [e:main,->] to (f3);

				%%%%% %%%%% %%%%%
				
				%%%%% Right Center %%%%%

				%%%%% %%%%% %%%%%
				
				%%%%% %%%%% %%%%%
				
				\end{pgfonlayer}
				
				%%%%% %%%%% %%%%%

				%%%%% Background %%%%%
				\begin{pgfonlayer}{background}
				
				\end{pgfonlayer}	
				%%%%% %%%%% %%%%%
				
				%%%%% Foreground %%%%%
				\begin{pgfonlayer}{foreground}

				\end{pgfonlayer}
				%%%%% %%%%% %%%%%
				\end{tikzpicture}
			\end{center}
				\subcaption{Clause gadget.}
				\label{subfig:clause gadget}
		\end{subfigure}
		\caption{Variable, literal and clause gadgets of the proof of \cref{lemma:np-hard 2 fvs} for the variable $X_1$ and the clause $(X_1 \lor \overline{X}_2)$ in the \SAT\ formula $(X_1 \lor X_2) \land (X_1 \lor \overline{X_2}) \land (\overline{X_1} \lor X_2)$.}
	\end{figure}
	
	We now construct a \emph{clause gadget} (illustrated in \cref{subfig:clause gadget}) that ensures that each clause is satisfied by at least one of its literals.
	We first add the cycle $t_3, f_3$.
	Then, for each clause $C_i$ we add the cycle $c_i, t_3$.
	Finally, we add the cycle $c_i, l_{j_1,i}, l_{j_2,i}, \dots, l_{j_h,i}, f_3$, where $l_{j_1,i}, l_{j_2,i}, \dots, l_{j_h,i}$ are the literals of $C_i$.
	We sort the literals in such a way that $j_1<j_2<\dots<j_h$ and such that $X_j$ comes before $\overline{X}_j$.
	This concludes the construction of $D$.
	
	We first show that $\VCNum{D} \leq 6$.
	We claim that the set $S = \{t_1, t_2, t_3, f_1, f_2, f_3\}$ is a feedback vertex set of $D$.
	We prove that $D - S$ is acyclic by finding a topological ordering of its vertices.
	We first take the positive literal vertices $x_j$ and the clause vertices $c_i$ into the ordering, as these are sources in $D-S$.
	Removing these vertices, all negative literal vertices $\overline{x}_j$ become sources, which we then add to the end of the current topological ordering.
	The only remaining vertices are the variable vertices $l_{j,i}$.
	It follows from the construction of the clause gadget that ordering the $l_{j,i}$ monotonically in $j$, with positive literals preceding corresponding negative literals, completes the topological ordering of $D - S$.
	
	To show that the degeneracy of $D$ is 3, we construct a linear ordering of the vertices as follows.	
	The first vertices of the ordering are $t_1, f_1, t_2, f_2, t_3$ and $f_3$.
	These have at most one outgoing arc to vertices which are smaller.
	Afterwards come all positive literal vertices $x_j$, then all negative literal vertices $\overline{x}_j$, followed by the variable vertices $l_{j,i}$.
	The vertices $\overline{x}_j$ have arcs to $t_2$ and $f_2$, and $x_j$ has no arc to smaller vertices.
	Hence, they have at most two arcs to smaller vertices.
	The vertices $l_{j,i}$ have arcs to $t_1$, $f_1$ and potentially to some other $l_{h,i}$ or to $f_3$, but never both.
	Hence, they have at most 3 arcs to smaller vertices.
	The last vertices in the ordering are the clause vertices $c_i$.
	These have an arc to $t_3$ and another to some $l_{j,i}$.
	Hence, the directed degeneracy of $D$ is at most $3$.

	We now prove that $D$ is 2-colourable if there is a truth assignment of the variables satisfying all clauses.
	
	Let $\beta:\{X_j \mid j \in [n]\} \rightarrow \{0,1\}$ be a satisfying truth assignment of the variables.
	We construct a colouring $c\colon\Fkt{V}{D} \rightarrow \Set{0,1}$ as follows.
	\begin{enumerate}
		\item $\Fkt{c}{f_i} \coloneqq 0$ and $\Fkt{c}{t_i} \coloneqq 1$ for $i \in [3]$.
		\item $\Fkt{c}{c_i} \coloneqq 0$ for $i \in [m]$.
		\item $\Fkt{c}{x_{j,i}} \coloneqq \Fkt{\beta}{X_j}$ for all $j \in [n]$ and $i \in [m]$ with $X_j \in C_i$.
		\item $\Fkt{c}{\overline{x}_{j,i}} \coloneqq 1 - \Fkt{\beta}{X_j}$ for all $j \in [n]$ and $i \in [m]$ with $\overline{X}_j \in C_i$.
		\item $\Fkt{c}{x_j} \coloneqq 1 - \Fkt{\beta}{X_j}$ and $\Fkt{c}{\overline{x}_j} \coloneqq \Fkt{\beta}{X_j}$ for all $j \in [n]$.
	\end{enumerate}
	This concludes the construction of $c$.
	We now argue that each colour class induces an acyclic digraph in $D$.
	
	Let $d \in \Set{0,1}$ be some colour.
	Note that either $t_1,t_2,t_3 \in \Fkt{c^{-1}}{d}$ or $f_1,f_2,f_3 \in \Fkt{c^{-1}}{d}$, as these vertices receive different colours.
	%Let $S_d = \Fkt{c^{-1}}{d} \cap S$.
	Since $S$ is a feedback vertex set of $D$, it suffices to show that there are no cycles using vertices of $S_d:=\Fkt{c^{-1}}{d} \cap S$ in $\InducedSubgraph{D}{\Fkt{c^{-1}}{d}}$.
	
	Assume, without loss of generality, that $t_1, t_2 \in \Fkt{c^{-1}}{d}$.
	The case $f_1, f_2 \in \Fkt{c^{-1}}{d}$ follows analogously.
	We prove that no cycle contains $t_1$ or $t_2$ by progressively identifying and removing sinks from $\InducedSubgraph{D}{c^{-1}(d)}$.
	As for all $j \in [n]$ and $i \in [m]$ we have $c(x_j) \neq c(\overline{x}_j) = c(x_{j,i})$, it follows that all $x_j$ are sinks in $\InducedSubgraph{D}{c^{-1}(d)}$.
	Removing all $x_j$, we can see that $t_2$ is now a sink.
	Hence, no directed cycle in $D[c^{-1}(d)]$ contains $t_2$.
	As $c(\overline{x}_j) \neq c(\overline{x}_{j,i})$, it follows that $\overline{x}_j$ is now a sink and we can remove it.
	Without literal vertices, $t_1$ becomes a sink, implying no cycle goes through $t_1$ in $\InducedSubgraph{D}{c^{-1}(d)}$, as desired.
	Consequently, for any $d \in \Set{0,1}$, no directed cycle in $D[c^{-1}(d)]$ can possibly use one of the vertices $t_1,t_2,f_1,f_2$ and therefore must either contain $t_3$ or $f_3$.
	
	If $t_3 \in \Fkt{c^{-1}}{d}$, then $c_i \not\in \Fkt{c^{-1}}{d}$ for all $i \in [m]$, as $\Fkt{c}{t_3} = 1$ and $\Fkt{c}{c_i} = 0$.
	Hence, $t_3$ has no neighbours in $\InducedSubgraph{D}{\Fkt{c^{-1}}{d}}$ and cannot be in any cycle.
	If $f_3 \in \Fkt{c^{-1}}{d}$, assume towards a contradiction that there is a cycle $C$ in $\InducedSubgraph{D}{\Fkt{c^{-1}}{d}}$ containing $f_3$.
	Note that this cycle must also contain $c_i$ for some $i \in [m]$, as these are the only out-neighbours of $f_3$ in $\InducedSubgraph{D}{\Fkt{c^{-1}}{d}}$.
	Furthermore, the out-neighbour of $c_i$ in $C$ is some $l_{j,i}$, and the only out-neighbours of $l_{j,i}$ are $t_1$ and potentially some $l_{h,i}$ or $f_3$, as these were the arcs added in the clause gadgets.
	The vertices $l_{j,i}$ in $C$ correspond to the literals in $c_i$.
	In order to form a cycle, all literals in $c_i$ must be in $C$.
	However, this means that $\Fkt{c}{x_{j,i}} = 0$ for all $X_j$ in clause $C_i$ and $\Fkt{c}{\overline{x}_{j,i}} = 0$ for all $\overline{X}_j$ in clause $C_i$.
	By construction of $c$, this implies that all literals in $C_i$ are set to false, which means that the clause is not satisfied, a contradiction to our initial assumption.
	Hence, the digraph $\InducedSubgraph{D}{\Fkt{c^{-1}}{d}}$ is acyclic, and $D$ is 2-colourable.
	
	We now show that the formula is satisfiable if $\Dichromatic{D} \leq 2$ by constructing a satisfying variable assignment $\beta$ from a proper 2-colouring of $D$.
	Let $c\colon\Fkt{V}{D}\rightarrow \Set{0,1}$ be a proper colouring of $D$.
	Without loss of generality, we assume that $\Fkt{c}{t_3} = 1$, which implies that $\Fkt{c}{f_3} = 0$.
	We set $\Fkt{\beta}{X_j}$ to true if $\Fkt{c}{x_j} = 0$ and to false if $\Fkt{c}{x_j} = 1$.
	
	Assume towards a contradiction that there is some clause $C_i$ which is not satisfied by $\beta$.
	By simply renaming the variables, we can assume without loss of generality that the literals of $C_i$ are $L_1, L_2, \dots, L_a$.
	As $C_i$ is not satisfied, it follows that all $L_j$ evaluate to false with $\beta$.
	By construction of the literal gadget, $\Fkt{c}{l_j} \neq \Fkt{c}{l_{j,i}}$ for all $i \in [m]$ with $L_j \in C_i$.
	From \ref{item:lji = ljh} and \ref{item:nxji != xji}, for all $j \in [n]$ it follows that $\Fkt{c}{l_{j,i}} = 1$ if the literal $L_j$ is true, and that $\Fkt{c}{l_{j,i}} = 0$ if the literal $L_j$ is false.
	As $C_i$ is not satisfied, $\Fkt{c}{c_i} = \Fkt{c}{f_3} = \Fkt{c}{l_{j,i}} = 0$ for all $j \in [a]$.
	Hence, the cycle $C = c_i, l_1, l_2, \dots l_a, f_3$ is monochromatic, contradicting our assumption that $c$ is a proper colouring.
	This implies that $\beta$ is a satisfying variable assignment, concluding our proof.
\end{proof}

With a simple self-reduction, we can extend the previous result to all $k \geq 2$.

\begin{theorem}\label{thm:np-hard k}
	For each $k \geq 2$, \DigraphColouringProblem[$k$] is \NP-hard even if $\VCNum{D} \leq k+4$ and $\Degeneracy{D} \leq k+1$, where $D$ is the input digraph.
\end{theorem}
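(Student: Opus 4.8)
The plan is a direct polynomial-time reduction from \DigraphColouringProblem[2] to \DigraphColouringProblem[$k$] that inflates both parameters by exactly $k-2$, so that \cref{lemma:np-hard 2 fvs} immediately transfers. Starting from a digraph $D$ as produced by \cref{lemma:np-hard 2 fvs} (so $\VCNum{D}\le 6$ and $\Degeneracy{D}\le 3$), I would build $D'$ by adding $k-2$ fresh vertices $w_1,\dots,w_{k-2}$, turning $\{w_1,\dots,w_{k-2}\}$ into a bidirected clique $\Bidirected{K_{k-2}}$, and inserting a digon between $w_i$ and every vertex of $V(D)$ for each $i$; for $k=2$ nothing is added and $D'=D$. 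The point of using digons is that a digon is a directed $2$-cycle, so its two endpoints must receive distinct colours in any proper colouring.

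Next I would verify that $\Dichromatic{D'}\le k$ if and only if $\Dichromatic{D}\le 2$, using that $D$ is the subdigraph of $D'$ induced on $V(D)$. Given a proper $2$-colouring $c\colon V(D)\to\{0,1\}$, extend it by $c(w_i):=i+1$: the colour classes $0$ and $1$ are contained in $V(D)$ and acyclic there, while the classes $2,\dots,k-1$ are singletons, so $c$ is a proper $k$-colouring of $D'$. Conversely, in any proper $k$-colouring $c'$ of $D'$ the pairwise digons force $w_1,\dots,w_{k-2}$ to receive $k-2$ distinct colours, which after renaming colours we may take to be $2,\dots,k-1$; then every $v\in V(D)$, forming a digon with each $w_i$, satisfies $c'(v)\in\{0,1\}$, and since every directed cycle of $D$ is also a directed cycle of $D'$, the restriction of $c'$ to $V(D)$ is a proper $2$-colouring of $D$.

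It remains to check the parameter bounds on $D'$. A feedback vertex set of $D$ of size at most $6$ together with $\{w_1,\dots,w_{k-2}\}$ is a feedback vertex set of $D'$, since $D'-\{w_1,\dots,w_{k-2}\}=D$ and removing the given set of $D$ makes $D$ acyclic; hence $\VCNum{D'}\le 6+(k-2)=k+4$. For out-degeneracy, order $w_1\prec\cdots\prec w_{k-2}$ and then the vertices of $V(D)$ according to an ordering witnessing $\Degeneracy{D}\le 3$: each $w_i$ has at most $i-1\le k-3$ earlier out-neighbours, and each $v\in V(D)$ has at most $3$ earlier out-neighbours within $D$ plus the $k-2$ vertices $w_1,\dots,w_{k-2}$, for a total of at most $k+1$; hence $\Degeneracy{D'}\le k+1$. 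Since $D'$ is computable from $D$ in polynomial time and \DigraphColouringProblem[2] is already \NP-hard on the relevant instances by \cref{lemma:np-hard 2 fvs}, the theorem follows.

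The reduction carries no genuine difficulty. The only points requiring care are that the added clique and joins must be \emph{bidirected} (so that they actually forbid colours rather than merely creating some directed cycles), and that the bookkeeping for the two parameters lands exactly on $k+4$ and $k+1$ rather than overshooting; in particular one should place all of $w_1,\dots,w_{k-2}$ before $V(D)$ in the degeneracy ordering so that they contribute $k-2$, and not more, to the count at each vertex of $V(D)$.
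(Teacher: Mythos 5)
Your proposal is correct and is essentially the paper's proof with the induction unrolled: the paper reduces $(k-1)$-colouring to $k$-colouring by adding one universal bidirected vertex at a time (each step increasing $\VCNum{}$ and $\Degeneracy{}$ by exactly one), and iterating this $k-2$ times from the base case of \cref{lemma:np-hard 2 fvs} produces precisely your digraph $D'$ with the bidirected clique on $w_1,\dots,w_{k-2}$ joined by digons to all of $V(D)$. The correctness and parameter-counting arguments match the paper's step by step.
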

\begin{proof}
	We prove the statement by induction on $k$.
	The case $k=2$ follows from \cref{lemma:np-hard 2 fvs}.
	We provide a reduction from \DigraphColouringProblem[$(k-1)$] to \DigraphColouringProblem[$k$] such that $\VCNum{D'} \leq \VCNum{D} + 1$ and $\Degeneracy{D'} \leq \Degeneracy{D} + 1$, where $D$ is the input instance and $D'$ is the reduced instance.
	We obtain $D'$ be adding a vertex $x$ to $D$, together with the edges $\CondSet{\Brace{x,v}, \Brace{v,x}}{ x \in \Fkt{V}{D}}$.
	If $D$ is $(k-1)$-colourable, then setting the colour of $x$ to $k$ gives a proper $k$-colouring for $D'$.
	If $D'$ is $k$-colourable, then no vertex in $D$ has the same colour as $x$.
	Hence, $D$ is $(k-1)$-colourable.
	Furthermore, all new cycles created by adding $x$ go through $x$.
	If $D - S$ is acyclic for some vertex set $S$, then $D' - \Brace{S \cup \Set{x}} = D - S$ is also acyclic.
	Hence, $\VCNum{D'} \leq \VCNum{D} + 1 = k+4$.
	To show that the degeneracy of $D'$ increased by at most one, we consider some ordering of $D$ with degeneracy $\Degeneracy{D} = k$.
	By placing $v$ as the smallest vertex with respect to the ordering, we increase the outdegree of the vertices in $D$ by one.
	Hence, the degeneracy of $D'$ is at most $\Degeneracy{D}+1 = k+1$, as desired.
\end{proof}

As an immediate consequence of the above theorem arises the following corollary.

\begin{corollary}
	There is no $n^{f(k,x,y)}$-time algorithm deciding \DigraphColouringProblem[$k$] where $x = \VCNum{D}$, $y = \Degeneracy{D}$ and $f$ is some function, unless \Poly$=$\NP.
\end{corollary}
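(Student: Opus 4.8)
The plan is to obtain this as an immediate consequence of \cref{thm:np-hard k}: the key observation is that on the \NP-hard family of instances produced there, the two parameters $x=\VCNum{D}$ and $y=\Degeneracy{D}$ are bounded by absolute constants, so the exponent $f(k,x,y)$ collapses to a constant.

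I would argue by contradiction. Suppose such an algorithm $\mathcal{A}$ exists, i.e.\ given $k$ and a digraph $D$ it decides \DigraphColouringProblem[$k$] in time at most $n^{f(k,\VCNum{D},\Degeneracy{D})}$, where $n=\Abs{\Fkt{V}{D}}$. Fix $k=2$ and restrict attention to the class $\mathcal{K}$ of digraphs output by the reduction of \cref{lemma:np-hard 2 fvs} (equivalently, the $k=2$ case of \cref{thm:np-hard k}); every $D\in\mathcal{K}$ satisfies $\VCNum{D}\le 6$ and $\Degeneracy{D}\le 3$. Since only finitely many pairs of non-negative integers $(x,y)$ satisfy $x\le 6$ and $y\le 3$, the quantity $c:=\max\{\,f(2,x,y)\,:\,x\le 6,\ y\le 3\,\}$ is a finite constant, and hence on every input from $\mathcal{K}$ the algorithm $\mathcal{A}$ runs in time at most $n^{c}=n^{\BigO{1}}$. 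Thus $\mathcal{A}$ decides \DigraphColouringProblem[$2$] on $\mathcal{K}$ in polynomial time; but \cref{lemma:np-hard 2 fvs} shows that \DigraphColouringProblem[$2$] is already \NP-hard on $\mathcal{K}$, so this would yield a polynomial-time algorithm for an \NP-hard problem, forcing \Poly$=$\NP.

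There is essentially no technical obstacle here; the only point worth stating carefully is that, although $f$ is an arbitrary function, it is evaluated only at constantly many arguments on the hardness instances and is therefore bounded there by a constant exponent. (If desired, the identical argument runs for every fixed $k\ge 2$, using the full strength of \cref{thm:np-hard k} with its bounds $\VCNum{D}\le k+4$ and $\Degeneracy{D}\le k+1$.)
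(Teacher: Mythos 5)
Your argument is correct and is exactly the intended (and in the paper, unstated) justification: the paper declares this corollary an immediate consequence of \cref{thm:np-hard k}, and your observation that on the hard instances the parameters $x$ and $y$ are bounded by constants, so the exponent $f(k,x,y)$ collapses to a constant and the hypothetical algorithm becomes polynomial-time on an \NP-hard family, is precisely the missing one-line reasoning.
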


A finer analysis of the reduction provided in \cref{lemma:np-hard 2 fvs} gives us stronger hardness results under a different assumption.
Similar to how no polynomial-time algorithms for \NP-complete problems are known, no $2^{\SmallO{n}}n^{\BigO{1}}$-time algorithm for \SAT[$k$] is known, where $n$ is the number of variables in the input formula (which contains at most $k$ literals in each clause).
An algorithm with such a running time is called a \emph{subexponential-time} algorithm.
Impagliazzo and Paturi \cite{impagliazzo2001complexity} provided evidence that no such algorithm for \SAT[$k$] exists, and formulated the following hypothesis (often referred to as \emph{ETH}).
\begin{hypothesis*}[Exponential Time Hypothesis \cite{impagliazzo2001complexity}]
	For each $k \geq 3$ there is some $s_k > 0$ such that no $2^{s_k n}n^{\BigO{1}}$-time algorithm for \SAT[$k$] exists.
\end{hypothesis*}

Note that the ETH only considers the running time with respect to the number of variables in the input formula, not the number of clauses.
In several reductions, however, it is difficult to ensure that the size of the reduced instance depends only on the number of variables.
For example, the reduction in \cref{lemma:np-hard 2 fvs} contains one vertex for each clause.
This would prevent us from directly applying the ETH.
Fortunately, \cite{impagliazzo2001problems} showed that it is possible to assume that $m \in \BigO{n}$, where $m$ is the number of clauses, by proving the following lemma.
\begin{lemma*}[Sparsification Lemma, Impagliazzo, Paturi and Zane \cite{impagliazzo2001problems}]
	For all $\epsilon > 0$ and $k > 0$ there is a constant $C$ so that any \SAT[$k$] formula $\Phi$ with $n$ variables can be expressed as $\Phi' = \bigvee_{i=1}^t\Psi_i$, where $t \leq 2^{\epsilon n}$ and each $\Psi_i$ is a \SAT[$k$] formula with at most $Cn$ clauses such that each variable appears in constantly many clauses.
	Moreover, this disjunction can be computed by an algorithm running in time $2^{\epsilon n}n^{\BigO{1}}$.
\end{lemma*}
By first applying the sparsification lemma to the input formula and then the reduction from \cref{thm:np-hard k}, we can show the following.
\begin{theorem}
	For each $k \geq 2$ there is some $\epsilon > 0$ such that no $2^{\epsilon n} n^{f(x,y)}$ algorithm for \DigraphColouringProblem[$k$] exists, where $D$ is the input digraph, $x = \VCNum{D}$, $y = \Degeneracy{D}$ and $f$ is some function, unless the ETH is false.
\end{theorem}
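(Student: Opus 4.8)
The plan is to upgrade the polynomial reduction behind \cref{lemma:np-hard 2 fvs} and \cref{thm:np-hard k} into a reduction with only \emph{linear} size blow-up by first applying the Sparsification Lemma, and then to exploit that on the digraphs it produces the parameters $\VCNum{\cdot}$ and $\Degeneracy{\cdot}$ are bounded by constants depending only on $k$. Fix $k\ge 2$ and assume the ETH; let $s_3>0$ be the constant it provides for \SAT[3]. I will show that for a suitable $\epsilon>0$, any algorithm deciding \DigraphColouringProblem[$k$] in time $2^{\epsilon n}n^{f(x,y)}$ (with $n=\Abs{\Fkt{V}{D}}$ the number of vertices of the input digraph $D$, $x=\VCNum{D}$, $y=\Degeneracy{D}$, and $f$ an arbitrary function) could be turned into an algorithm deciding \SAT[3] in time contradicting the ETH.

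First I would record the size of the composed reduction. Chaining the reduction of \cref{lemma:np-hard 2 fvs} (which is from \SAT, hence in particular from \SAT[3]) with the $k-2$ single-vertex steps of the induction in \cref{thm:np-hard k}, a \SAT[3] formula with $n$ variables and $m$ clauses is mapped to a digraph $D$ with $\Abs{\Fkt{V}{D}}\le 2n+4m+k+4$, with $\VCNum{D}\le k+4$ and $\Degeneracy{D}\le k+1$, such that $D$ is $k$-colourable if and only if the formula is satisfiable, and $D$ is computable in time polynomial in $n+m$; for $k=2$ this is just \cref{lemma:np-hard 2 fvs}. Next I would apply the Sparsification Lemma to the input formula $\Phi$ with a parameter $\delta>0$ (fixed below): in time $2^{\delta n}n^{\BigO{1}}$ this yields formulas $\Psi_1,\dots,\Psi_t$ with $t\le 2^{\delta n}$, each on the same $n$ variables and with at most $Cn$ clauses for a constant $C=C(\delta)$, such that $\Phi$ is satisfiable if and only if some $\Psi_i$ is. Running the reduction above on each $\Psi_i$ produces digraphs $D_i$ with $\Abs{\Fkt{V}{D_i}}\le c_\delta\, n$ for a constant $c_\delta$ depending only on $\delta$ and $k$ (one may take $c_\delta=6+4C+k$), again with $\VCNum{D_i}\le k+4$, $\Degeneracy{D_i}\le k+1$, and $D_i$ being $k$-colourable iff $\Psi_i$ is satisfiable.

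Now I would fix the constants: set $\delta:=s_3/4$, which determines $C$ and hence $c_\delta$, and put $\epsilon:=s_3/(2c_\delta)$. Suppose, towards a contradiction, that for some function $f$ there is an algorithm $\mathcal{A}$ deciding \DigraphColouringProblem[$k$] in time $2^{\epsilon\Abs{\Fkt{V}{D}}}\Abs{\Fkt{V}{D}}^{f(\VCNum{D},\Degeneracy{D})}$. On each $D_i$ we have $\VCNum{D_i}\le k+4$ and $\Degeneracy{D_i}\le k+1$, so the exponent $f(\VCNum{D_i},\Degeneracy{D_i})$ is a constant depending only on $k$, and hence $\mathcal{A}$ runs on $D_i$ in time $2^{\epsilon c_\delta n}n^{\BigO{1}}$. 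Thus the algorithm which sparsifies $\Phi$, builds all $D_i$, runs $\mathcal{A}$ on each, and accepts iff some $D_i$ is $k$-colourable, decides \SAT[3] in total time
\[
2^{\delta n}n^{\BigO{1}}+2^{\delta n}\cdot 2^{\epsilon c_\delta n}n^{\BigO{1}}=2^{(\delta+\epsilon c_\delta)n}n^{\BigO{1}}=2^{(3s_3/4)n}n^{\BigO{1}},
\]
which is less than $2^{s_3 n}$ for all sufficiently large $n$ (small $n$ handled by brute force). This contradicts the ETH, so for this $\epsilon$ no such $\mathcal{A}$ exists, which is the assertion.

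The only delicate point I expect is the order in which the constants are chosen: the Sparsification Lemma must be invoked \emph{before} $\epsilon$ is fixed, so that the linear blow-up factor $c_\delta$ of the composed reduction is already pinned down when $\epsilon:=s_3/(2c_\delta)$ is set. The other thing to notice is that boundedness of $\VCNum{D_i}$ and $\Degeneracy{D_i}$ collapses the factor $\Abs{\Fkt{V}{D_i}}^{f(\cdot,\cdot)}$ to a fixed polynomial in $n$, regardless of how fast $f$ grows, so that it is dominated by $2^{\epsilon c_\delta n}$; everything else is a routine composition of the already-established polynomial reductions together with the standard ``handle small instances by brute force'' remark.
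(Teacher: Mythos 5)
Your proposal is correct and follows essentially the same route as the paper: sparsify the \SAT[3] instance first, apply the reduction of \cref{lemma:np-hard 2 fvs} (plus the $k-2$ single-vertex steps of \cref{thm:np-hard k}) to each sparse formula to get linearly-sized digraphs with $\VCNum{D_i}\le k+4$ and $\Degeneracy{D_i}\le k+1$, and conclude that a $2^{\epsilon n}n^{f(x,y)}$ algorithm would yield a subexponential algorithm for \SAT[3]. Your version is in fact more careful than the paper's sketch about the order in which $\delta$, $C$, $c_\delta$ and $\epsilon$ are fixed, and about why the bounded parameters collapse $n^{f(x,y)}$ to a fixed polynomial.
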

\begin{proof}
	First note that the reduction from \DigraphColouringProblem[$(k-1)$] to \DigraphColouringProblem[$k)$] from \cref{thm:np-hard k} increases the input instance by one vertex.
	Hence, it suffices to show the statement for $k=2$, as the remaining cases follow by induction.
	We first use the sparsification lemma to obtain at most $2^{\epsilon n}$ many \SAT[$3$] instances where each variable appears in constantly many clauses.
	Applying the reduction from \cref{lemma:np-hard 2 fvs} to each instance, we obtain at most $2^{\epsilon n}$ many digraphs where for each variable we have constantly many vertices and for each clause we have one vertex.
	This means that the number of vertices on the reduced instances is linear in the number of variables of the formula.
	Hence, a subexponential-time algorithm for \DigraphColouringProblem[$2$] implies a subexponential-time algorithm for \SAT[$3$], which would contradict the ETH.
\end{proof}

Note that an algorithm with running time $\BigO{k^n\cdot(n+m})$ is trivial: test all $k^n$ colourings of the vertices of $D$, and then check if each colour class is a DAG in linear time by computing a topological ordering.

\section{Polychromatic Colourings and Cycle Packings of Strongly Planar Digraphs} \label{sec:polychromatic}

In this section, we study colouring properties of so-called \emph{strongly planar digraphs}.
These form a canonical class of planar non-even digraphs (however, there are many others).
To motivate their definition, consider an arbitrary bipartite, matching-covered planar graph $G$ with bipartition $\Brace{A,B}$.
Because $G$ is planar, it must be Pfaffian.
Choose some perfect matching $M$ of $G$.
Considering the orientation $\vec{G}$ of $G$ orienting all edges from $A$ to $B$, we can view $\DirM{G}{M}$ as being obtained from $\vec{G}$ by contraction of all edges in $M$.
It is now clear that the digraph $\DirM{G}{M}$ inherits a natural plane-embedding from $G$ in which for each vertex, the incident incoming and outgoing edges are separated into two intervals in the cyclic ordering.
It is not hard to reverse the described relationship to see that any digraph $D$ admitting such an embedding is isomorphic to $\DirM{G}{M}$ for some planar bipartite graph and a perfect matching $M$.

\begin{definition}
	A digraph $D$ is called \emph{strongly planar} if there is a simple, non-crossing topological plane-embedding of $D$ such that for each $x \in \Fkt{V}{D}$ the incoming (resp. outgoing) edges incident to $x$ form a consecutive interval in the cyclic ordering around $x$.
	Equivalently, $D \cong \mathcal{D}(G,M)$ for a planar bipartite graph $G$ and a perfect matching $M$.
\end{definition}

An example of a strongly planar digraph is given in \cref{fig:GridDiGraph}.

\begin{figure}[h!]
	\begin{center}
		\begin{tikzpicture}[scale=0.7]
		
		\pgfdeclarelayer{background}
		\pgfdeclarelayer{foreground}
		
		\pgfsetlayers{background,main,foreground}
		
		%%%%% Vertex Styles %%%%%
%		\tikzstyle{v:main} = [draw, circle, scale=0.5, thick,fill=black]
%		\tikzstyle{v:tree} = [draw, circle, scale=0.3, thick,fill=black]
%		\tikzstyle{v:border} = [draw, circle, scale=0.75, thick,minimum size=10.5mm]
%		\tikzstyle{v:mainfull} = [draw, circle, scale=1, thick,fill]
%		\tikzstyle{v:ghost} = [inner sep=0pt,scale=1]
%		\tikzstyle{v:marked} = [circle, scale=1.2, fill=CornflowerBlue,opacity=0.3]
%		%%%%% %%%%% %%%%%
%		
%		%%%%% Edge Styles %%%%%
%		\tikzset{>=latex} 
%		\tikzstyle{e:marker} = [line width=9pt,line cap=round,opacity=0.2,color=DarkGoldenrod]
%		\tikzstyle{e:colored} = [line width=1.2pt,color=BostonUniversityRed,cap=round,opacity=0.8]
%		\tikzstyle{e:coloredthin} = [line width=0.9pt,color=myGreen]
%		\tikzstyle{e:coloredborder} = [line width=2pt]
%		\tikzstyle{e:main} = [line width=1pt]
%		\tikzstyle{e:extra} = [line width=1.3pt,color=LavenderGray]
		%%%%% %%%%% %%%%%
		
		\begin{pgfonlayer}{main}
		
		%%%%% Centered Ghost Vertices %%%%%
		\node (C) [] {};
		
		%%%%% Left Center %%%%%
		\node (C1) [v:ghost, position=180:40mm from C] {};
		%\node (U1) [v:ghost, position=90:50mm from C1] {};
%		\node (L1) [v:ghost, position=270:32mm from C1,align=center] {$R$};
		
		%%%%% Center %%%%%
		\node (C2) [v:ghost, position=0:0mm from C] {};
		%\node (U2) [v:ghost, position=90:50mm from C2] {};
		%		\node (L2) [v:ghost, position=270:27mm from C2,align=center] {};
		
		%%%%% Right Center %%%%%
		\node (C3) [v:ghost, position=0:40mm from C] {};
		%\node (U3) [v:ghost, position=90:50mm from C3] {};
%		\node (L3) [v:ghost, position=270:32mm from C3,align=center] {$\Bidirected{C_5}$};
		%%%%% %%%%% %%%%%

		%%%%% Vertices %%%%%
		
		%%%%% Left Center %%%%%
		
		\node (a) [v:main,position=0:0mm from C1] {};
		\node (ain) [v:ghost,position=0:2.25mm from a] {};
		
		\node (b) [v:main,position=0:13mm from a,fill=white] {};
		\node (e) [v:main,position=270:13mm from a,fill=white] {};
		
		\node (c) [v:main,position=0:13mm from b] {};
		\node (cin) [v:ghost,position=270:2.25mm from c] {};
		\node (f) [v:main,position=270:13mm from b] {};
		\node (fin) [v:ghost,position=270:2.25mm from f] {};
		\node (i) [v:main,position=270:13mm from e] {};
		\node (iin) [v:ghost,position=90:2.25mm from i] {};
		
		\node (d) [v:main,position=0:13mm from c,fill=white] {};
		\node (g) [v:main,position=270:13mm from c,fill=white] {};
		\node (j) [v:main,position=0:13mm from i,fill=white] {};
		\node (m) [v:main,position=270:13mm from i,fill=white] {};
		
		\node (h) [v:main,position=0:13mm from g] {};
		\node (hin) [v:ghost,position=90:2.25mm from h] {};
		\node (k) [v:main,position=270:13mm from g] {};
		\node (kin) [v:ghost,position=0:2.25mm from k] {};
		\node (n) [v:main,position=270:13mm from j] {};
		\node (nin) [v:ghost,position=180:2.25mm from n] {};
		
		\node (l) [v:main,position=0:13mm from k,fill=white] {};
		\node (o) [v:main,position=270:13mm from k,fill=white] {};
		
		\node (p) [v:main,position=0:13mm from o] {};
		\node (pin) [v:ghost,position=180:2.25mm from p] {};
		
		%%%%% %%%%% %%%%%
		
		%%%%% Center %%%%%

		%%%%% %%%%% %%%%%
		
		%%%%% Right Center %%%%%
		
		\node (ab) [v:main,fill=myGreen,position=270:4mm from C3] {};
		\node (ei) [v:main,fill=myGreen,position=247.5:16mm from ab] {};
		\node (fj) [v:main,fill=myGreen,position=292.5:16mm from ab] {};
		\node (mn) [v:main,fill=myGreen,position=270:32mm from ab] {};
		\node (cg) [v:main,fill=myGreen,position=45:15mm from fj] {};
		\node (dh) [v:main,fill=myGreen,position=0:15mm from cg] {};
		\node (kl) [v:main,fill=myGreen,position=300:15mm from cg] {};
		\node (op) [v:main,fill=myGreen,position=270:14.5mm from kl] {};
		
		%%%%% %%%%% %%%%%
		
		%%%%% %%%%% %%%%%

		%%%%% Edges %%%%%
		
		%%%%% Left Center %%%%%
		
		\draw (b) [e:main,->] to (c);
		\draw (b) [e:main,->] to (f);
		
		\draw (e) [e:main,->] to (a);
		\draw (e) [e:main,->] to (f);
		
		\draw (d) [e:main,->] to (c);
		
		\draw (g) [e:main,->] to (f);
		\draw (g) [e:main,->] to (h);
		\draw (g) [e:main,->] to (k);
		
		\draw (j) [e:main,->] to (i);
		\draw (j) [e:main,->] to (k);
		\draw (j) [e:main,->] to (n);
		
		\draw (m) [e:main,->] to (i);
		
		\draw (l) [e:main,->] to (h);
		\draw (l) [e:main,->] to (p);
		
		\draw (o) [e:main,->] to (k);
		\draw (o) [e:main,->] to (n);

		%%%%% %%%%% %%%%%
		
		%%%%% Center %%%%%
		
		%%%%% %%%%% %%%%%
		
		%%%%% Right Center %%%%%
		
		\draw (ab) [e:main,->] to (fj);
		\draw (ab) [e:main,->] to (cg);
		
		\draw (ei) [e:main,->,bend left=15] to (fj);
		\draw (ei) [e:main,->] to (ab);
		
		\draw (fj) [e:main,->,bend left=15] to (ei);
		\draw (fj) [e:main,->] to (mn);
		\draw (fj) [e:main,->] to (kl);
		
		\draw (mn) [e:main,->] to (ei);
		
		\draw (cg) [e:main,->,bend right=15] to (dh);
		\draw (cg) [e:main,->] to (fj);
		\draw (cg) [e:main,->] to (kl);
		
		\draw (dh) [e:main,->,bend right=15] to (cg);
		
		\draw (kl) [e:main,->] to (dh);
		\draw (kl) [e:main,->,bend left=15] to (op);
		
		\draw (op) [e:main,->] to (mn);
		\draw (op) [e:main,->,bend left=15] to (kl);
		
		%%%%% %%%%% %%%%%
		
		%%%%% %%%%% %%%%%
		
		\end{pgfonlayer}
		
		%%%%% %%%%% %%%%%

		%%%%% Background %%%%%
		\begin{pgfonlayer}{background}
		
		\draw (b) [e:matching1,->] to (a);
		\draw (e) [e:matching1,->] to (i);
		\draw (d) [e:matching1,->] to (h);
		\draw (g) [e:matching1,->] to (c);
		\draw (j) [e:matching1,->] to (f);
		\draw (m) [e:matching1,->] to (n);
		\draw (l) [e:matching1,->] to (k);
		\draw (o) [e:matching1,->] to (p);
		
%		\draw (b) [e:coloredthin,->] to (ain);
%		\draw (e) [e:coloredthin,->] to (iin);
%		\draw (d) [e:coloredthin,->] to (hin);
%		\draw (g) [e:coloredthin,->] to (cin);
%		\draw (j) [e:coloredthin,->] to (fin);
%		\draw (m) [e:coloredthin,->] to (nin);
%		\draw (l) [e:coloredthin,->] to (kin);
%		\draw (o) [e:coloredthin,->] to (pin);
		
		\end{pgfonlayer}	
		%%%%% %%%%% %%%%%
		
		%%%%% Foreground %%%%%
		\begin{pgfonlayer}{foreground}

		\end{pgfonlayer}
		%%%%% %%%%% %%%%%
		\end{tikzpicture}
	\end{center}
	\caption{Left: An oriented grid equipped with a perfect matching. Right: The arising $M$-direction, a strongly planar digraph.}
	\label{fig:GridDiGraph}
\end{figure}
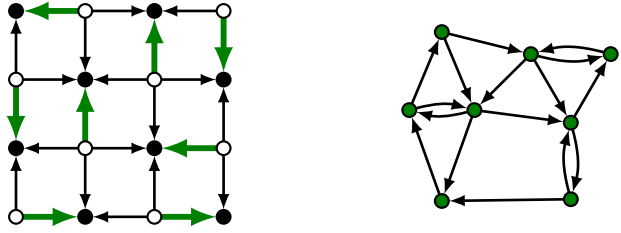
By \cref{thm:pfaffian}, every strongly planar digraph is non-even and so, according to \cref{thm:mainthm}, it is $2$-colourable.

In this section, we seek a strengthening of $2$-colourability for strongly planar digraphs of large girth.
While $\Dichromatic{D}\leq 2$ for all strongly planar digraphs can be rephrased as the existence of a packing of two disjoint feedback vertex sets in any strongly planar digraph, we conjecture the following generalisation.

\begin{conjecture} \label{conj:highergirth}
	For any strongly planar digraph $D$ of girth $g$, there exists a packing of $g$ pairwise disjoint feedback vertex sets.
	In other words, $D$ can be vertex $g$-coloured such that every directed cycle uses each colour at least once.
\end{conjecture}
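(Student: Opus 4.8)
The plan is to induct on the girth $g$; the base case $g=2$ is already available, since a strongly planar digraph is non-even by \cref{thm:pfaffian}, hence $2$-colourable by \cref{thm:mainthm}, and in any proper $2$-colouring each colour class is a feedback vertex set (the complementary class induces an acyclic digraph). It is convenient to record that a packing of $g$ pairwise disjoint feedback vertex sets $S_1,\dots,S_g$ of $D$ is literally the same object as a colouring $c\colon V(D)\to\mathbb Z/g\mathbb Z$ in which every directed cycle meets every colour class: given a packing put $c(v):=i$ for $v\in S_i$ (and, say, $c(v):=1$ otherwise), and conversely take $S_i:=c^{-1}(i)$, which is a feedback vertex set exactly because no directed cycle avoids colour $i$. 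So the conjecture asks for a \emph{panchromatic} $g$-colouring, the girth condition being the obvious (and, conjecturally, the only) obstruction.

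The reduction I would aim for turns this into a statement about feedback \emph{edge} sets. Suppose we can choose $c\colon V(D)\to\mathbb Z/g\mathbb Z$ so that every arc $(u,v)$ satisfies $c(v)-c(u)\in\{0,1\}$ and so that the set $F$ of \emph{incrementing} arcs (those with $c(v)-c(u)=1$) meets every directed cycle. Then $c$ is panchromatic: around a directed cycle $C$ the increments, lifted to $\{0,1\}\subseteq\mathbb Z$, sum to $|E(C)\cap F|$, which is $\equiv 0\pmod g$ by telescoping and is positive since $F$ is a feedback edge set, hence is a positive multiple of $g$; reading the colours along $C$ as the partial sums of the increments, they climb from $0$ to at least $g$ in steps of size $0$ and $1$ and therefore realise every residue modulo $g$. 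Note that the girth hypothesis is \emph{not} used in this implication; it re-enters because such a $c$ can exist only if the girth is at least $g$ (a directed cycle of length $<g$ would force $|E(C)\cap F|=0$). So it suffices to build one $\mathbb Z/g\mathbb Z$-potential on $D$ whose unit jumps form a feedback edge set — a strictly sufficient, not obviously necessary, condition, so if this loses too much one would instead fall back on a direct structural induction.

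For a \emph{planar} digraph a $\mathbb Z/g\mathbb Z$-valued arc function is a coboundary as soon as it sums to zero around every face boundary, so building the potential reduces to the (alternating) face boundaries of the strongly planar embedding together with the girth bound. Concretely I would use $D\cong\mathcal{D}(G,M)$ for a planar bipartite matching covered $G$, where directed cycles of $D$ of length $\ell$ correspond to $M$-alternating cycles of $G$ of length $2\ell$ (so girth $g$ means every $M$-alternating cycle has length $\geq 2g$), and exploit that the perfect matchings of a planar bipartite graph, ordered by flipping faces, form a distributive lattice whose join-irreducibles are the $M$-alternating face boundaries: one reads a height function off this lattice, transports it through the $M$-direction correspondence to a $\mathbb Z/g\mathbb Z$-potential on $V(D)=M$, and tries to use the $2g$-length bound to keep it consistent while forcing its unit jumps to hit every directed cycle. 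A secondary induction — peeling off a single face of $G$, or splitting $G$ along a tight cut and gluing the colourings along the shared vertex, mirroring the $0$- and $1$-sum reductions and the bicontraction machinery of \cref{sec:noneven} — would be the natural way to carry this through.

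The main obstacle is the girth accounting. Every reduction available — bicontracting a degree-two vertex, contracting a face, performing a tight-cut split — can shorten some $M$-alternating cycle, and, in contrast with the proof of \cref{thm:mainthm}, we cannot afford to lose even one unit of girth without weakening the conclusion from $g$ feedback vertex sets to $g-1$. This is compounded by a real rigidity: all $g$ colour classes must meet the directed cycles of the \emph{whole} digraph, so there is no ``remove one layer, recurse on the rest'' shortcut (deleting a feedback vertex set leaves an acyclic digraph), which is precisely why a global device like the potential above seems necessary. I expect the decisive difficulty to be the directed cycles of $D$ that do not bound a single face — those enclosing several faces of $G$ — since the face-by-face construction must be balanced against the $2g$-length lower bound exactly for these.
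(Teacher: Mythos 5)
This statement is \Cref{conj:highergirth} and is left \emph{open} in the paper: the authors only establish the fractional relaxation $\FracDichromatic{D}=\frac{g}{g-1}$ (\Cref{thm:mainfractional}), via a completely different route (showing the clutter of induced directed cycles of a strongly planar digraph is MFMC using the planar dual of Lucchesi--Younger, then invoking Lehman's theorem to get idealness of the blocking clutter of minimal feedback vertex sets). So any complete argument here would be a new theorem, and your text is explicitly a plan rather than a proof. The parts you do carry out are sound: the base case $g=2$ from \Cref{thm:mainthm}, the equivalence between packings of $g$ disjoint feedback vertex sets and panchromatic $g$-colourings, and the implication that a $\mathbb{Z}/g\mathbb{Z}$-potential $c$ with all arc-increments in $\{0,1\}$ whose incrementing arcs form a feedback edge set yields a panchromatic colouring (the telescoping/partial-sum argument is correct).

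The genuine gap is that the central object is never constructed, and its existence is itself at least as hard as the conjecture. First, your sufficient condition demands a feedback \emph{arc} set $F$ such that every directed cycle meets $F$ in a number of arcs that is a positive multiple of $g$; nothing in the paper (nor in Lucchesi--Younger, which packs cuts/cycles rather than producing such exact congruences) supplies this, and it is not known to hold for strongly planar digraphs of girth $g$. Second, the coboundary step is flawed as stated: a $\{0,1\}$-valued arc function is a $\mathbb{Z}/g\mathbb{Z}$-coboundary iff its \emph{signed} sum vanishes around every face boundary of the embedding, but the face boundaries of a strongly planar digraph are in general not directed cycles, so the girth hypothesis and the feedback condition (which only constrain directed cycles) give no control over these face sums. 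Third, the proposed transport of a ``height function'' from the distributive lattice of perfect matchings of $G$ to a potential on $V(D)=M$ is undefined --- that lattice lives on the set of perfect matchings, not on the matching edges --- and the induction on $g$ announced at the outset is never given an inductive step. You correctly identify where the difficulty lies (directed cycles enclosing several faces, and the impossibility of losing girth under the reductions of \Cref{sec:noneven}), but the proposal does not overcome it; as it stands it reduces one open statement to another, stronger-looking one.
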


Clearly, the directed cycle $\vec{C}_g$ of length $g$ admits a packing of $g$ and no more disjoint feedback vertex sets, and consequently, this conjecture, if true, is best-possible. 

For an arbitrary bipartite planar graph $G$ with a perfect matching $M$, a feedback vertex set in $\mathcal{D}(G,M)$ corresponds to a partial matching $S \subseteq M$ with the property that every $M$-alternating cycle uses an edge in $S$, which is the same as saying that $S$ is forcing. Consequently, in the language of perfect matchings, the above translates to:

\begin{conjecture}\label{conj:highergirthmatching}
	Let $G$ be a bipartite planar graph with a perfect matching $M$ and let $2g$ be the length of a shortest $M$-alternating cycle.
	Then $M$ can be decomposed into $g$ pairwise disjoint forcing sets. 
\end{conjecture}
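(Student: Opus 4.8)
We sketch a possible line of attack on \cref{conj:highergirthmatching}. By the bijection between $M$-alternating cycles of $G$ and directed cycles of $\mathcal{D}(G,M)$, and between forcing sets and feedback vertex sets, the statement is equivalent to \cref{conj:highergirth}, and the plan is to work with the latter. It is harmless to relax ``girth $g$'' to ``girth at least $g$'', since merging colour classes turns a $g'$-polychromatic colouring with $g'\ge g$ into a $g$-polychromatic one. The first step is to reduce to the strongly $2$-connected case via the splitting operations of \cref{01sums}, after checking that a $0$- or $1$-sum preserves strong planarity and does not shorten directed cycles (every directed cycle of an $i$-sum lives in a single summand), so that the summands are again strongly planar of girth at least $g$; conversely, a $g$-polychromatic colouring of $D$ can be reassembled from ones of $D_1$ and $D_2$ by permuting the colours of $D_2$ so that the two colourings agree on the shared vertex, polychromaticity being invariant under permuting colours.

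Next I would reformulate the goal: it is enough to find a map $c\colon\Fkt{V}{D}\to\mathbb{Z}_g$ with $c(v)-c(u)\in\Set{0,1}$ in $\mathbb{Z}_g$ for every edge $\Brace{u,v}$ and with the spanning subdigraph $L$ of \emph{level edges} (those satisfying $c(u)=c(v)$) acyclic. Such a $c$ is automatically $g$-polychromatic: along a directed cycle the increments sum to $0$ in $\mathbb{Z}_g$, so the number $t$ of non-level edges (each contributing a unit increment) is a multiple of $g$; since $L$ is acyclic, $t\ge 1$, hence $t\ge g$, and a closed walk that returns to its origin after $t\ge g$ unit ascents must meet every residue class. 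After contracting $L$ this says: choose an acyclic $L\subseteq\Fkt{E}{D}$ such that $\Abs{\Fkt{E}{C}\setminus L}\equiv 0\pmod g$ for every directed cycle $C$; note that the girth hypothesis is then recovered automatically, which signals that the construction must genuinely exploit the plane embedding.

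For the construction of $c$, several inductive schemes suggest themselves — for instance peeling off a facial directed cycle, reducing at a vertex of small out-degree in the spirit of \cref{cor:degreetwo} and \cref{lemma:3vertexcontraction}, or processing a bipartite ear decomposition of $G$ (each $M$-alternating ear corresponding to a directed path in $\mathcal{D}(G,M)$ along which $c$ must be interpolated) — in each case colouring the removed part last and carrying along the invariant that the increments stay in $\Set{0,1}$ and the level subdigraph stays acyclic. As a softer intermediate target, which the planar-dual / flow viewpoint should make accessible, one could first establish the fractional relaxation, i.e.\ that a strongly planar digraph of girth $\ge g$ admits a fractional packing of $g$ feedback vertex sets; the genuinely integral content then lies in forcing $L$ to be acyclic at the same time.

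The step I expect to be hardest is controlling the girth under whatever reduction is chosen while simultaneously keeping $L$ acyclic. In the case $g=2$ the short obstructions are digons, which can be identified away by \cref{lemma:3vertexcontraction} while preserving non-evenness; for $g>2$ there is no comparable move, and the obvious minor-type operations misbehave, since a tight-cut contraction in $G$ — equivalently, merging a directed separation in $\mathcal{D}(G,M)$ — can collapse a long $M$-alternating cycle into a very short one by contracting an alternating path through the shore, so the alternating girth is not preserved. A successful argument must therefore supply a reduction, or a direct construction, tailored so that the alternating girth never drops and the level subdigraph never gains a directed cycle; this coupling of a tension-type divisibility condition with a global acyclicity condition, neither of which is local, is where the difficulty concentrates.
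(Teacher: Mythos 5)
The statement you are addressing is posed in the paper as an \emph{open conjecture}: it is the matching-theoretic translation of \cref{conj:highergirth}, and the authors do not prove it. What they prove is only the fractional relaxation, \cref{thm:mainfractional}, obtained via \cref{hilf} from the clutter machinery of Lucchesi--Younger and Lehman applied to the planarised splitting digraph. Your proposal likewise does not prove it, and you say so yourself. The parts you do carry out are sound and consistent with the paper: the equivalence with \cref{conj:highergirth}, the harmlessness of relaxing girth exactly $g$ to girth at least $g$, the reduction to strongly $2$-connected summands via $0$- and $1$-sums, and the identification of the fractional packing of $g$ feedback vertex sets as the accessible intermediate target (which is precisely what the paper establishes). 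But the integral step --- actually producing $g$ pairwise disjoint feedback vertex sets --- is the entire content of the conjecture, and it is left open by you exactly as it is by the authors. So there is a genuine gap, namely the whole theorem.

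Two cautions about the programme itself. First, your reformulation via a map $c\colon \Fkt{V}{D}\to\mathbb{Z}_g$ with edge increments in $\Set{0,1}$ and acyclic level subdigraph is a \emph{sufficient} condition for a polychromatic $g$-colouring, but for $g\ge 3$ it is a priori strictly stronger than what is conjectured (an edge between classes $0$ and $2$ is forbidden by your condition yet perfectly compatible with polychromaticity); the paper's own reformulation via the star dichromatic number only requires each colour class to be a feedback vertex set. Committing to the tension formulation therefore risks attacking a false statement even if the conjecture is true, and you offer no argument that the two are equivalent for strongly planar digraphs. Second, your justification that the summands of a $1$-sum have girth at least $g$ (``every directed cycle of an $i$-sum lives in a single summand'') argues the wrong direction: what is needed is that every directed cycle of a summand $D_1$ \emph{lifts} to a directed cycle of $D$ of at least the same length --- true, since a cycle of $D_1$ through the identification vertex corresponds to a cycle of $D$ rerouted through $Y\cup\Set{v}$, which can only be longer --- but that is not what your parenthesis says. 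These points are repairable; the difficulty you correctly name at the end, coupling the divisibility condition with global acyclicity of the level set, is precisely where the conjecture remains open.
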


The type of colouring as described for cycles in digraphs was investigated more generally for hypergraphs by Bollob\'{a}s et al. (\cite{bollobas}).
Given a hypergraph $H$, a \emph{polychromatic $k$-colouring} of $\mathcal{H}$ is defined to be a vertex-colouring $c\colon \Fkt{V}{H} \rightarrow \Set{0,\ldots,k-1}$ such that every hyperedge $e \in \Fkt{E}{H}$ contains at least one vertex of each colour.
The \emph{polychromatic number} of $H$ then is defined as the maximal $k$ for which a polychromatic $k$-colouring of $H$ exists.
Clearly, the polychromatic number of a hypergraph $H$ is upper bounded by its \emph{rank}, that is, the size of a smallest hyperedge. 

Given a digraph $D$, we may associate with it the \emph{cycle hypergraph} $\CycleHypergraph{D}$ having $\Fkt{V}{D}$ as vertex set and containing the vertex sets of all directed cycles in $D$ as hyperedges.
It is now clear that \cref{conj:highergirth} claims that the cycle hypergraph $\CycleHypergraph{D}$ of any strongly planar digraph $D$ has the very special property that the polychromatic number matches its rank.

To the best of our knowledge, polychromatic colourings of digraphs in the above sense have not been investigated before, and we hope that this conjecture might initiate research in this direction.
Looking at general planar digraphs, for any $g \ge 2$, there are examples of planar digraphs with girth $g$ which do not admit a packing of $g$ disjoint feedback vertex sets (cf. \cite{hochsteiner}).
However, the following statement, which contains the 2-Colour-Conjecture (\Cref{con:twocolours}) as the subcase $g=3$, might still be true.

\begin{conjecture}[Hochstättler and S. \cite{hochsteiner}]
	For any planar digraph of girth $g \geq 3$, there exists a packing of $g-1$ disjoint feedback vertex sets. 
\end{conjecture}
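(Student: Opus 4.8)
The plan is to argue by induction on $\Abs{\Fkt{V}{D}}$, in the spirit of the proof of \cref{thm:mainthm}: first strip off the easy structure (acyclic parts, sinks and sources, and $0$- and $1$-sums, which decompose the feedback-vertex-set packing problem exactly as they decompose $2$-colouring in \cref{lemma:splitting}), so that a minimal counterexample may be assumed strongly $2$-connected, and then exhibit a local configuration that can be contracted or deleted while keeping the digraph planar and of directed girth at least $g$, so that a packing of $g-1$ feedback vertex sets in the reduced digraph lifts back to $D$. One should keep in mind from the outset that for $g=3$ a packing of two disjoint feedback vertex sets is exactly a proper $2$-colouring, so the $g=3$ instance of the statement is literally the Two-Colour-Conjecture (\cref{con:twocolours}); what follows is therefore a programme rather than a finished argument.

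For the base of the induction one draws on the results already available. \Cref{thm:mainthm} settles the subcase $g=2$ for planar non-even digraphs and, more to the point, it proves \cref{con:twocolours} --- hence the $g=3$ case of the statement --- for every planar digraph that happens also to be non-even, in particular for the strongly planar digraphs of \cref{sec:polychromatic}. The natural first milestone is therefore the strongly planar case, that is, \cref{conj:highergirth} together with its matching reformulation \cref{conj:highergirthmatching}, where far more structure is at hand: writing $D\cong\DirM{G}{M}$ for a planar bipartite matching covered graph $G$, the directed cycles of $D$ are precisely the $M$-alternating cycles of $G$, a shortest one having length $2g$, and by \cref{lemma:tight1sums} the $1$-sum decomposition of $D$ is nothing but the tight-cut decomposition of $G$. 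Since $G$ is a plane graph, its cycle space is spanned by the facial cycles, and one can hope to transport a decomposition of $M$ into $g$ forcing sets across a bicontraction of a degree-two vertex --- mimicking the tight-cut reductions of \cref{thm:pfaffiantcc} and the contraction of \cref{lemma:3vertexcontraction} --- while ensuring that no short $M$-alternating cycle is created, the induction bottoming out at the braces produced by the tight-cut decomposition, whose planar representatives one would examine separately.

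The crux, and where I expect the argument to stall, is the reducibility step for general (not necessarily strongly planar) planar digraphs. In ordinary planar colouring one relies on Euler's formula to produce a sparse vertex, but here ``sparse'' is dictated by the \emph{directed} girth, which controls nothing: an acyclic orientation of a dense planar triangulation has no directed cycle at all, so there is no analogue of \cref{cor:degreetwo} for this class. One is therefore forced to use the interplay between the plane embedding and the strongly connected structure --- after the reductions above every edge of $D$ lies on a directed cycle --- and the natural instrument is a discharging argument on the faces of $D$, charging each face according to how many short directed cycles run along its boundary, and hoping to extract a digon-free contractible configuration. Designing a discharging scheme that is at once strong enough to always yield such a configuration and weak enough that the configuration is genuinely reducible is precisely the content of the Two-Colour-Conjecture, so I do not expect to get past it for $g=3$ without a genuinely new idea; for $g\ge 4$ the slack in the girth --- which already let Mohar and Li establish $2$-colourability for $g\ge 4$ --- makes it plausible that such a discharging argument can be pushed through even though the $g=3$ case resists.
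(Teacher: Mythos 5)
This statement is an open conjecture, not a theorem of the paper: it is attributed to Hochst\"attler and Steiner and the paper explicitly points out that its subcase $g=3$ \emph{is} the Two-Colour-Conjecture (\cref{con:twocolours}), which remains unresolved. There is therefore no proof in the paper to compare your attempt against, and your proposal --- as you yourself acknowledge in its final paragraph --- is a programme rather than a proof. The irreducible gap is exactly where you locate it: any argument along these lines must, at minimum, settle \cref{con:twocolours}, and the reduction machinery of \cref{sec:noneven} does not survive the passage from non-even to general planar digraphs. In particular, \cref{cor:degreetwo} rests on the edge bound $\Abs{\Fkt{E}{D}}\le 3\Abs{\Fkt{V}{D}}-4$ for strongly $2$-connected non-even digraphs, and, as you correctly observe, directed girth imposes no such sparsity on planar digraphs; no substitute unavoidable-configuration lemma is known. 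So the proposal cannot be completed with the tools available in or cited by the paper.

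Two smaller corrections to the framing. First, \cref{thm:mainthm} gives only a packing of \emph{two} disjoint feedback vertex sets, so even for strongly planar digraphs it covers only the $g=3$ instance of the statement; for $g\ge 4$ the relevant open target is \cref{conj:highergirth} (which asks for $g$, not $g-1$, disjoint feedback vertex sets, reflecting that strongly planar digraphs are better behaved than general planar ones --- the paper notes that general planar digraphs of girth $g$ need not admit $g$ disjoint feedback vertex sets). The paper's only unconditional progress in this direction is the fractional relaxation \cref{thm:mainfractional}, proved via the MFMC property and Lehman's theorem rather than via the inductive reduction scheme you sketch; if you want to make partial progress, that LP-duality route (idealness of the clutter of minimal feedback vertex sets) is the one the paper actually gets traction from, and it is worth asking whether any of it extends beyond the strongly planar (or mengerian) setting. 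Second, the Mohar--Li result you invoke establishes $2$-colourability of oriented planar digraphs of girth at least $4$, i.e.\ a packing of two disjoint feedback vertex sets; it does not by itself provide the $g-1\ge 3$ disjoint feedback vertex sets the conjecture demands for $g\ge 4$, so the ``slack in the girth'' does not obviously help in the way suggested.
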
 

The rest of this section is devoted to partial results towards \cref{conj:highergirth} using the concept of fractional colourings.

Given a fixed natural number $b \geq 1$ and some $k \in \mathbb{N}, k \geq b$, a \emph{$b$-tuple $k$-colouring} of a digraph $D$ is defined to be an assignment of subsets of $\Set{0,\ldots,k-1}$ of size $b$ to the vertices of $D$ in such a way that for any $i \in \Set{0,\ldots,k-1}$, the subdigraph of $D$ induced by those vertices whose colour-set contains $i$ is acyclic.
The \emph{$b$-dichromatic number} $\BDichromatic{D}$ of a digraph is then defined to be the least $k$ for which a $b$-tuple $k$-colouring of $D$ exists.
It is easy to see that $\BDichromatic{D} \leq b \cdot \Dichromatic{D}$ for any digraph.
Thus, the \emph{fractional dichromatic number} of a digraph defined as $\FracDichromatic{D}\coloneqq\inf_{b \geq 1}{\frac{\BDichromatic{D}}{b}} \in [1,\infty)$ is always a lower bound for the dichromatic number.

It has been proved in \cite{doct}, Chapter 5 that $\FracDichromatic{D}$ is always a rational number and can be alternatively represented as the optimal value of the following linear relaxation of a natural integer program formulation of the dichromatic number:

\begin{theorem}[Severino \cite{doct}] \label{doct}
	Let $D$ be a digraph.
	Then there is an integer $b \ge 1$ such that $\FracDichromatic{D}=\frac{\BDichromatic{D}}{b}$.
	Denote the collection of acyclic vertex sets in $D$ by $\Acyclic{D}$ and for any $v \in \Fkt{V}{D}$ let $\Acyclic{D,v} \subseteq \Acyclic{D}$ consist of only those acyclic sets containing $v$.
	Then $\FracDichromatic{D}$ is the optimal value of
	\begin{align} \label{primalfract}
	&\min \sum_{A \in \mathcal{A}(D)}{x_A}\\
	\text{  subj.\ to }&
	\sum_{A \in \mathcal{A}(D,v)}{x_A} \ge 1, \text{ for all } v \in V(D) \cr
	& x \ge 0.\nonumber
	\end{align}
\end{theorem}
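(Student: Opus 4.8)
The strategy is to show that $\FracDichromatic{D}$ equals the optimal value $\tau^\ast$ of the covering linear program \eqref{primalfract} and that the integer $b$ promised in the statement can be read off from a rational optimal solution of it. To get started I would record the elementary linear-programming facts about \eqref{primalfract}: it is a finite covering LP with one nonnegative variable $x_A$ per acyclic set $A\in\Acyclic{D}$, one constraint per vertex of $D$, all coefficients in $\Set{0,1}$, and all-ones objective. Its feasible region is a nonempty (take $x_A=1$ over all singleton sets $A$, which are acyclic because $D$ is loopless) pointed rational polyhedron on which the objective is bounded below by $0$; hence the minimum $\tau^\ast$ is attained at a \emph{rational} feasible solution $x^\ast$. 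I also note $\tau^\ast\ge 1$ whenever $\Fkt{V}{D}\neq\emptyset$.

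\textbf{Step 1: $\tau^\ast\le\FracDichromatic{D}$.} Fix $b\ge 1$ and an optimal $b$-tuple $k$-colouring of $D$, where $k=\BDichromatic{D}$. For each colour $i\in\Set{0,\dots,k-1}$ let $A_i$ be the set of vertices whose colour-set contains $i$; by the definition of a $b$-tuple colouring $A_i\in\Acyclic{D}$. Since every vertex lies in exactly $b$ of the sets $A_0,\dots,A_{k-1}$, the assignment $x_A:=\tfrac1b\,\Abs{\Set{i : A_i=A}}$ satisfies $\sum_{A\in\Acyclic{D,v}}x_A=1$ for every vertex $v$, so it is feasible for \eqref{primalfract} with objective value $\sum_A x_A=k/b$. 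Therefore $\tau^\ast\le\BDichromatic{D}/b$ for every $b\ge 1$, and taking the infimum over $b$ yields $\tau^\ast\le\FracDichromatic{D}$.

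\textbf{Step 2: rounding the optimum.} For the reverse inequality, together with the attainment claim, I would round the rational optimal solution $x^\ast$. Choose $b\ge 1$ to be a common denominator of the entries of $x^\ast$, so that $m_A:=b\,x^\ast_A\in\mathbb{Z}_{\ge 0}$ for each $A$, and put $k:=\sum_A m_A=b\,\tau^\ast\in\mathbb{Z}_{\ge 0}$. List the acyclic sets with multiplicity as $A^{(1)},\dots,A^{(k)}$, with $A$ occurring $m_A$ times, and assign to each vertex $v$ the colour-set $\Set{j : v\in A^{(j)}}$. The constraint of \eqref{primalfract} at $v$ reads $\sum_{A\ni v}m_A\ge b$, i.e.\ $v$ lies in at least $b$ of the listed sets, so every colour-set has size at least $b$; deleting superfluous colours vertex by vertex until each colour-set has size exactly $b$ leaves each colour class $j$ a subdigraph of the acyclic digraph $\InducedSubgraph{D}{A^{(j)}}$, hence acyclic. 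This is a valid $b$-tuple $k$-colouring (note $k=b\,\tau^\ast\ge b$), so $\BDichromatic{D}\le k=b\,\tau^\ast$ and thus $\FracDichromatic{D}\le\BDichromatic{D}/b\le\tau^\ast$. Combining with Step 1 gives $\FracDichromatic{D}=\tau^\ast=\BDichromatic{D}/b$ for this $b$, which is precisely the assertion.

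\textbf{Main obstacle.} I do not expect a genuine obstacle here: both translations between $b$-tuple colourings and feasible solutions of \eqref{primalfract}, as well as the trimming step, are routine bookkeeping with multiplicities, and the only facts used from outside are that a covering LP with integral data attains its optimum at a rational point and that induced subdigraphs of acyclic digraphs are acyclic. The single point requiring care is making the colour-sets have size \emph{exactly} $b$ (the definition of a $b$-tuple colouring asks for size $b$, while the rounded solution only guarantees size at least $b$); this is exactly what the ``discard superfluous colours'' move fixes, relying on the fact that removing vertices never destroys acyclicity of a colour class.
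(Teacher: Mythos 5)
The paper does not prove this statement at all --- it is quoted from Severino's dissertation \cite{doct} and used as a black box --- so there is no in-paper argument to compare against. Your proof is correct and is the standard one for results of this type: Step 1 turns a $b$-tuple $k$-colouring into a feasible point of \eqref{primalfract} of value $k/b$ by weighting each colour class $A_i$ with $1/b$ (the multiplicity bookkeeping for repeated classes is handled properly, and the size-exactly-$b$ condition is exactly what gives $\sum_{A\in\Acyclic{D,v}}x_A=1$); Step 2 clears denominators in a rational optimal solution and reads the result as a multiset of acyclic sets covering each vertex at least $b$ times, then trims each colour-set down to size exactly $b$, which keeps every colour class inside some acyclic $A^{(j)}$. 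The two places that genuinely need care --- that the optimum of the covering LP is attained at a rational point (nonempty pointed polyhedron with $0$--$1$ data and objective bounded below), and that $k=b\tau^\ast\ge b$ so the trimmed assignment really is a $b$-tuple $k$-colouring --- are both addressed. The chain $\FracDichromatic{D}\le\BDichromatic{D}/b\le\tau^\ast\le\FracDichromatic{D}$ then simultaneously yields $\FracDichromatic{D}=\tau^\ast$ and the attainment of the infimum at this $b$, which is the full content of the theorem. The only cosmetic caveat is the degenerate case $\Fkt{V}{D}=\emptyset$, which you may simply exclude.
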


The fractional dichromatic number has turned out to be a useful concept.
For instance, it was used in \cite{frdichr} to prove a fractional version of the so-called \emph{Erd\H{o}s-Neumann-Lara-Conjecture}.

To make the statement of our results clearer, we reformulate \cref{conj:highergirth} in the setting of \emph{circular colourings} of digraphs.
The \emph{star dichromatic number} $\StarDichromatic{D}$ of a digraph was recently introduced in \cite{hochsteiner} as a refined measure of the dichromatic number of a digraph which, similar to the circular or fractional chromatic number of a graph (cf. \cite{vince} and \cite{kneser}), can take on rational values.
Instead of a finite colour set, for any $p \in \mathbb{R}, p \geq 1$, in an \emph{acyclic $p$-colouring} of a digraph $D$, vertices are coloured with points on a plane circle $S_p$ with perimeter $p$ such that for any open cyclic subinterval $I \subseteq S_p$ of length $1$, the set of vertices mapped to this interval is acyclic.
The star dichromatic number $\StarDichromatic{D}$ is now defined as the minimal value of $p$ for which an acyclic $p$-colouring of $D$ exists. 

Intuitively, having fractional or star dichromatic number close to $1$ captures the property of a digraph being ``close'' to acyclic.

We restate the following basics.

\begin{proposition} [Hochstättler and S. \cite{hochsteiner}] \noindent
	Let $D$ be a digraph, then the following statements hold.
	\begin{enumerate}
		\item The star dichromatic number $\StarDichromatic{D}$ is a fraction with numerator at most $\Abs{\Fkt{V}{D}}$ satisfying $\lceil \StarDichromatic{D} \rceil=\Dichromatic{D}$. 
		\item For any pair of integers $k \ge d \ge 1$, we have $\StarDichromatic{D} \leq \frac{k}{d}$ if and only if there is a colouring $c\colon\Fkt{V}{D} \rightarrow \mathbb{Z}_k \simeq \{0,1,\ldots,k-1\}$ of the vertices of $D$ such that $\Fkt{c^{-1}}{\Set{i,i+1,\ldots,i+d-1}}$ (sums taken modulo $k$) is an acyclic vertex set for every $i \in \mathbb{Z}_k$.
		\item $\FracDichromatic{D} \leq \StarDichromatic{D}$.
		\item For every $n \in \mathbb{N}, n \ge 2$ we have $\FracDichromatic{\vec{C}_n}=\StarDichromatic{\vec{C}_n}=\frac{n}{n-1}$.
	\end{enumerate}
\end{proposition}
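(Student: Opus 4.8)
The plan is to prove the four items in order, each feeding into the next: item (i) is the geometric core, item (ii) a short two-sided rounding argument, and items (iii)--(iv) then follow quickly from (i), (ii) and \cref{doct}. Write $n:=\Abs{\Fkt{V}{D}}$ throughout.

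For (i), I would first note that placing, in a proper $\Dichromatic{D}$-colouring, every vertex of colour class $j$ at the single point $j$ of the circle $S_{\Dichromatic{D}}$ gives an acyclic $\Dichromatic{D}$-colouring, since an open arc of length $1$ contains at most one integer point and hence meets at most one colour class; this shows there is a valid $p$-colouring with $p\le\Dichromatic{D}\le n$, and because ``valid'' is a closed condition while the set of colourings with perimeter at most $n$ is compact (up to rotation), the infimum defining $\StarDichromatic{D}$ is attained, and $\StarDichromatic{D}\le\Dichromatic{D}$. Conversely, given any acyclic $p$-colouring one cuts $S_p$ into $\lceil p\rceil$ half-open arcs of length $p/\lceil p\rceil\le 1$ and rotates slightly so that no vertex lies on a cut point; each arc then carries the vertex set of an open arc of length at most $1$, hence an acyclic one, which yields a proper $\lceil p\rceil$-colouring and the reverse inequality $\Dichromatic{D}\le\lceil\StarDichromatic{D}\rceil$. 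For the rationality of $\StarDichromatic{D}$ and the numerator bound, fix an optimal colouring, perturb it into general position, and use as variables the consecutive gap lengths $g_1,\dots,g_n>0$, where $\sum_i g_i=p$: validity translates into a finite family of linear inequalities with $\{0,1\}$-coefficients in the $g_i$ (for every directed cycle $C$ and every block of gaps strictly between two cyclically consecutive vertices of $V(C)$, the gaps outside that block sum to at least $1$), and $p=\sum_i g_i$ is the objective to be minimised. An optimal vertex of this rational polyhedron is the unique solution of a square subsystem $Ag=\mathbf{1}$ with $A$ an $n\times n$ $\{0,1\}$-matrix, so $p=\mathbf{1}^{\top}A^{-1}\mathbf{1}$ is rational; following the tight unit arcs around the circle, exactly as in the analysis of the circular chromatic number of graphs, bounds the numerator of $\StarDichromatic{D}$ in lowest terms by $n$.

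For (ii), the implication ``$\Leftarrow$'' is geometric: given $c\colon\Fkt{V}{D}\to\mathbb{Z}_k$ with every window $\Set{i,i+1,\ldots,i+d-1}$ inducing an acyclic set, place vertex $v$ at the point $c(v)/d$ of $S_{k/d}$; an open arc of length $1$ meets at most $d$ of the equally spaced grid points $\Set{0,1/d,\ldots,(k-1)/d}$, and those are consecutive, so the vertices it catches have colours inside one length-$d$ window and form an acyclic set, whence $\StarDichromatic{D}\le k/d$. For ``$\Rightarrow$'', assume $\StarDichromatic{D}\le k/d$; by (i) the minimum is attained and stretching a valid colouring to a larger perimeter preserves validity, so there is an acyclic $(k/d)$-colouring. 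Scaling $S_{k/d}$ by the factor $d$ turns it into a placement $x$ of $\Fkt{V}{D}$ on $S_k$ in which every open arc of length $d$ induces an acyclic set; rotate so that no vertex lies at an integer point and set $c(v):=\lfloor x_v\rfloor\in\mathbb{Z}_k$. Then $\Fkt{c^{-1}}{\Set{i,i+1,\ldots,i+d-1}}=\CondSet{v}{x_v\in[i,i+d)}$, and since no vertex sits at the integer $i$ this set coincides with the vertex set of the open arc of length $d$ between $i$ and $i+d$, which is acyclic.

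For (iii), write $\StarDichromatic{D}=k/d$ using (i), take a colouring $c$ as in (ii), and for each $i\in\mathbb{Z}_k$ set $x_{A_i}:=\frac{1}{d}$ for the acyclic set $A_i:=\Fkt{c^{-1}}{\Set{i,i+1,\ldots,i+d-1}}$ (adding weights should two of these sets coincide). A vertex of colour $j$ lies in exactly the $d$ sets $A_{j-d+1},\ldots,A_j$ (indices modulo $k$), so $\sum_{A\in\Acyclic{D,v}}x_A\ge d\cdot\frac{1}{d}=1$; thus $x$ is feasible for the program of \cref{doct} with value $\sum_A x_A=k\cdot\frac{1}{d}=\StarDichromatic{D}$, which gives $\FracDichromatic{D}\le\StarDichromatic{D}$. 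For (iv), applying (ii) with $k=n$, $d=n-1$ and $c$ any bijection $\Fkt{V}{\vec{C}_n}\to\mathbb{Z}_n$ yields $\StarDichromatic{\vec{C}_n}\le\frac{n}{n-1}$, since every window of $n-1$ colours omits exactly one vertex and $\vec{C}_n$ minus a vertex is a path; conversely every acyclic subset of $\vec{C}_n$ is a proper subset of $\Fkt{V}{\vec{C}_n}$ and so has at most $n-1$ vertices, whence summing $\sum_{A\in\Acyclic{D,v}}x_A\ge1$ over all $v$ gives $(n-1)\sum_A x_A\ge\sum_A\Abs{A}\,x_A\ge n$, i.e.\ $\FracDichromatic{\vec{C}_n}\ge\frac{n}{n-1}$, and together with (iii) this closes the chain $\frac{n}{n-1}\ge\StarDichromatic{\vec{C}_n}\ge\FracDichromatic{\vec{C}_n}\ge\frac{n}{n-1}$. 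The only genuinely delicate step is the numerator bound in (i): setting up the gap-length linear program is routine, but squeezing out the sharp bound $\Abs{\Fkt{V}{D}}$ on the numerator --- rather than a bound involving a determinant --- requires the combinatorial analysis of the tight unit arcs at an optimal colouring, in exact parallel to the corresponding fact for the circular chromatic number.
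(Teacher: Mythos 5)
The paper does not prove this proposition at all: it is quoted as a known result from Hochst\"attler and Steiner \cite{hochsteiner} (``We restate the following basics''), so there is no in-paper argument to compare yours against. Judged on its own merits, your treatment of (ii), (iii), (iv) and of the identity $\lceil \StarDichromatic{D}\rceil=\Dichromatic{D}$ in (i) is correct and is essentially the standard argument. The two rounding directions are sound --- the perturbation so that no vertex sits on a cut point, and the observation that validity is monotone under stretching the perimeter, are exactly the points that need care and you handle both --- the compactness argument for attainment works because the validity condition is closed in the pair (perimeter, placement), and the derivations of (iii) from \cref{doct} and of (iv) from the averaging bound $\sum_A \Abs{A}\,x_A\ge n$ are complete. (For (iii) you could even bypass \cref{doct}: the colouring from (ii) directly yields a $d$-tuple $k$-colouring by assigning $v$ the window $\{c(v)-d+1,\dots,c(v)\}$ of colours, so $\BDichromatic{D}\le k$ for $b=d$ and hence $\FracDichromatic{D}\le k/d$ straight from the definition.)

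The one genuine gap is the numerator bound in (i), which you openly defer to ``the combinatorial analysis of the tight unit arcs \dots\ in exact parallel to the circular chromatic number.'' This is not automatic, and the rest of your item (i) does not deliver it. Your gap-length LP does give rationality (finitely many cyclic orders, each yielding a rational LP optimum), but a basic optimal solution $g=A^{-1}\mathbf{1}$ only expresses $p=\mathbf{1}^{\top}\mathrm{adj}(A)\mathbf{1}/\det A$; this controls the denominator of $p$ by a subdeterminant of a $0$--$1$ matrix and says nothing about the numerator being at most $\Abs{\Fkt{V}{D}}$ in lowest terms. Moreover, the tight structure here differs genuinely from Vince's setting \cite{vince}: a tight constraint is attached to a directed cycle $C$ \emph{together with} a choice of circularly consecutive pair of vertices of $C$ (the condition that $V(C)$ does not fit into an open unit arc), not to a single edge, so the winding-number bookkeeping that yields $k\le \Abs{\Fkt{V}{G}}$ for the circular chromatic number has to be redone for these cycle constraints; that adaptation is precisely the content of the corresponding lemma in \cite{hochsteiner}. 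Without it your proof of the numerator bound is a pointer rather than an argument; everything else stands.
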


As a consequence of the second point, we obtain that, for a digraph $D$ of girth $g$, $\StarDichromatic{D} \leq \frac{g}{g-1}$ if and only if $\Fkt{V}{D}$ can be coloured with the elements of $\mathbb{Z}_g$ such that for any $i \in \mathbb{Z}_g$, the vertices mapped to $\mathbb{Z}_g \setminus \{i\}$ form an acyclic set.
However, this is the same as saying that $D$ can be vertex-coloured with $g$ colours such that each colour class is a feedback vertex set of $D$. Therefore, the following is an equivalent reformulation of \cref{conj:highergirth}.

\begin{conjecture}
	For any strongly planar digraph $D$ of girth $g \geq 2$, we have $\StarDichromatic{D}=\frac{g}{g-1}$.
\end{conjecture}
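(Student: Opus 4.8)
We separate the easy lower bound from the hard upper bound. The inequality $\StarDichromatic{D}\ge\frac{g}{g-1}$ is immediate and in fact holds for every digraph of girth $g$: by definition of the girth, $D$ contains a copy of $\vec{C}_g$ as a subdigraph, and restricting an acyclic $p$-colouring to a subdigraph yields again an acyclic $p$-colouring, so $\StarDichromatic{\cdot}$ is monotone under taking subdigraphs; combined with $\StarDichromatic{\vec{C}_g}=\frac{g}{g-1}$ from the proposition above, this gives the bound. Hence all of the content lies in proving $\StarDichromatic{D}\le\frac{g}{g-1}$.

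By the circular reformulation recorded above (with $k=g$ and $d=g-1$), the inequality $\StarDichromatic{D}\le\frac{g}{g-1}$ is equivalent to producing a colouring $c\colon\Fkt{V}{D}\to\mathbb{Z}_g$ each of whose colour classes is a feedback vertex set, that is, a packing of $g$ pairwise disjoint feedback vertex sets; via \cref{conj:highergirthmatching} it is the same as decomposing a perfect matching $M$ of a planar bipartite graph $G$ into $g$ forcing sets, where $2g$ is the length of a shortest $M$-alternating cycle. We would proceed in two stages. First, we would prove the fractional relaxation $\FracDichromatic{D}\le\frac{g}{g-1}$: by \cref{doct} this is the optimum of the linear program \eqref{primalfract}, and by LP duality it suffices to show that every vertex-weighting $y\ge 0$ with $\sum_{v\in A}y_v\le 1$ for all acyclic sets $A$ satisfies $\sum_{v}y_v\le\frac{g}{g-1}$. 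Strong planarity should be decisive here: every vertex lies on a directed cycle of length at least $g$, and in a strongly planar digraph such cycles bound plane regions, so a discharging or averaging argument over the faces of the embedding --- or, dually, a planar cut argument in the style of max-flow--min-cut --- ought to push the total weight down to $\frac{g}{g-1}$. Together with the subdigraph argument above, this already yields $\FracDichromatic{D}=\frac{g}{g-1}$.

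The second stage, and the genuinely hard part, is to promote the fractional colouring to an integral one of the rigid form required, in which each of the $g$ colour classes is a feedback vertex set; this is exactly where packing statements of this flavour are currently open. Since ordinary rounding destroys the constraint that every class be a feedback vertex set, we would instead attempt an inductive reducibility argument in the spirit of the proof of \cref{thm:mainthm}: locate a local configuration that strong planarity together with girth exactly $g$ forces to appear (a vertex of small out-degree, or a short ``reducible'' alternating face), delete or contract it to obtain a smaller strongly planar digraph of girth still at least $g$, $g$-colour that digraph by induction, and extend the colouring across the reduced vertex. The three demands such a reduction must meet simultaneously --- preserving strong planarity, not decreasing the girth, and leaving enough colour freedom at the reduced vertex that every directed cycle through it still omits some colour --- are in tension with one another, and we expect that engineering a reduction satisfying all three for arbitrary $g$ requires a genuinely new idea. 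A realistic intermediate target is therefore to establish the fractional statement in full generality and the integral statement only for small $g$, or for structurally restricted subclasses of strongly planar digraphs.
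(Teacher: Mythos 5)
The statement you are asked to prove is not a theorem of the paper but an open conjecture (it is the circular-colouring reformulation of \cref{conj:highergirth}), and your proposal, to its credit, does not claim to close it: you prove the lower bound, reduce the upper bound to packing $g$ disjoint feedback vertex sets, and then explicitly defer the integral packing as the hard open part. That assessment is accurate --- the paper leaves exactly this gap open --- so there is no error of overclaiming, but there is also no proof here, and none exists in the paper.

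What the paper does establish is precisely your ``realistic intermediate target'', the fractional relaxation $\FracDichromatic{D}=\frac{g}{g-1}$ (\cref{thm:mainfractional}), and here your sketch diverges from, and falls short of, the actual argument. You reduce via LP duality to bounding $\sum_v y_v$ over weightings satisfying $\sum_{v\in A} y_v\le 1$ for all acyclic $A$, and then gesture at ``a discharging or averaging argument over the faces'' that ``ought to'' work; no such argument is supplied, and it is not obvious one exists in that form. The paper instead proceeds concretely: it passes to the splitting digraph to convert vertex-disjoint cycle packing into edge-disjoint packing, applies the Lucchesi--Younger theorem through planar duality (\cref{duallucchesi}) to show that the clutter of induced directed cycles of a strongly planar digraph has the MFMC property, invokes Lehman's theorem to transfer idealness to the blocking clutter of minimal feedback vertex sets, and extracts from the resulting integral primal--dual pair a fractional packing of feedback vertex sets of total weight $g$ (\cref{hilf}), which is then converted into an explicit feasible solution of the programme \eqref{primalfract} of value $\frac{g}{g-1}$. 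Strong planarity enters exactly at the splitting step (the in/out interval condition is what makes the split digraph planar), which your sketch does not identify. So: your lower bound and your framing of the problem are correct, your fractional stage needs to be replaced by the clutter-theoretic argument (or an equivalent one actually carried out), and your integral stage is, as you say, open --- the reduction strategy you outline in the spirit of \cref{thm:mainthm} is a reasonable research direction but not a proof.
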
 

For planar digraphs, the fractional and the star dichromatic number often coincide or are closely tied to each other.
Thus, the following result can be seen as a source of evidence for \cref{conj:highergirth}.

\begin{theorem}\label{thm:mainfractional}
	For any strongly planar digraph $D$ of girth $g \geq 2$, we have $\FracDichromatic{D}=\frac{g}{g-1}$. 
\end{theorem}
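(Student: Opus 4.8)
The plan is to prove the two bounds $\FracDichromatic{D}\geq\tfrac{g}{g-1}$ and $\FracDichromatic{D}\leq\tfrac{g}{g-1}$ separately, working with the linear program of \cref{doct} and its dual. By \cref{doct}, $\FracDichromatic{D}$ is the optimum of that linear program, so by LP duality it also equals the maximum of $\sum_{v\in\Fkt{V}{D}}y_v$ over all $y\geq0$ with $\sum_{v\in A}y_v\leq1$ for every $A\in\Acyclic{D}$. For the lower bound I would simply exhibit such a $y$: as $D$ has girth $g$ it contains a directed cycle $\vec{C}$ on exactly $g$ vertices, and no acyclic set can contain all of $\Fkt{V}{\vec{C}}$ (it would induce a directed cycle); hence $y_v:=\tfrac{1}{g-1}$ for $v\in\Fkt{V}{\vec{C}}$ and $y_v:=0$ otherwise is feasible and has total weight $\tfrac{g}{g-1}$.

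For the upper bound it suffices to find a feasible point of that linear program of value $\tfrac{g}{g-1}$, and I would obtain one from a fractional packing of feedback vertex sets. It is enough to find feedback vertex sets $F_1,\dots,F_t$ of $D$ and weights $\mu_1,\dots,\mu_t\geq0$ with $\sum_j\mu_j=1$ and $\sum_{j\,:\,v\in F_j}\mu_j\leq\tfrac{1}{g}$ for every $v\in\Fkt{V}{D}$: then the acyclic sets $A_j:=\Fkt{V}{D}\setminus F_j$ with weights $x_{A_j}:=\tfrac{g}{g-1}\mu_j$ cover every vertex to total weight at least $\tfrac{g}{g-1}\bigl(1-\tfrac1g\bigr)=1$, with total weight $\tfrac{g}{g-1}$. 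By the minimax theorem for matrix games, such weights exist if and only if for \emph{every} $z\colon\Fkt{V}{D}\to\mathbb{R}_{\geq0}$ there is a feedback vertex set $F$ of $D$ with $z(F)\leq\tfrac1g\,z(\Fkt{V}{D})$. So the theorem reduces to this statement (for $g=2$ it also follows directly from \cref{thm:mainthm}).

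To prove it, fix $z$; write $\tau_z$ for the minimum $z$-weight of a feedback vertex set of $D$, and $\nu_z$ for the supremum of $\sum_C\lambda_C$ over all $\lambda\geq0$ indexed by the directed cycles $C$ of $D$ with $\sum_{C\ni v}\lambda_C\leq z_v$ for all $v$. Since every directed cycle of $D$ has at least $g$ vertices, $g\sum_C\lambda_C\leq\sum_C\lambda_C\Abs{\Fkt{V}{C}}=\sum_v\sum_{C\ni v}\lambda_C\leq z(\Fkt{V}{D})$, hence $\nu_z\leq\tfrac1g\,z(\Fkt{V}{D})$; so it remains to show $\tau_z=\nu_z$. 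This is where strong planarity is used: split every vertex $v$ of $D$ into an arc $\Brace{v^-,v^+}$, attaching the block of in-edges at $v$ to $v^-$ and the block of out-edges to $v^+$. Because by strong planarity these two blocks are consecutive intervals in the rotation at $v$, the resulting digraph $D'$ is again planar, and its directed cycles correspond bijectively to those of $D$ (at $v^-$ the unique out-arc is $\Brace{v^-,v^+}$, at $v^+$ the unique in-arc is $\Brace{v^-,v^+}$, so a directed cycle of $D'$ alternates between vertex-arcs and original arcs). Assign the arc $\Brace{v^-,v^+}$ weight $z_v$ and every original arc a weight larger than $z(\Fkt{V}{D})$; then a minimum-weight feedback arc set of $D'$ uses only vertex-arcs and has weight $\tau_z$, while a maximum-weight fractional packing of directed cycles of $D'$ under these capacities has value $\nu_z$. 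Applying to $D'$ the classical min--max theorem for feedback arc sets in planar digraphs --- the planar-dual form of the Lucchesi--Younger theorem, stating that in any planar digraph the minimum weight of a feedback arc set equals the maximum weight of a fractional (in fact integral) packing of directed cycles --- yields $\tau_z=\nu_z$, which together with $\nu_z\leq\tfrac1g\,z(\Fkt{V}{D})$ completes the argument.

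The step I expect to be the main obstacle is this last one. It is the only point where planarity enters, and it is crucial that $D$ is \emph{strongly} planar: that hypothesis is precisely what allows each vertex-split to be carried out without introducing a crossing, whereas for a general planar embedding the in- and out-edges at a vertex may interleave and the split is forced to cross itself --- consistent with the integral statement \cref{conj:highergirth} being false for general planar digraphs. The remaining details (the cycle correspondence under the split, verifying that the large-weight arcs are used neither by an optimal feedback arc set nor by an optimal packing in $D'$, and citing the planar feedback-arc-set min--max in the precise capacitated form needed) are routine.
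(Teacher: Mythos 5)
Your proposal is correct, and its core coincides with the paper's: both arguments hinge on splitting each vertex of the strongly planar digraph into an arc (which stays planar precisely because the in- and out-edges occupy consecutive intervals in the rotation) and then invoking the planar dual of the Lucchesi--Younger theorem to get the weighted min--max between feedback vertex sets and fractional packings of directed cycles; the final primal certificate $x_{A}=\tfrac{g}{g-1}\mu_j$ for $A=\Fkt{V}{D}\setminus F_j$ is also exactly the paper's (\cref{hilf} rescaled by $g$). Where you genuinely diverge is in how that min--max is turned into a fractional packing of feedback vertex sets. The paper packages the min--max as the MFMC property of the clutter of vertex sets of directed cycles, deduces idealness, applies \emph{Lehman's theorem} to transfer idealness to the blocking clutter of minimal feedback vertex sets, and extracts the packing from LP duality at the single weighting $w=\textbf{1}$, using that an integral optimum of the covering LP must contain a directed cycle and hence have size at least $g$. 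You instead quantify the min--max over all vertex weightings $z$, bound the cycle-packing value by $\tfrac{1}{g}z(\Fkt{V}{D})$ via the girth, and apply the von Neumann minimax theorem to the finite FVS-versus-vertex game; this trades Lehman's theorem for elementary LP duality, at the mild cost of needing the capacitated min--max for real (not just integral) weights --- which you should discharge either by restricting to rational $z$ and scaling, or by noting that MFMC implies idealness. Your lower bound (an explicit dual certificate of weight $\tfrac{1}{g-1}$ on a shortest cycle) is likewise a self-contained substitute for the paper's appeal to $\FracDichromatic{\vec{C}_g}=\tfrac{g}{g-1}$ and subdigraph monotonicity. One point in your favour: the form of the planar dual of Lucchesi--Younger you state (minimum weight of a feedback \emph{arc} set equals the maximum capacitated packing of directed cycles) is the one the argument actually requires and the one the paper subsequently uses, whereas the literal wording of \cref{duallucchesi} (``minimal weight of a directed cycle'') appears to be a slip.
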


To prove this result, we use insights from the theory of so-called clutters.
A \emph{clutter} is defined to be a collection $\mathcal{C}$ of subsets of a finite ground set $S$ such that $C_1 \nsubseteq C_2$ for any $C_1 \neq C_2 \in \mathcal{C}$.
We refer to the first chapter of \cite{corn} for a short and comprehensible introduction to the topic.

Associated with any clutter $\mathcal{C}$ over the ground set $S$ we have a \emph{clutter matrix} $\ClutterMatrix{C}$ whose columns are indexed by the elements of $S$ and whose rows correspond to the characteristic vectors of the members of $\mathcal{C}$ with respect to $S$.
The following primal-dual pair of linear optimisation programs resembles natural covering and packing problems related to clutters.
Here, $w \geq 0$ denotes a row vector whose entries are non-negative real numbers or possibly $\infty$, and $\textbf{1}$ denotes the vector with all entries equal to $1$. Vector-inequalities are to be understood component-wise.

\begin{align}
\min\CondSet{wx}{ x \geq 0,~ \ClutterMatrix{C}x \geq \textbf{1}} \label{primal} \\
=\max\CondSet{y\textbf{1}}{y \geq 0,~ y\ClutterMatrix{C} \leq w} \label{dual} 
\end{align}

In the following, we introduce a number of important notions for clutters related to integral solutions of the linear programs (\ref{primal}) and (\ref{dual}).

Given a clutter $\mathcal{C}$, we will say that it admits the \emph{Max-Flow-Min-Cut-Property} (MFMC for short) if, for any non-negative $w$ with integral entries, there exists a primal-dual pair of integral optimal solutions to the linear programs (\ref{primal}) and (\ref{dual}).

We say that $\mathcal{C}$ \emph{packs} if the same holds true at least for $w=\textbf{1}$.
If such an integral primal-dual solution exists for all vectors $w$ with entries $0$, $1$ or $\infty$, we call the clutter \emph{packing}.

It is not hard to see that if a clutter has the MFMC-property, it is packing, and, clearly, any packing clutter also packs.
While there are examples of clutters that pack but do not have the packing property, it is a famous open problem due to Conforti and Cornuejols to show that in fact, the packing property and the MFMC-property are equivalent.

\begin{conjecture}[Conforti and Cornuejols]
	A clutter has the packing property if and only if it has the MFMC property.
\end{conjecture}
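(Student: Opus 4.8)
The reverse implication of the conjecture is already recorded in the discussion above: a clutter with the MFMC property certainly admits integral primal--dual optima for the linear programs (\ref{primal}) and (\ref{dual}) for every weighting with entries in $\{0,1,\infty\}$, so it has the packing property. The whole difficulty is the forward implication, and the plan is to split it into a weight-reduction step and a structural step.

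For the weight-reduction step, I would use the replication trick: an element $s$ of weight $w_s$ is replaced by $w_s$ parallel copies, each of weight $1$. This leaves the optimal value of (\ref{primal}) unchanged and turns an arbitrary non-negative integral weighting into a $0/1$ weighting; consequently the MFMC property of $\mathcal{C}$ is equivalent to every replication of every minor of $\mathcal{C}$ being a clutter that packs, which in turn --- given that $\mathcal{C}$ has the packing property --- reduces to the statement that the class of clutters with the packing property is closed under replication of elements. This is (a form of) the Replication Conjecture of Conforti and Cornuejols, so the problem is really about understanding the interaction of the packing property with parallel elements.

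For the structural step, I would start from the theorem of Cornuejols, Guenin and Margot that every clutter with the packing property is \emph{ideal}, i.e.\ the polyhedron $\{x\ge 0 : \ClutterMatrix{C}x\ge\mathbf{1}\}$ has only integral vertices. Assume towards a contradiction that $\mathcal{C}$ has the packing property but not MFMC, and pick $\mathcal{C}$ minimal under taking minors. Since the packing property passes to all minors by definition, every proper minor of $\mathcal{C}$ is ideal and has the MFMC property, and the aim is to conclude that $\mathcal{C}$ is then minimally non-ideal, contradicting idealness. Here Lehman's theorem on minimally non-ideal clutters enters: such a clutter is either one of the degenerate projective-plane clutters $\mathbb{J}_t$, which can be ruled out directly because they fail to pack and hence cannot have the packing property, or else $\mathcal{C}$ and its blocker (the clutter of minimal covers) have extremely rigid clutter matrices, essentially the incidence matrices of a symmetric combinatorial design, and one must derive a contradiction with the packing property in this rigid case.

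The realistic scope of this plan is to recover the known partial cases, where the rigid Lehman alternative can be excluded or analysed directly: binary clutters (via Seymour's characterisation of the binary MFMC clutters by the exclusion of the clutter of odd circuits of $K_5$), clutters with no $\mathbb{J}_t$ minor, clutters of small covering number, and similar restricted families. The main obstacle, and the reason the conjecture remains open, is precisely the rigid case of Lehman's theorem: there is currently no way to show that a clutter with the packing property whose clutter matrix realises such a design cannot exist, and this is exactly equivalent to the Replication Conjecture mentioned above. Any argument along these lines therefore stalls at the same point, and closing this gap would require a genuinely new idea rather than a routine refinement of the present tools.
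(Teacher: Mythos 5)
This statement is not something the paper proves at all: it is stated verbatim as an open conjecture of Conforti and Cornuejols, and the surrounding text explicitly calls it ``a famous open problem.'' The only verifiable content in your proposal is the easy direction, namely that the MFMC property implies the packing property (take the $0/1/\infty$ weightings as a special case of integral weightings), which the paper also records in one sentence. For the forward direction you do not give a proof; you give a programme. Its two pillars are (i) the reduction of MFMC to the packing property via replication of elements, and (ii) the analysis of a minor-minimal counterexample through idealness (Cornuejols--Guenin--Margot) and Lehman's theorem on minimally non-ideal clutters. Both steps are accurately described, but the first is exactly the Replication Conjecture and the second stalls, as you yourself say, at the rigid Lehman case; since the Replication Conjecture is well known to be equivalent to the statement you are trying to prove, the plan is circular as a proof strategy and establishes nothing beyond the known partial cases (binary clutters via Seymour, exclusion of the degenerate projective planes, small covering number).

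So the genuine gap is the entire forward implication: no step in your outline closes it, and no refinement of the tools you list is known to close it. That said, your write-up is a correct and candid account of the state of the art, and it is consistent with the role the conjecture plays in this paper, where it is only quoted for context (the paper's actual arguments in the strongly planar section rely on the Lucchesi--Younger theorem, idealness, and Lehman's blocking-duality theorem, none of which require the Conforti--Cornuejols conjecture). If your intention was to ``prove'' the statement, you have not; if it was to explain why it is open and what a proof would have to overcome, it does that correctly.
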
 

For the following, we will furthermore need the notion of \emph{idealness} for clutters.
A clutter is said to be \emph{ideal} if, for any real-valued vector $w \ge 0$, the primal linear program (\ref{primal}) has an integral optimal solution vector $x$.
It is not hard to show that the MFMC-property implies idealness of a clutter.

A famous example of a clutter related to digraphs is the clutter of all minimal directed cuts of a fixed directed graph $D$.
The following well-known result of Lucchesi and Younger can be rephrased as the fact that the clutter of minimal directed cuts of any digraph has the MFMC-property.
To formulate the theorem, we need the following terminology: A \emph{dijoin} of a digraph $D$ is a subset of $E(D)$ intersecting every directed cut in at least one edge.

\begin{theorem}[Lucchesi and Younger \cite{lucchesiyounger}]
	Let $D$ be a digraph and $w\colon\Fkt{E}{D} \rightarrow \mathbb{N}_0$ a non-negative integral edge-weighting.
	Then the minimal weight of a dijoin in $D$ equals the maximal size of a collection of (minimal) directed cuts in $D$ so that any edge $e \in \Fkt{E}{D}$ is contained in at most $w(e)$ of them.
\end{theorem}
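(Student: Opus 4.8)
The plan is to deduce both inequalities from a single structural fact: the covering system consisting of $x\ge 0$ together with $\sum_{e\in C}x_e\ge 1$ for every directed cut $C$ of $D$ is totally dual integral. The easy inequality comes first and needs no linear programming: if $J$ is a dijoin and $\mathcal{C}$ is a family of directed cuts in which each edge $e$ occurs at most $w(e)$ times, then every member of $\mathcal{C}$ meets $J$, so $\Abs{\mathcal{C}}\le\sum_{C\in\mathcal{C}}\Abs{C\cap J}=\sum_{e\in J}\Abs{\CondSet{C\in\mathcal{C}}{e\in C}}\le\sum_{e\in J}w(e)=w(J)$, and hence the maximum such packing is at most the minimum weight of a dijoin. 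Once total dual integrality is established, then, since the right-hand side $\mathbf{1}$ is integral, the packing (dual) LP has an integral optimum — an actual collection of directed cuts — and the covering (primal) LP has an integral optimum whose support is a dijoin of weight equal to the common LP value; combining this with the trivial inequality forces the minimum dijoin weight, the LP value, and the maximum dicut packing to coincide.

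To prove total dual integrality I would fix an integral $w\ge 0$ and view an optimal solution of the packing LP as an assignment $y\ge 0$ of weights $y_U$ to the nonempty proper vertex sets $U$ with $\delta^+(U)=\emptyset$ and $\delta^-(U)\neq\emptyset$, subject to $\sum_{U:\,e\in\delta^-(U)}y_U\le w(e)$ for every edge $e$; the LP value is $\sum_U y_U$. The set of optimal $y$ is a nonempty bounded polytope, so I may choose an optimal $y$ maximising the linear functional $\sum_U y_U\Abs{U}^2$. The crucial structural input is that $\CondSet{U}{\delta^+(U)=\emptyset}$ is closed under unions and intersections, and that for any two sets $U_1,U_2$ one has, edge by edge, $\mathbf{1}_{\delta^-(U_1\cap U_2)}+\mathbf{1}_{\delta^-(U_1\cup U_2)}\le\mathbf{1}_{\delta^-(U_1)}+\mathbf{1}_{\delta^-(U_2)}$. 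Consequently, if $U_1,U_2$ lie in the support of $y$ and cross, then moving $\varepsilon=\min(y_{U_1},y_{U_2})$ of weight from $U_1,U_2$ onto $U_1\cap U_2,U_1\cup U_2$ produces a feasible $y'$ with $\sum_U y'_U=\sum_U y_U$ but $\sum_U y'_U\Abs{U}^2>\sum_U y_U\Abs{U}^2$ (since $\Abs{U_1\cap U_2}^2+\Abs{U_1\cup U_2}^2>\Abs{U_1}^2+\Abs{U_2}^2$ for crossing sets), contradicting maximality. Hence the support $\mathcal{F}$ of $y$ is a cross-free family of sets.

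The final step would invoke the classical fact that the incidence matrix of a cross-free family $\mathcal{F}$ against the edge set — with $(U,e)$-entry $1$ if $e\in\delta^-(U)$ and $0$ otherwise — is a network matrix, hence totally unimodular: one builds a tree whose edges are in bijection with the members of $\mathcal{F}$ and reads off membership of $e$, with appropriate signs, from the tree path joining the endpoints of $e$. Therefore the LP $\max\CondSet{\sum_{U\in\mathcal{F}}y_U}{y\ge 0,\ \sum_{U\in\mathcal{F}:\,e\in\delta^-(U)}y_U\le w(e)\ \forall e}$ has an integral optimal solution; since $y$ is feasible for it and optimal for the unrestricted packing LP, the two optima coincide, so this integral solution, padded with zeros, is an integral optimum of the full packing LP. This yields total dual integrality, and by the argument of the first paragraph the asserted equality follows (refining to minimal sub-cuts if one insists on minimal directed cuts in the packing). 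An alternative route is Lucchesi and Younger's original induction on $\sum_e w(e)$: contract an edge of weight $0$ if one exists, and otherwise choose a directed cut $C_0$ such that $w-\mathbf{1}_{C_0}$ has minimum dijoin weight exactly one less than $w$, and recurse.

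The genuine difficulties I would expect are two. First is the manipulation in the extremal step: one must confirm that $\sum_U y_U\Abs{U}^2$ really attains a maximum over the compact optimal face, that the uncrossed vector $y'$ remains feasible — which is precisely where the edge-wise submodular inequality is needed — and optimal, and that the strict increase genuinely rules out crossings in the support. Second, and more essential, is the passage from a cross-free family of directed cuts to a totally unimodular (network) matrix: constructing the tree representation and verifying that the $\pm$ signs along tree paths faithfully encode $e\in\delta^-(U)$ is the step that exploits the special structure of directed cuts rather than of arbitrary set systems, and it is what converts the combinatorics into LP integrality. In the inductive alternative, the corresponding hard point is the existence of a cut $C_0$ with the ``drops the dijoin minimum by exactly one'' property, which is itself proved by an uncrossing argument between $C_0$ and a hypothetical too-cheap dijoin for $w-\mathbf{1}_{C_0}$.
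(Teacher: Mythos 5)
The paper does not prove this statement at all: it is quoted verbatim as the classical Lucchesi--Younger theorem and used as a black box (only its planar-dual corollary is derived from it), so there is no internal proof to compare against. Judged on its own, your outline is a correct sketch of the standard modern proof (essentially Lov\'asz' uncrossing argument in the Edmonds--Giles framework, as presented in Schrijver's book): the weak-duality inequality is right, the uncrossing step is sound --- for crossing $U_1,U_2$ with no leaving arcs, both $U_1\cap U_2$ and $U_1\cup U_2$ again have no leaving arcs, the edge-wise inequality on in-cuts holds, and $|U_1\cap U_2|^2+|U_1\cup U_2|^2-|U_1|^2-|U_2|^2=2|U_1\setminus U_2|\,|U_2\setminus U_1|>0$ gives the strict increase --- and the reduction to a cross-free family followed by total unimodularity of its (network) incidence matrix does yield total dual integrality, from which the integral primal optimum follows via the Edmonds--Giles/Hoffman theorem that a TDI system with integral right-hand side defines an integral polyhedron. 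Two details you should make explicit in a full write-up: first, the uncrossing move is only legal if $U_1\cap U_2$ and $U_1\cup U_2$ are genuine dual indices, i.e.\ have nonempty in-cuts, which one guarantees by reducing to a weakly connected digraph (otherwise the problem decomposes over components); second, the passage from TDI to the primal statement is an appeal to a nontrivial theorem, not a formality, so either cite it or restrict the claim to the dual (packing) side and obtain the dijoin bound directly from an integral primal vertex. With those caveats, and with your correct observation that the ``minimal cuts'' qualifier is harmless since shrinking cuts in a packing preserves feasibility, the proposal is a legitimate alternative to simply citing \cite{lucchesiyounger}, though it leans on substantial standard machinery (TU of cross-free network matrices) rather than being self-contained.
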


Using planar duality of digraphs, the above theorem restricted to planar digraphs reformulates as follows.

\begin{corollary}\label{duallucchesi}
	Let $D$ be a planar digraph and $w\colon\Fkt{E}{D} \rightarrow \mathbb{N}_0$ a non-negative integral edge-weighting.
	Then the minimal weight of a directed cycle in $D$ equals the maximal number of directed cycles containing any edge $e \in\Fkt{E}{D}$ at most $\Fkt{w}{e}$ times. 
\end{corollary}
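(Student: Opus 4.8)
The plan is to read the statement off the Lucchesi--Younger theorem by passing to the planar dual digraph, where directed cycles of $D$ turn into directed cuts and the cycle packing on the right turns into a cut packing. Fix a plane embedding of $D$; after discarding edges that lie on no directed cycle (which affects neither side) and passing to strongly connected components we may assume $D$ is connected, so that the directed planar dual $D^{*}$ is well defined: each edge $e$ of $D$ gives a dual edge $e^{*}$ joining the two faces incident with $e$, oriented by the usual quarter‑turn convention, and we set $w(e^{*}):=w(e)$. The directed version of planar duality states that a set of edges of $D$ is the edge set of a simple directed cycle of $D$ if and only if the corresponding set of dual edges is a minimal directed cut of $D^{*}$ (and, symmetrically, minimal directed cuts of $D$ correspond to simple directed cycles of $D^{*}$). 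Both correspondences are realised by the incidence‑ and weight‑preserving bijection $e\leftrightarrow e^{*}$.

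Given this dictionary, one only has to translate the two sides of the Lucchesi--Younger theorem applied to $(D^{*},w)$. A collection of minimal directed cuts of $D^{*}$ in which every edge $e^{*}$ occurs at most $w(e^{*})$ times is, under the bijection, exactly a collection of directed cycles of $D$ in which every edge $e$ is used at most $w(e)$ times, so the maximum in Lucchesi--Younger equals the right‑hand side of the claim. On the other side, a dijoin of $D^{*}$ is a set of dual edges meeting every directed cut of $D^{*}$, i.e.\ a set $F\subseteq E(D)$ meeting every directed cycle of $D$, of the same weight $w(F)$; minimising over all such $F$ produces the left‑hand quantity. Hence the asserted equality is precisely the Lucchesi--Younger identity for $(D^{*},w)$, and nothing more needs to be proved.

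The combinatorial substance is carried entirely by Lucchesi--Younger; the only thing requiring care here is the duality dictionary itself. Concretely, the delicate point is to fix the orientation rule for dual edges so that, with it, the minimal directed cuts of $D^{*}$ are \emph{exactly} the duals of the simple directed cycles of $D$ — in particular that no spurious directed cuts appear and that passing to simple cycles and to minimal cuts changes neither optimum. The rest — the choice of embedding and outer face, loops and parallel edges, and the reduction to a connected (indeed suitably $2$‑edge‑connected, so that $D^{*}$ is loopless) digraph — is routine bookkeeping that leaves both sides of the statement untouched.
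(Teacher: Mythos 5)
Your route — dualise, apply Lucchesi--Younger to $(D^{*},w)$, and translate back — is exactly the paper's (the paper offers nothing beyond the phrase ``using planar duality of digraphs''), and the dictionary you set up is the right one: minimal directed cuts of $D^{*}$ correspond to directed cycles of $D$, so the packing side matches, and a dijoin of $D^{*}$ corresponds to an edge set of $D$ meeting every directed cycle.

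There is, however, one point you pass over too quickly, and it is the only substantive one. What your translation produces on the left is the minimum weight of an edge set $F\subseteq E(D)$ meeting \emph{every} directed cycle of $D$ (the primal image of a dijoin of $D^{*}$), and you then simply declare this to be ``the left-hand quantity''. But the corollary as printed asks for the minimum weight of a \emph{single} directed cycle, which is a different and in general strictly larger number: for $D=\vec{C}_3$ with $w\equiv 1$ the minimum cycle weight is $3$ while the maximum packing of directed cycles with each edge used at most once is $1$. So the literal statement is not what planar duality plus Lucchesi--Younger gives (indeed it is false), and the version you actually establish — minimum weight of a transversal of the directed cycles equals the maximum cycle packing — is the one the paper uses in the theorem that follows. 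Your mathematics is correct for that intended statement, but you should make the substitution of ``directed cycle'' by ``transversal of the directed cycles'' explicit rather than silently identifying the two quantities; as written, the sentence ``minimising over all such $F$ produces the left-hand quantity'' is the one step of your argument that does not hold for the statement as worded. Your reductions (discarding edges on no directed cycle, splitting into strong components, fixing the orientation convention for dual edges) are fine for the transversal version, where both sides are additive over components.
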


We now use the above result to prove that given a strongly planar digraph $D$, the associated clutter containing the vertex sets of all induced directed cycles in $D$ admits the MFMC-property.
This result has already been observed for instance in \cite{mengeriandigraphs}, we provide its proof for completeness.

\begin{theorem}
	Let $D$ be strongly planar.
	Then for any non-negative integral vertex-weighting $w\colon\Fkt{V}{D} \rightarrow \mathbb{N}_0$, the minimal weight of a feedback vertex set in $D$ equals the maximal number of (induced) directed cycles in $D$ which together contain any vertex $x \in \Fkt{V}{D}$ at most $\Fkt{w}{x}$ times.
\end{theorem}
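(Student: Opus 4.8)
The strategy is to reduce the claimed vertex min-max equality to the edge min-max equality of \cref{duallucchesi} by a vertex-splitting construction, in which strong planarity is exactly what keeps the auxiliary digraph planar.

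First I would construct from $D$ an auxiliary planar digraph $D'$ as follows: replace every $x \in V(D)$ by two vertices $x^-$ and $x^+$ together with an \emph{internal} arc $(x^-,x^+)$, and replace every arc $(x,y)$ of $D$ by the arc $(x^+,y^-)$ of $D'$. Using the strong plane embedding of $D$, this split can be carried out without creating crossings: since the edges entering $x$ and the edges leaving $x$ each form a consecutive interval in the rotation at $x$, one may place $x^-$ and $x^+$ inside a small disc around $x$, reattach the former in-edges to $x^-$ and the former out-edges to $x^+$, and join $x^-$ to $x^+$ inside the disc. Hence $D'$ is planar (it need not be strongly planar, but \cref{duallucchesi} only requires planarity). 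Every directed cycle of $D'$ necessarily alternates internal arcs with image arcs, which yields a bijection between the directed cycles of $D$ and those of $D'$: a directed cycle $C = x_1 x_2 \cdots x_k x_1$ of $D$ corresponds to the cycle of $D'$ running $x_1^+ \to x_2^- \to x_2^+ \to \cdots \to x_k^+ \to x_1^- \to x_1^+$, and the internal arcs it uses are exactly $\{(x^-,x^+) : x \in V(C)\}$.

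Next I would transfer the weights: set $W := 1 + \sum_{x \in V(D)} w(x)$ and define $w' \colon E(D') \to \mathbb{N}_0$ by $w'(x^-,x^+) := w(x)$ for internal arcs and $w'(e) := W$ for every image arc $e$. I claim the minimum $w'$-weight of a feedback arc set of $D'$ equals the minimum $w$-weight $\tau$ of a feedback vertex set of $D$. Indeed, for any feedback vertex set $T$ of $D$, the set $\{(x^-,x^+) : x \in T\}$ is a feedback arc set of $D'$ of the same weight; conversely, a feedback arc set of $D'$ using any image arc has weight at least $W > W-1 = \sum_{x \in V(D)} w(x)$, which is the weight of the feedback arc set $\{(x^-,x^+) : x \in V(D)\}$, so a minimum one uses only internal arcs, and then $\{x : (x^-,x^+) \text{ is used}\}$ is a feedback vertex set of $D$ of the same weight.

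Finally I would apply \cref{duallucchesi} to $(D',w')$, which gives that $\tau$ also equals the maximum number of directed cycles of $D'$ in which each arc $e$ lies at most $w'(e)$ times. Transporting such an optimal integral packing through the cycle bijection produces $\tau$ directed cycles of $D$ in which every vertex $x$ lies at most $w'(x^-,x^+) = w(x)$ times (the constraints on image arcs only help); shortcutting chords, one may moreover assume each of these cycles is induced. Conversely, any family of directed cycles of $D$ in which every vertex $x$ lies at most $w(x)$ times has size at most $\tau$, since each cycle meets a fixed minimum-weight feedback vertex set $T$ and counting incidences gives size $\le \sum_{x \in T} w(x) = \tau$. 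Hence this maximum equals $\tau$, which is exactly the asserted identity. The only genuinely delicate step is verifying planarity of $D'$, i.e.\ that the vertex split respects the embedding, and this is precisely the point at which strong planarity — rather than mere planarity of $D$ — is used.
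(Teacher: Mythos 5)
Your proposal is correct and follows essentially the same route as the paper: split each vertex $x$ into an internal arc carrying weight $w(x)$ (planarity of the split digraph being guaranteed by the strong plane embedding), assign prohibitively large weights to the remaining arcs, and invoke \cref{duallucchesi} to transfer the edge-version min--max equality back to vertices. Your explicit choice of $W$ and the added weak-duality check are harmless refinements of the paper's ``$M$ large enough'' argument.
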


\begin{proof}
	We construct an auxiliary splitting-digraph $D'$ by replacing each vertex $x \in \Fkt{V}{D}$ by a directed edge $e_x \in \Fkt{E}{D'}$ in such a way that all the incoming edges incident to $x$ in $D$ are now incident to $\Tail{e_x}$ while all the outgoing edges of $x$ in $D$ are now emanating from $\Head{e_x}$.
	By contracting the edge $e_x$ for each $x \in \Fkt{V}{D}$, it is clear that the directed cycles in $D'$ are in one-to-one correspondence with the directed cycles of $D$.
	Moreover, the vertex-intersection of a pair of directed cycles in $D$ yields a subset of the edge-intersection of the corresponding directed cycles in $D'$.
	It is furthermore easy to see from the fact that the outgoing and incoming edges incident to any vertex in $D$ are separated in the cyclic ordering, that $D'$ indeed admits a planar embedding.
	We now define a corresponding weighting of the edges of $D'$ by setting $\Fkt{w'}{e_x}\coloneqq\Fkt{w}{x}$ for any $x \in \Fkt{V}{D}$ and $\Fkt{w'}{e}\coloneqq M$ for a large natural number $M \in \mathbb{N}$ for any other edge of $D'$.
	If we choose $M$ large enough, we find that the minimal edge-weight of a feedback edge set in $D'$ is exactly the minimal vertex-weight of a feedback vertex set in $D$.
	\Cref{duallucchesi} now tells us that the latter is the same as the maximal size of a collection of directed cycles in $D'$ in which any $e_x$ is contained at most $\Fkt{w}{x}$ times while any other edge is contained at most $M$ times.
	As the latter condition becomes redundant for $M$ large enough, this again is the same as the maximal size of a collection of directed cycles in $D$ in which any vertex $x \in \Fkt{V}{D}$ is contained at most $\Fkt{w}{x}$ times.
	As we may assume all the directed cycles in an optimal collection to be induced, this implies the claim.
\end{proof}

As a consequence, the clutter of vertex sets of induced directed cycles of a strongly planar digraph $D$ is MFMC and, thus, also ideal.

Given any clutter $\Brace{S,\mathcal{C}}$, we may define a corresponding dual clutter (called \emph{blocking clutter} and denoted by $\Brace{S,\mathcal{C}^\ast}$) which contains all the inclusion-wise minimal subsets $X \subseteq S$ with the property that $X \cap C \neq \emptyset$ for all $C \in \mathcal{C}$.
It is clear that the blocking clutter of the clutter of vertex sets of induced directed cycles of a digraph is just the clutter of inclusion-wise minimal feedback vertex sets.
To proceed, we will need the following theorem of Lehman.

\begin{theorem}[Lehman, \cite{lehman}, and \cite{corn}, Theorem 1.17]
	A clutter is ideal if and only if its blocking clutter is.
\end{theorem}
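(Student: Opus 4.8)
The plan is to reduce idealness of a clutter to integrality of its fractional covering polyhedron and then route everything through Fulkerson's theory of blocking polyhedra, which makes the passage between $\mathcal{C}$ and $\mathcal{C}^\ast$ essentially automatic. Fix the ground set $S=\{1,\dots,n\}$; we may assume $\emptyset\notin\mathcal{C}$ and $\mathcal{C}\neq\emptyset$, the remaining cases being trivial. Write $\chi^C\in\{0,1\}^n$ for the incidence vector of $C\subseteq S$, and introduce the fractional covering polyhedron $Q(\mathcal{C}):=\{x\in\mathbb{R}^n_+:\sum_{i\in C}x_i\ge 1\text{ for all }C\in\mathcal{C}\}$ together with the dominant $\mathrm{dom}(\mathcal{C}):=\mathrm{conv}\{\chi^C:C\in\mathcal{C}\}+\mathbb{R}^n_+$. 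Both polyhedra are contained in $\mathbb{R}^n_+$ and have recession cone exactly $\mathbb{R}^n_+$, hence are pointed, so every linear program over them with a nonnegative objective attains its minimum at a vertex.

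First I would show that $\mathcal{C}$ is ideal if and only if $Q(\mathcal{C})=\mathrm{dom}(\mathcal{C}^\ast)$. Since $Q(\mathcal{C})$ is pointed with recession cone $\mathbb{R}^n_+$, for every $w\ge 0$ the primal program (\ref{primal}) attains its optimum at a vertex of $Q(\mathcal{C})$, and conversely every vertex $v$ of $Q(\mathcal{C})$ is the unique optimum of some $w\ge 0$ (a supporting hyperplane isolating $v$ has a nonnegative normal vector because $Q(\mathcal{C})+\mathbb{R}^n_+=Q(\mathcal{C})$); hence $\mathcal{C}$ is ideal exactly when every vertex of $Q(\mathcal{C})$ is a $0/1$ vector. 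One then checks that $\chi^X$ is a vertex of $Q(\mathcal{C})$ precisely when, for each $j\in X$, some $C\in\mathcal{C}$ satisfies $X\cap C=\{j\}$, i.e.\ exactly when $X$ is an inclusion-minimal transversal of $\mathcal{C}$, i.e.\ $X\in\mathcal{C}^\ast$. As a pointed polyhedron with recession cone $\mathbb{R}^n_+$ equals $\mathbb{R}^n_+$ plus the convex hull of its vertices, and the vertices of $\mathrm{dom}(\mathcal{C}^\ast)$ are exactly the $\chi^X$ with $X\in\mathcal{C}^\ast$ (a standard fact for clutters), the stated equivalence follows.

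Next I would invoke Fulkerson duality. For a blocking-type polyhedron $P$ set $B(P):=\{y\in\mathbb{R}^n_+:y^\top x\ge 1\text{ for all }x\in P\}$. Directly from the definitions one gets $B(\mathrm{dom}(\mathcal{D}))=Q(\mathcal{D})$ for every clutter $\mathcal{D}$, and Fulkerson's theorem gives $B(B(P))=P$ for every blocking-type polyhedron, so that $B(Q(\mathcal{D}))=\mathrm{dom}(\mathcal{D})$ — which is an \emph{integral} polyhedron, since the vertices of $\mathrm{conv}\{\chi^D:D\in\mathcal{D}\}+\mathbb{R}^n_+$ lie among the $0/1$ vectors $\chi^D$. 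Now assume $\mathcal{C}$ is ideal. By the first step $Q(\mathcal{C})=\mathrm{dom}(\mathcal{C}^\ast)$, so $B(Q(\mathcal{C}))=B(\mathrm{dom}(\mathcal{C}^\ast))=Q(\mathcal{C}^\ast)$; but also $B(Q(\mathcal{C}))=\mathrm{dom}(\mathcal{C})$, which is integral. Hence $Q(\mathcal{C}^\ast)=\mathrm{dom}(\mathcal{C})$ is integral, and applying the criterion of the first step to $\mathcal{C}^\ast$ (note $(\mathcal{C}^\ast)^\ast=\mathcal{C}$) shows $\mathcal{C}^\ast$ is ideal. The reverse implication is the same argument with the roles of $\mathcal{C}$ and $\mathcal{C}^\ast$ interchanged, so the equivalence is established.

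The only genuinely nontrivial ingredient, and thus the main obstacle, is Fulkerson's identity $B(B(P))=P$ for blocking-type polyhedra; I would derive it from LP duality (equivalently, from Farkas's lemma), using that for such polyhedra it suffices to compare their finitely many vertices. Everything else is bookkeeping with $0/1$ vectors and pointed polyhedra whose recession cone is the nonnegative orthant.
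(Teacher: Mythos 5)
The paper does not prove this statement at all: it is imported as a known result of Lehman, citing \cite{lehman} and \cite{corn} (Theorem 1.17). Your argument is correct and is essentially the standard proof from that cited source — reducing idealness of $\mathcal{C}$ to integrality of the covering polyhedron, identifying its $0$-$1$ vertices with the members of $\mathcal{C}^\ast$, and passing to the blocker via Fulkerson's blocking-polyhedron identity $B(B(P))=P$, which you correctly single out as the only non-routine ingredient and which indeed follows from a separation/LP-duality argument.
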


In our case, this implies that, for strongly planar digraphs, the clutter of minimal feedback vertex sets is ideal and, consequently, the corresponding linear optimisation problem (\ref{primal}) admits an integer optimal solution $x \geq 0$ for any real-valued vector $w \geq 0$.
By setting $w\coloneqq\textbf{1}^T$, we obtain the following.

\begin{lemma} \label{hilf}
	Let $D$ be strongly planar and let $g$ be the girth of $D$.
	Then there is a collection $F_1,\ldots,F_m$ of feedback vertex sets of $D$ equipped with a weighting $y_1,\ldots,y_m \in \mathbb{R}_{\geq 0}$ such that $y_1+\cdots+y_m=g$ and for any vertex $v \in \Fkt{V}{D}$, we have $\sum_{\CondSet{j}{ v \in F_j}}{y_j} \leq 1$.
\end{lemma}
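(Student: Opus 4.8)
The plan is to recognise \cref{hilf} as the strong linear-programming dual of the integrality statement just established: the idealness of the clutter $\mathcal{C}^{\ast}$ of inclusion-wise minimal feedback vertex sets of the strongly planar digraph $D$. By that idealness, applied with $w\coloneqq\mathbf{1}^{T}$, the primal program (\ref{primal}) instantiated for $\mathcal{C}^{\ast}$ — that is, the minimum of $\sum_{v}x_{v}$ over $x\geq 0$ subject to $\sum_{v\in F}x_{v}\geq 1$ for every (minimal) feedback vertex set $F$ — attains its optimum at an integral vector $x^{\ast}\in\mathbb{Z}_{\geq 0}^{\Fkt{V}{D}}$.

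First I would compute this optimal value and show it equals $g$. For the upper bound, the characteristic vector of the vertex set of a shortest directed cycle $C$ is feasible, since every feedback vertex set meets $C$; hence the optimum is at most $\Abs{\Fkt{V}{C}}=g$. For the lower bound, let $X\coloneqq\CondSet{v\in\Fkt{V}{D}}{x^{\ast}_{v}\geq 1}$ be the support of $x^{\ast}$. Since $\sum_{v\in F}x^{\ast}_{v}\geq 1>0$ for every minimal, hence every, feedback vertex set $F$, the set $X$ meets every feedback vertex set; therefore $\InducedSubgraph{D}{X}$ cannot be acyclic, since otherwise $\Fkt{V}{D}\setminus X$ would be a feedback vertex set disjoint from $X$. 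So $X$ contains a directed cycle, whence $\Abs{X}\geq g$, and as $x^{\ast}_{v}\geq 1$ for every $v\in X$ we get $\sum_{v}x^{\ast}_{v}\geq g$. Thus the common optimal value of (\ref{primal}) for $\mathcal{C}^{\ast}$ with $w=\mathbf{1}^{T}$ is exactly $g$.

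Next I would invoke strong linear-programming duality for the pair (\ref{primal})--(\ref{dual}): the dual program (\ref{dual}) — assigning non-negative weights $y_{F}$ to the minimal feedback vertex sets $F$ subject to $\sum_{F\ni v}y_{F}\leq 1$ for every $v\in\Fkt{V}{D}$, and maximising $\sum_{F}y_{F}$ — also has optimal value $g$, and being feasible (take $y=0$) and bounded it attains it at some $y^{\ast}$. Listing the minimal feedback vertex sets with $y^{\ast}_{F}>0$ as $F_{1},\dots,F_{m}$ and putting $y_{j}\coloneqq y^{\ast}_{F_{j}}$ yields $y_{1}+\dots+y_{m}=g$ together with $\sum_{j\colon v\in F_{j}}y_{j}\leq 1$ for every vertex $v$, which is precisely the statement of \cref{hilf}.

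The only genuinely non-routine point is the identification of the integral primal optimum with the girth $g$; it rests on the elementary observation that a vertex set of a digraph meets every feedback vertex set if and only if it induces a subdigraph containing a directed cycle. Everything else is the clutter/LP machinery already set up in the excerpt, combined with textbook linear-programming duality.
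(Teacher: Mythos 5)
Your proposal is correct and follows essentially the same route as the paper: use the idealness of the clutter of minimal feedback vertex sets to get an integral primal optimum for (\ref{primal}) with $w=\mathbf{1}^T$, identify that optimum with the girth $g$ via the observation that the support must induce a directed cycle, and then pass to the dual by strong LP duality. The only cosmetic difference is that the paper first truncates the optimal solution to a $0$-$1$ vector before bounding it below by $g$, whereas you bound $\sum_v x^*_v$ directly by $|X|$; both variants are fine.
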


\begin{proof}
	Let $x \geq 0$ be an integer-valued optimal solution of the linear program (\ref{primal}) corresponding to the clutter of inclusion-wise minimal feedback vertex sets of $D$ and $w=\textbf{1}^T$.
	It is easy to see from the definition of the linear program (\ref{primal}) that, in any optimal solution, we have $x \leq \textbf{1}$ (component-wise), as otherwise one could replace $x$ with $\min\Set{x,\textbf{1}}$, obtaining a better solution to the linear program, contradicting the optimality.
	Consequently, we know that $x$ has only $0$ and $1$ as entries and is thus determined by its support $X\coloneqq\FktO{supp}{x} \subseteq \Fkt{V}{D}$.
	From the conditions in the program (\ref{primal}) we derive that $X$ has a common intersection with any feedback vertex set of $D$ and thus must contain a directed cycle (as $\Fkt{V}{D} \setminus X$ cannot be a feedback vertex set).
	Hence $wx=\Abs{X} \geq g$.
	On the other hand, the $(0,1)$-vector whose support is given by the vertex set of some directed cycle of length of $g$ clearly has value $g$ and also satisfies the conditions of the program and thus is an optimal solution.
	Consequently, also the optimal value of the dual program (\ref{dual}) is $g$ and thus there is an optimal solution vector $y \ge 0$ with $y\textbf{1}=g$. This implies the claim.
\end{proof}

We are now ready to give a proof of \cref{thm:mainfractional} which will conclude this section.

\begin{proof}[Proof (of \Cref{thm:mainfractional}).]
	Let $D$ be strongly planar and let $g \geq 2$ denote the directed girth of $D$.
	We show that $\FracDichromatic{D}=\frac{g}{g-1}$.
	First of all, the fractional dichromatic number cannot increase by taking subdigraphs, and so we have $\FracDichromatic{D} \ge \FracDichromatic{\vec{C}_g}=\frac{g}{g-1}$.
	It remains to prove $\FracDichromatic{D} \leq \frac{g}{g-1}$.
	For this purpose we construct a feasible instance of the linear optimisation program (\ref{primalfract}) with value at most $\frac{g}{g-1}$.
	To do so, let $F_1,\ldots,F_m$ be a collection of feedback vertex sets as given by \cref{hilf} with a corresponding weighting $y_1,\ldots,y_m \ge 0$. The complements $\Fkt{V}{D} \setminus F_i$ are clearly acyclic for any $j \in \{1,\ldots,m\}$.
	For any acyclic vertex $A \in \Acyclic{D}$ we now define the value of the corresponding variable to be
	$$x_A\coloneqq\frac{1}{g-1}\sum_{\CondSet{j}{A=V(D) \setminus F_j}}{y_j} \geq 0.$$
	We then have for any vertex $v \in \Fkt{V}{D}$:
	$$\sum_{A \in \mathcal{A}(D,v)}{x_A}=\frac{1}{g-1}\sum_{\CondSet{j}{v \notin F_j}}{y_j}=\frac{1}{g-1}\left(\underbrace{\sum_{j=1}^{m}{y_j}}_{=g}-\underbrace{\sum_{\CondSet{j}{ v \in F_j}}{y_j}}_{\leq 1}\right) \ge \frac{g-1}{g-1}=1,$$
	so this is indeed a feasible instance of the program (\ref{primalfract}) and we obtain
	$$\FracDichromatic{D} \leq \sum_{A \in \mathcal{A}(D)}{x_A}=\frac{1}{g-1}\sum_{j=1}^{m}{y_j}=\frac{g}{g-1}$$ as desired.
\end{proof}

\section{Non-Bipartite Graphs} \label{sec:pfaff}

In the previous sections we were concerned with digraphs, which correspond exactly to the bipartite graphs with perfect matchings.
However, a matching covered graph does not need to be bipartite.
In fact, most parts of (bipartite) matching theory directly translate into the world of general matching covered graphs.
This includes, especially, tight cuts, their contractions, and pfaffian orientations.

In particular, the $M$-chromatic number is defined on all matching covered graphs.
By \cref{cor:matchinghadwiger} every bipartite Pfaffian graph has $M$-chromatic number at most $2$ for every perfect matching.
A natural question to ask would be whether this generalises to all Pfaffian graphs.
To this question there exists a rather easy negative answer.
The \emph{triangular prism} is the complement $\Complement{C_6}$ of the $6$-cycle.
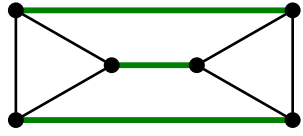
\begin{figure}[h!]
	\begin{center}
		\begin{tikzpicture}[scale=0.7]
		
		\pgfdeclarelayer{background}
		\pgfdeclarelayer{foreground}
		
		\pgfsetlayers{background,main,foreground}
		
		%%%%% Vertex Styles %%%%%
%		\tikzstyle{v:main} = [draw, circle, scale=0.5, thick,fill=black]
%		\tikzstyle{v:tree} = [draw, circle, scale=0.3, thick,fill=black]
%		\tikzstyle{v:border} = [draw, circle, scale=0.75, thick,minimum size=10.5mm]
%		\tikzstyle{v:mainfull} = [draw, circle, scale=1, thick,fill]
%		\tikzstyle{v:ghost} = [inner sep=0pt,scale=1]
%		\tikzstyle{v:marked} = [circle, scale=1.2, fill=CornflowerBlue,opacity=0.3]
%		%%%%% %%%%% %%%%%
%		
%		%%%%% Edge Styles %%%%%
%		\tikzset{>=latex} 
%		\tikzstyle{e:marker} = [line width=9pt,line cap=round,opacity=0.2,color=DarkGoldenrod]
%		\tikzstyle{e:colored} = [line width=1.2pt,color=BostonUniversityRed,cap=round,opacity=0.8]
%		\tikzstyle{e:coloredthin} = [line width=1.1pt,opacity=1,color=myGreen]
%		\tikzstyle{e:coloredborder} = [line width=2pt]
%		\tikzstyle{e:main} = [line width=1pt]
%		\tikzstyle{e:extra} = [line width=1.3pt,color=LavenderGray]
		%%%%% %%%%% %%%%%
		
		\begin{pgfonlayer}{main}
		
		%%%%% Centered Ghost Vertices %%%%%
		\node (C) [] {};
		
		%%%%% Left Center %%%%%
		\node (C1) [v:ghost, position=180:25mm from C] {};
		%\node (U1) [v:ghost, position=90:50mm from C1] {};
		%		\node (L1) [v:ghost, position=270:27mm from C1,align=center] {};
		
		%%%%% Center %%%%%
		\node (C2) [v:ghost, position=0:0mm from C] {};
		%\node (U2) [v:ghost, position=90:50mm from C2] {};
		%		\node (L2) [v:ghost, position=270:27mm from C2,align=center] {};
		
		%%%%% Right Center %%%%%
		\node (C3) [v:ghost, position=0:25mm from C] {};
		%\node (U3) [v:ghost, position=90:50mm from C3] {};
		%		\node (L3) [v:ghost, position=270:27mm from C3,align=center] {};
		%%%%% %%%%% %%%%%

		%%%%% Vertices %%%%%
		
		%%%%% Left Center %%%%%

		%%%%% %%%%% %%%%%
		
		%%%%% Center %%%%%
		
		\node (L) [v:ghost,position=180:20mm from C2] {};
		\node (R) [v:ghost,position=0:20mm from C2] {};
		
		\node(l1) [v:main,position=0:12mm from L] {};
		\node(l2) [v:main,position=120:12mm from L] {};
		\node(l3) [v:main,position=240:12mm from L] {};
		
		\node(r1) [v:main,position=180:12mm from R] {};
		\node(r2) [v:main,position=60:12mm from R] {};
		\node(r3) [v:main,position=300:12mm from R] {};
		
		%%%%% %%%%% %%%%%
		
		%%%%% Right Center %%%%%
		
		%%%%% %%%%% %%%%%
		
		%%%%% %%%%% %%%%%

		%%%%% Edges %%%%%
		
		%%%%% Left Center %%%%%

		%%%%% %%%%% %%%%%
		
		%%%%% Center %%%%%
		
		\draw (l1) [e:main] to (l2);
		\draw (l2) [e:main] to (l3);
		\draw (l3) [e:main] to (l1);
		
		\draw (r1) [e:main] to (r2);
		\draw (r2) [e:main] to (r3);
		\draw (r3) [e:main] to (r1);
		
		%%%%% %%%%% %%%%%
		
		%%%%% Right Center %%%%%

		%%%%% %%%%% %%%%%
		
		%%%%% %%%%% %%%%%
		
		\end{pgfonlayer}
		
		%%%%% %%%%% %%%%%

		%%%%% Background %%%%%
		\begin{pgfonlayer}{background}
		
		\draw (l1) [e:matching1] to (r1);
		\draw (l2) [e:matching1] to (r2);
		\draw (l3) [e:matching1] to (r3);
		
%		\draw (l1) [e:coloredthin] to (r1);
%		\draw (l2) [e:coloredthin] to (r2);
%		\draw (l3) [e:coloredthin] to (r3);
		
		\end{pgfonlayer}	
		%%%%% %%%%% %%%%%
		
		%%%%% Foreground %%%%%
		\begin{pgfonlayer}{foreground}

		\end{pgfonlayer}
		%%%%% %%%%% %%%%%
		\end{tikzpicture}
	\end{center}
	\caption{The triangular prism $\Complement{C_6}$ together with a perfect matching $M$.}
	\label{fig:prism}
\end{figure}

It is planar and therefore Pfaffian, but when considering the perfect matching $M$ from \cref{fig:prism}, one can see that any two of the three edges in $M$ lie together on a $4$-cycle.
Hence no two of the three edges may receive the same colour and therefore $\Mchromatic{\Complement{C_6}}{M}=3$.

In \cref{cor:matchinghadwiger} we went for a class closed under matching minors, so a next step would be to consider a subclass of the $\Complement{C_6}$-matching minor-free graphs.
The triangular prism is one of two graphs appearing in a fundamental theorem by Lov{\'a}sz on non-bipartite matching covered graphs.

\begin{theorem}[Lov{\'a}sz \cite{lovasz1987matching}]\label{thm:K4andprism}
	Every non-bipartite matching covered graph contains a conformal bisubdivision of $K_4$ or $\Complement{C_6}$.
\end{theorem}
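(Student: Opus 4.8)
The plan is to extract a minimal witness and identify it explicitly. Among all conformal subgraphs of $G$ that are matching covered and non-bipartite, choose one, say $H$, which is minimal with respect to $\Abs{\Fkt{E}{H}}$ and, subject to that, $\Abs{\Fkt{V}{H}}$; such an $H$ exists because $G$ itself qualifies. Since a conformal subgraph of a conformal subgraph of $G$ is again conformal in $G$, it suffices to prove that $H$ is a bisubdivision of $K_4$ or of $\Complement{C_6}$. Minimality is used in two forms: no conformal matching covered \emph{induced} subgraph of $H$ on fewer vertices is non-bipartite, and, considering spanning subgraphs, $H-e$ fails to be matching covered for every edge $e$ whose deletion keeps $H$ non-bipartite; in particular one checks (using that $H$ is $2$-connected, so its colour classes would be forced) that in fact $H$ is edge-minimal matching covered.

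Next I would use the classical ear-decomposition theorem for matching covered graphs: $H$ is built from $K_2$ by successively adding single ears (odd paths with new interior) and double ears, every intermediate graph being matching covered and spanning. A matching covered graph obtained using only single ears that each join the two colour classes remains bipartite; hence, reading the decomposition of $H$ from $K_2$ onward, there is a \emph{first} stage producing a non-bipartite graph $H'$, and $H'$ arises from a bipartite matching covered graph $B$ either by a single ear $R$ whose ends lie in one colour class of $B$, or by a double ear $R=(R_1,R_2)$. Each stage is a spanning conformal subgraph of the next (the interior of an odd ear carries its own perfect matching), so $H'$ is conformal in $H$, matching covered and non-bipartite; by minimality $H=H'$. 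Thus $H=B+R$ with $B$ bipartite matching covered and built from $K_2$ using single ears only.

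The remaining step is to shrink $B$ and enumerate. Using the removable-ear theorem (a matching covered graph with sufficiently many ears admits a single ear whose deletion preserves matching-coveredness) I would delete single ears of $B$ one at a time, checking at each step whether $B'+R$ is still matching covered; edge-minimality of $H$ forces this process to have terminated immediately, so $B$ has only very few ears. One is then left with a short finite list of configurations — $B$ equal to $K_2$, or an even cycle $C_{2k}$, or $K_2$ plus two single ears, and so on, with $R$ a same-colour-class single ear of small length or a double ear with short branches — and for each one checks directly, using that $B+R$ must be matching covered (this already discards the $\Theta$-graph configurations coming from $C_{2k}$ plus one ear, since those are not matching covered) and non-bipartite, that $H$ is a bisubdivision of $K_4$ (arising from a double ear with two length-one branches over a $4$-cycle) or of $\Complement{C_6}$ (the remaining small case). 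The main obstacle is precisely this case analysis: verifying that in every surviving configuration matching-coveredness of $B+R$ pins down the parities and attachment points tightly enough to force the outcome to be a bisubdivision of $K_4$ or $\Complement{C_6}$ and nothing larger. A heavier but cleaner alternative, avoiding the bare-hands enumeration, is to pass to the tight cut decomposition of $H$: since $H$ is non-bipartite it has a brick in its decomposition, for bricks the statement can be proved from $3$-connectivity and bicriticality, and a conformal bisubdivision is then pulled back through the tight cut contractions, which are reversible for conformal structures just as in \cref{thm:pfaffiantcc}.
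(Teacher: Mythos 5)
The paper does not prove this statement at all; it is imported directly from Lov\'asz \cite{lovasz1987matching}, so there is no in-paper argument to compare against and your proposal has to stand on its own. Its opening moves do follow the strategy of the classical proof: pass to an edge-minimal non-bipartite conformal matching covered subgraph $H$ (transitivity of conformality makes this legitimate), invoke the two-ear decomposition theorem, and observe that the first non-bipartite stage of the decomposition is itself a conformal, matching covered, non-bipartite subgraph of $H$ and hence equals $H$ by minimality, so that $H=B+R$ with $B$ bipartite matching covered and $R$ either a single ear with both ends in one colour class of $B$ or a double ear. Apart from the slip of calling the intermediate stages ``spanning'' (they are conformal, not spanning), this part is sound.

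The gap is that everything after that point, which is where the entire content of the theorem lives, is asserted rather than proved. Two concrete problems. First, the claim that minimality of $H$ lets you strip single ears off $B$ until ``very few'' remain does not follow: when you delete an ear of $B$, the ends of $R$ may lie on its interior, or $B'+R$ may simply fail to be matching covered, and in either case the minimality of $H$ gives no contradiction; you would need to show that this obstruction cannot occur for \emph{every} removable ear, and that is not addressed. (Relatedly, your parenthetical claim that $H$ is edge-minimal matching covered does not follow from minimality among \emph{non-bipartite} conformal matching covered subgraphs, since $H-e$ could be matching covered but bipartite.) Second, the terminal case analysis --- verifying that matching-coveredness of $B+R$ pins down the attachment points and parities so tightly that only bisubdivisions of $K_4$ and $\Complement{C_6}$ survive --- is exactly the hard combinatorial core of Lov\'asz's argument, and you explicitly leave it undone. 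As written the proposal is a plausible plan rather than a proof; to complete it you would have to carry out that analysis in full, or develop your ``heavier'' alternative via the brick decomposition, which itself presupposes a proof of the statement for bricks.
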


A matching covered graph without a non-trivial tight cut is called a \emph{brace} if it is bipartite and a \emph{brick} otherwise. In his seminal paper \cite{lovasz1987matching}, Lov{\'a}sz introduced a decomposition procedure, known under the name \emph{tight cut decomposition}, which, given a matching covered graph, searches for non-trivial tight cuts, computes both tight cut contractions, and iterates this for both reduced matching covered graphs, until a list of bricks and braces, which are not reducible any more, is obtained. Among many other things, Lov{\'a}sz proved that the list of bricks and braces does not depend on the chosen order in which the tight cuts are contracted.
As the following theorem shows, braces correspond exactly to the strongly $2$-connected digraphs.

\begin{theorem}[Lov{\'a}sz and Plummer \cite{lovasz1986matching}]
	A bipartite graph $G$ is a brace if and only if it is $2$-extendable.
\end{theorem}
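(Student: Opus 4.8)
The plan is to push the whole statement through to the $M$-direction $D\coloneqq\DirM{G}{M}$ and then combine \cref{thm:exttoconn} with \cref{lemma:tight1sums}. First observe that both a brace and a $2$-extendable bipartite graph are connected bipartite matching covered graphs: for a brace this is part of the definition, and a $2$-extendable graph $G$ is matching covered because for any edge $uv$ the graph $G-u-v$ is not edgeless --- if it were, every edge of $G$ would meet $u$ or $v$ and $G$, having at least six vertices, would admit no perfect matching at all --- so picking any edge $e'$ of $G-u-v$, the size-$2$ matching $\Set{uv,e'}$ extends to a perfect matching, which then contains $uv$. So let $G$ be a connected bipartite matching covered graph, fix $M\in\Perf{G}$, and set $D\coloneqq\DirM{G}{M}$, a digraph on $\Abs{M}=\Abs{\Fkt{V}{G}}/2$ vertices. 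We may assume $\Abs{\Fkt{V}{G}}\ge 6$, that is $\Abs{\Fkt{V}{D}}\ge 3$; the only smaller bipartite matching covered graphs are $K_2$ and $C_4$, the familiar degenerate cases which have no non-trivial tight cut but are too small to be $2$-extendable. By \cref{thm:exttoconn} applied with $k=2$, $G$ is $2$-extendable if and only if $D$ is strongly $2$-connected, so it suffices to show that $G$ has no non-trivial tight cut if and only if $D$ is strongly $2$-connected.

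For the forward implication I would argue as follows. Suppose $D$ is not strongly $2$-connected, so $D-w$ is not strongly connected for some $w\in\Fkt{V}{D}$. The condensation of $D-w$ has at least two nodes, so it has a source: a strong component $U$ receiving no edge from $\Fkt{V}{D}\setminus(U\cup\Set{w})$ in $D$. Then $(U\cup\Set{w},\,\Fkt{V}{D}\setminus U)$ is a directed separation of $D$ of order $1$ both of whose sides properly contain $\Set{w}$. Let $f\in M$ be the matching edge with $v_f=w$, and let $X$ consist of both endpoints of every matching edge $e$ with $v_e\in U$, together with one endpoint of $f$. A short check gives $\Cut{}{X}\cap M=\Set{f}$ and $M_X=\CondSet{e\in M}{v_e\in U\cup\Set{w}}$, whence $\CondSet{v_e}{e\in M_X}=U\cup\Set{w}$ and $\CondSet{v_e}{e\in M_{\Complement{X}}}=\Fkt{V}{D}\setminus U$; so the separation above is exactly the one attached to $X$ in \cref{lemma:tight1sums}, and that lemma certifies that $\Cut{}{X}$ is a tight cut. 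Finally, since $\Abs{M_X}=\Abs{U\cup\Set{w}}\ge 2$ and $\Abs{M_{\Complement{X}}}\ge 2$, each of $M_X$ and $M_{\Complement{X}}$ contains a matching edge other than $f$, which must lie inside $X$ and inside $\Complement{X}$ respectively; hence $\Abs{X}\ge 3$ and $\Abs{\Complement{X}}\ge 3$, so $\Cut{}{X}$ is non-trivial.

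For the reverse implication, let $\Cut{}{X}$ be a non-trivial tight cut of $G$, so $X$ and $\Complement{X}$ both have odd size at least $3$. By \cref{lemma:tight1sums} the pair $(\CondSet{v_e}{e\in M_X},\CondSet{v_e}{e\in M_{\Complement{X}}})$ --- or its reverse --- is a directed separation of $D$ of order $1$, the two sides sharing only the vertex $v_f$, where $f$ is the matching edge crossing $\Cut{}{X}$. As $\Abs{X}\ge 3$ and every vertex of $X$ except the one incident with $f$ is matched within $X$, there is a matching edge of $M$ inside $X$, so $\CondSet{v_e}{e\in M_X}\setminus\Set{v_f}\neq\emptyset$, and symmetrically $\CondSet{v_e}{e\in M_{\Complement{X}}}\setminus\Set{v_f}\neq\emptyset$. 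By the definition of a directed separation, there is a direction in which $D-v_f$ contains no directed path between these two non-empty vertex sets, so $D-v_f$ is not strongly connected and $D$ is not strongly $2$-connected. Combining the two implications with \cref{thm:exttoconn}, we get that $G$ is a brace if and only if it has no non-trivial tight cut, if and only if $D$ is strongly $2$-connected, if and only if $G$ is $2$-extendable.

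The one place that genuinely needs care is matching up the word ``non-trivial'' across \cref{lemma:tight1sums}. A tight cut with a one-vertex shore corresponds to an order-$1$ directed separation one of whose sides is all of $\Fkt{V}{D}$, so one has to verify that a tight cut is non-trivial \emph{precisely} when the associated order-$1$ separation has both sides strictly larger than the single shared vertex $v_f$; this rests on the parity fact that a shore of a tight cut has odd cardinality and hence, once its size is at least $3$, contains a matching edge of $M$ in its interior. The auxiliary nuisance is the vertex-count clause $\Abs{\Fkt{V}{G}}\ge 2k+2$ in the definition of $k$-extendability, which is exactly why the two small braces $K_2$ and $C_4$ must be excluded by hand.
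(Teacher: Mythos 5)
The paper does not prove this statement at all --- it is quoted as a known theorem of Lov\'{a}sz and Plummer with a citation, so there is no in-paper proof to compare against. Your derivation is a sensible and, as far as I can check, correct way to obtain it from the paper's own toolkit: the chain ``no non-trivial tight cut $\Leftrightarrow$ $\DirM{G}{M}$ strongly $2$-connected'' via \cref{lemma:tight1sums} is carried out carefully (in particular the parity argument that a tight-cut shore of size at least $3$ contains a matching edge in its interior, and the explicit construction of the shore $X$ from a source component of the condensation of $D-w$, are exactly the points that need checking, and you check them). Two caveats are worth recording. First, the other half of your chain rests entirely on \cref{thm:exttoconn}, which the paper itself only states as folklore without proof; so your argument does not eliminate the unproven content but relocates it into that equivalence --- this is legitimate (the standard proof of \cref{thm:exttoconn} is a direct Hall/K\"onig-type argument that does not pass through braces, so there is no circularity), but it should be said explicitly. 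Second, your claim that a $2$-extendable graph is matching covered tacitly uses that a $2$-extendable graph has a perfect matching; under the paper's literal definition this is vacuous for graphs with no matching of size $2$ (e.g.\ $K_{1,5}$), so one must read ``$k$-extendable'' in the standard sense of Plummer, which presupposes a perfect matching. Neither point is a real gap, but both deserve a sentence in a final write-up.
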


Bricks have a more complicated structure and although every $2$-extendable graph is either a brick or a brace as seen in \cref{thm:2extnotightcut}, there are bricks that are not $2$-extendable.
For an example of such a brick consider the triangular prism.

\begin{theorem}[Plummer \cite{plummer1980n}]\label{thm:2extnotightcut}
	Let $G$ be a $2$-extendable graph.
	Then, $G$ is either a brace or a brick.
\end{theorem}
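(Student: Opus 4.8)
The plan is to prove the more explicit statement that every $2$-extendable graph $G$ is matching covered and has no non-trivial tight cut; by definition this immediately gives that $G$ is a brace (if bipartite) or a brick (otherwise). That $G$ is matching covered is immediate: for an edge $e=uv$, the set $V(G)\setminus\{u,v\}$ has at least four vertices and, since $G$ has a perfect matching, cannot be independent (otherwise all of these vertices would have to be matched into $\{u,v\}$), so it contains an edge $f$; extending the matching $\{e,f\}$ of size $2$ produces a perfect matching through $e$.

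Next I would record two preliminary facts. First, $G$ is $2$-connected: if $z$ were a cut vertex, pick two components $C_1,C_2$ of $G-z$ and edges $za_1,za_2$ with $a_i\in C_i$ (these exist as $G$ is connected), each lying in a perfect matching because $G$ is matching covered. The perfect matching through $za_1$ matches $z$ into $C_1$ and matches $C_2$ entirely within itself, forcing $|C_2|$ to be even, while the perfect matching through $za_2$ forces $|C_2|$ odd, a contradiction. Second, any non-trivial tight cut $\partial(X)$ has $|X|$ and $|\overline{X}|$ odd, since each perfect matching contains exactly one of its edges; hence both shores have at least $3$ vertices.

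The core argument then runs as follows. Assume for contradiction that $\partial(X)$ is a non-trivial tight cut. I claim $\partial(X)$ contains two disjoint edges. If not, the edges of $\partial(X)$ pairwise intersect, and a family of pairwise intersecting $2$-element sets is either a star or is contained in a triangle. A triangle is impossible here: each edge of $\partial(X)$ has exactly one end in $X$, so the three triangle vertices would properly $2$-colour $K_3$. Hence the edges of $\partial(X)$ form a star centred at some vertex $z$; then $z$ separates the two non-empty sets obtained from $X$ and $\overline{X}$ by deleting $z$ from whichever side contains it, contradicting $2$-connectivity. So there are disjoint edges $e_1,e_2\in\partial(X)$; the matching $\{e_1,e_2\}$ of size $2$ extends to a perfect matching $M$ by $2$-extendability, and then $|M\cap\partial(X)|\ge 2$, contradicting tightness. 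Therefore $G$ has no non-trivial tight cut, completing the proof.

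The main obstacle is the step ruling out the ``star'' configuration of the cut: $2$-extendability by itself only kills the case of two disjoint cut edges, so everything hinges on the $2$-connectivity of $G$ (which comes from matching-coveredness, not from extendability) together with the small bipartiteness-type observation that a triangle cannot lie across a cut. The remaining parts are a routine unwinding of the definitions.
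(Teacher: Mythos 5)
The paper states this theorem only as a citation of Plummer and supplies no proof of its own, so there is nothing internal to compare against; judged on its own terms, your argument is a correct and complete proof, and it is essentially the classical one. All three ingredients are in place: (i) $2$-extendability gives matching-coveredness, via the observation that $V(G)\setminus\{u,v\}$ cannot be independent in the presence of a perfect matching; (ii) matching-coveredness gives $2$-connectivity, via the parity contradiction on $|C_2|$ coming from the two perfect matchings through $za_1$ and $za_2$; and (iii) a non-trivial tight cut $\partial(X)$ must contain two independent edges, since a pairwise-intersecting family of edges is a star or sits inside a triangle, a triangle cannot have all three edges crossing a cut, and a star centred at $z$ would make $z$ a cut vertex (both shores being odd and hence of size at least $3$). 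Extending the two independent cut edges to a perfect matching then contradicts tightness, so $G$ has no non-trivial tight cut and is a brick or a brace according to bipartiteness. One small point worth making explicit: the paper's literal definition of $k$-extendable does not require a perfect matching to exist (it is vacuous for, say, a star on six vertices), while your step (i) invokes one. You are implicitly using the standard convention -- present in Plummer's definition and clearly intended here -- that a $k$-extendable graph has a perfect matching; with that stated, the proof is airtight.
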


There exists a generalisation of tight cuts that crosses the border towards bricks.
Given a matching covered graph $G$ and a set $X\subseteq\Fkt{V}{G}$ we call the graph $G_X$ obtained from $G$ by identifying $X$ into a single vertex the \emph{$X$-contraction} of $G$.
Now a cut $\Cut{}{X}$ is called \emph{separating} if both $G_X$ and $G_{\Complement{X}}$ are matching covered.

\begin{theorem}[de Carvalho, Lucchesi and Murty \cite{de2002conjecture}]\label{thm:separatingcuts}
	Let $G$ be a matching covered graph and $X\subseteq\Fkt{V}{G}$.
	The cut $\Cut{}{X}$ is separating if and only if for every edge $e\in\Fkt{E}{G}$ there is a perfect matching $M_e$ of $G$ containing $e$ such that $\Abs{\Cut{}{X}\cap M_e}=1$.
\end{theorem}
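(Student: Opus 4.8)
The plan is to shuttle perfect matchings back and forth between $G$ and its two contractions $G_X$ and $G_{\Complement{X}}$, using the following dictionary. A perfect matching $M$ of $G$ with $\Abs{\Cut{}{X}\cap M}=1$ restricts to a perfect matching of $G_X$: keep the edges of $M$ inside $\Complement{X}$, and replace the unique cut edge $uv$ (say $v\in\Complement{X}$) by the $G_X$-edge joining $v$ to the contraction vertex; symmetrically it restricts to a perfect matching of $G_{\Complement{X}}$. Conversely, any perfect matching $N_1$ of $G_X$ uses exactly one edge at the contraction vertex, corresponding to some cut edge $f$; if $N_2$ is a perfect matching of $G_{\Complement{X}}$ using the edge corresponding to the same $f$, then $N_1$ and $N_2$ glue along $f$ into a perfect matching of $G$ whose intersection with $\Cut{}{X}$ is exactly $\Set{f}$.

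For the forward implication, I would assume $\Cut{}{X}$ is separating and fix an edge $e\in\Fkt{E}{G}$. If $e\in\Cut{}{X}$, then $e$ corresponds to an edge at the contraction vertex in each of $G_X$ and $G_{\Complement{X}}$; since both contractions are matching covered, choose perfect matchings of $G_X$ and of $G_{\Complement{X}}$ through these edges and glue them along $e$ to obtain $M_e$ with $\Cut{}{X}\cap M_e=\Set{e}$. If $e$ lies inside $\Complement{X}$ (the case $e\subseteq X$ being symmetric), use that $G_X$ is matching covered to pick a perfect matching $N_1\ni e$ of $G_X$; it meets the contraction vertex along some cut edge $f$, and since $G_{\Complement{X}}$ is matching covered there is a perfect matching $N_2$ of $G_{\Complement{X}}$ through the edge corresponding to $f$. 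Gluing $N_1$ and $N_2$ along $f$ yields $M_e\ni e$ with $\Cut{}{X}\cap M_e=\Set{f}$.

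For the converse, I would assume the matching condition and, by symmetry, prove only that $G_X$ is matching covered; it is connected, being a contraction of the connected graph $G$. Let $\epsilon$ be an edge of $G_X$. If $\epsilon$ is incident to the contraction vertex it corresponds to a cut edge $e$, so apply the hypothesis to $e$ and obtain $M_e\ni e$ with $\Cut{}{X}\cap M_e=\Set{e}$; if $\epsilon$ lies inside $\Complement{X}$, apply the hypothesis to $\epsilon$ and obtain $M_\epsilon\ni\epsilon$ with $\Cut{}{X}\cap M_\epsilon=\Set{f}$ for a single cut edge $f$. In both cases the edges of the chosen matching lying inside $\Complement{X}$ cover all of $\Complement{X}$ except the $\Complement{X}$-endpoint of the unique cut edge used, so adding the corresponding $G_X$-edge at the contraction vertex gives a perfect matching of $G_X$ containing $\epsilon$. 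Hence $G_X$, and symmetrically $G_{\Complement{X}}$, is matching covered, i.e. $\Cut{}{X}$ is separating.

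The proof is essentially bookkeeping of which vertices get covered, and no parity arguments are needed. The one step that genuinely consumes both halves of "separating" is the gluing in the forward direction: after fixing a perfect matching of one contraction, one must produce a perfect matching of the other through a \emph{prescribed} cut edge, which is precisely what matching-coveredness of that second contraction supplies. That is the place I would be most careful, but I do not anticipate a real obstacle.
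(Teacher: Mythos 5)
This theorem is imported from de Carvalho, Lucchesi and Murty and the paper states it without proof, so there is no in-paper argument to compare yours against; judged on its own, your proof is correct. The dictionary you set up is exactly the right mechanism: a perfect matching $M$ of $G$ with $\Abs{\Cut{}{X}\cap M}=1$ restricts to perfect matchings of both $G_X$ and $G_{\Complement{X}}$ that use the (images of the) same cut edge, and conversely two such matchings of the contractions agreeing on a cut edge $f$ glue back to a perfect matching of $G$ meeting $\Cut{}{X}$ exactly in $f$; your three-way case split (cut edge versus edge inside either shore) and the bookkeeping of covered vertices are all sound, and you correctly identify that the only nontrivial use of \emph{both} halves of ``separating'' is forcing the second contraction's matching through a prescribed edge at its contraction vertex. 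The one point worth making explicit is that if the contraction identifies parallel edges coming from cut edges that share a $\Complement{X}$-endpoint, an edge at the contraction vertex may correspond to several cut edges of $G$, so in the gluing one must fix a common preimage $f$ on both sides; your phrasing already handles this implicitly by always naming the cut edge $f$ in $G$ rather than its image.
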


We call a matching covered graph \emph{solid} if every non-trivial separating cut is already tight.

One can easily check the following lemma on bipartite graphs, showing that any bipartite matching covered graph is solid.

\begin{lemma}[de Carvalho, Lucchesi, Kothari and Murty \cite{lucchesi2018two}]\label{lemma:bipartitenosepcuts}
Let $G$ be a bipartite matching covered graph. Then $\Cut{}{X}$ is separating if and only if it is tight.
\end{lemma}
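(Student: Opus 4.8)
The plan is to prove both implications, with Theorem~\ref{thm:separatingcuts} (the de Carvalho--Lucchesi--Murty characterisation of separating cuts) and Lemma~\ref{lemma:tightcutmajority} as the main tools, and with the parity constraints forced by bipartiteness doing the rest of the work.

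The direction ``tight $\Rightarrow$ separating'' is immediate and does not even use bipartiteness: if $\partial(X)$ is tight, then for every edge $e$ matching-coveredness of $G$ supplies a perfect matching $M_e$ containing $e$, and tightness forces $\Abs{\partial(X)\cap M_e}=1$; by Theorem~\ref{thm:separatingcuts} this certifies that $\partial(X)$ is separating.

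For ``separating $\Rightarrow$ tight'' I would proceed in three steps. First, a parity observation: for any perfect matching $M$, split $E(M)$ into edges with both ends in $X$, edges with exactly one end in $X$ (these are $\partial(X)\cap M$), and edges with no end in $X$; counting the vertices of $X$ gives $\Abs{X}\equiv\Abs{\partial(X)\cap M}\pmod 2$. Since $\partial(X)$ is separating, Theorem~\ref{thm:separatingcuts} provides, for any chosen edge, a perfect matching meeting $\partial(X)$ exactly once, so $\Abs{X}$ must be odd. Second, writing $(A,B)$ for the bipartition and $a=\Abs{X\cap A}$, $b=\Abs{X\cap B}$ (which are unequal since $\Abs{X}$ is odd; say WLOG $a<b$, so $A\cap X$ is the minority), I observe that for a perfect matching $M$ with $t$ edges inside $X$ one has $\Abs{\partial(X)\cap M}=(a-t)+(b-t)$ with $a-t,b-t\ge 0$ and $(b-t)-(a-t)=b-a>0$; applying this to a matching meeting $\partial(X)$ exactly once forces $a-t=0$, $b-t=1$, hence $b-a=1$. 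Third, I claim no vertex of the minority $A\cap X$ has a neighbour outside $X$: if $u\in A\cap X$ had a neighbour $w\notin X$, then $w\in B\setminus X$ by bipartiteness, and by Theorem~\ref{thm:separatingcuts} the edge $uw$ lies in some perfect matching $M_e$ with $\Abs{\partial(X)\cap M_e}=1$; but then $u$ is matched across $\partial(X)$, so in the notation above $a-t\ge 1$, forcing $a-t=1$, $b-t=0$, i.e.\ $b-a=-1$, contradicting $b-a=1$. With $\Abs{X}$ odd, $\big\lvert\Abs{X\cap A}-\Abs{X\cap B}\big\rvert=1$, and the minority of $X$ having no neighbour in $\Complement{X}$, Lemma~\ref{lemma:tightcutmajority} yields that $\partial(X)$ is tight. (Alternatively one can skip Lemma~\ref{lemma:tightcutmajority} and finish by hand: since no edge joins $A\cap X$ to $\Complement{X}$, every perfect matching $M$ matches all $a$ vertices of $A\cap X$ inside $X$, so $t=a$ and $\Abs{\partial(X)\cap M}=(a-a)+(b-a)=1$.)

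I do not expect any single deep obstacle here; the only real care needed is the bookkeeping of how a perfect matching meets $X$ relative to the two colour classes, and making sure the degenerate situations (trivial $X$, the choice of which class is the minority, and the case $a=0$) are absorbed by the same counting identity rather than treated separately. Once the parity statement and the identity $\Abs{\partial(X)\cap M}=(a-t)+(b-t)$ are written down cleanly, Theorem~\ref{thm:separatingcuts} and Lemma~\ref{lemma:tightcutmajority} close the argument quickly.
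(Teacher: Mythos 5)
Your argument is correct and complete. Note that the paper itself does not prove this lemma --- it is quoted from de Carvalho, Lucchesi, Kothari and Murty with the remark that one can ``easily check'' it --- so there is no in-text proof to compare against; your write-up supplies exactly the routine verification the authors had in mind. The forward direction via \cref{thm:separatingcuts} is immediate, and in the converse direction the counting identity $\Abs{\Cut{}{X}\cap M}=(a-t)+(b-t)$ together with the parity of $\Abs{X}$ correctly forces $\abs{a-b}=1$ and excludes minority-to-complement edges, after which either \cref{lemma:tightcutmajority} or your direct computation $t=a$, $\Abs{\Cut{}{X}\cap M}=b-a=1$ finishes the proof.
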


Moreover, being solid is preserved by tight cut contractions (cf.\@ \cite{de2002conjecture}) and thus a matching covered graph is solid of and only if all of its bricks are solid.

Please note that even bricks may contain non-trivial separating cuts.
Again consider the triangular prism from \cref{fig:prism} and take a cut around one of the two triangles.
Such a cut is separating.
In fact, the existence of a prism as a conformal bisubdivision immediately implies the existence of a non-trivial and non-tight separating cut.

\begin{lemma}[de Carvalho, Lucchesi, Kothari and Murty \cite{lucchesi2018two}]\label{lemma:solidprismfree}
	Every solid graph is $\Complement{C_6}$-free.
\end{lemma}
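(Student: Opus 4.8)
The plan is to argue by contraposition: assuming that $G$ is solid and contains a conformal bisubdivision $H$ of the triangular prism $\Complement{C_6}$, I would produce a non-trivial separating cut of $G$ which is not tight, contradicting solidity. The first observation is that a bisubdivision of a matching covered graph is again matching covered — replacing an edge $uv$ by an odd path $u p_1 \dots p_{2t} v$, the unique perfect matching of that path witnesses that the three new edge types are allowed, and one reroutes any perfect matching through $uv$ along it — so $H$ is a matching covered conformal subgraph of $G$.

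Next I would exhibit a non-trivial, non-tight separating cut inside $H$. Write the two ``branch triangles'' of $\Complement{C_6}$ as $a_1a_2a_3$ and $b_1b_2b_3$ with rung edges $a_ib_i$; in $H$ the triangle edges $a_ia_j$, $b_ib_j$ become internally disjoint odd paths $P_{ij}$, $Q_{ij}$, and the rungs become internally disjoint odd paths $R_i$ from $a_i$ to $b_i$. Let $C_a := P_{12}\cup P_{13}\cup P_{23}$, an odd cycle through $a_1,a_2,a_3$, and set $Y := V(C_a)$. In $H$ the only edges leaving $Y$ are the three edges $a_iu_i$, where $u_i$ is the neighbour of $a_i$ on $R_i$, so $\partial_H(Y)$ consists of exactly these three edges. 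It is non-trivial (both shores have at least $3$ vertices) and not tight: the perfect matching of $H$ obtained from the (unique) perfect matching of each $R_i$ together with a perfect matching of the internal vertices of each $P_{ij}$, $Q_{ij}$ contains all three edges $a_iu_i$. Finally it is separating: contracting $Y$ turns $H$ into a bisubdivision of $K_4$ (hub the contracted $C_a$, triangle $b_1b_2b_3$, subdivided edges $R_i$, $Q_{ij}$), and contracting $V(H)\setminus Y$ turns $H$ into the odd cycle $C_a$ together with one new vertex adjacent to $a_1,a_2,a_3$, again a bisubdivision of $K_4$; both are matching covered, so by \cref{thm:separatingcuts} the cut $\partial_H(Y)$ is separating. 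Hence $H$ is not solid.

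It remains to transfer this to $G$. The plan is to extend $Y$ to a shore $X\subseteq V(G)$ with $X\cap V(H)=Y$ by distributing the vertices of $V(G)\setminus V(H)$ suitably between the two sides; since $V(H)$ is conformal, the perfect matching found above extends to a perfect matching of $G$ crossing $\partial_G(X)$ exactly three times, so $\partial_G(X)$ remains non-trivial and not tight. The real work is to verify that $\partial_G(X)$ is still separating, i.e.\ (by \cref{thm:separatingcuts}) that for every edge $e\in E(G)$ there is a perfect matching through $e$ crossing $\partial_G(X)$ exactly once; for edges inside $V(H)$ this follows from the analysis of $H$ plus conformality, and for the remaining edges one argues by an uncrossing step: starting from any perfect matching through $e$, repeatedly reroute it along an alternating sub-arc of the odd cycle $C_a$ to decrease the (necessarily odd) number of crossing edges to one while keeping $e$ in the matching.

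I expect the transfer step, and in particular this uncrossing argument, to be the main obstacle: rerouting along an arc of $C_a$ temporarily exposes two vertices lying outside $V(H)$, and one must argue — using that $G$ is matching covered and connected — that the matching can be completed again through $e$ without re-creating crossings. A secondary technical point is that one should take $H$ with $|V(H)|$ minimal so that $C_a$ (and $C_b$) have no chords in $G$, which keeps the contracted graphs clean. Alternatively, if one invokes the known structural fact that solidity is inherited by conformal matching covered subgraphs, the entire transfer collapses: $H$ is then a non-solid conformal matching covered subgraph of $G$, immediately contradicting that $G$ is solid.
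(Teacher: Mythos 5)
The paper itself does not prove this lemma; it imports it from \cite{lucchesi2018two}, accompanied only by the informal remark that a cut around one of the two (bisubdivided) triangles is separating but not tight. Your first half is a correct formalisation of exactly that remark, carried out \emph{inside} the bisubdivision $H$: the cut around the odd cycle $C_a$ is non-trivial, not tight (the perfect matching assembled from the unique perfect matchings of the rung-paths $R_i$ crosses it three times), and separating, since both contractions are bisubdivisions of $K_4$ and hence matching covered. Up to this point the argument is sound.

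The genuine gap is the transfer to $G$, and it is not a patchable technicality. The uncrossing step is not a legitimate matching operation: to reduce $\lvert M\cap\partial(X)\rvert$ by two you would flip $M$ along an alternating arc of $C_a$ joining two externally matched vertices $u,v$, but the symmetric difference then leaves their partners $u',v'\notin X$ exposed; repairing this requires an $M$-alternating cycle through both $uu'$ and $vv'$ meeting $\partial(X)$ in exactly these two edges, which neither connectivity nor matching-coveredness of $G$ supplies (and you must simultaneously keep $e$ in the matching). In fact the cut $\partial(X)$ with $X\cap V(H)=V(C_a)$ need not be separating in $G$ at all, so no uncrossing scheme can certify it in general. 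Your fallback --- that solidity is inherited by conformal matching covered subgraphs --- is true but is precisely the non-trivial content here: it rests on the theorem of de Carvalho, Lucchesi and Murty that a brick is solid if and only if it contains no two vertex-disjoint odd cycles whose removal leaves a perfectly matchable graph, together with the reduction of both solidity and $\Complement{C_6}$-freeness to the bricks of the tight cut decomposition. That is exactly how the cited source obtains the lemma (the two bisubdivided triangles are the two odd cycles, and conformality of $H$ supplies the perfect matching of the remainder), so invoking the inheritance fact as ``known'' without proof or precise reference begs the question. To repair your write-up, either argue directly from the odd-cycle characterisation of solid bricks, or treat the lemma as a citation, as the paper does.
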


The goal of this section is to establish an extension of \cref{cor:matchinghadwiger} to non-bipartite matching covered graphs in the form of a conjecture.

\begin{conjecture}\label{con:solidandpfaffian}
Let $G$ be a solid and Pfaffian graph and $M$ a perfect matching of $G$. Then $\Mchromatic{G}{M} \leq 2$.
\end{conjecture}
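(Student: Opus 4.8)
The plan is to mirror the strategy behind \cref{thm:mainthm}: decompose $G$ along its tight cuts, prove the statement separately for the resulting bricks and braces, and glue the partial $M$-colourings back together. The gluing step is a matching-theoretic analogue of \cref{lemma:splitting}. Fix a non-trivial tight cut $\Cut{}{X}$ with unique matching edge $f=uv$, $u\in X$, $v\notin X$, let $G_1=G/X$, $G_2=G/\Complement{X}$ be the two tight cut contractions, and let $M_1=(M\cap E(\InducedSubgraph{G}{\Complement{X}}))\cup\{f\}$ and $M_2=(M\cap E(\InducedSubgraph{G}{X}))\cup\{f\}$ be the induced perfect matchings. The key observation is that every $M$-alternating cycle $C$ meets $\Cut{}{X}$ in $0$ or $2$ edges: since $M\Delta C$ is a perfect matching, tightness gives $|(M\Delta C)\cap\Cut{}{X}|=1$, and $C\cap\Cut{}{X}=(M\cap\Cut{}{X})\Delta((M\Delta C)\cap\Cut{}{X})$ is the symmetric difference of two singletons. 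Hence a crossing $M$-alternating cycle uses $f$ together with exactly one non-matching cut edge, projects to an $M_1$-alternating cycle in $G_1$ and to an $M_2$-alternating cycle in $G_2$, and its set of matching edges is the union of the matching-edge sets of the two projections (overlapping only in $f$); a non-crossing $M$-alternating cycle lies entirely inside $G_1$ or inside $G_2$ and avoids $f$. Consequently, if $c_i$ is a proper $2$-$M_i$-colouring of $G_i$, then after possibly swapping the two colours of $c_2$ so that $c_1(f)=c_2(f)$, the colouring of $M$ agreeing with $c_1$ on $M\cap E(\InducedSubgraph{G}{\Complement{X}})$, with $c_2$ on $M\cap E(\InducedSubgraph{G}{X})$, and with the common value on $f$, is proper. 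By \cref{thm:pfaffiantcc} and the observation (recorded after \cref{lemma:solidprismfree}) that solidity passes to tight cut contractions and amounts to all bricks being solid, both hypotheses are inherited by this reduction, so it suffices to settle the conjecture for bricks and braces.

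For braces the statement is already available: a brace is bipartite and matching covered, so $\DirM{G}{M}$ is a strongly $2$-connected digraph, non-even by \cref{thm:pfaffian}, hence $2$-colourable by \cref{thm:mainthm}, i.e. $\Mchromatic{G}{M}\le 2$ — this is exactly \cref{cor:matchinghadwiger}. The real content, and the main obstacle, is the case of solid Pfaffian \emph{bricks}. Here the digraph machinery is unavailable because bricks are non-bipartite, and, worse, the local reductions driving the proof of \cref{thm:mainthm} do not apply: a brick is $3$-connected with no non-trivial tight cut, so there is neither a vertex of out-degree (resp. degree) $2$ to contract nor a tight cut to split along, and there is no counterpart of the edge bound \cref{thm:nonevenedges} forcing such structure to exist. (Note that the prism $\Complement{C_6}$, which has $\Mchromatic{\Complement{C_6}}{M}=3$, is a brick; solidity must be used precisely to rule out this and prism-like behaviour, in line with \cref{lemma:solidprismfree}.)

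I see two plausible routes past this obstacle. The first is to devise genuinely new reductions that stay within the class of solid Pfaffian matching covered graphs — for instance reductions along non-trivial \emph{separating} cuts, which a solid brick may still possess (as $\Cut{}{X}$ around a triangle of $\Complement{C_6}$ shows), or along the building operations of a brick generation procedure — and to show each of them transports $2$-$M$-colourability along the lines of \cref{lemma:3vertexcontraction}; the difficulty is that being a brick and being solid are both fragile under such operations, so controlling them simultaneously is delicate. The second, and to me the more promising, route is to invoke the existing structural understanding of Pfaffian bricks and of solid bricks (in the spirit of the work of de Carvalho, Lucchesi, Kothari and Murty) together with the $\Complement{C_6}$-freeness forced by solidity, to argue that a solid Pfaffian brick is either one of a short explicit list of small bricks — for which $\Mchromatic{G}{M}\le 2$ can be checked by hand for every perfect matching — or is assembled from smaller solid Pfaffian bricks by operations along which a proper $2$-$M$-colouring can be propagated. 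Pinning down exactly which bricks satisfy both ``Pfaffian'' and ``solid'', and then verifying the propagation along the corresponding composition operations, is where the essential work lies and is the reason this remains a conjecture.
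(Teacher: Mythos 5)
The statement you are trying to prove is stated in the paper as \cref{con:solidandpfaffian}, an open conjecture: the paper itself offers no proof, only partial evidence. Your proposal is, accordingly, not a proof either, and you say so yourself. What you do establish — the reduction along non-trivial tight cuts, with the observation that an $M$-alternating cycle meets a tight cut in $0$ or $2$ edges and that a crossing cycle projects to alternating cycles in both contractions sharing the edge $f$ — is correct and is precisely the paper's \cref{lemma:tightcutsplitting}; combined with \cref{thm:pfaffiantcc} and the fact that solidity passes to tight cut contractions, it validly reduces the conjecture to bricks and braces, and the brace case is indeed \cref{cor:matchinghadwiger}. But the solid Pfaffian brick case, which you leave open, is the entire content of the conjecture, so the gap is not a technicality: no amount of tight-cut gluing produces a $2$-$M$-colouring of a brick, since a brick by definition admits no non-trivial tight cut.

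Your ``second route'' is essentially what the paper carries out, but only in the planar setting: by the Kothari--Murty classification (\cref{thm:planarprismfree}), the planar $\Complement{C_6}$-free bricks are exactly the odd wheels, the staircases of order $4k$ and the tricorn, and the paper verifies $2$-$M$-colourability for each by hand, yielding \cref{thm:planarprismfree2colours}; together with the fact that the only planar solid bricks are the odd wheels (\cref{cor:planarsolidbricks}) this settles the planar case of the conjecture (\cref{thm:planarsolid2colours}). Beyond planarity no analogous classification of solid Pfaffian bricks is available, and the paper explicitly notes that even a linear edge bound for solid Pfaffian bricks — the ingredient that would let one imitate the reduction machinery of \cref{thm:mainthm} — is unknown. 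So your diagnosis of where the difficulty lies is accurate, but the proposal should not be read as a proof of the statement.
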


To provide some evidence towards \cref{con:solidandpfaffian}, the remainder of this section is dedicated to settle the planar case.
For this we first establish a more general version of \cref{lemma:splitting} by proving it directly for tight cut contractions.
We will need a bit of notation here.
If $G$ is matching covered, $M$ a perfect matching, and $G_X$ is a tight cut contraction of $\Cut{}{X}$ with contraction vertex $v_X$, we denote by $M_X$ the perfect matching $\CondSet{e\in M}{e\subseteq\Fkt{V}{G_X}}\cup\Set{uv_X}$ where $u$ is the unique vertex of $X$ covered by the edge of $M$ in $\Cut{}{X}$.

\begin{lemma}\label{lemma:tightcutsplitting}
	Let $G$ be a matching covered graph, $\Cut{}{X}$ a non-trivial tight cut in $G$ and $M$ a perfect matching.
	If $\Mchromatic{G_X}{M_X}\leq 2$ and $\Mchromatic{G_{\Complement{X}}}{M_{\Complement{X}}}\leq 2$, then $\Mchromatic{G}{M}\leq 2$.
\end{lemma}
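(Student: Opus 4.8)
The plan is to mimic the structure of the proof of \cref{lemma:splitting} for $1$-sums, but carried out directly on the matching covered graph rather than on $M$-directions, so that bipartiteness is not needed. First I would fix proper $M_X$-colourings $c_X\colon M_X\to\Set{0,1}$ and $M_{\Complement{X}}$-colourings $c_{\Complement{X}}\colon M_{\Complement{X}}\to\Set{0,1}$, which exist by hypothesis. Let $f$ denote the unique edge of $M$ lying in $\Cut{}{X}$, let $u\in X$ and $u'\in\Complement{X}$ be its endpoints, and let $v_X$, $v_{\Complement{X}}$ be the two contraction vertices. Recall that $M_X=\CondSet{e\in M}{e\subseteq\Fkt{V}{G_X}}\cup\Set{uv_X}$ and symmetrically for $M_{\Complement{X}}$; thus the edge $uv_X$ of $M_X$ and the edge $u'v_{\Complement{X}}$ of $M_{\Complement{X}}$ both ``represent'' the edge $f$ of $M$, while every other edge of $M$ lies in exactly one of $M_X$, $M_{\Complement{X}}$. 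After possibly swapping the two colours in $c_{\Complement{X}}$, I may assume $c_X(uv_X)=c_{\Complement{X}}(u'v_{\Complement{X}})$, and then define $c\colon M\to\Set{0,1}$ by giving each $e\in M\setminus\Set{f}$ the colour assigned by whichever of $c_X$, $c_{\Complement{X}}$ contains it, and giving $f$ the common colour $c_X(uv_X)$.

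Next I would verify that $c$ is a proper $M$-colouring, i.e.\ that no $M$-alternating cycle in $G$ is monochromatic. Suppose $C$ is such a cycle, monochromatic of colour $d$. The key combinatorial fact is that $\Cut{}{X}$ is tight, so $\Abs{\Cut{}{X}\cap E(C)}$ is even (any cycle crosses any cut an even number of times) and, since $C$ is $M$-alternating and $\Abs{\Cut{}{X}\cap M}=1$, in fact $C$ meets $\Cut{}{X}$ in either $0$ or exactly $2$ edges, one of which, in the latter case, must be $f$ (the unique matching edge of the cut). If $C$ avoids $\Cut{}{X}$ entirely then $V(C)\subseteq X$ or $V(C)\subseteq\Complement{X}$; in the first case $C$ is already an $M_X$-alternating cycle in $G_X$ avoiding $v_X$ (it uses only edges of $G[X]$ and only matching edges among $M_X\setminus\Set{uv_X}\subseteq M$), monochromatic under $c_X$, contradicting properness of $c_X$; symmetrically in the second case. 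If $C$ crosses $\Cut{}{X}$ in the two edges $f$ and a non-matching edge $g$, then contracting $\Complement{X}$ turns the portion of $C$ inside $\Complement{X}$ together with $f$ and $g$ into a single edge incident to $v_X$, so $C$ maps to an $M_X$-alternating cycle $C_X$ in $G_X$ through $v_X$, whose matching edges are exactly $(E(C)\cap M\cap E(G[X]))\cup\Set{uv_X}$; all of these received colour $d$ under $c$, hence under $c_X$, so $C_X$ is monochromatic, again a contradiction. (Here one uses that $uv_X$ got colour $d$ precisely because $f$ did, by the definition of $c$.) This exhausts the cases, so $c$ is proper and $\Mchromatic{G}{M}\le 2$.

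The step I expect to need the most care is the ``translation'' between alternating cycles of $G$ crossing $\Cut{}{X}$ and alternating cycles of the contractions $G_X$, $G_{\Complement{X}}$: I must check that when a cycle $C$ of $G$ uses $f$ and one non-matching cut edge $g$, the contraction of $\Complement{X}$ really does collapse the $\Complement{X}$-part of $C$ to a \emph{single} well-defined edge at $v_X$ (which is why it matters that $C$ meets the cut in exactly two edges, not more), and that the resulting closed walk is a genuine $M_X$-alternating \emph{cycle} rather than merely a closed alternating walk; this follows because $C$ visited $X$ along a single subpath. A secondary point worth spelling out is the symmetric normalisation of colours making $c_X(uv_X)=c_{\Complement{X}}(u'v_{\Complement{X}})$, which is legitimate since negating an $M$-colouring preserves properness. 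Everything else is bookkeeping of which matching edges live on which side, entirely analogous to \cref{lemma:splitting}; note that no appeal to $M$-directions or to bipartiteness is required, so the argument applies verbatim to non-bipartite $G$, as needed for the planar non-bipartite results of this section.
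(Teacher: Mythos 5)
Your proposal is correct and takes essentially the same route as the paper's proof: fix proper colourings of $M_X$ and $M_{\Complement{X}}$ in the two tight cut contractions, align them on the edges covering the contraction vertices, combine, and use tightness to show that a monochromatic $M$-alternating cycle either stays on one side or crosses $\Cut{}{X}$ in exactly two edges (one of them $f$) and hence projects to a monochromatic alternating cycle in a contraction. One cosmetic slip: after contracting $\Complement{X}$, the $\Complement{X}$-portion of $C$ together with $f$ and $g$ becomes a path of length two through $v_X$ (two edges, not a single edge), but your subsequent description of $C_X$ and its matching edges is the correct one, so the argument stands.
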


\begin{proof}
	For $Y\in\Set{X,\Complement{X}}$ let $c_Y$ be a proper $2$-colouring of $M_Y$ in $G_Y$.
	Let $e_Y\in M_Y$ be the edge covering the contraction vertex. Then we can rename the colours for $c_X$ and $c_{\Complement{X}}$ such that $\Fkt{c_X}{e_X}=\Fkt{c_{\Complement{X}}}{e_{\Complement{X}}}$ and we define a colouring for $M$ as follows.
	\begin{align*}
	\Fkt{c}{e}\coloneqq\ThreeCases{\Fkt{c_X}{e}}{e\in M_X}{\Fkt{c_X}{e_X}=\Fkt{c_{\Complement{X}}}{e_{\Complement{X}}}}{e\in\Cut{}{X}\cap M}{\Fkt{c_{\Complement{X}}}{e}}{e\in M_{\Complement{X}}}
	\end{align*}
	Suppose $G$ contains an $M$-alternating cycle $C$ that is monochromatic with respect to $c$.
	If $\Fkt{V}{C}$ is a subset of either $X$ or $\Complement{X}$, by definition of $c$, $C$ must be a monochromatic cycle in either $G_X$ or $G_{\Complement{X}}$ and, thus,  $C$ must cross $\Cut{}{X}$.
	Since $\Cut{}{X}$ is tight, $C-\Brace{\Cut{}{X}\cap\Fkt{E}{C}}$ contains exactly $2$ components.
	Each of them is a path of even length and $M$ covers all vertices but exactly one endpoint.
	Moreover, each of these paths forms, together with the corresponding edges in $\Cut{}{X}$, an $M_Y$-alternating cycle in their respective contraction $G_Y$.
	By definition of $c$, these two cycles must also be monochromatic which ultimately contradicts the choice of the $c_Y$ and completes the proof.
\end{proof}

Using the tight cut decomposition and the above Theorem, it suffices to show that every perfect matching of a solid planar brick or planar brace is $2$-colourable.
The brace case is of course taken care of by \cref{cor:matchinghadwiger} and thus our only concern are the solid planar bricks.
By \cref{lemma:solidprismfree} we only have to consider $\Complement{C_6}$-free planar bricks.
A theorem of Kothari and Murty (cf.\@ \cite{kothari2016k4}) gives a precise description of these bricks.

A graph $W_k$ consisting of a cycle of length $k$ and a single vertex adjacent to every vertex on the cycle is called a \emph{wheel}.
If $k$ is odd, we call $W_k$ an odd wheel; every odd wheel is a brick.

Let $\Brace{u_1,u_2,\dots,u_k}$ and $\Brace{v_1,v_2,\dots,v_k}$ be two disjoint paths with $k\geq2$.
The graph $S_k$ obtained from the union of these paths by adding the edges $u_iv_i$ for all $i\in\Set{1,\dots,k}$, two new vertices $x$ and $y$ joined by an edge and the edges $xu_1$, $xv_1$, $yu_k$, $yv_k$, is called a \emph{staircase of order $2k+2$}.
Every $S_{2k+2}$ is a brick and $S_6$ is isomorphic to the triangular prism.

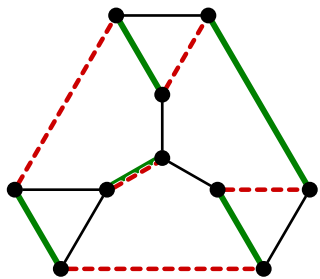
\begin{figure}[h!]
	\begin{center}
		\begin{tikzpicture}[scale=0.7]
		
		\pgfdeclarelayer{background}
		\pgfdeclarelayer{foreground}
		
		\pgfsetlayers{background,main,foreground}
		
%		%%%%% Vertex Styles %%%%%
%		\tikzstyle{v:main} = [draw, circle, scale=0.5, thick,fill=black]
%		\tikzstyle{v:tree} = [draw, circle, scale=0.3, thick,fill=black]
%		\tikzstyle{v:border} = [draw, circle, scale=0.75, thick,minimum size=10.5mm]
%		\tikzstyle{v:mainfull} = [draw, circle, scale=1, thick,fill]
%		\tikzstyle{v:ghost} = [inner sep=0pt,scale=1]
%		\tikzstyle{v:marked} = [circle, scale=1.2, fill=CornflowerBlue,opacity=0.3]
%		%%%%% %%%%% %%%%%
%		
%		%%%%% Edge Styles %%%%%
%		\tikzset{>=latex} 
%		\tikzstyle{e:marker} = [line width=9pt,line cap=round,opacity=0.2,color=DarkGoldenrod]
%		\tikzstyle{e:colored} = [line width=1.2pt,color=BostonUniversityRed,cap=round,opacity=0.8]
%		\tikzstyle{e:coloredthin} = [line width=1.1pt,opacity=1]
%		\tikzstyle{e:coloredborder} = [line width=2pt]
%		\tikzstyle{e:main} = [line width=1pt]
%		\tikzstyle{e:extra} = [line width=1.3pt,color=LavenderGray]
%		%%%%% %%%%% %%%%%
		
		\begin{pgfonlayer}{main}
		
		%%%%% Centered Ghost Vertices %%%%%
		\node (C) [] {};
		
		%%%%% Left Center %%%%%
		\node (C1) [v:ghost, position=180:25mm from C] {};
		%\node (U1) [v:ghost, position=90:50mm from C1] {};
		%		\node (L1) [v:ghost, position=270:27mm from C1,align=center] {};
		
		%%%%% Center %%%%%
		\node (C2) [v:ghost, position=0:0mm from C] {};
		%\node (U2) [v:ghost, position=90:50mm from C2] {};
		%		\node (L2) [v:ghost, position=270:27mm from C2,align=center] {};
		
		%%%%% Right Center %%%%%
		\node (C3) [v:ghost, position=0:25mm from C] {};
		%\node (U3) [v:ghost, position=90:50mm from C3] {};
		%		\node (L3) [v:ghost, position=270:27mm from C3,align=center] {};
		%%%%% %%%%% %%%%%

		%%%%% Vertices %%%%%
		
		%%%%% Left Center %%%%%

		%%%%% %%%%% %%%%%
		
		%%%%% Center %%%%%
		
		\node (T) [v:ghost,position=90:22mm from C2] {};
		\node (L) [v:ghost,position=210:22mm from C2] {};
		\node (R) [v:ghost,position=330:22mm from C2] {};
		
		\node(l1) [v:main,position=30:10mm from L] {};
		\node(l2) [v:main,position=150:10mm from L] {};
		\node(l3) [v:main,position=270:10mm from L] {};
		
		\node(r1) [v:main,position=150:10mm from R] {};
		\node(r2) [v:main,position=30:10mm from R] {};
		\node(r3) [v:main,position=270:10mm from R] {};
		
		\node(t1) [v:main,position=270:10mm from T] {};
		\node(t2) [v:main,position=150:10mm from T] {};
		\node(t3) [v:main,position=30:10mm from T] {};
		
		\node(M) [v:main,position=0:0mm from C2] {};
		
		%%%%% %%%%% %%%%%
		
		%%%%% Right Center %%%%%
		
		%%%%% %%%%% %%%%%
		
		%%%%% %%%%% %%%%%

		%%%%% Edges %%%%%
		
		%%%%% Left Center %%%%%

		%%%%% %%%%% %%%%%
		
		%%%%% Center %%%%%
		
		\draw (l1) [e:main] to (l2);
%		\draw (l2) [e:main] to (l3);
		\draw (l3) [e:main] to (l1);
		
%		\draw (r1) [e:main] to (r2);
		\draw (r2) [e:main] to (r3);
%		\draw (r3) [e:main] to (r1);
		
%		\draw (t1) [e:main] to (t2);
		\draw (t2) [e:main] to (t3);
%		\draw (t3) [e:main] to (t1);
		
		\draw (M) [e:main] to (t1);
		\draw (M) [e:main] to (r1);
%		\draw (M) [e:main] to (l1);
		
%		\draw (t2) [e:main] to (l2);
%		\draw (t3) [e:main] to (r2);
%		\draw (r3) [e:main] to (l3);

		%%%%% %%%%% %%%%%
		
		%%%%% Right Center %%%%%

		%%%%% %%%%% %%%%%
		
		%%%%% %%%%% %%%%%
		
		\end{pgfonlayer}
		
		%%%%% %%%%% %%%%%

		%%%%% Background %%%%%
		\begin{pgfonlayer}{background}
		
		\draw (l1) [e:matching1,e:shiftedright] to (M);
%		\draw (l1) [line width=3.9pt,cap=round,color=BostonUniversityRed] to (M);
		\draw (l1) [e:matching2border,e:shiftedleft] to (M);
%		\draw (l1) [e:coloredthin,color=myGreen] to (M);
		
		\draw (l2) [e:matching1] to (l3);
		\draw (r1) [e:matching1] to (r3);
		\draw (t1) [e:matching1] to (t2);
		\draw (t3) [e:matching1] to (r2);
		
%		\draw (l2) [e:coloredthin,color=myGreen] to (l3);
%		\draw (r1) [e:coloredthin,color=myGreen] to (r3);
%		\draw (t1) [e:coloredthin,color=myGreen] to (t2);
%		\draw (t3) [e:coloredthin,color=myGreen] to (r2);
%		
		\draw (t1) [e:matching2] to (t3);
		\draw (l2) [e:matching2] to (t2);
		\draw (r1) [e:matching2] to (r2);
		\draw (l3) [e:matching2] to (r3);
		
%		\draw (t1) [e:coloredthin,color=BostonUniversityRed] to (t3);
%		\draw (l2) [e:coloredthin,color=BostonUniversityRed] to (t2);
%		\draw (r1) [e:coloredthin,color=BostonUniversityRed] to (r2);
%		\draw (l3) [e:coloredthin,color=BostonUniversityRed] to (r3);
		
		\end{pgfonlayer}	
		%%%%% %%%%% %%%%%
		
		%%%%% Foreground %%%%%
		\begin{pgfonlayer}{foreground}

		\end{pgfonlayer}
		%%%%% %%%%% %%%%%
		\end{tikzpicture}
	\end{center}
	\caption{The \emph{tricorn} together with a perfect matching of \textcolor{myGreen}{type I} and a perfect matching of \textcolor{BostonUniversityRed}{type II}.}
	\label{fig:tricorn}
\end{figure}

\begin{theorem}[Kothari and Murty \cite{kothari2016k4}]\label{thm:planarprismfree}
	\noindent
	\begin{enumerate}
		\item A matching-covered graph is $\Complement{C_6}$-free if and only if all the bricks and braces in its tight cut decomposition are $\Complement{C_6}$-free.
		\item The only planar $\Complement{C_6}$-free bricks are the odd wheels, the staircases of order $4k$ and the tricorn (see \cref{fig:tricorn}).
	\end{enumerate}
\end{theorem}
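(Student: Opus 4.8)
The statement has two parts; I treat part~(i) (the behaviour of $\Complement{C_6}$ under the tight cut decomposition) as a general fact and part~(ii) (the explicit classification) as the main work.

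For part~(i), the decisive feature is that $\Complement{C_6}$ is itself a brick and therefore has no non-trivial tight cut. The easy half is ``lifting'': if $G'$ is obtained from $G$ by a single tight cut contraction of $\Cut{}{X}$ with contraction vertex $v_X$, and $G'$ contains a conformal bisubdivision $H$ of $\Complement{C_6}$, then so does $G$ --- at most one branch path of $H$ runs through $v_X$, and replacing $v_X$ by an $M$-alternating path inside $\InducedSubgraph{G}{X}$ joining the two $\Cut{}{X}$-edges used by $H$ (which exists because $\Cut{}{X}$ is tight, making $\InducedSubgraph{G}{X}$ with a pendant matching edge matching covered) produces a conformal bisubdivision of $\Complement{C_6}$ in $G$. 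For the other half, if $G$ contains a conformal bisubdivision $H$ of $\Complement{C_6}$ and $\Cut{}{X}$ is a non-trivial tight cut of $G$, a standard argument shows that $\Cut{}{X}$ cannot properly ``cut'' $H$: either $V(H)$ is confined to one side, or $\Cut{}{X}\cap\Fkt{E}{H}$ induces a tight cut of $H$, which, $\Complement{C_6}$ being a brick, forces all but a bounded piece of $H$ --- in particular a conformal bisubdivision of $\Complement{C_6}$ --- onto one side of the contraction (with branch paths possibly rerouted through $v_X$). Induction on $\Abs{\Fkt{V}{G}}$ down the tight cut decomposition then gives part~(i).

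For part~(ii), the plan is to use the brick generation theorem of Norine and Thomas (equivalently the ear decomposition results of de Carvalho, Lucchesi and Murty): every brick other than $K_4$, $\Complement{C_6}$ and the Petersen graph has a $b$-invariant edge $e$ such that deleting $e$ and bicontracting the two resulting degree-two vertices yields a matching covered graph whose tight cut decomposition contains exactly one brick, on fewer vertices. Applying this repeatedly to a planar $\Complement{C_6}$-free brick $G$, and using the small lemma (parallel to part~(i)) that this ``delete-and-retract'' step keeps the resulting brick planar and $\Complement{C_6}$-free, the descent must end at one of the three basic bricks; $\Complement{C_6}$ is excluded because it is not $\Complement{C_6}$-free, the Petersen graph because it is not planar, so the descent ends at $K_4 = W_3$. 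It then remains to run the generation upwards inside the class of planar $\Complement{C_6}$-free bricks. I would: (a) check directly that every odd wheel $W_{2k+1}$, every staircase of order $4k$, and the tricorn is a planar brick with no conformal bisubdivision of $\Complement{C_6}$ --- the brick property from explicit bicriticality and ear decompositions, planarity from the pictures, and $\Complement{C_6}$-freeness from the face structure, using Whitney's theorem (a brick is $3$-connected, hence uniquely embedded in the plane); and (b) show that this family is closed under the inverse of the ``delete a $b$-invariant edge and retract'' operation among planar $\Complement{C_6}$-free bricks. For (b), a $b$-invariant edge of a plane brick is, dually, ``added across a face'' of the smaller brick, so using the unique embedding of each listed graph one enumerates the finitely many ways to add such an edge while staying matching covered and planar, discards those creating a conformal $\Complement{C_6}$-bisubdivision, and verifies that every survivor is again on the list --- the next odd wheel, the next staircase of order $4k$, or the tricorn (the tricorn arising only in the sporadic chain through $\Complement{C_6}$, which has to be picked out by hand).

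I expect step~(b) of part~(ii) to be the principal obstacle. One must argue that ``add an edge across a face'' genuinely captures every planar $\Complement{C_6}$-free brick that can sit above a given one under $b$-invariant edge removal --- this uses the unique plane embedding together with the fact that retraction only bicontracts degree-two vertices, so the smaller brick embeds into $G$ in a tightly controlled way --- and one must propagate $\Complement{C_6}$-freeness through the whole induction, which needs the deletion-plus-bicontraction analogue of part~(i) so that no intermediate retract silently acquires or loses a conformal $\Complement{C_6}$-bisubdivision. A secondary nuisance is purely organisational: the two infinite families (odd wheels; staircases of order $4k$) are each produced by a single ``uniform'' expansion step from the previous member, whereas the tricorn is an isolated point of the generation tree and has to be treated separately.
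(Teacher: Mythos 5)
You should first note that the paper does not prove this statement at all: it is imported as a black box from Kothari and Murty, and is only \emph{used} (in the proof of \cref{thm:planarprismfree2colours}). So your proposal can only be compared with the literature it reconstructs; at that level your strategy (reduce to bricks and braces via the tight cut decomposition, then classify planar $\overline{C_6}$-free bricks by a generation/descent argument through removable edges) is indeed the strategy used there. As a proof, however, it has two genuine gaps.

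In part (i), the lifting direction is not as innocent as you make it. You assume that ``at most one branch path of $H$ runs through $v_X$'', i.e.\ that the contraction vertex is absent from the bisubdivision or is an internal vertex of a single branch path. But $v_X$ may be one of the six branch vertices of the bisubdivision of $\overline{C_6}$; then three edges of the cut are used by $H$, and to lift $H$ into $G$ you must realise inside the shore a conformal ``tripod'' joining the three cut edges with the correct parities, not merely an $M$-alternating path. This is exactly where the matching-theoretic work sits, and your sketch does not address it; the same issue reappears in part (ii) when you need $\overline{C_6}$-freeness to survive bicontraction. (Your dichotomy in the converse direction -- $H$ confined to one shore, or the cut restricted to $H$ is a tight cut of $H$ -- is fine, since $H$ is matching covered and conformal, but it too needs the short argument rather than an appeal to ``standard''.)

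In part (ii), the descent step is misstated: a $b$-invariant edge $e$ only guarantees that $G-e$ is a \emph{near-brick}, i.e.\ matching covered with exactly one brick; it does not say that the retract of $G-e$ is that brick, and $G$ is recovered from the smaller brick $G'$ only after re-inserting possible bipartite (brace) pieces of the tight cut decomposition of $G-e$. Hence the upward step is not simply ``add an edge across a face of $G'$'', and controlling it is precisely the content of the thin-edge machinery of de Carvalho, Lucchesi and Murty (cf.\ \cite{de2006build}) respectively of the Norine--Thomas generation theorem. Moreover, your step (b) -- enumerating, for each odd wheel, each staircase of order $4k$ and the tricorn, all expansions that remain planar, matching covered, bricks, and free of conformal bisubdivisions of $\overline{C_6}$, and showing every survivor is again on the list -- is the bulk of Kothari and Murty's actual case analysis; naming it does not discharge it, and it is an infinite (if uniform) family of verifications, not a finite check. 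So the proposal is a sound roadmap consistent with the cited proof, but the two places where it would have to be completed are exactly the ones it leaves open.
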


If we have a planar and matching covered graph $G$ that does not contain a conformal bisubdivision of $\Complement{C_6}$, by \cref{thm:planarprismfree} the only bricks $G$ can have are odd wheels, staircases of orders divisible by $4$ and tricorns.
Along with these bricks, $G$ can have any planar brace.
Planar braces are Pfaffian and thus by \cref{cor:matchinghadwiger} $2$-colourable.
While it is our goal to provide evidence towards the $2$-colourability of solid Pfaffian graphs, for the planar case we can prove a stronger statement, namely \cref{thm:planarprismfree2colours}.

\begin{proof}[Proof (of \Cref{thm:planarprismfree2colours})]
	As we have seen, by \cref{lemma:tightcutsplitting} it suffices to consider planar and $\Complement{C_6}$-free bricks and planar braces.
	Since planar braces $G$ all satisfy $\Mchromatic{G}{M}\leq 2$ for all perfect matchings $M$ by \cref{cor:matchinghadwiger} the only case left is where $G$ is a planar and $\Complement{C_6}$-free brick.
	So with \cref{thm:planarprismfree} we have to show that the perfect matchings of the odd wheels, staircases of order $4k$ and the tricorn are $2$-colourable.
	
	\emph{Odd Wheels}
	
	For $K_4=W_3$ we have exactly two edges in every perfect matching and thus are done.
	Let $k\geq 4$ be any odd number.
	For the odd wheel $W_k$ on $k+1$ vertices, let $x$ be the unique vertex of degree $k$.
	Clearly every perfect matching $M$ has to cover $x$ with an edge, say, $e_x^M$, and every other matching edge lies on the cycle induced by the neighbourhood of $x$.
	Consider the graph induced by $\bigcup M\setminus\Set{e_x^M}$.
	Since $\Fkt{N}{x}$ induces a cycle, this graph is a path and thus every $M$-alternating cycle in $W_k$ must contain $e_x^M$.
	Hence, by colouring $e_x^M$ with $0$ and every other edge of $M$ with $1$ we have found a proper $2$-colouring for $M$ in $W_k$.
	
	\emph{Staircases of Order $4k$}
	
	For the staircases $S_{4k}$ we give a $2$-colouring $c\colon\Fkt{E}{S_{4k}}\rightarrow\Set{0,1}$ of the edges that induces a proper $2$-colouring for every perfect matching.
	Let $xy$ be the unique edge with endpoints in two disjoint triangles.
	Let $\Brace{u_1,\dots,u_{2k-1}}$ be the path from the construction of $S_{4k}$ not on the outer face and assume $xu_1$ to be an edge of $S_{4k}$.
	We colour $xy$ with $0$.
	Then, going counter-clockwise around the outer face, we assign $0$ as the colour of the edges $xv_1$ and $v_1v_2$, the next two edges receive the colour $1$, then two times colour $0$ and so forth until the edge $v_{2k-1}y$ is coloured.
	Since $S_{4k}$ is of order $4k$ we colour $2k-2$ edges this way and the last two edges receive colour $1$.
	With this the path $(x,v_1,\ldots,v_{2k-1},y)$ on the outer face is coloured.
	We set $\Fkt{c}{u_iu_{i+1}}\coloneqq1-\Fkt{c}{v_iv_{i+1}}$ for $i=1,\ldots,2k-2$ and $\Fkt{c}{xu_1}\coloneqq1$ while $\Fkt{c}{yu_{2k-1}}=0$.
	At last we need to colour the spokes.
	Let $\Fkt{c}{v_iu_i}\coloneqq i\mod2$ for $i=1,\ldots,2k-1$.
	For an illustration consider \cref{fig:staircase}.
	To show that $c$ induces a proper $2$-colouring for every perfect matching, we must show that there is no conformal cycle $C$ such that every second edge has the same colour.
	Assume for a contradiction that $\InducedSubgraph{S_{4k}}{\Fkt{c^{-1}}{0}}$ contains $C$.
	However, this graph contains a single even length cycle and this cycle contains exactly the vertices incident with at most one edge of colour $1$ in $G$.
	Therefore $V(G)\setminus V(C)$ is a stable set and thus $C$ is not conformal, a contradiction.
	Thus $C$ must contain an edge of colour $1$ and therefore, by construction, also two consecutive such edges. Consequently, we must have $i=1$, and every second edge must be of colour $1$.
	There does not exist a path of length $5$ in $S_{4k}$ such that the first, third, and fifth edge are coloured with $1$, hence $C$ must have length $4$.
	Clearly none of the $4$-cycles contains two disjoint edges of the same colour and thus $C$ cannot exist.
	
\begin{figure}[h!]
	\begin{center}
		\begin{tikzpicture}[scale=0.7]
		
		\pgfdeclarelayer{background}
		\pgfdeclarelayer{foreground}
		
		\pgfsetlayers{background,main,foreground}
		
		%%%%% Vertex Styles %%%%%
		\tikzstyle{v:main} = [draw, circle, scale=0.5, thick,fill=black]
		\tikzstyle{v:tree} = [draw, circle, scale=0.3, thick,fill=black]
		\tikzstyle{v:border} = [draw, circle, scale=0.75, thick,minimum size=10.5mm]
		\tikzstyle{v:mainfull} = [draw, circle, scale=1, thick,fill]
		\tikzstyle{v:ghost} = [inner sep=0pt,scale=1]
		\tikzstyle{v:marked} = [circle, scale=1.2, fill=CornflowerBlue,opacity=0.3]
		%%%%% %%%%% %%%%%
		
		%%%%% Edge Styles %%%%%
		\tikzset{>=latex} 
		\tikzstyle{e:marker} = [line width=9pt,line cap=round,opacity=0.2,color=DarkGoldenrod]
		\tikzstyle{e:colored} = [line width=1.2pt,color=BostonUniversityRed,cap=round,opacity=0.8]
		\tikzstyle{e:coloredthin} = [line width=1.1pt,opacity=1]
		\tikzstyle{e:coloredborder} = [line width=2pt]
		\tikzstyle{e:main} = [line width=1.5pt]
		\tikzstyle{e:extra} = [line width=1.3pt,color=LavenderGray]
		%%%%% %%%%% %%%%%
		
		\begin{pgfonlayer}{main}
		
		%%%%% Centered Ghost Vertices %%%%%
		\node (C) [] {};
		
		%%%%% Left Center %%%%%
		\node (C1) [v:ghost, position=180:25mm from C] {};
		%\node (U1) [v:ghost, position=90:50mm from C1] {};
		%		\node (L1) [v:ghost, position=270:27mm from C1,align=center] {};
		
		%%%%% Center %%%%%
		\node (C2) [v:ghost, position=0:0mm from C] {};
		%\node (U2) [v:ghost, position=90:50mm from C2] {};
		%		\node (L2) [v:ghost, position=270:27mm from C2,align=center] {};
		
		%%%%% Right Center %%%%%
		\node (C3) [v:ghost, position=0:25mm from C] {};
		%\node (U3) [v:ghost, position=90:50mm from C3] {};
		%		\node (L3) [v:ghost, position=270:27mm from C3,align=center] {};
		%%%%% %%%%% %%%%%

		%%%%% Vertices %%%%%
		
		%%%%% Left Center %%%%%

		%%%%% %%%%% %%%%%
		
		%%%%% Center %%%%%
		
		\node(u1) [v:main,position=115.7:21mm from C2] {};
		\node(u2) [v:main,position=167.13:21mm from C2] {};
		\node(u3) [v:main,position=218.56:21mm from C2] {};
		\node(u4) [v:main,position=269.99:21mm from C2] {};
		\node(u5) [v:main,position=321.42:21mm from C2] {};
		\node(u6) [v:main,position=372.85:21mm from C2] {};
		\node(u7) [v:main,position=424.28:21mm from C2] {};
%		\node(u8) [v:main,position=67.5:21mm from C2] {};
		
		\node(v1) [v:main,position=115.7:10mm from u1] {};
		\node(v2) [v:main,position=167.13:10mm from u2] {};
		\node(v3) [v:main,position=218.56:10mm from u3] {};
		\node(v4) [v:main,position=269.99:10mm from u4] {};
		\node(v5) [v:main,position=321.42:10mm from u5] {};
		\node(v6) [v:main,position=372.85:10mm from u6] {};
		\node(v7) [v:main,position=424.28:10mm from u7] {};
%		\node(v8) [v:main,position=67.5:10mm from u8] {};
		
		\node(x) [v:main,position=101.25:26mm from C2] {};
		\node(y) [v:main,position=78.75:26mm from C2] {};
		
		%%%%% %%%%% %%%%%
		
		%%%%% Right Center %%%%%
		
		%%%%% %%%%% %%%%%
		
		%%%%% %%%%% %%%%%

		%%%%% Edges %%%%%
		
		%%%%% Left Center %%%%%

		%%%%% %%%%% %%%%%
		
		%%%%% Center %%%%%
		
		\draw (u1) [e:main,dashed,color=BostonUniversityRed] to (u2);
		\draw (u2) [e:main,color=myGreen] to (u3);
		\draw (u3) [e:main,color=myGreen] to (u4);
		\draw (u4) [e:main,dashed,color=BostonUniversityRed] to (u5);
		\draw (u5) [e:main,dashed,color=BostonUniversityRed] to (u6);
		\draw (u6) [e:main,color=myGreen] to (u7);
%		\draw (u7) [e:main,color=myGreen] to (u8);
%		
		\draw (v1) [e:main,color=myGreen] to (v2);
		\draw (v2) [e:main,dashed,color=BostonUniversityRed] to (v3);
		\draw (v3) [e:main,dashed,color=BostonUniversityRed] to (v4);
		\draw (v4) [e:main,color=myGreen] to (v5);
		\draw (v5) [e:main,color=myGreen] to (v6);
		\draw (v6) [e:main,dashed,color=BostonUniversityRed] to (v7);
%		\draw (v7) [e:main,dashed,color=BostonUniversityRed] to (v8);
%		
		\draw (u1) [e:main,dashed,color=BostonUniversityRed] to (v1);
		\draw (u2) [e:main,color=myGreen] to (v2);
		\draw (u3) [e:main,dashed,color=BostonUniversityRed] to (v3);
		\draw (u4) [e:main,color=myGreen] to (v4);
		\draw (u5) [e:main,dashed,color=BostonUniversityRed] to (v5);
		\draw (u6) [e:main,color=myGreen] to (v6);
		\draw (u7) [e:main,dashed,color=BostonUniversityRed] to (v7);
%		\draw (u8) [e:main,color=myGreen] to (v8);
		
		\draw (x) [e:main,color=myGreen] to (v1);
		\draw (x) [e:main,color=BostonUniversityRed,dashed] to (u1);
		
		\draw (y) [e:main,color=BostonUniversityRed,dashed] to (v7);
		\draw (y) [e:main,color=myGreen] to (u7);
		
		\draw (x) [e:main,color=myGreen] to (y);
		
		%%%%% %%%%% %%%%%
		
		%%%%% Right Center %%%%%

		%%%%% %%%%% %%%%%
		
		%%%%% %%%%% %%%%%
		
		\end{pgfonlayer}
		
		%%%%% %%%%% %%%%%

		%%%%% Background %%%%%
		\begin{pgfonlayer}{background}

		\end{pgfonlayer}	
		%%%%% %%%%% %%%%%
		
		%%%%% Foreground %%%%%
		\begin{pgfonlayer}{foreground}

		\end{pgfonlayer}
		%%%%% %%%%% %%%%%
		\end{tikzpicture}
	\end{center}
	\caption{The staircase of order $16$ together with a $2$-colouring of the edges inducing a proper $2$-colouring for every perfect matching.
	The solid edges are considered to be of colour $0$, while the dashed ones are of colour $1$.}
	\label{fig:staircase}
\end{figure}
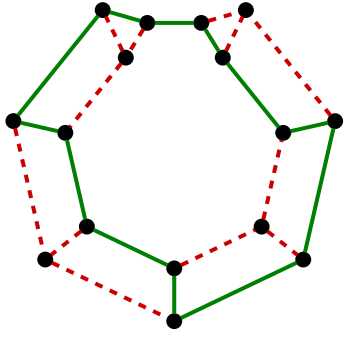

\emph{Tricorn}

For the tricorn we first observe that we can classify its perfect matchings into two types.
Any perfect matching either contains exactly one edge on the outer face (compare \cref{fig:tricorn}) that belongs to a triangle or none.
If we fix such an edge $e$ on the outer face belonging to a triangle for our perfect matching $M_1$, the remaining edges of $M_1$ are uniquely determined.
This can be seen as follows: Taking an edge from one of the triangles forces us to match the remaining vertex of said triangle to the middle vertex.
Then the remaining neighbours of the middle vertex have to be matched within their respective triangles in such a way that the remaining two vertices are adjacent.
There is only one way to do this after $e$ has been chosen and thus $\Set{e}$ is in fact a forcing set for $M_1$.
Hence colouring $e$ with $0$ and all other edges of $M_1$ with $1$ yields the desired colouring.
We call such a matching \emph{type I}.

A matching of \emph{type II} is a matching not containing any edge on the outer face belonging to a triangle.
Note that any perfect matching must contain two edges of the outer face.
So let $e_1$ and $e_2$ be these two edges.
One of the three triangles contains an endpoint from both $e_1$ and $e_2$, and its third vertex has to be matched to the middle one.
This is already enough to determine the last two edges and we obtain $M_2$.
Hence $\Set{e_1,e_2}$ is a forcing set of $M_2$ and, since the tricorn contains no $4$-cycle, by colouring $e_1$ and $e_2$ with $0$ and the rest of $M_2$ with $1$ we are done.

By the above discussion it is clear that any perfect matching of the tricorn is either of type I or II and this concludes the proof.
\end{proof}

One can easily see that any cut around a triangle in the tricorn or a staircase is separating.
Moreover, one can check that the odd wheels are indeed solid.
Hence we have the following corollary.

\begin{corollary}[de Carvalho, Lucchesi and Murty \cite{de2006build}]\label{cor:planarsolidbricks}
The only planar solid bricks are the odd wheels.
\end{corollary}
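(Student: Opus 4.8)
The plan is to read the corollary off the Kothari--Murty classification of planar $\Complement{C_6}$-free bricks, together with an easy observation about cuts around triangles. First I would note that by \cref{lemma:solidprismfree} every solid graph, and in particular a planar solid brick $G$, is $\Complement{C_6}$-free; hence \cref{thm:planarprismfree} applies and $G$ is an odd wheel, a staircase of order $4k$, or the tricorn. Here ``staircase of order $4k$'' forces order at least $8$: the triangular prism $S_6=\Complement{C_6}$ is not $\Complement{C_6}$-free, and $6$ is not a multiple of $4$ in any case. So it remains to exclude the staircases of order $4k$ and the tricorn, and conversely to verify that every odd wheel is a planar solid brick.

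For the exclusion I would exhibit in each such graph a non-trivial separating cut that is not tight, so that the graph fails to be solid. Let $T$ be one of the two end-triangles $\Brace{x,u_1,v_1}$ or $\Brace{y,u_k,v_k}$ of the staircase --- or any of the three triangles of the tricorn. Each vertex of $T$ has exactly one neighbour outside $T$, so $\Cut{}{T}$ consists of exactly three edges, and since $\Abs{T}=3$ and $G$ has at least eight vertices, $\Cut{}{T}$ is a non-trivial cut. By \cref{thm:separatingcuts}, $\Cut{}{T}$ is separating: for any edge $e$ of $G$ one readily finds a perfect matching through $e$ using one internal edge of $T$ and sending the third vertex of $T$ out along its unique outgoing edge, so exactly one matching edge lies in $\Cut{}{T}$. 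On the other hand $\Cut{}{T}$ is not tight, because matching all three vertices of $T$ simultaneously along their outgoing edges extends to a perfect matching of $G$ --- for a staircase the remainder is a ladder, which has a perfect matching; for the tricorn one checks this directly, e.g.\ using the type~II matching of \cref{fig:tricorn} --- and such a perfect matching meets $\Cut{}{T}$ in three edges. Hence the staircases of order $4k$ and the tricorn are not solid, and so $G$ must be an odd wheel.

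For the converse, each odd wheel $W_k$ ($k$ odd) is planar and, as already recorded, a brick; since a brick has no non-trivial tight cut, solidity of $W_k$ is equivalent to $W_k$ having no non-trivial separating cut at all. I would argue as follows. If $\Cut{}{X}$ is separating, both $X$-contractions $G_X$ and $G_{\Complement{X}}$ have perfect matchings, which forces $\Abs{X}$ and $\Abs{\Complement{X}}$ to be odd; taking the hub $h\in X$, the shore $\Complement{X}$ is then an odd set of at least three rim vertices inducing a disjoint union of arcs of the rim cycle. In $G_X$ the contraction vertex is joined to all of $\Complement{X}$, and a short parity count on those arcs shows that every other edge at the contraction vertex lies in no perfect matching of $G_X$, so $G_X$ is not matching covered --- contradicting that $\Cut{}{X}$ is separating. (Alternatively, one may simply quote the solidity of odd wheels from \cite{de2006build}.) Combining the two directions yields the corollary.

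The step I expect to be the real obstacle is this last verification --- cleanly ruling out every non-trivial separating cut of an odd wheel --- because it requires controlling how an arbitrary shore meets both the rim and the hub, whereas the triangle-cut half of the argument is essentially immediate once \cref{thm:planarprismfree} is in hand. A minor but genuine point of care is to confirm that ``staircase of order $4k$'' really excludes the prism, so that the end-triangles considered are disjoint from a complement of size at least $2$ and $\Cut{}{T}$ is indeed non-trivial.
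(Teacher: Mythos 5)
Your proposal is correct and follows essentially the same route the paper indicates: apply \cref{lemma:solidprismfree} and \cref{thm:planarprismfree} to reduce to odd wheels, staircases of order $4k$ and the tricorn, then observe that the cut around a triangle in a staircase or the tricorn is a non-trivial separating cut that is not tight, while odd wheels are solid. The paper only asserts these last two facts without proof (attributing the corollary to de Carvalho, Lucchesi and Murty), so your parity argument for the odd wheels and the explicit non-tight perfect matchings are a welcome, and correct, filling-in of the details.
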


With this, the planar and solid case follows immediately.

\begin{corollary}\label{thm:planarsolid2colours}
Let $G$ be a planar solid graph and $M$ a perfect matching of $G$, then $\Mchromatic{G}{M}\leq2$.
\end{corollary}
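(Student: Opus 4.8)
The plan is short, since the statement follows almost directly from results already established. First I would note that, by definition, a solid graph is matching covered, and by \cref{lemma:solidprismfree} every solid graph is $\Complement{C_6}$-free, i.e.\ it contains no conformal bisubdivision of the triangular prism. Thus our $G$ is a planar, matching covered, $\Complement{C_6}$-free graph.

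Given this, the bound is immediate from the contrapositive of \cref{thm:planarprismfree2colours}: were $\Mchromatic{G}{M}\geq 3$, then $G$ would have to contain a conformal bisubdivision of $\Complement{C_6}$, contradicting $\Complement{C_6}$-freeness. Hence $\Mchromatic{G}{M}\leq 2$. (In fact this holds for every perfect matching of $G$, not just the prescribed $M$.)

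A route that stays closer to the brick/brace decomposition --- and is essentially how the preceding paragraph is proved --- runs as follows. Since solidness is inherited by tight cut contractions, and a matching covered graph is solid if and only if all its bricks are, every brick in the tight cut decomposition of $G$ is a planar solid brick, hence an odd wheel by \cref{cor:planarsolidbricks}, while every brace is a planar (hence Pfaffian) brace. The matching $M$ descends to a perfect matching $M_X$ on each member of the decomposition via the construction preceding \cref{lemma:tightcutsplitting}; for braces \cref{cor:matchinghadwiger} gives an $M_X$-colouring with two colours, and for odd wheels the ``Odd Wheels'' case in the proof of \cref{thm:planarprismfree2colours} does the same (the hub edge of any perfect matching of a wheel is its own forcing set). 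Repeatedly applying \cref{lemma:tightcutsplitting} up the decomposition then assembles these into a proper $2$-colouring of $M$ in $G$. I do not expect any real obstacle here: the only point needing care is the standard fact, already used in the proof of \cref{thm:planarprismfree2colours}, that planarity and matching-coveredness are preserved along the tight cut decomposition, so that \cref{lemma:tightcutsplitting}, \cref{cor:matchinghadwiger} and the wheel analysis are each applicable at every step.
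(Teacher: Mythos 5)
Your proposal is correct and matches the paper, which derives this corollary immediately from the preceding results: the one-line argument via \cref{lemma:solidprismfree} together with the contrapositive of \cref{thm:planarprismfree2colours} is exactly the intended deduction, and your alternative route through the tight cut decomposition, \cref{cor:planarsolidbricks}, \cref{cor:matchinghadwiger} and \cref{lemma:tightcutsplitting} is just the unwinding of how \cref{thm:planarprismfree2colours} itself was proved. Nothing is missing.
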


Please note that the proof of \cref{thm:planarprismfree2colours} works for every Pfaffian matching covered graph whose bricks are planar and $\Complement{C_6}$-free.
If one was able to show that the number of edges in a solid Pfaffian brick is linearly bounded in the number of vertices, an approach similar to the one for \cref{thm:mainthm} would likely be successful.
It does not seem very likely that solid bricks in general can be very dense, as they cannot contain conformal bisubdivisions of the triangular prism, however, no linear bound on the number of edges is known.
\section{List Colourings of Non-Even Digraphs}
List colourings naturally generalise several types of colourings of graphs and have been widely investigated. While a lot of progress has been made in the last decades, many important questions, such as the \emph{list colouring conjecture}, still remain open. 

It is natural to apply the concept of list colouring also to colourings of digraphs. Indeed, such a notion was investigated in \cite{lists}. Therein, for a given digraph $D$ equipped with an assignment of finite \emph{colour lists} $\mathcal{L}=\{L(v)|v \in V(D)\}$ to the vertices, an \emph{$\mathcal{L}$-list-colouring} of $D$ is defined to be a choice function $c:V(D) \rightarrow \bigcup \mathcal{L}$ such that for any vertex $v \in V(D)$, we have $c(v) \in L(v)$, and moreover, $c$ defines a proper digraph colouring, that is, $D[c^{-1}(i)]$ is acyclic for all $i \in \bigcup \mathcal{L}$. 

Putting $L(v):=\{1,\ldots,k\}$ for each vertex simply yields the definition of a usual digraph $k$-colouring. In \cite{lists}, a digraph $D$ is called \emph{$k$-list colourable} (also \emph{$k$-choosable}) if for any list assignment $\mathcal{L}$, where $|L(v)| \ge k$ for every $v \in V(D)$, there is an $\mathcal{L}$-list colouring of $D$. The smallest integer $k \ge 1$ for which a digraph $D$ is $k$-choosable now is defined to be the \emph{list dichromatic number} (also \emph{choice number}) $\vec{\chi}_\ell(D)$. Clearly, we have $\vec{\chi}(D) \leq \vec{\chi}_\ell(D)$ for every digraph. However, as pointed out in \cite{lists}, this estimate can be arbitrarily bad in general. 

It is therefore desirable to identify classes of digraphs with bounded choice number. In the context of \cref{con:twocolours}, the authors of \cite{lists} observed that every oriented planar digraph is $3$-choosable and posed the question whether all oriented planar digraphs are $2$-choosable. 

We have shown in \cref{sec:noneven} that all non-even digraphs are $2$-colourable, and so it is natural to ask whether they are even $2$-choosable. This question can rather easily be answered in the negative, see \cref{fig:listexample} for an example of a strongly planar digraph with choice number $3$. In the remainder of this section, we show that $3$ is the (best possible) upper bound for the choice number of non-even digraphs.

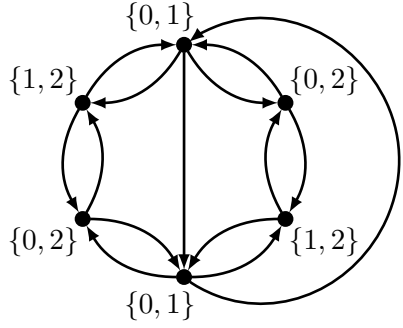
\begin{figure}[h!]
	\begin{center}
		\begin{tikzpicture}[scale=0.7]
		
		\pgfdeclarelayer{background}
		\pgfdeclarelayer{foreground}
		
		\pgfsetlayers{background,main,foreground}
		
		%%%%% Vertex Styles %%%%%
		\tikzstyle{v:main} = [draw, circle, scale=0.5, thick,fill=black]
		\tikzstyle{v:tree} = [draw, circle, scale=0.3, thick,fill=black]
		\tikzstyle{v:border} = [draw, circle, scale=0.75, thick,minimum size=10.5mm]
		\tikzstyle{v:mainfull} = [draw, circle, scale=1, thick,fill]
		\tikzstyle{v:ghost} = [inner sep=0pt,scale=1]
		\tikzstyle{v:marked} = [circle, scale=1.2, fill=CornflowerBlue,opacity=0.3]
		%%%%% %%%%% %%%%%
		
		%%%%% Edge Styles %%%%%
		\tikzset{>=latex} 
		\tikzstyle{e:marker} = [line width=9pt,line cap=round,opacity=0.2,color=DarkGoldenrod]
		\tikzstyle{e:colored} = [line width=1.2pt,color=BostonUniversityRed,cap=round,opacity=0.8]
		\tikzstyle{e:coloredthin} = [line width=1.1pt,opacity=0.8]
		\tikzstyle{e:coloredborder} = [line width=2pt]
		\tikzstyle{e:main} = [line width=1pt]
		\tikzstyle{e:extra} = [line width=1.3pt,color=LavenderGray]
		%%%%% %%%%% %%%%%
		
		\begin{pgfonlayer}{main}
		
		%%%%% Centered Ghost Vertices %%%%%
		\node (C) [] {};
		
		%%%%% Left Center %%%%%
		\node (C1) [v:ghost, position=180:25mm from C] {};
		%\node (U1) [v:ghost, position=90:50mm from C1] {};
		%		\node (L1) [v:ghost, position=270:27mm from C1,align=center] {};
		
		%%%%% Center %%%%%
		\node (C2) [v:ghost, position=0:0mm from C] {};
		%\node (U2) [v:ghost, position=90:50mm from C2] {};
		%		\node (L2) [v:ghost, position=270:27mm from C2,align=center] {};
		
		%%%%% Right Center %%%%%
		\node (C3) [v:ghost, position=0:25mm from C] {};
		%\node (U3) [v:ghost, position=90:50mm from C3] {};
		%		\node (L3) [v:ghost, position=270:27mm from C3,align=center] {};
		%%%%% %%%%% %%%%%

		%%%%% Vertices %%%%%
		
		%%%%% Left Center %%%%%

		%%%%% %%%%% %%%%%
		
		%%%%% Center %%%%%
		
		\node (v1) [v:main,position=90:22mm from C2] {};
		\node (v2) [v:main,position=150:22mm from C2] {};
		\node (v3) [v:main,position=210:22mm from C2] {};
		\node (v4) [v:main,position=270:22mm from C2] {};
		\node (v5) [v:main,position=330:22mm from C2] {};
		\node (v6) [v:main,position=30:22mm from C2] {};
		
		\node (g1) [v:ghost,position=330:7mm from v4] {};
		\node (g2) [v:ghost,position=0:19mm from v5] {};
		\node (g3) [v:ghost,position=0:19mm from v6] {};
		\node (g4) [v:ghost,position=30:7mm from v1] {};
		
		\node (v1g) [v:ghost,position=30:1mm from v1] {};
		
		\node (l1) [v:ghost,position=130:7mm from v1] {$\Set{0,1}$};
		\node (l2) [v:ghost,position=150:8.5mm from v2] {$\Set{1,2}$};
		\node (l3) [v:ghost,position=210:8.5mm from v3] {$\Set{0,2}$};
		\node (l4) [v:ghost,position=230:7mm from v4] {$\Set{0,1}$};
		\node (l5) [v:ghost,position=330:8.5mm from v5] {$\Set{1,2}$};
		\node (l6) [v:ghost,position=30:8.5mm from v6] {$\Set{0,2}$};
		
		%%%%% %%%%% %%%%%
		
		%%%%% Right Center %%%%%
		
		%%%%% %%%%% %%%%%
		
		%%%%% %%%%% %%%%%

		%%%%% Edges %%%%%
		
		%%%%% Left Center %%%%%

		%%%%% %%%%% %%%%%
		
		%%%%% Center %%%%%
		
		\draw (v1) [e:main,bend left=30,->] to (v2);
		\draw (v2) [e:main,bend left=30,->] to (v1);
		
		\draw (v2) [e:main,bend right=30,->] to (v3);
		\draw (v3) [e:main,bend right=30,->] to (v2);
		
		\draw (v3) [e:main,bend left=30,->] to (v4);
		\draw (v4) [e:main,bend left=30,->] to (v3);
		
		\draw (v4) [e:main,bend right=30,->] to (v5);
		\draw (v5) [e:main,bend right=30,->] to (v4);
		
		\draw (v5) [e:main,bend left=30,->] to (v6);
		\draw (v6) [e:main,bend left=30,->] to (v5);
		
		\draw (v1) [e:main,bend right=30,->] to (v6);
		\draw (v6) [e:main,bend right=30,->] to (v1);
		
		\draw (v1) [e:main,->] to (v4);
		
		\draw (v4) edge[e:main,->,quick curve through={(g1) (g2) (g3) (g4)}] (v1g);
		
%		\draw (v4) [e:main,->,out=330,in=30] to (v1);
		
		%%%%% %%%%% %%%%%
		
		%%%%% Right Center %%%%%

		%%%%% %%%%% %%%%%
		
		%%%%% %%%%% %%%%%
		
		\end{pgfonlayer}
		
		%%%%% %%%%% %%%%%

		%%%%% Background %%%%%
		\begin{pgfonlayer}{background}
		
		\end{pgfonlayer}	
		%%%%% %%%%% %%%%%
		
		%%%%% Foreground %%%%%
		\begin{pgfonlayer}{foreground}

		\end{pgfonlayer}
		%%%%% %%%%% %%%%%
		\end{tikzpicture}
	\end{center}
	\caption{A non-2-choosable strongly planar digraph.}
	\label{fig:listexample}
\end{figure}

\begin{theorem}
Let $D$ be a non-even digraph. Then $\vec{\chi}_\ell(D) \leq 3$. 
Moreover, for any choice of a designated vertex $v_0 \in V(D)$, $D$ is $\mathcal{L}$-list colourable for every list assignment $\mathcal{L}=\{L(v)|v \in V(D)\}$ fulfilling $|L(v_0)|=1$ and $|L(v)| \ge 3$ for all $v \in V(D) \setminus \{v_0\}$.
\end{theorem}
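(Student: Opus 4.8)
The plan is to prove the formally stronger ``moreover'' assertion by induction on $\Abs{\Fkt{V}{D}}$; the bound $\vec{\chi}_\ell(D)\le 3$ then follows at once by fixing an arbitrary vertex $v_0$, discarding all but one colour from $L(v_0)$, and invoking the stronger statement. The base cases $\Abs{\Fkt{V}{D}}\le 2$ are trivial: the only directed cycle that can possibly occur is a digon on the two vertices, so colouring $v_0$ from its singleton list and the other vertex (if present) with any colour different from $\Fkt{c}{v_0}$ --- possible since its list has size at least $3$ --- already yields a proper $\mathcal{L}$-list colouring.

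For the inductive step I follow the case distinction of the proof of \cref{thm:mainthm}. If $D$ is not strongly $2$-connected, then it is a $0$-sum or a $1$-sum of strictly smaller digraphs $D_1,D_2$, both of which are again non-even by \cref{lemma:01sumnoneven}. In the $0$-sum case I apply induction to the part containing $v_0$ (keeping $v_0$ as the designated vertex) and to the other part after restricting the list of one arbitrarily chosen vertex of it to a singleton and designating that vertex; since no directed cycle of $D$ crosses the cut, the union of the two colourings is proper. In the $1$-sum case at a vertex $v$, with contraction vertices $v_1\in\Fkt{V}{D_1}$ and $v_2\in\Fkt{V}{D_2}$, I adapt the gluing argument of \cref{lemma:splitting}: I assign to both $v_1$ and $v_2$ the list $L(v)$; if $v=v_0$ I designate $v_1$ in $D_1$ and $v_2$ in $D_2$ (their common singleton list being $L(v_0)$), while if $v_0$ lies in one of the two sides, say $X$ (the case $v_0\in Y$ being symmetric), I first colour $D_1$ with $v_0$ designated, read off the colour $\gamma\in L(v)$ it assigns to $v_1$, and then colour $D_2$ with $v_2$ designated and its list restricted to $\Set{\gamma}$. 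In either case every non-designated vertex of $D_1$ and $D_2$ still carries a list of size at least $3$ (the two new vertices inherit $L(v)$, which has size at least $3$ whenever $v\neq v_0$), both pieces are non-even and strictly smaller than $D$, so induction applies; setting $\Fkt{c}{v}$ equal to the colour received by $v_1$ (equivalently $v_2$) and keeping all other colours of $D_1,D_2$ gives a proper colouring of $D$ by exactly the verification carried out in \cref{lemma:splitting}.

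It remains to treat the case of a strongly $2$-connected non-even digraph $D$ on at least three vertices. By \cref{cor:degreetwo} there are at least two vertices of out-degree $2$, hence at least one such vertex $v$ with $v\neq v_0$; let $N^\text{out}(v)=\Set{v_1,v_2}$. The subdigraph $D-v$ is non-even, still contains $v_0$, and has fewer vertices than $D$, so by the inductive hypothesis it admits an $\mathcal{L}$-list colouring $c$ for the inherited lists. I extend $c$ to $v$ by picking $\Fkt{c}{v}\in L(v)\setminus\Set{\Fkt{c}{v_1},\Fkt{c}{v_2}}$, which is possible because $\Abs{L(v)}\ge 3$. Any monochromatic directed cycle in $D$ must either avoid $v$ --- impossible, as it would be a monochromatic cycle in $D-v$ --- or pass through $v$ and hence use one of the edges $\Brace{v,v_1},\Brace{v,v_2}$; but this is impossible because $\Fkt{c}{v}$ differs from both $\Fkt{c}{v_1}$ and $\Fkt{c}{v_2}$.

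The extra colour in the lists is exactly what makes this last step short: in the proof of \cref{thm:mainthm} one cannot simply delete an out-degree-$2$ vertex, since with only two colours available there may be no colour left for it --- this is what forces the digon case distinction there --- whereas a list of size $3$ always leaves a colour avoiding the at most two colours of the out-neighbours. Accordingly, the only place requiring genuine care is the $1$-sum reduction, where the single pinned colour must be transported across the cut and the contraction vertices of the two sides must be handed lists of the right size; it is precisely this bookkeeping that forces us to prove the statement with a designated vertex rather than the plain inequality $\vec{\chi}_\ell(D)\le 3$ directly.
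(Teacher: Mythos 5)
Your proposal is correct and follows essentially the same route as the paper's own proof: reduce along $0$- and $1$-sums via \cref{lemma:01sumnoneven} (transporting the pinned colour across the cut vertex exactly as you describe), then in the strongly $2$-connected case use \cref{cor:degreetwo} to find an out-degree-$2$ vertex distinct from $v_0$, delete it, and extend the colouring using the third list entry. The only differences are presentational (induction versus minimal counterexample, and your slightly more explicit bookkeeping of the designated vertex in the $0$-sum case).
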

\begin{proof}
We show the second (stronger) assertion. Assume towards a contradiction that there is a non-even digraph $D$ which does not satisfy the assertion, and assume $D$ to be chosen minimal with respect to the number of vertices. Let in the following $\mathcal{L}$ be a fixed list assignment for $D$, where $|L(v_0)|=1$ for some designated $v_0 \in V(D)$, $|L(v)| \ge 3$ for all $v \in V(D) \setminus \{v_0\}$, and such that $D$ is not $\mathcal{L}$-choosable. Clearly, we have $|V(D)| \ge 3$. 

We first show that $D$ must be strongly $2$-connected: Assume for a contradiction that there is a directed separation of order $i\in\Set{0,1}$ in $D$. By \cref{lemma:01sumnoneven}, we find that there are non-even digraphs $D_1$ and $D_2$ with fewer vertices than $D$ such that $D$ is the $i$-sum of $D_1$ and $D_2$. By the assumed minimality of $D$, we know that $D_1$ and $D_2$ both satisfy the assertion. 

If $i=0$, consider a partition $(X,Y)$ of $V(D)$ such that $D_1=D[X], D_2=D[X]$ and the edges with exactly one endpoint in $X$ and exactly one endpoint in $Y$ form a directed cut in $D$. Restricting $\mathcal{L}$ to $X$ resp. $Y$ defines list assignments for $D_1$ and $D_2$ (each with at most one list of size less than $3$), and we find that $D_j$ admits a choice function $c_j$ for $j=1,2$ that defines a valid digraph colouring and satisfies $c_j(x) \in L(x)$ for all $x \in V(D_j)$. Putting 
\begin{align*}
	\Fkt{c}{x}\coloneqq\TwoCases{\Fkt{c_1}{x}}{x\in X}{\Fkt{c_2}{x}}{x\in Y}
\end{align*} now defines a valid choice of colours for $D$ without a monochromatic directed cycle, proving that $D$ is $\mathcal{L}$-choosable. This is a contradiction to our initial assumption. 

If $i=1$, let $w \in V(D)$ be such that $D$ is the $1$-sum of $D_1$ and $D_2$ along $w$. Consider a partition $(X,Y)$ of $V(D) \setminus \{w\}$ such that no edge in $D$ has its head in $X$ and its tail in $Y$, and such that $D_1$ arises from $D$ by identification of $Y \cup \{w\}$ into a single vertex $v_1$, and $D_2$ by identification of $X \cup \{w\}$ into a vertex $v_2$. 

We have that $v_0 \in X \cup \{w\}$ or $v_0 \in Y \cup \{w\}$. Assume for the following that $v_0 \in X \cup \{w\}$, the other case works symmetrically. Define an assignment $\mathcal{L}_1$ of lists to the vertices of $D_1$ according to $L_1(x):=L(x)$ for all $x \in X$ and $L_1(v_1):=L(w)$. Because $D_1$ satisfies the assertion, we find a choice function $c_1$ which defines a proper digraph colouring of $D_1$ while satisfying $c_1(x) \in L(x), x \in X$, and $\tilde{c}:=c_1(v_1) \in L(w)$.
Now define a list assignment $\mathcal{L}_2$ for $D_2$ according to $L_2(x):=L(x)$ for $x \in Y$ and $L_2(v_2):=\{\tilde{c}\}$. Because we have $|L_2(x)|=|L(x)| \ge 3$ for all $x \in Y=V(D_2) \setminus \{v_2\}$, we can apply the assertion to $D_2$ and thus find a choice function $c_2$ on $V(D_2)$ satisfying $c_2(x) \in L(x)$ for all $x \in Y$ and $c_2(v_2)=\tilde{c}=c_1(v_1)$. Now define a choice function $c$ on $V(D)$ by
\begin{align*}
	\Fkt{c}{x}\coloneqq\ThreeCases{\Fkt{c_1}{x}}{x\in X}{\tilde{c}}{x=w}{\Fkt{c_2}{x}}{x\in Y}
\end{align*}
By the above it is clear that we have $c(x) \in L(x)$ for all $x \in V(D)$. Because $D$ is not $\mathcal{L}$-choosable, this implies that there is a directed cycle $C$ in $D$ which is monochromatic under $c$. Because $c_1$ and $c_2$ are valid digraph colourings of $D_1$ and $D_2$, $C$ must contain vertices of both $X$ and $Y$ and therefore must visit $w$ as well as exactly one edge with tail in $X$ and head in $Y$. Therefore, identifying all vertices in $Y \cup \{v\}$ on $C$ into a single vertex results in a directed cycle in $D_1$, which has to be monochromatic as well. This finally is a contradiction to the definition of $c_1$.

As both cases led to a contradiction, for the rest of the proof we may assume that $|V(D)| \ge 3$ and $D$ is strongly $2$-connected. Applying \cref{cor:degreetwo} we find that there is a vertex $u \in V(D) \setminus \{v_0\}$ of out-degree two. Clearly, $D-u$ is non-even as well and has less vertices, so the minimality of $D$ implies that for the induced assignment $\mathcal{L}':=\{L(x)|x \in V(D) \setminus \{u\}\}$ of lists, there is a choice function $c'$ which defines a valid digraph colouring of $D-u$. Let $u_1,u_2$ be the two out-neighbours of $u$. Since $|L(u) \setminus \{c'(u_1),c'(u_2)\}| \ge 1$, we can extend $c'$ to a choice function $c$ on $V(D)$ such that $c(x)=c'(x) \in L(x)$ for all $x \in V(D) \setminus \{u\}$ and $c(u) \in L(u) \setminus \{c(u_1),c(u_2)\}$. Because $D$ is by initial assumption not $\mathcal{L}$-choosable, this implies that there is a directed cycle in $D$ which is monochromatic with respect to $c$. Since $c'$ defined a valid digraph colouring, this is only possible if the cycle traverses $u$ and thus one of the edges $(u,u_1)$ or $(u,u_2)$. However, this gives a contradiction to the fact that both of these edges are bi-coloured. 

This final contradiction shows that our initial assumption was false and concludes the proof of the Theorem.
\end{proof}
\section{Conclusive Remarks}

In this paper, we initiated the study of relationships between butterfly-minor closed classes of digraphs and the dichromatic number by characterising the largest butterfly-minor closed class of $2$-colourable digraphs.
Since odd bicycles have dichromatic number $3$, one direction of the following is \cref{thm:noneven}, the reverse follows from \cref{thm:mainthm}.

\begin{corollary} \label{cor:minorclosedclass}
The non-even digraphs form the unique inclusion-wise largest class $\mathcal{D}_2$ of 2-colourable digraphs which is closed under butterfly-minors.
\end{corollary}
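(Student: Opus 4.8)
The plan is to obtain \cref{cor:minorclosedclass} as an essentially immediate consequence of \cref{thm:mainthm} and \cref{thm:noneven}, the only extra ingredient being the elementary fact that odd bicycles are not $2$-colourable.

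First I would record that the class of non-even digraphs is itself closed under butterfly minors: by \cref{thm:noneven} a digraph is non-even exactly when it has no odd bicycle as a butterfly minor, and since being a butterfly minor is a transitive relation (a butterfly minor of a butterfly minor of $D$ is again a butterfly minor of $D$), any butterfly minor of a non-even digraph is again non-even. Combined with \cref{thm:mainthm}, which gives $\Dichromatic{D}\le 2$ for every non-even $D$, this shows that the non-even digraphs constitute one butterfly-minor closed class consisting entirely of $2$-colourable digraphs.

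Second I would prove maximality. Let $\mathcal{D}$ be an arbitrary class of $2$-colourable digraphs that is closed under butterfly minors; I claim $\mathcal{D}$ is contained in the class of non-even digraphs. The key observation is that every odd bicycle $\Bidirected{C}$ (with $C$ an odd cycle) satisfies $\Dichromatic{\Bidirected{C}}=3$: each digon is a directed cycle of length $2$, so in any proper $2$-colouring the two endpoints of every digon must receive distinct colours, which would be a proper $2$-colouring of the non-bipartite graph $C$, a contradiction. Now suppose some $D\in\mathcal{D}$ were even. By \cref{thm:noneven} it would contain an odd bicycle $B$ as a butterfly minor, hence $B\in\mathcal{D}$ by closure, contradicting that $B$ is not $2$-colourable. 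Therefore every $D\in\mathcal{D}$ is non-even, i.e.\ $\mathcal{D}$ is contained in the class of non-even digraphs. Since the latter is itself a class of the required kind, it is the unique inclusion-wise largest such class, which is precisely the assertion.

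I do not anticipate any genuine difficulty here: all the substance sits in the two quoted theorems, and the corollary is a formal combination of them. The only points requiring a word of care are to state the transitivity of the butterfly-minor relation explicitly rather than use it silently, and to spell out why odd bicycles need three colours, which follows at once from the presence of digons forcing properness on the underlying non-bipartite cycle.
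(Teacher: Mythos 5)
Your proposal is correct and follows exactly the paper's (one-line) justification: \cref{thm:mainthm} gives $2$-colourability of non-even digraphs, \cref{thm:noneven} gives butterfly-minor closure, and the fact that odd bicycles have dichromatic number $3$ gives maximality. Your explicit verifications of the transitivity of the butterfly-minor relation and of $\Dichromatic{\Bidirected{C}}=3$ for odd cycles $C$ are both accurate and merely spell out what the paper leaves implicit.
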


In the undirected case, Hadwiger's Conjecture claims a characterisation of the largest minor-closed class of $k$-colourable graphs. 
In view of \cref{cor:minorclosedclass}, the following is a natural directed analogue.

\begin{question}
Given a natural $k \ge 3$, what is the largest butterfly-minor closed subclass $\mathcal{D}_k$ of the $k$-colourable digraphs?
\end{question}

Due to the existence of infinte antichains (such as the odd bicycles) in the butterfly-minor order of digraphs, we believe that for larger values of $k$, possibly no very simple description of the forbidden butterfly minors for $\mathcal{D}_k$ can be obtained.
Looking at the case $k=2$, this drastically changed when moving from digraphs to the corresponding bipartite graphs, where we only needed to exclude $K_{3,3}$ as a matching minor.
While by now the $K_{3,3}$-matching minor-free bipartite graphs (that is, the Pfaffian bipartite graphs) have many equivalent characterisations and can be recognised in polynomial time, not much is known about the classes of $K_{k,k}$-matching minor-free graphs with $k \ge 4$.
Clearly, the complete bipartite graph $K_{k,k}$ has $M$-chromatic number $k$ for any perfect matching.
Concerning \cref{cor:matchinghadwiger}, we think that the following analogue of Hadwiger's Conjecture for $M$-colourings of bipartite graphs could be true.

\begin{conjecture} \label{con:biphadwiger}
Let $k \in \mathbb{N}$, $G$ be a bipartite graph and $M$ an arbitrary perfect matching of $G$, such that $\chi(G,M) \ge k$. Then $G$ contains $K_{k,k}$ as a matching minor.
\end{conjecture}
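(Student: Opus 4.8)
The final statement is \Cref{con:biphadwiger}, a conjecture, so the goal here is to outline the most promising attack rather than a complete argument. The plan is to reduce the statement to the language of digraphs via the $M$-direction correspondence and then mimic, as far as possible, the structure of the known proof for $k=2$, i.e.\ of \cref{thm:mainthm} together with \cref{thm:pfaffian}. Concretely, by \cref{lemma:mcguigmatminors} the graph $G$ contains $K_{k,k}$ as a matching minor if and only if $\DirM{G}{M}$ contains some $M'$-direction of $K_{k,k}$ as a butterfly minor; and by the remark following \cref{def:Mdirection}, $\chi(G,M)=\vec\chi(\DirM{G}{M})$. So the conjecture is equivalent to the purely digraph statement: \emph{every digraph $D$ with $\vec\chi(D)\ge k$ contains, as a butterfly minor, some $M'$-direction of $K_{k,k}$.} The class of such forbidden configurations is the set of all $M'$-directions of $K_{k,k}$ over all perfect matchings $M'$; for $k=2$ this set is exactly the odd bicycles, and the heart of the matter is that excluding it coincides with non-evenness (Theorems~\ref{thm:noneven} and~\ref{thm:pfaffian}).

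First I would set up the right inductive framework. Take a minimal counterexample $D$ with respect to $|V(D)|$: a digraph with $\vec\chi(D)\ge k$ containing no $M'$-direction of $K_{k,k}$ as a butterfly minor. Using \cref{lemma:01sumnoneven}-style arguments and the splitting lemmas (\cref{lemma:splitting,lemma:tightcutsplitting}), one first argues that $D$ can be assumed strongly $2$-connected: a $0$- or $1$-sum decomposition would let us colour the parts with $k-1$ colours (if the parts avoided the forbidden family) and recombine, and butterfly minors in a part are butterfly minors in $D$. The second step would be to find a low-out-degree vertex. For $k=2$ this is \cref{cor:degreetwo}, which relied crucially on the linear edge bound \cref{thm:nonevenedges} for strongly $2$-connected non-even digraphs. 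Here one would want an analogous statement: a strongly $2$-connected digraph with no $M'$-direction of $K_{k,k}$ as a butterfly minor has at most $c_k|V(D)|$ edges for some constant $c_k$, which would force a vertex of out-degree at most $2c_k$. Such an extremal bound is exactly the kind of ``$K_{k,k}$-matching-minor-free bipartite graphs are sparse'' statement that is open for $k\ge 4$, so establishing it (or a usable weakening) is the crux.

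Given a vertex $v$ of bounded out-degree, the endgame would imitate Cases~1 and~2 of the proof of \cref{thm:mainthm}: either delete edges at $v$ to create a butterfly-contractible edge and contract (butterfly contraction preserves being in the forbidden-minor-free class, since butterfly minors of $D$ are butterfly minors of $D$), or, if $v$ sits inside several digons, perform a local identification in the spirit of \cref{lemma:3vertexcontraction}, checking via the matching-theoretic translation (\cref{cor:pfaffianmatminors} would need a $K_{k,k}$-analogue: matching minors of a $K_{k,k}$-matching-minor-free graph are $K_{k,k}$-matching-minor-free, which is immediate) that the reduced digraph stays in the class and still needs $k$ colours, then lift a $(k-1)$- or $k$-colouring of the smaller digraph back to $D$ by assigning $v$ a colour avoiding its at most $2c_k$ out-neighbours---forcing $k\ge 2c_k+1$, which is where a good constant $c_k$ matters.

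\textbf{Main obstacle.} The decisive gap is the absence of a linear (or even subquadratic with small enough constant) edge bound for strongly $2$-connected $K_{k,k}$-matching-minor-free bipartite graphs when $k\ge 4$; without it one cannot guarantee a vertex of out-degree small enough to both perform a clean local reduction and re-extend a $k$-colouring. A secondary difficulty is that, unlike the $k=2$ case where the single excluded matching minor $K_{3,3}$ yields the clean characterisation ``non-even'', for $k\ge 4$ the class of $M'$-directions of $K_{k,k}$ has no known compact description, so verifying that a given reduction stays inside the class must be done directly through the matching-minor translation (via \cref{lemma:mcguigmatminors} and tight-cut contractions as in \cref{thm:pfaffiantcc}) rather than through an analogue of \cref{thm:noneven}. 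I would therefore expect that a full resolution of \cref{con:biphadwiger} requires first developing the extremal theory of $K_{k,k}$-matching-minor-free graphs, and that the colouring argument above is the ``easy half'' once that sparsity input is available.
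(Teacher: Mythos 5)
This statement is an open conjecture in the paper: there is no proof to compare against. The paper's only related results are the weaker quantitative bound obtained from \cref{thm:largesubdivision} (dichromatic number at least $4^{k^2-k}(k-1)+1$ forces a $K_{k,k}$ matching minor) and the explicitly posed open Question on a linear edge bound for $(k-1)$-extendable bipartite graphs without a $K_{k,k}$ matching minor. Your identification of the decisive obstacle — sparsity of $K_{k,k}$-matching-minor-free graphs — therefore agrees exactly with the paper's own assessment, and the overall plan (sparsity, low out-degree vertex, local reductions, recolouring) mirrors how the $k=3$ case was handled.

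There is, however, a genuine flaw in your set-up. \Cref{lemma:mcguigmatminors} quantifies existentially over the perfect matching of $G$, so for the \emph{fixed} matching $M$ the statement ``$G$ contains $K_{k,k}$ as a matching minor'' is \emph{not} equivalent to ``$\DirM{G}{M}$ contains some $M'$-direction of $K_{k,k}$ as a butterfly minor''; only the backward implication holds. Consequently your ``equivalent purely digraph statement'' is strictly stronger than \cref{con:biphadwiger} and is already false for $k=3$: the only $M'$-direction of $K_{3,3}$ is the bidirected triangle, yet $\Bidirected{C_5}$ has dichromatic number $3$ and, since the odd bicycles form an antichain in the butterfly-minor order, contains no bidirected $K_3$ as a butterfly minor — even though the conjecture holds there, because the underlying bipartite graph does contain $K_{3,3}$ as a matching minor, witnessed via a \emph{different} perfect matching (this is exactly why \cref{thm:pfaffian,thm:noneven} must exclude the whole infinite family of odd bicycles rather than the single $K_{3,3}$-direction). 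Your parenthetical ``for $k=2$ this set is exactly the odd bicycles'' reflects the same confusion, both in the index (the $M$-direction of $K_{2,2}$ is a digon) and in the identification of the excluded family. As a result, a minimal-counterexample argument excluding only the $K_{k,k}$-directions targets the wrong class; the correct butterfly-minor-closed class is that of $M$-directions of $K_{k,k}$-matching-minor-free bipartite graphs, whose excluded butterfly minors are unknown for $k\ge 4$ (and infinite in number already for $k=3$), so any reduction step must be verified through the matching-minor translation, as you note later — but the framework as you state it contradicts this and would need to be repaired before the sparsity input could even be applied.
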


While for $k=1,2$, the statement is trivial, the case $k=3$ amounts to \cref{cor:matchinghadwiger}. At the current state, we do not have a good approach for proving this conjecture even in the first open case of $k=4$, which
is mostly due to the fact that our proof for $k=3$ relied on a certain sparsity of Pfaffian bipartite graphs, which has not yet been established for classes excluding larger complete bipartite graphs as matching minors.

\begin{question}
Is there a function $f\colon\mathbb{N} \rightarrow \mathbb{N}$ such that every $(k-1)$-extendable bipartite graph $G$ without a $K_{k,k}$-matching minor on $n$ vertices has at most $\Fkt{f}{k}n$ edges?
In other words, is the average degree of these graphs bounded in terms of $k$?
\end{question}

The following observation, which is a direct consequence of a result of Aboulker et al. \cite{largesubdivisions}, provides some evidence towards \cref{con:biphadwiger}.
\begin{theorem}[Theorem 32 in \cite{largesubdivisions}] \label{thm:largesubdivision}
Let $D$ and $F$ be digraphs, $m:=|E(F)|, n:=|V(F)|$. If $\vec{\chi}(D) \ge 4^m(n-1)+1$, then $D$ contains a subdivision of $F$ as a subdigraph.
\end{theorem}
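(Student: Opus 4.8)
The plan is to establish the equivalent statement by induction on $m=|E(F)|$ after first passing to a well-behaved digraph. Since any monochromatic directed cycle lies inside a single strong component, $\vec{\chi}(D)$ equals the maximum of $\vec{\chi}$ over the strong components of $D$; and since deleting vertices or edges cannot increase $\vec{\chi}$, we may replace $D$ by a $\vec{\chi}(D)$-critical subdigraph $D_0$ (a minimal subdigraph with $\vec{\chi}(D_0)=\vec{\chi}(D)$). Such a $D_0$ is strongly connected, and by the standard extension argument (if some vertex had fewer than $\vec{\chi}(D_0)-1$ out-neighbours, a proper $(\vec{\chi}(D_0)-1)$-colouring of $D_0-v$ could be extended to $v$ by a colour missing on its out-neighbours, and symmetrically for in-neighbours) it has minimum out-degree and minimum in-degree at least $\vec{\chi}(D)-1\ge 4^m(n-1)$. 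So it suffices to prove the following \emph{Main Lemma}: if a digraph $D$ has minimum out-degree and minimum in-degree at least $4^m(n-1)$, then $D$ contains a subdivision of every digraph $F$ with $m$ edges and $n$ vertices. The base case $m=0$ is immediate: every vertex then has at least $n-1$ out-neighbours, so $|V(D)|\ge n$, and any $n$ vertices form the branch vertices of the (edgeless) subdivision.

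For the inductive step I would fix an arc $e=(x,y)$ of $F$ and set $F':=F-e$, which has $m-1$ edges and $n$ vertices, so the recursion threshold drops by a clean factor of $4$. The key tool is a \emph{layer lemma}: doing a breadth-first search from an arbitrary vertex $v$ and letting $L_i$ denote the vertices at directed distance exactly $i$ from $v$, every arc runs from some $L_i$ into $L_0\cup\dots\cup L_{i+1}$; hence within the union of the even-indexed layers (and likewise the odd-indexed ones) every directed cycle is confined to a single layer, so by subadditivity of $\vec{\chi}$ over the even/odd partition one gets $\vec{\chi}(D)\le 2\max_i\vec{\chi}(D[L_i])$. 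This produces a layer $L_{i^\ast}$ with $\vec{\chi}(D[L_{i^\ast}])\ge\tfrac12\vec{\chi}(D)$, hence (taking a heaviest strong component and then a critical subdigraph inside it) a strongly connected subdigraph whose minimum in- and out-degrees are at least $\tfrac12\cdot 4^m(n-1)-1\ge 4^{m-1}(n-1)$ and which lives inside a single BFS-layer; by the induction hypothesis it contains a subdivision $H$ of $F'$ whose interior is therefore disjoint from all other layers. One then has to route the one missing directed path from the branch vertex $b_x$ (the image of $x$) to $b_y$: using the large out-degree of $b_x$ to escape the used set, travelling through the untouched part of $D$, and using the large in-degree of $b_y$ to return. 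The two separate "escape" and "re-entry" steps each cost roughly a factor of two in the remaining degree budget, which is precisely what turns the $4^{m-1}$ of the recursion into the $4^m$ of the current step.

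The main obstacle is exactly this last routing step. One must guarantee a directed $b_x$–$b_y$ path that is internally disjoint from $H$ \emph{and} from the previously routed paths, while also not consuming the structure needed to route the still-remaining arcs of $F$; since a critical subdigraph can be arbitrarily large, one cannot simply "go around" the part occupied by $H$. The clean way to handle this, which I would set up as the actual induction hypothesis, is to process the arcs of $F$ one at a time and maintain the invariant: \emph{after handling $i$ of the arcs there is a strongly connected "reservoir" subdigraph $R_i$ of minimum degree at least $4^{m-i}(n-1)$ that contains all $n$ branch vertices and is disjoint from the interiors of the $i$ paths routed so far}. Each new path is taken to be a shortest $b_x$–$b_y$ path inside $R_i$, so by the layer lemma applied within $R_i$ its interior meets each BFS-layer of $R_i$ at most once, and the complementary layers still host a strongly connected sub-reservoir of comparable dichromatic number; arranging that this sub-reservoir still contains all branch vertices — which is where the second factor of two is spent — is the delicate point. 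Once this bookkeeping is carried out correctly, the induction closes, and unwinding the reduction to a $\vec{\chi}(D)$-critical subdigraph yields the stated bound $\vec{\chi}(D)\ge 4^m(n-1)+1$.
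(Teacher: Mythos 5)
This theorem is quoted in the paper from \cite{largesubdivisions} without proof, so the comparison is with the argument of that source; measured against it (or on its own terms), your proposal has a genuine gap. The fatal move is the opening reduction: passing to a $\vec{\chi}$-critical subdigraph converts the hypothesis $\vec{\chi}(D)\ge 4^m(n-1)+1$ into your ``Main Lemma'', namely that large minimum out- \emph{and} in-degree alone force a subdivision of every $F$. That is a strictly stronger assertion than the theorem (large minimum in- and out-degree does not imply large dichromatic number: the digraph on $\mathbb{Z}_N$ with arcs $i\to i+1,\dots,i+d$, or $\Bidirected{K_{N,N}}$, has both degrees huge and dichromatic number $2$), it is in the territory of hard open problems such as Mader's conjecture on subdivisions of transitive tournaments, and your sketch does not prove it: inside the Main Lemma you immediately invoke ``a layer with $\vec{\chi}(D[L_{i^\ast}])\ge\tfrac12\vec{\chi}(D)$, hence a strong subdigraph of minimum degree $\ge 4^{m-1}(n-1)$'', which silently reuses the large dichromatic number you discarded when you switched to degrees; under the degree hypothesis $\vec{\chi}(D)$ may be $2$, and BFS layers inherit neither a degree bound nor anything else (in $\Bidirected{K_{N,N}}$ every BFS layer is arcless). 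The layer lemma itself, $\vec{\chi}(D)\le 2\max_i\vec{\chi}(D[L_i])$ for strong $D$, is correct, but it is a statement about dichromatic number and is useless once you have traded that hypothesis for degrees.

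The second gap is the routing. Even keeping $\vec{\chi}$ as the induction quantity, the invariant you propose --- a strongly connected ``reservoir'' of large dichromatic number containing all $n$ branch vertices, maintained as arcs are routed one by one --- is exactly what you do not prove, and it is doubtful as stated: a heaviest strong component or a critical subdigraph inside a BFS layer cannot be forced to contain prescribed vertices, and all branch vertices lie in a single layer, so the ``complementary layers'' generally contain none of them. The argument in \cite{largesubdivisions} avoids this bookkeeping entirely by inducting on $m$ over $F$ itself: in a strong component pick one root $v$, take an out-BFS level $L_i$ with $\vec{\chi}(D[L_i])\ge\lceil\vec{\chi}(D)/2\rceil$, and inside it an in-BFS level $I_k$ (towards the same $v$) losing another factor $2$; the cell $Z=L_i\cap I_k$ has $\vec{\chi}\ge 4^{m-1}(n-1)+1$, so by induction it contains a subdivision of $F-e$ \emph{in its entirety}, and the single missing path for $e=(x,y)$ is routed $b_x\to v\to b_y$ along shortest in-/out-BFS paths whose internal vertices lie in strictly lower levels, hence outside $Z$ and off the subdivision, shortcutting at a common vertex if the two halves meet. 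Only one path per inductive step leaves the cell, which is why the factor $4$ per arc suffices and why no reservoir containing the branch vertices is ever needed.
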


\begin{corollary}
There is a function $f: \mathbb{N} \rightarrow \mathbb{N}$ such that for any $k \in \mathbb{N},$ every bipartite graph $G$ with a perfect matching $M$ satisfying $\chi(G,M) \ge f(k)$ contains $K_{k,k}$ as a matching minor. 
\end{corollary}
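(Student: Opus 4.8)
The plan is to deduce the statement directly from \cref{thm:largesubdivision} via the $M$-direction correspondence between bipartite graphs with perfect matchings and digraphs. First I would take $F:=\Bidirected{K_k}$, the bidirected complete graph on $k$ vertices, so that $n:=|V(F)|=k$ and $m:=|E(F)|=k(k-1)$, and set $f(k):=4^{k(k-1)}(k-1)+1$. The key preliminary observation is that $F=\mathcal{D}(K_{k,k},M_0)$, where $M_0=\{a_1b_1,\dots,a_kb_k\}$ is the canonical perfect matching of $K_{k,k}$: since $K_{k,k}$ is complete bipartite we have $a_ib_j\in E(K_{k,k})$ for all $i\ne j$, so by \cref{def:Mdirection} $\mathcal{D}(K_{k,k},M_0)$ is the complete digraph on $k$ vertices. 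Now, given a bipartite graph $G$ with a perfect matching $M$ and $\chi(G,M)\ge f(k)$, the identity $\chi(G,M)=\vec{\chi}(\mathcal{D}(G,M))$ gives $\vec{\chi}(\mathcal{D}(G,M))\ge 4^m(n-1)+1$, so \cref{thm:largesubdivision} produces a subdivision $D_0$ of $\Bidirected{K_k}$ as a subdigraph of $\mathcal{D}(G,M)$.

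The second and only real step is to transport $D_0$ back into $G$. The $k$ branch vertices $v_1,\dots,v_k$ of $D_0$ correspond to $k$ pairwise disjoint matching edges $a_1b_1,\dots,a_kb_k\in M$, and a subdivided arc $v_i\to w_1\to\cdots\to w_t\to v_j$ of $D_0$ corresponds to the $M$-alternating path $a_i,b_{w_1},a_{w_1},b_{w_2},\dots,b_{w_t},a_{w_t},b_j$ in $G$, whose $2t$ internal vertices are pairwise disjoint and disjoint from the branch edges. Collecting these paths together with the edges $a_ib_i$ yields a subgraph $H_2\subseteq G$ which is a bisubdivision of $K_{k,k}$ (the edge $a_ib_j$ with $i\ne j$ is replaced by a path with the even number $2t$ of internal vertices, and $a_ib_i$ is left unsubdivided), and which is conformal, since $M\cap E(H_2)$ is a perfect matching of $H_2$ covering exactly $V(H_2)$, so that $M\setminus E(H_2)$ perfectly matches $G-V(H_2)$. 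Bicontracting the degree-$2$ subdivision vertices of $H_2$ then exhibits $K_{k,k}$ as a matching minor of $G$. (Equivalently, each subdivision vertex of $D_0$ has in-degree and out-degree one in $D_0$, hence its unique out-arc is butterfly-contractible, and contracting all of them turns $D_0$ into $\Bidirected{K_k}=\mathcal{D}(K_{k,k},M_0)$; so $\mathcal{D}(K_{k,k},M_0)$ is a butterfly minor of $\mathcal{D}(G,M)$, and \cref{lemma:mcguigmatminors} yields the same conclusion, after first passing to a component of the subgraph of $G$ spanned by $M$ and all $M$-alternating cycles so that $G$ may be assumed matching covered without changing $\chi(G,M)$.)

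I do not anticipate a real obstacle — the corollary genuinely is a direct consequence of \cref{thm:largesubdivision} — so the only thing needing care is the bookkeeping in the second step: verifying that the vertex sets of the various subdivision paths are pairwise disjoint in $G$ and that the resulting subgraph is indeed a conformal bisubdivision of $K_{k,k}$. All of this is immediate from \cref{def:Mdirection} and is essentially the content of \cref{lemma:mcguigmatminors}, and the argument also yields the explicit bound $f(k)=4^{k(k-1)}(k-1)+1$.
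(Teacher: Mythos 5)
Your proposal is correct and follows essentially the same route as the paper: apply \cref{thm:largesubdivision} with $F=\Bidirected{K_k}$ to obtain a subdivision of $\Bidirected{K_k}$ in $\DirM{G}{M}$, observe that this makes $\Bidirected{K_k}=\DirM{K_{k,k}}{M_0}$ a butterfly minor, and conclude via \cref{lemma:mcguigmatminors}, yielding the same bound $f(k)=4^{k^2-k}(k-1)+1$. Your explicit translation of the subdivision into a conformal bisubdivision of $K_{k,k}$, and your remark about reducing to the matching covered case before invoking \cref{lemma:mcguigmatminors}, are welcome extra care but do not change the argument.
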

\begin{proof}
Set $f(k):=4^{k^2-k}(k-1)+1$ and let $G$ be a bipartite graph with a perfect matching $M$ such that $\chi(G,M) \ge f(k)$. As the complete bioriented digraph $\Bidirected{K_k}$ has $k^2-k$ edges and $k$ vertices, we deduce from \cref{thm:largesubdivision} that $\vec{\chi}(\DirM{G}{M})=\chi(G,M) \ge f(k)$ implies the existence of a subdivision of $\Bidirected{K_k}$ as a subdigraph of $\DirM{G}{M}$. Clearly, this implies that $\Bidirected{K_k}$, which is the unique perfect matching-direction of $K_{k,k}$, is a butterfly minor of $\DirM{G}{M}$. The claim now follows from \cref{lemma:mcguigmatminors}.
\end{proof}

It would be interesting to see whether \cref{con:biphadwiger} would already imply Hadwiger's Conjecture for graphs.
While we do not have a proof of this implication yet, it does seem quite likely that a relation exists.
For this, note that the chromatic number of a graph can be expressed as the dichromatic number of its bidirection, and that the matching minors of the corresponding bipartite graph to some extent resemble the ordinary minors of the original graph.
Here, the complete graph $K_k$ yields the bidirected $k$-clique, which in the matching context corresponds to $K_{k,k}$.

An additional line of future research could be to investigate colouring properties of classes of digraphs which are closed under different notions of digraph minors. One such candidate are the \emph{topological minors}, which are defined similarly to the undirected case: A digraph $D_1$ is called a directed topological minor of another digraph $D_2$ if $D_2$ contains a subdivision of $D_1$ (that is, replacing directed edges by directed paths of positive length) as a subdigraph. It is easily seen that topological minors are always butterfly-minors, but that the converse fails in general. In any class of $2$-colourable digraphs which is closed under topological minors, the odd bicycles must form a set of forbidden minors. So far, we have been unable to decide the following question. If true, this statement would be a proper generalisation of \cref{thm:mainthm}. 
\begin{question}
Let $D$ be a digraph with $\vec{\chi}(D) \ge 3$. Must $D$ contain a subdivision of an odd bicycle?
\end{question}

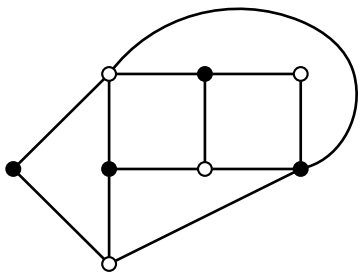
\begin{figure}[h]
	\begin{center}
	\begin{tikzpicture}[scale=0.7]
	
	\pgfdeclarelayer{background}
	\pgfdeclarelayer{foreground}
	
	\pgfsetlayers{background,main,foreground}
	
	%%%%% Vertex Styles %%%%%
	\tikzstyle{v:main} = [draw, circle, scale=0.5, thick,fill=black]
	\tikzstyle{v:tree} = [draw, circle, scale=0.3, thick,fill=black]
	\tikzstyle{v:border} = [draw, circle, scale=0.75, thick,minimum size=10.5mm]
	\tikzstyle{v:mainfull} = [draw, circle, scale=1, thick,fill]
	\tikzstyle{v:ghost} = [inner sep=0pt,scale=1]
	\tikzstyle{v:marked} = [circle, scale=1.2, fill=CornflowerBlue,opacity=0.3]
	%%%%% %%%%% %%%%%
	
	%%%%% Edge Styles %%%%%
	\tikzset{>=latex} 
	\tikzstyle{e:marker} = [line width=9pt,line cap=round,opacity=0.2,color=DarkGoldenrod]
	\tikzstyle{e:colored} = [line width=1.2pt,color=BostonUniversityRed,cap=round,opacity=0.8]
	\tikzstyle{e:coloredthin} = [line width=1.1pt,opacity=1]
	\tikzstyle{e:coloredborder} = [line width=2pt]
	\tikzstyle{e:main} = [line width=1pt]
	\tikzstyle{e:extra} = [line width=1.3pt,color=LavenderGray]
	%%%%% %%%%% %%%%%
	
	\begin{pgfonlayer}{main}
	
	%%%%% Centered Ghost Vertices %%%%%
	\node (C) [] {};
	
	%%%%% Left Center %%%%%
	\node (C1) [v:ghost, position=180:25mm from C] {};
	%\node (U1) [v:ghost, position=90:50mm from C1] {};
	%		\node (L1) [v:ghost, position=270:27mm from C1,align=center] {};
	
	%%%%% Center %%%%%
	\node (C2) [v:ghost, position=0:0mm from C] {};
	%\node (U2) [v:ghost, position=90:50mm from C2] {};
	%		\node (L2) [v:ghost, position=270:27mm from C2,align=center] {};
	
	%%%%% Right Center %%%%%
	\node (C3) [v:ghost, position=0:25mm from C] {};
	%\node (U3) [v:ghost, position=90:50mm from C3] {};
	%		\node (L3) [v:ghost, position=270:27mm from C3,align=center] {};
	%%%%% %%%%% %%%%%

	%%%%% Vertices %%%%%
	
	%%%%% Left Center %%%%%

	%%%%% %%%%% %%%%%
	
	%%%%% Center %%%%%

	\node (a1) [v:main,position=0:0mm from C2] {};
	\node (a4) [v:main,position=180:18mm from a1] {};
	\node (b2) [v:main,position=90:18mm from a1, fill=white] {};
	\node (b3) [v:main,position=0:18mm from a1, fill=white] {};
	\node (b1) [v:main,position=270:18mm from a1, fill=white] {};
	\node (a2) [v:main,position=90:18mm from b3] {};
	\node (a3) [v:main,position=0:18mm from b3] {};
	\node (b4) [v:main,position=90:18mm from a3, fill=white] {};
	
	\node (x) [v:ghost,position=0:10mm from b4] {};
	\node (y) [v:ghost,position=90:10mm from b4] {};
	
	%%%%% %%%%% %%%%%
	
	%%%%% Right Center %%%%%
	
	%%%%% %%%%% %%%%%
	
	%%%%% %%%%% %%%%%

	%%%%% Edges %%%%%
	
	%%%%% Left Center %%%%%

	%%%%% %%%%% %%%%%
	
	%%%%% Center %%%%%

	\draw (a1) [e:main] to (b1);
	\draw (a1) [e:main] to (b2);
	\draw (a1) [e:main] to (b3);
	
	\draw (a2) [e:main] to (b2);
	\draw (a2) [e:main] to (b3);
	\draw (a2) [e:main] to (b4);

%	\draw (a3) [e:main,bend right=120] to (b2);
	\draw (a3) [e:main] to (b3);
	\draw (a3) [e:main] to (b4);
	
	\draw (a4) [e:main] to (b1);
	\draw (a4) [e:main] to (b2);
	
	\draw (a3) [e:main] to (b1);
	
	%%%%% %%%%% %%%%%
	
	%%%%% Right Center %%%%%

	%%%%% %%%%% %%%%%
	
	%%%%% %%%%% %%%%%
	
	\end{pgfonlayer}
	
	%%%%% %%%%% %%%%%

	%%%%% Background %%%%%
	\begin{pgfonlayer}{background}
	
	\draw (a3) edge[line width=1pt,quick curve through={(x) (y)}] (b2);
	
	\end{pgfonlayer}	
	%%%%% %%%%% %%%%%
	
	%%%%% Foreground %%%%%
	\begin{pgfonlayer}{foreground}

	\end{pgfonlayer}
	%%%%% %%%%% %%%%%
	\end{tikzpicture}
\end{center}
\caption{A planar bipartite graph such that for any $2$-colouring of its edges, there is a perfect matching with a monochromatic alternating cycle.} \label{fig:counterex}
\end{figure}

Considering the notion of $M$-colourings, it is natural to ask whether it is necessary to have different colourings of the matching edges for every perfect matching, or whether one might strengthen \cref{cor:matchinghadwiger} by finding a single $2$-colouring of \emph{all} edges in a bipartite Pfaffian graph, such that for any perfect matching $M$ the induced $2$-colouring on the matching edges yields a proper $M$-colouring.
For an example consider the $2$-colouring of the staircase in \cref{fig:staircase}.
Although it seems to be possible to find such a ``super''-colouring for many bipartite Pfaffian graphs such as the Heawood graph or square grids, there are small examples of (even planar) Pfaffian bipartite graphs without such a colouring (cf. \cref{fig:counterex}).

Furthermore, all stated results and Conjectures are worthy to consider in the more general setting of solid graphs.
However, here, even more fundamental questions concerning the structure of these graphs are left widely open, see \cref{sec:pfaff}.

The questions raised in \cref{sec:polychromatic} concerning the relationship of directed girth and disjoint packings of feedback vertex sets might also apply to non-planar digraphs excluding certain butterfly-minors; in fact, \cref{thm:mainfractional} easily extends to the class of so-called \emph{mengerian digraphs} generalising the strongly planar digraphs (\cite{mengeriandigraphs}), with very similar properties.
To conclude, we want to mention the similarity of the treated problems with the following open subcase of a Conjecture of Woodall.

\begin{conjecture}[\cite{egres}]
	In every planar digraph $D$ of girth $g \geq 3$, there exists a packing of $g$ disjoint feedback arc sets.
\end{conjecture}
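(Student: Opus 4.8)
The statement is the planar case of Woodall's Conjecture on packing dijoins, and it is open; what follows is the route one would take and the point at which it stalls. The first move is planar duality. For a connected plane digraph $D$, the planar dual $D^{\ast}$ is again a digraph once each dual edge is oriented so that it crosses its primal edge consistently from one side to the other, and under this correspondence the directed cycles of $D$ become exactly the directed cuts of $D^{\ast}$, the feedback arc sets of $D$ become exactly the dijoins of $D^{\ast}$, and the girth $g$ of $D$ equals the minimum size $\tau$ of a directed cut of $D^{\ast}$. Hence the assertion is equivalent to: every planar digraph whose smallest directed cut has size $\tau$ contains $\tau$ pairwise edge-disjoint dijoins, i.e.\ the clutter of inclusion-wise minimal dijoins of a planar digraph \emph{packs}.

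\textbf{The covering side comes for free.} By the Lucchesi--Younger theorem quoted above, the clutter of minimal directed cuts of any digraph has the Max-Flow-Min-Cut property; in particular it is ideal, and by Lehman's theorem its blocking clutter, which is precisely the clutter of minimal dijoins, is ideal as well. Feeding the all-ones weight vector into the associated packing linear program and arguing exactly as in the proofs of \cref{hilf} and \cref{thm:mainfractional} (with $\vec{C}_g$ replaced by a minimum directed cut and using the planar duality of \cref{duallucchesi}), one obtains a \emph{fractional} packing of dijoins of total weight $\tau$. So the fractional relaxation already has the correct optimum; the only thing missing is an \emph{integral} optimum of the packing LP for $w=\mathbf{1}$.

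\textbf{The decisive step and the obstacle.} It remains to round the fractional packing to $\tau$ genuinely disjoint dijoins. The natural attempt is induction on $\tau$: fix a minimum directed cut $\delta^{+}(X)$ with $|\delta^{+}(X)|=\tau$, note that every dijoin meets it, and try to peel off one dijoin "using up" a single edge of $\delta^{+}(X)$ so that the residual digraph is planar with minimum directed cut size $\tau-1$, then recurse. The difficulty is entirely in making this precise: deleting or contracting the chosen edge affects planarity only mildly, but it is unclear how to guarantee simultaneously that the edge set one removes is itself a dijoin and that the minimum directed cut drops by exactly one rather than collapsing. A discharging argument directly on the plane embedding runs into the dual obstruction that directed cuts, unlike undirected cuts, are not preserved by the local reductions (edge contractions, suppression of degree-two vertices) on which such arguments rely. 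Either way, the crux --- that the dijoin clutter of a planar digraph packs --- is exactly Woodall's Conjecture in the planar case, and bridging the integrality gap of the $w=\mathbf{1}$ packing program is the main obstacle; the results of \cref{sec:polychromatic} show that nothing else stands in the way.
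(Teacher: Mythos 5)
This statement is presented in the paper as an open conjecture (cited to \cite{egres}); the paper gives no proof of it, and none is known --- it is precisely the planar case of Woodall's conjecture on packing dijoins. So there is nothing to compare your argument against, and your decision not to manufacture a proof is the correct one. Your analysis of the situation is accurate on all counts: the planar-duality dictionary (directed cycles of $D$ correspond to directed cuts of $D^\ast$, feedback arc sets to dijoins, and the girth $g$ to the minimum dicut size $\tau$) is the standard reformulation; the Lucchesi--Younger theorem gives the MFMC property, hence idealness, of the dicut clutter, and Lehman's theorem transfers idealness to the blocking clutter of minimal dijoins, which yields a fractional packing of dijoins of value $\tau$ exactly as in \cref{hilf}; and the sole remaining obstacle is the integrality of the packing program for $w=\mathbf{1}$, i.e.\ that the dijoin clutter of a planar digraph packs. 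Your diagnosis of why the naive peeling induction fails (one cannot simultaneously certify that the peeled edge set is a dijoin and that the minimum dicut size drops by exactly one) matches the known difficulty. The one caution I would add is in your closing sentence: the results of \cref{sec:polychromatic} concern the \emph{vertex} version (feedback vertex sets and induced directed cycles in \emph{strongly} planar digraphs), which is a genuinely different clutter, so they give fractional evidence in an analogous setting rather than showing that ``nothing else stands in the way'' of the arc version.
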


\bibliographystyle{alphaurl}
\bibliography{literature}

\end{document}